\newtheorem{thm}{Theorem}[section]
\newtheorem{definition}[thm]{Definition}
\newtheorem{lemma}[thm]{Lemma}
\newtheorem{cor}[thm]{Corollary}
\newtheorem{prop}[thm]{Proposition}
\newtheorem{remark}[thm]{Remark}
\newcommand{\R}{\mathbb{R}}
\newcommand{\RD}{\mathbb{R}^{d}}
\newcommand{\RDD}{\mathbb{R}^{d+1}}
\newcommand{\Hdot}{\dot{H}}
\newcommand{\Hzeta}{H^{t_{0}+2}(\RD)}
\newcommand{\Hzetaa}{H^{t_{0}+1}(\RD)}
\newcommand{\G}{G_{\mu}[\epsilon \zeta,\beta b]}
\newcommand{\GNN}{G_{\mu}^{N \! N}[\epsilon \zeta, \beta b]}
\newcommand{\GDD}{G_{\mu}^{D \! D}[\epsilon \zeta, \beta b]}
\newcommand{\GND}{G_{\mu}^{N \! D}[\epsilon \zeta, \beta b]}
\newcommand{\E}{\mathcal{E}^{N}}
\newcommand{\w}{\underline{w}}
\newcommand{\V}{\underline{V}}
\newcommand{\ws}{\widetilde{\underline{w}}}
\newcommand{\Vs}{\widetilde{\underline{V}}}
\newcommand{\psia}{\psi_{(\alpha)}}
\newcommand{\zetaa}{\zeta_{(\alpha)}}
\newcommand{\Ua}{U_{(\alpha)}}
\newcommand{\Ra}{R_{\alpha}}
\newcommand{\Sa}{S_{\alpha}}
\newcommand{\rt}{\underline{\mathfrak{a}}}
\newcommand{\psih}{\psi^{\mathfrak{h}}}
\newcommand{\indicatrice}{\mathbf{1}}
\def \epsilon {\varepsilon}
\begin{document}
\title{\textbf{A mathematical study of meteo and landslide tsunamis : The Proudman resonance}}
\author{Benjamin MELINAND\footnote{benjamin.melinand@math.u-bordeaux.fr}}
\date{March 2015}

\maketitle
\vspace{-1cm}

\begin{abstract}
\noindent In this paper, we want to understand the Proudman resonance. It is a resonant respond in shallow waters of a water body on a traveling atmospheric disturbance when the speed of the disturbance is close to the typical water wave velocity. We show here that the same kind of resonance exists for landslide tsunamis and we propose a mathematical approach to investigate these phenomena based on the derivation, justification and analysis of relevant asymptotic models. This approach allows us to investigate more complex phenomena that are not dealt with in the physics literature such as the influence of a variable bottom or the generalization of the Proudman resonance in deeper waters. First, we prove a local well-posedness of the water waves equations with a moving bottom and a non constant pressure at the surface taking into account the dependence of small physical parameters and we show that these equations are a Hamiltonian system (which extend the result of Zakharov \cite{Zakharov}). Then, we justify some linear asymptotic models in order to study the Proudman resonance and submarine landslide tsunamis; we study the linear water waves equations and dispersion estimates allow us to investigate the amplitude of the sea level. To complete these asymptotic models, we add some numerical simulations.
\end{abstract}

\section{Introduction}

\subsection{Presentation of the problem}

\noindent A tsunami is popularly an elevation of the sea level due to an earthquake. However, tsunamis induced by seismic sources represent only 80 \% of the tsunamis. 6\% are due to landslides and 3\% to meteorological effects (see the book of B. Levin and M. Nosov \cite{Levin_tsunamis}). Big traveling storms for instance can give energy to the sea  and lead to an elevation of the surface. In some cases, this amplification is important and this phenomenon is called the Proudman resonance in the physics literature. Similarly, submarine landslides can significantly  increase the level of the sea and we talk about landslide tsunamis. In this paper, we study mathematically these two phenomena. We model the sea by an irrotational and incompressible ideal fluid bounded from below by the seabed and from above by a free surface. We suppose that the seabed and the surface are graphs above the still water level. We model an underwater landslide by a moving seabed (moving bottom) and the meteorological effects by a non constant pressure at the surface (air-pressure disturbance). Therefore, we suppose that $b(t,X) = b_{0}(X) + b_{m}(t,X)$, where $b_{0}$ represents a fixed bottom and $b_{m}$ the variation of the bottom because of the landslide. Similarly, the pressure at the surface is of the form $P + P_{\text{ref}}$, where $P_{\text{ref}}$ is a constant which represents the pressure far from the meteorological disturbance, and $P(t,X)$ models the meteorological disturbance (we assume that the pressure at the surface is known). We denote by $d$ the horizontal dimension, which is equal to $1$ or $2$. $X \in \RD$ stands for the horizontal variable and $z \in \mathbb{R}$ is the vertical variable. $H$ is the typical water depth. The water occupies a moving domain $\Omega_{t} := \{ (X,z) \in \RDD \text{ , } -H + b(t,X) < z < \zeta (t,X) \}$. The water is homogeneous (constant density $\rho$), inviscid, irrotational with no surface tension. We denote by $\textbf{U}$ the velocity and $\Phi$ the velocity potential. We have $\textbf{U}=\nabla_{\! X,z} \Phi$. The law governing the irrotational fluids is the Bernoulli law

\begin{equation}\label{Bernoulli_law}
\partial_{t} \Phi + \frac{1}{2} |\nabla_{\! X,z} \Phi|^{2} + gz =  \frac{1}{\rho} \left(P_{\text{ref}} - \mathcal{P} \right) \text{ in } \Omega_{t},
\end{equation}

\noindent where $\mathcal{P}$ is the pressure in the fluid domain. Changing $\Phi$ if necessary, it is possible to assume that $P_{\text{ref}} = 0$. Furthermore, the incompressibility of the fluid implies that 

\begin{equation}\label{Incompressibility}
\Delta_{\! X,z} \Phi = 0 \text{ in } \Omega_{t}.
\end{equation}

\noindent We suppose also that the fluid particles do not cross the bottom or the surface. We denote by \textbf{n} the unit normal vector, pointing upward and $\partial_{\textbf{n}}$ the upward normal derivative. Then, the boundary conditions are

\begin{equation}\label{surface_condition}
\partial_{t} \zeta - \sqrt{1+|\nabla \zeta|^{2}} \partial_{\textbf{n}} \Phi = 0 \text{ on } \{ z = \zeta(t,X) \},
\end{equation}

\noindent and

\begin{equation}\label{bottom_condition}
\partial_{t} b - \sqrt{1+|\nabla b|^{2}} \partial_{\textbf{n}} \Phi = 0 \text{ on } \{ z = -H + b(t,X) \}.
\end{equation}

\noindent In 1968, V. E. Zakharov (see \cite{Zakharov}) showed that the water waves problem is a Hamiltonian system and that  $\psi$, the trace of the velocity potential at the surface ($\psi = \Phi_{|z=\zeta}$), and the surface $\zeta$ are canonical variables. Then, W. Craig, C. Sulem and P.L. Sulem (see \cite{Craig_Sulem_1} and \cite{Craig_Sulem_2}) formulate this remark into a system of two non local equations. We follow their construction to formulate our problem. Using the fact that $\Phi$ satisfies \eqref{Incompressibility} and \eqref{bottom_condition}, we can characterize $\Phi$ thanks to $\zeta$ and $\psi = \Phi_{|z=\zeta}$

\begin{equation}\label{Laplace_mixte}
\left\{
\begin{aligned}
&\Delta_{\! X,z} \Phi = 0 \text{ in } \Omega_{t},\\
&\Phi_{|z= \zeta} = \psi \text{ , } \sqrt{1+|\nabla b|^{2}}\partial_{\textbf{n}} \Phi_{|z=-H+b} = \partial_{t} b.
\end{aligned}
\right.
\end{equation}

\noindent We decompose this equation in two parts, the surface contribution and the bottom contribution

\begin{equation*}
\Phi = \Phi^{S} + \Phi^{B},
\end{equation*}

\noindent such that 

\begin{equation}\label{Laplace_surface}
\left\{
\begin{aligned}
&\Delta_{\! X,z} \Phi^{S} = 0 \text{ in } \Omega_{t},\\
&\Phi^{S}_{\;\; |z=\zeta} = \psi \text{ , } \sqrt{1+|\nabla b|^{2}}\partial_{\textbf{n}} \Phi^{S}_{\;\; |z=-H+b} = 0,
\end{aligned}
\right.
\end{equation}

\noindent and

\begin{equation}\label{Laplace_bottom}
\left\{
\begin{aligned}
&\Delta_{\! X,z} \Phi^{B} = 0 \text{ in } \Omega_{t},\\
&\Phi^{B}_{\;\; |z= \zeta} = 0 \text{ , } \sqrt{1+|\nabla b|^{2}}\partial_{\textbf{n}} \Phi^{B}_{\;\; |z=-H+b} = \partial_{t} b.
\end{aligned}
\right.
\end{equation}

\noindent In the purpose of expressing \eqref{surface_condition} with $\zeta$ and $\psi$, we introduce two operators. The first one is the Dirichlet-Neumann operator 

\begin{equation}\label{Dirichlet_Neumann_operator}
G[\zeta,b] : \psi \mapsto \sqrt{1+|\nabla \zeta|^{2}} \partial_{\textbf{n}} \Phi^{S}_{\;\; |z=\zeta},
\end{equation}

\noindent where $\Phi^{S}$ satisfies \eqref{Laplace_surface}. The second one is the Neumann-Neumann operator 

\begin{equation}\label{Neumann_Neumann_operator}
G^{N \! N} [\zeta,b] : \partial_{t} b \mapsto \sqrt{1+|\nabla \zeta|^{2}} \partial_{\textbf{n}} \Phi^{B}_{\;\; |z=\zeta},
\end{equation}

\noindent where $\Phi^{B}$ satisfies \eqref{Laplace_bottom}. Then, we can reformulate \eqref{surface_condition} as

\begin{equation}\label{surface_condition2}
\partial_{t} \zeta - G[\zeta,b](\psi) = G^{N \! N} [\zeta,b](\partial_{t} b).
\end{equation}

\noindent Furthermore thanks to the chain rule, we can express $(\hspace{-0.02cm} \partial_{t} \Phi \hspace{-0.02cm} )_{|z=\zeta}$, $\hspace{-0.05cm} (\hspace{-0.02cm} \nabla_{\! X,z} \Phi \hspace{-0.02cm})_{|z=\zeta}$ and $(\hspace{-0.02cm} \partial_{z} \Phi \hspace{-0.02cm})_{|z=\zeta}$ in terms of $\psi$, $\zeta$, $G[\zeta,b](\psi)$ and $G^{N \! N} [\zeta,b](\partial_{t} b)$. Then, we take the trace at the surface of \eqref{Bernoulli_law} (since there is no surface tension we have $ \mathcal{P}_{|z=\zeta} = P$) and we obtain a system of two scalar equations that reduces to the standard Zakharov/Craig-Sulem formulation when $\partial_{t} b = 0$ and $P=0$,

\begin{equation}\label{ww_equations}
  \left\{
  \begin{aligned}
   &\partial_{t} \zeta - G[\zeta,b](\psi) =  G^{N \! N} [\zeta,b](\partial_{t} b),\\
   &\partial_{t} \psi +  g\zeta + \frac{1}{2} |\nabla \psi|^{2} - \frac{1}{2} \frac{\left( G[\zeta,b](\psi) + G^{N \! N} [\zeta,b](\partial_{t} b) + \nabla \zeta \cdot \nabla \psi\right)^{2}}{(1+|\nabla \zeta|^{2})}= -\frac{P}{\rho} \text{.}
  \end{aligned}
  \right.
\end{equation}

\noindent In the following, we work with a nondimensionalized version of the water waves equations with small parameters $\epsilon$, $\beta$ and $\mu$ (see section \ref{model}). The wellposedness of the water waves problem with a constant pressure and a fixed bottom was studied by many people. S. Wu proved it in the case of an infinite depth without nondimensionalization (\cite{Wu_2D} and \cite{Wu_3D}). Then, D. Lannes treated the case of a finite bottom without nondimensionalization (\cite{Lannes_wellposedness_ww}), T. Iguchi proved a local wellposedness result for $\mu$ small enough in order to justify shallow water approximations for water waves (\cite{Iguchi_shallow_water}), and D. Lannes and B. Alvarez-Samaniego showed, in the case of the nondimensionalized equations, that we can find an existence time $T = \frac{T_{0}}{\max(\epsilon,\beta)}$ where $T_{0}$ does not depend on $\epsilon$, $\beta$ and $\mu$ (\cite{Alvarez_Lannes}). More recently, B. M\'esognon-Gireau improved the result of  D. Lannes and B. Alvarez-Samaniego and proved that if we add enough surface tension we can find an existence time $T = \frac{T_{0}}{\epsilon}$ where $T_{0}$ does not depend on $\epsilon$ and $\mu$ (\cite{Benoit_large_time_existence}). T. Iguchi studied the case of a moving bottom in order to justify asymptotic models for tsunamis (\cite{Iguchi_tsunami}). Finally, T. Alazard, N. Burq and C. Zuily study  the optimal regularity for the initial data (\cite{Alazard_cauchy_problem}) and more recently, T. Alazard, P. Baldi and D. Han-Kwan show that a well-chosen non constant external pressure can create any small amplitude two-dimensional  gravity-capillary water waves (\cite{Alazard_control_water_waves}). We organize this paper in two part. Firstly in Section \ref{Local existence of the water waves equations}, we prove two local existence theorems  for the water waves problem with a moving bottom and a non constant pressure at the surface by differentiating and "quasilinearizing" the water waves equations and we pay attention to the dependence of the time of existence and the size of the solution with respect to the parameters $\epsilon$, $\beta$, $\lambda$ and $\mu$. This theorem extends the result of T. Iguchi (\cite{Iguchi_tsunami}) and D. Lannes (Chapter 4 in \cite{Lannes_ww}). We also prove that the water waves problem can be viewed as a Hamiltonian system. Secondly in Section \ref{Asymptotic models}, we justify some linear asymptotic models and study the Proudman resonance. First, in Section \ref{A shallow water model with small topography variations} we study the case of small topography variations in shallow waters, approximation used in the Physics literature to investigate the Proudman resonance; then in Section \ref{A shallow water model with a variable topography} we derive a model when the topography is not small in the shallow water approximation; and in Section \ref{Linear asymptotic and resonance in intermediate depths} we study the linear water waves equations in order to extend the Proudman resonance in deep water with a small fixed topography. Finally, Appendix \ref{Laplace_problem} contains results about the elliptic problem \eqref{Laplace_bott} and Appendix \ref{The Dirichlet-Neumann and the Neumann-Neumann operators} contains results about the Dirichlet-Neumann and the Neumann-Neumann operators. Appendix \ref{estimates} comprises standard estimates that we use in this paper.

\subsection{Notations}

\noindent A good framework for the velocity in the Euler equations is the Sobolev spaces $H^{s}$. But we do not work with $\textbf{U}$ but with $\psi$ the trace of $\Phi$, and $\textbf{U} = \nabla_{\! X,z} \Phi$. It will be too restrictive to take $\psi$ in a Sobolev space. A good idea is to work with the Beppo Levi spaces (see \cite{Deny_Lions_beppo_levi}). For $s \geq 0$, the Beppo Levi spaces are 

\begin{equation*}
  \Hdot^{s}(\RD) := \left\lbrace \psi \in  L^{2}_{\rm loc}(\RD) \text{, } \nabla \psi \in H^{s-1}(\RD) \right\rbrace.
\end{equation*}

\noindent In this paper, $C$ is a constant and for a function $f$ in a normed space $(X,\lvert \cdot \rvert)$ or a parameter $\gamma$, $C(|f|,\gamma)$ is a constant depending on $|f|$ and $\gamma$ whose exact value has non importance. The norm $| \cdot |_{L^{2}}$ is the $L^{2}$-norm and $| \cdot |_{\infty}$ is the $L^{\infty}$-norm in $\RD$. Let $f \in \mathcal{C}^{0}(\RD)$ and $m \in \mathbb{N}$ such that $ \frac{f}{1+|x|^{m}}\in L^{\infty}(\RD)$. We define the Fourier multiplier $f(D) : H^{m}(\RD) \shortrightarrow L^{2}(\RD)$ as

\begin{equation*}
\forall u \in H^{m}(\RD) \text{, } \widehat{f(D) u}(\xi) = f(\xi) \widehat{u}(\xi).
\end{equation*}

\noindent In $\RD$ we denote the gradient operator by $\nabla$ and in $\Omega$ or $S =  \RD \times (-1,0)$ the gradient operator is denoted $\nabla_{\! X,z}$. Finally, we denote by $\Lambda := \sqrt{1 + |D|^{2}}$ with $D = -i \nabla$.

\section{Local existence of the water waves equations}\label{Local existence of the water waves equations}

\noindent This part is devoted to the wellposedness of the water waves equations (Theorems \ref{existence_uniqueness} and \ref{long_time_existence}). We carefully study the dependence on the parameters $\epsilon$, $\beta$, $\lambda$ and $\mu$ of the existence time and of the size of the solution. Contrary to \cite{Lannes_ww} and \cite{Iguchi_tsunami}, we exhibit the nonlinearities of the water waves equations in order to obtain a better existence time. 

\subsection{The model}\label{model}

\noindent In this part, we present a nondimensionalized version of the water waves equations. In order to derive some asymptotic models to the water waves equations we introduce some dimensionless parameters linked to the physical scales of the system. The first one is the ratio between the typical free surface amplitude $a$ and the water depth $H$. We define $\epsilon := \frac{a}{H}$, called the nonlinearity parameter. The second one is the ratio between $H$ and the characteristic horizontal scale $L$. We define $\mu := \frac{H^{2}}{L^{2}}$, called the shallowness parameter. The third one is the ratio between the order of bottom bathymetry amplitude $a_{\text{bott}}$ and $H$. We define $\beta := \frac{a_{\text{bott}}}{H}$, called the bathymetric parameter. Finally, we denote by $\lambda$ the ratio of the typical landslide amplitude $a_{\text{bott,m}}$ and $a_{\text{bott}}$. We also nondimensionalize the variables and the unknowns. We introduce

\begin{equation}
 \left\{ 
 \begin{aligned}
 &X' = \frac{X}{L} \text{, } z' = \frac{z}{H} \text{, } \zeta' = \frac{\zeta}{a} \text{, } b' = \frac{b}{a_{\text{bott}}} \text{, } b_{0}' = \frac{b_{0}}{a_{\text{bott}}} \text{, } b_{m}' = \frac{b_{m}}{a_{\text{bott,m}}} \text{, } t' = \frac{\sqrt{gH}}{L} t \text{, }\\
 &\left(\Phi^{S}\right)^{'}=\frac{H}{aL \sqrt{gH}} \Phi^{S} \text{, } \left(\Phi^{B} \right)^{'}=\frac{L}{H a_{\text{bott,m}} \sqrt{gH}} \Phi^{B} \text{, } \psi'=\frac{H}{aL \sqrt{gH}} \psi \text{, } P' = \frac{P}{a \rho g},
 \end{aligned}
 \right.
\end{equation}

\noindent where 

\begin{equation*}
\Omega_{t}^{\prime} = \{ (X',z') \in \RDD \text{ , } -1 + \beta b'(t',X') < z' < \epsilon \zeta' (t',X') \}.
\end{equation*}

\begin{remark}

\noindent It is worth noting that the nondimensionalization of $\Phi^{S}$, $\psi$ and $t$ comes from the linear wave theory (in shallow water regime, the characteristic speed is $\sqrt{gH}$). See paragraph \textbf{1.3.2} in \cite{Lannes_ww}. Let us explain the nondimensionalization of $\Phi^{B}$. Consider the linear case

\begin{equation*}
\left\{
\begin{aligned}
&\Delta_{\! X,z} \Phi^{B} = 0 \text{, } -H < z < 0,\\
&\Phi^{B}_{\;\; |z= 0} = 0 \text{ , } \partial_{z} \Phi^{B}_{\;\; |z=-H} = \partial_{t} b.
\end{aligned}
\right.
\end{equation*}

\noindent A straightforward computation gives $\Phi^{B} = \frac{\sinh(z |D|)}{|D| \cosh(H |D|)} \partial_{t} b$. If the typical wavelength is $L$, the typical wave number is $\frac{2 \pi}{L}$. Furthermore, the typical order of magnitude of $\partial_{t} b$ is $\frac{a_{\text{bott,m}} \sqrt{gH}}{L}$. Then, the order of magnitude of $\Phi^{B}$ in the shallow water case is 

\begin{equation*}
\frac{L}{2 \pi} \frac{\sqrt{gH} a_{\text{bott,m}}}{L} \frac{\sinh(2 \pi \frac{H}{L})}{\cosh(2 \pi \frac{H}{L})} \sim \frac{\sqrt{gH} a_{\text{bott,m}} H}{L}.
\end{equation*}

\end{remark}

\noindent For the sake of clarity, we omit the primes. We can now nondimensionalize the water waves problem. Using the notation

\begin{equation*}
\nabla^{\mu}_{\! X,z} := (\sqrt{\mu} \nabla_{\! X} \; , \partial_{z} \; )^{t} \text{ and } \Delta^{\mu}_{\! X,z} \; := \mu \Delta_{\! X} \; + \partial_{z}^{2},
\end{equation*}

\noindent the water waves equations \eqref{ww_equations} become in dimensionless form

\begin{equation}\label{water_waves_equations}
  \left\{
  \begin{aligned}
   &\partial_{t} \zeta \hspace{-0.05cm} - \hspace{-0.05cm} \frac{1}{\mu} \G(\psi) \hspace{-0.05cm} = \hspace{-0.05cm} \frac{\beta \lambda}{ \epsilon} \GNN(\partial_{t} b), \\
   &\partial_{t} \psi \hspace{-0.05cm} + \hspace{-0.05cm} \zeta \hspace{-0.05cm} + \hspace{-0.05cm} \frac{\epsilon}{2} |\nabla \psi|^{2} \hspace{-0.05cm} - \hspace{-0.05cm} \frac{\epsilon}{2 \mu} \frac{\left( \G(\psi) \hspace{-0.05cm} + \hspace{-0.05cm} \frac{\lambda \beta \mu}{\epsilon}\GNN(\partial_{t} b) \hspace{-0.05cm} + \hspace{-0.05cm} \mu \nabla(\epsilon \zeta) \cdot \nabla \psi\right)^{2}}{(1+\epsilon^{2} \mu |\nabla \zeta|^{2})} \hspace{-0.05cm}=  \hspace{-0.05cm} - P \text{.}
  \end{aligned}
  \right.
\end{equation}

\noindent In the following $\partial_{\textbf{n}}$ is the upward \textit{conormal} derivative

\begin{equation*}
\partial_{\textbf{n}} \Phi^{S} = \textbf{n} \cdot \begin{pmatrix} \sqrt{\mu} I_{d} \hspace{0.1cm} 0 \\ \hspace{0.5cm} 0 \hspace{0.3cm} 1 \end{pmatrix} \nabla^{\mu}_{\! X,z} \Phi^{S}_{\;\, | \partial \Omega}.
\end{equation*}

\noindent Then, The Dirichlet-Neumann operator $\G$ is

\begin{equation}\label{DN_operator}
  \G(\psi) := \sqrt{1+\epsilon^{2} |\nabla \zeta|^{2}} \partial_{\textbf{n}} \Phi^{S}_{\;\, |z=\epsilon \zeta} = -\mu \epsilon \nabla \zeta \cdot \nabla_{\! X} \Phi^{S}_{\;\, |z=\epsilon \zeta} + \partial_{z} \Phi^{S}_{\;\, |z=\epsilon \zeta} \text{ ,}
\end{equation} 

\noindent where $\Phi^{S}$ satisfies 

\begin{equation}\label{Laplace_surf}
  \left\{
  \begin{aligned}
   & \Delta^{\mu}_{\! X,z} \Phi^{S} = 0 \text{ in } \Omega_{t} \text{ ,}\\
   &\Phi^{S}_{\;\, |z=\epsilon \zeta} = \psi \text{ , } \partial_{\textbf{n}} \Phi^{S}_{\;\, |z=-1+\beta b} = 0,
  \end{aligned}
  \right.
\end{equation} 

\noindent while the Neumann-Neumann operator $\GNN$ is

\begin{equation}\label{NN_operator}
  \GNN(\partial_{t} b) := \sqrt{1+\epsilon^{2} |\nabla \zeta|^{2}} \partial_{\textbf{n}} \Phi^{B}_{\;\;\, |z=\epsilon \zeta} = -\mu \nabla(\epsilon \zeta) \cdot \nabla_{\! X} \Phi^{B}_{\;\;\, |z=\epsilon \zeta} + \partial_{z} \Phi^{B}_{\;\;\, |z=\epsilon \zeta} \text{ ,}
\end{equation} 

\noindent where $\Phi^{B}$ satisfies 

\begin{equation}\label{Laplace_bott}
  \left\{
  \begin{aligned}
   & \Delta^{\mu}_{\! X,z} \Phi^{B} = 0 \text{ in } \Omega_{t} \text{ ,} \\
   &\Phi^{B}_{\;\;\, |z=\epsilon \zeta} = 0 \text{ , } \sqrt{1+\beta^{2} |\nabla b|^{2}} \partial_{\textbf{n}} \Phi^{B}_{\;\;\, |z=-1+\beta b} = \partial_{t} b.
  \end{aligned}
  \right.
\end{equation} 

\begin{remark}
\noindent We have nondimensionalized the Dirichlet-Neumann and the Neumann-Neumann operators as follows 

\begin{equation*}
G[\zeta,b](\psi) = \frac{a L \sqrt{gH}}{H^{2}} G_{\mu}[\epsilon \zeta^{'}, \beta b^{'}] (\psi^{'} ) \text{, } G^{N \! N}[\zeta,b](\partial_{t} b) = \frac{a_{\text{bott,m}} \sqrt{gH}}{L} G_{\mu}^{N \! N}[\epsilon \zeta^{'}, \beta b^{'}] (\partial_{t^{'}} b^{'}) .
\end{equation*}
\end{remark}

\noindent We add two classical assumptions. First, we assume some constraints on the nondimensionalized parameters and we suppose there exist $\rho_{\max} > 0$ and $\mu_{\max} > 0$, such that

\begin{equation}\label{parameters_constraints}
0 < \epsilon, \beta, \beta \lambda \leq 1 \text{ , } \frac{\beta \lambda}{\epsilon} \leq \rho_{\max} \text{ and } \mu \leq \mu_{\max}.
\end{equation}

\noindent Furthermore, we assume that the water depth is bounded from below by a positive constant

\begin{equation}\label{nonvanishing}
  \exists \, h_{\min} > 0 \text{ ,  } \epsilon \zeta + 1 - \beta b \geq h_{\min}.
\end{equation} 

\noindent In order to quasilinearize the water waves equations, we have to introduce the vertical speed at the surface $\w$ and horizontal speed at the surface $\V$. We define

\begin{equation}
 \w := \w[\epsilon \zeta, \beta b] \left( \psi, \frac{\beta \lambda}{\epsilon}\partial_{t} b \right) = \frac{\G(\psi) + \mu \frac{\beta \lambda}{\epsilon}  \GNN(\partial_{t} b) + \epsilon \mu \nabla \zeta \cdot \nabla \psi}{1+\epsilon^{2} \mu |\nabla \zeta|^{2}},     
\end{equation}

\hspace{-0.8cm} and

\begin{equation}
\V := \V[\epsilon \zeta, \beta b]  \left( \psi, \frac{\beta \lambda}{\epsilon}\partial_{t} b \right) = \nabla \psi - \epsilon \w[\epsilon \zeta, \beta b] \left( \psi, \frac{\beta \lambda}{\epsilon}\partial_{t} b \right) \nabla \zeta.     
\end{equation}

\subsection{Notations and statement of the main results}

\noindent In this paper, $d=1 \text{ or } 2$, $t_{0}>\frac{d}{2}$, $N \in \mathbb{N}$ and $s \geq 0$. The constant $T \geq 0$ represents a final time. The pressure $P$ and the bottom $b$ are given functions. We suppose that $b \in W^{3,\infty}(\R^{+}; H^{N}(\RD))$ and $P \in W^{1,\infty}(\R^{+}; \Hdot^{N+1}(\RD))$. We denote by $M_{N}$ a constant of the form

\begin{equation}
M_{N} = C \left( \frac{1}{h_{\min}},\mu_{\max}, \epsilon |\zeta|_{H^{\max(t_{0} + 2,N)}}, \beta |b|_{L^{\infty}_{t} H^{\max(t_{0}+2,N)}_{\! X}} \right).
\end{equation}

\noindent We denote by $U := (\zeta,\psi)^{t}$ the unknowns of our problem. We want to express \eqref{ww_equations} as a quasilinear system. It is well-known that the good energy for the water waves problem is

\begin{equation}\label{energy_water_waves}
\E(U) = |\mathfrak{P} \psi |^{2}_{H^{\frac{3}{2}}} + \underset{\alpha \in \mathbb{N}^{d}, |\alpha| \leq N}{\sum} \left( |\zetaa|^{2}_{L^{2}} + |\mathfrak{P} \psia|^{2}_{L^{2}} \right),
\end{equation}

\noindent where $\zetaa := \partial^{\alpha} \zeta$, $\psia := \partial^{\alpha} \psi - \epsilon \underline{w} \partial^{\alpha} \zeta$ and $\mathfrak{P} := \frac{|D|}{\sqrt{1+\sqrt{\mu} |D|}}$. This energy is motivated by the linearization of the system around the rest state (see \textbf{4.1} in \cite{Lannes_ww}). $\mathfrak{P}$ acts as the square root of the Dirichlet-Neumann operator (see \cite{Lannes_ww}). Here, $\zetaa$ and $\psia$ are the \textit{Alinhac's good unknowns of the system} (see \cite{Alinhac_good_unknown} and \cite{Alinhac_good_unknown_ww} in the case of the standard water waves problem). We define $\Ua := (\zetaa,\psia)^{t}$. We can introduce an associated energy space. Considering a $T \geq 0$,

\begin{equation}
E^{N}_{T} := \{ U  \in \mathcal{C}([0,T];H^{t_{0}+2}(\RD) \times \Hdot^{2}(\RD)) \text{ , } \E(U) \in L^{\infty}([0,T]) \}.
\end{equation}

\noindent Our main results are the following theorems. We give two existence results. The first theorem extends the result of T.Iguchi (Theorem 2.4 in \cite{Iguchi_tsunami}) since we give a control of the dependence of the solution with respect to the parameters $\epsilon$, $\beta$ and $\mu$ and we add a non constant pressure at the surface and also extends the result of D.Lannes (Theorem 4.16 in \cite{Lannes_ww}), since we improve the regularity of the initial data and add a non constant pressure pressure at the surface and a moving bottom. Notice that we explain later what is Condition \eqref{rt_constraints} (it corresponds to the positivity of the so called Rayleigh-Taylor coefficient).

\begin{thm}\label{existence_uniqueness}
Let $A > 0$, $t_{0} > \frac{d}{2}$, $N \geq \max(1,t_{0}) +3$, $U^{0} \in E_{0}^{N}$, $b \in W^{3,\infty}(\R^{+}; H^{N}(\RD))$ and $P \in W^{1,\infty}(\mathbb{R}^{+}; \Hdot^{N+1}(\RD))$ such that

\begin{equation*}
\E \left( U^{0} \right) + \frac{\beta \lambda}{\epsilon} \left\lvert \partial_{t} b \right\rvert_{L^{\infty}_{t} H_{\! X}^{N}} + \left\lvert \nabla P \right\rvert_{L^{\infty}_{t} H_{\! X}^{N}} \leq A.
\end{equation*}

\noindent We suppose that the parameters $\epsilon,\beta,\mu, \lambda$ satisfy \eqref{parameters_constraints} and that \eqref{nonvanishing} and \eqref{rt_constraints} are satisfied initially. Then, there exists $T > 0$ and a unique solution $U \in E^{N}_{T}$ to \eqref{water_waves_equations} with initial data $U^{0}$. Moreover, we have 

\begin{equation*}
T = \min \left( \frac{T_{0}}{\max(\epsilon, \beta)}, \frac{T_{0}}{\frac{\beta \lambda}{\epsilon} \left\lvert \partial_{t} b \right\rvert_{L^{\infty}_{t} H_{\! X}^{N}} + \left\lvert \nabla P \right\rvert_{L^{\infty}_{t} H_{\! X}^{N}}} \right) \text{ , }  \frac{1}{T_{0}} =c^{1} \text{ and } \underset{t \in \left[0, T \right]}{\sup} \E \hspace{-0.1cm} (U) = c^{2},
\end{equation*}

\noindent with $c^{j} = C \left(A, \frac{1}{h_{\min}}, \frac{1}{\mathfrak{a}_{\min}}, \mu_{\max}, \rho_{\max}, \left\lvert b \right\rvert_{W^{3,\infty}_{t} H_{\! X}^{N}}, \left\lvert \nabla P \right\rvert_{W^{1,\infty}_{t} H_{\! X}^{N}} \right) $.
\end{thm}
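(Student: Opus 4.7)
The plan is to carry out an Alinhac-type quasilinearization of \eqref{water_waves_equations}, adapted to the moving bottom and the surface pressure, and close an energy estimate for the good unknowns $\Ua$ in which the nonlinear contribution carries an extra $\max(\epsilon,\beta)$ factor. As a preliminary I would collect from the appendices the mapping, commutator and shape-derivative estimates for $\G$ and $\GNN$, with explicit dependence on $\mu_{\max}$, $h_{\min}$, $\epsilon|\zeta|_{H^N}$ and $\beta|b|_{H^N}$ (this is what the constant $M_N$ encodes). The shape derivative formulas are the key structural input: upon differentiating the equations, they produce a term that cancels the apparent loss of a derivative precisely because it is reabsorbed by the good unknown $\psia = \partial^\alpha\psi - \epsilon\w\partial^\alpha\zeta$.

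Applying $\partial^\alpha$ for $|\alpha|\le N$ to \eqref{water_waves_equations} and rewriting $\partial^\alpha\psi$ in terms of $\psia$, I expect a quasilinear system
\begin{equation*}
\partial_t \Ua + \mathcal{A}[U]\,\Ua = \mathcal{F}_\alpha,
\end{equation*}
where $\mathcal{A}[U]$ is built from the transport field $\V$, from $\tfrac{1}{\mu}\G$, and from the Rayleigh-Taylor coefficient $\mathfrak{a}$ (so that \eqref{rt_constraints} is precisely the positivity condition making $\mathcal{A}[U]$ symmetrizable), and $\mathcal{F}_\alpha$ carries $\partial^\alpha P$, $\tfrac{\beta\lambda}{\epsilon}\partial^\alpha\partial_t b$ and lower-order commutators. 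Symmetrizing with the inner product weighted by $\mathfrak{a}$, using $\mathfrak{P}$ to recover the half derivative on $\psia$, and treating the extra summand $|\mathfrak{P}\psi|^2_{H^{3/2}}$ in $\E(U)$ directly from the second equation of \eqref{water_waves_equations} should yield
\begin{equation*}
\frac{d}{dt}\E(U) \le M_N\,\max(\epsilon,\beta)\,\E(U) + M_N\Bigl(\tfrac{\beta\lambda}{\epsilon}|\partial_t b|_{H^N} + |\nabla P|_{H^N}\Bigr)\sqrt{\E(U)}.
\end{equation*}
A Grönwall argument applied to $\sqrt{\E(U)}$ then produces exactly the two competing times $T_0/\max(\epsilon,\beta)$ and $T_0/(\tfrac{\beta\lambda}{\epsilon}|\partial_t b|_{L^\infty_t H^N_X}+|\nabla P|_{L^\infty_t H^N_X})$ appearing in the statement.

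Existence follows from a standard regularization scheme: cut the nonlocal operators in Fourier by $J_\nu = \mathbf{1}_{|D|\le\nu}$, solve the smoothed system by a Picard iteration in $E_T^N$ (the regularized nonlocal operators being bounded for each $\nu$), and use the uniform estimate above to extract a limit that is $E_T^N$-bounded and Cauchy in $H^{t_0+1}(\RD)\times\Hdot^{3/2}(\RD)$. Uniqueness and Lipschitz continuous dependence on the data come from an analogous energy estimate for the difference of two solutions, performed one derivative below. A continuation argument then extends the local solution up to the announced $T$, provided the non-vanishing condition \eqref{nonvanishing} and the Rayleigh-Taylor condition \eqref{rt_constraints} are propagated; both are, since $\E(U)$ remains bounded by $c^2$ on $[0,T]$ and the evolution of $\mathfrak{a}$ can be controlled using the assumed time regularity of $b$ and $P$.

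The main obstacle, I expect, will be the bookkeeping for $\GNN(\partial_t b)$. Its shape derivative with respect to $\zeta$ couples $\psi$ and $\partial_t b$ in a way absent from \cite{Iguchi_tsunami} and \cite{Lannes_ww}, and extracting a clean quasilinear form from $\partial^\alpha\bigl(\tfrac{\beta\lambda}{\epsilon}\GNN(\partial_t b)\bigr)$ requires splitting it carefully into one piece that is absorbed into $\mathcal{A}[U]\Ua$ through the good unknown, and one piece that enters $\mathcal{F}_\alpha$ with precisely the scaling $\beta\lambda/\epsilon$. Without this separation, the forcing would pollute the quasilinear part and destroy the factor $\max(\epsilon,\beta)$ in the a priori inequality, which is the improvement over the earlier results.
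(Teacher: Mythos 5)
Your proposal follows essentially the same route as the paper: quasilinearization via the Alinhac good unknowns $\Ua$, symmetrization by the Rayleigh--Taylor coefficient together with $\tfrac{1}{\mu}\G$, an energy inequality of the form $\frac{d}{dt}\E(U)\lesssim \max(\epsilon,\beta)\E(U)+\bigl(\tfrac{\beta\lambda}{\epsilon}|\partial_t b|+|\nabla P|\bigr)\E(U)^{1/2}$ (the paper's version also carries an $\epsilon\E(U)^{3/2}$ term, absorbed since the constant may depend on $\epsilon\E(U)^{1/2}$), followed by Gr\"onwall, regularization, and a difference estimate for uniqueness. You also correctly identify the main new technical point relative to \cite{Lannes_ww} and \cite{Iguchi_tsunami}, namely the shape-derivative bookkeeping for $\GNN(\partial_t b)$, which is exactly what the paper's Propositions \ref{linearization_1}--\ref{quasilinearization} and Appendix \ref{The Dirichlet-Neumann and the Neumann-Neumann operators} handle.
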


\noindent Notice that if $\partial_{t} b$ and $P$ are of size $\max(\epsilon, \beta)$, we find the same existence time that in Theorem 4.16 in \cite{Lannes_ww}. The second result shows that it is possible to go beyond the time scale of the previous theorem; although the norm of the solution is not uniformly bounded in terms of $\epsilon$ and $\beta$, we are able to make this dependence precise. This theorem will be used to justify some of the asymptotic models derived in Section \ref{Asymptotic models} over large time scales when the pressure at the surface and the moving bottom are not supposed small. We introduce $\delta := \max(\epsilon,\beta^{2})$.

\begin{thm}\label{long_time_existence}
\noindent Under the assumptions of the previous theorem, there exists $T_{0} > 0$ such that $U \in E^{N}_{\frac{T_{0}}{\sqrt{\delta}}}$. Moreover, for all $\alpha \in \left[0, \frac{1}{2} \right]$, we have

\begin{equation*}
\small{
\frac{1}{T_{0}} =c^{1} , \hspace{-0.3cm} \underset{\tiny{t \in \left[0, \frac{T_{0}}{\delta^{\alpha}} \right]}}{\sup} \hspace{-0.15cm} \E \hspace{-0.1cm} (U) \leq \frac{c^{3}}{\delta^{2\alpha}} \text{, } c^{j} = C \hspace{-0.1cm} \left(\hspace{-0.1cm} A, \frac{1}{h_{\min}}, \frac{1}{\mathfrak{a}_{\min}}, \mu_{\max}, \rho_{\max}, \left\lvert b \right\rvert_{W^{3,\infty}_{t} H_{\! X}^{N}}, \left\lvert \nabla P \right\rvert_{W^{1,\infty}_{t} H_{\! X}^{N}} \hspace{-0.1cm} \right) \hspace{-0.15cm}.
}
\end{equation*}
\end{thm}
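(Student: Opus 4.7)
The strategy has two parts: refine the energy inequality proved for Theorem \ref{existence_uniqueness} so that the internal nonlinearity carries a factor $\delta = \max(\epsilon,\beta^{2})$ instead of $\max(\epsilon,\beta)$, and then close the argument by a continuation/bootstrap on the interpolated threshold $K/\delta^{2\alpha}$. Start from the quasilinearized system satisfied by the Alinhac good unknowns $\Ua = (\zetaa,\psia)^{t}$ constructed in the proof of Theorem \ref{existence_uniqueness}; its symmetric form yields a differential inequality of the shape
\begin{equation*}
\frac{d}{dt}\E(U) \leq C(\mu_{\max}, M_{N})\Big[\kappa(\epsilon,\beta)\,\E(U) + F(t)\,\sqrt{\E(U)}\Big],
\end{equation*}
with $F(t) := \tfrac{\beta\lambda}{\epsilon}\bigl|\partial_{t} b(t,\cdot)\bigr|_{H^{N}_{X}} + \bigl|\nabla P(t,\cdot)\bigr|_{H^{N}_{X}}$. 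The refinement compared with Theorem \ref{existence_uniqueness} is to establish $\kappa(\epsilon,\beta) \lesssim \delta$: the $\epsilon$-contributions are already explicit in \eqref{water_waves_equations}, while the $\beta$-contributions arise from shape derivatives of $\G$ and $\GNN$ with respect to the bottom and from commutators $[\partial^{\alpha}, \beta\nabla b\cdot\nabla]$. The $\beta$-linear commutators are cancelled by the very definition of $\Ua$ (as in the Alinhac argument for the flat-bottom case), leaving only contributions quadratic in $\beta\nabla b$, hence of size $\beta^{2}\E(U) \leq \delta\E(U)$.

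Once this refined inequality is in hand, divide by $2\sqrt{\E(U)}$ to obtain $\partial_{t}\sqrt{\E(U)} \leq C\delta\sqrt{\E(U)} + CF$, then apply Gronwall:
\begin{equation*}
\sqrt{\E(U)(t)} \leq e^{C\delta t}\Big(\sqrt{\E(U^{0})} + C\,t\,\bigl|F\bigr|_{L^{\infty}_{t} H^{N}_{X}}\Big).
\end{equation*}
For $\alpha \in [0,\tfrac{1}{2}]$ and $t \in [0,T_{0}/\delta^{\alpha}]$, one has $\delta t \leq T_{0}\delta^{1-\alpha} \leq T_{0}$, so the exponential factor is bounded and $\sqrt{\E(U)(t)} \leq C(1 + T_{0}/\delta^{\alpha})$, giving $\E(U)(t) \leq c^{3}/\delta^{2\alpha}$. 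A standard bootstrap then closes the argument: on the largest time $T^{\star} \leq T_{0}/\delta^{\alpha}$ on which \eqref{nonvanishing}, \eqref{rt_constraints} and the a priori bound $\E(U) \leq K/\delta^{2\alpha}$ (with $K := 2c^{3}$) all hold, the estimate just obtained strictly improves the last bound provided $T_{0}$ is chosen small enough; the first two conditions propagate by the same control (their time derivatives are bounded by a quantity of the form $C\delta\E(U) \leq CK$); Theorem \ref{existence_uniqueness} then extends the solution past $T^{\star}$, forcing $T^{\star} = T_{0}/\delta^{\alpha}$.

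The main obstacle is the first step. Upgrading $\kappa(\epsilon,\beta)$ from $\max(\epsilon,\beta)$ to $\delta$ requires a term-by-term accounting of $\beta$-powers in every commutator and shape derivative appearing when $\partial^{\alpha}$ is applied to \eqref{water_waves_equations}, together with an explicit identification of each $\beta$-linear commutator as either cancelled by the good-unknown structure or absorbable by integration by parts against the symmetrizer of the quasilinearized system. This is the moving-bottom counterpart of the Alinhac-type cancellation responsible for the $O(\epsilon)$ self-interaction coefficient in the flat-bottom water waves problem, and it is precisely what unlocks the time scale $T_{0}/\sqrt{\delta}$ rather than $T_{0}/\max(\epsilon,\beta)$.
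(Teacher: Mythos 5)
Your endgame (Gronwall on $\sqrt{\E}$ plus a bootstrap at the threshold $K/\delta^{2\alpha}$) is workable, but the step you yourself identify as ``the main obstacle'' --- upgrading the coefficient of $\E(U)$ from $\max(\epsilon,\beta)$ to $\delta$ --- is both incorrect as stated and unnecessary. The claimed Alinhac-type cancellation does not apply to the $\beta$-linear terms: the good unknowns $\psia = \partial^{\alpha}\psi - \epsilon \w \partial^{\alpha}\zeta$ are built to absorb the \emph{surface} commutators (the shape derivatives of $\G$ and $\GNN$ with respect to $\zeta$), not the bottom ones. The shape derivatives with respect to $b$ and the transport of $\partial_{t}b$ produce terms that are genuinely linear in $\beta$ multiplying $\E(U)$ --- in the paper's energy estimate a term $\max(\beta,\beta\lambda)\left\lvert\partial_{t}b\right\rvert_{L^{\infty}_{t}H^{N}_{\!X}}\E(U)$ survives, coming from $\frac{1}{\mu}\bigl(d\G(\Lambda^{\frac{3}{2}}\psi).(\partial_{t}\zeta,\partial_{t}b),\Lambda^{\frac{3}{2}}\psi\bigr)_{L^{2}}$ --- and these are not quadratic in $\beta\nabla b$. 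When $\beta^{2} > \epsilon$ and $\beta < 1$ one has $\beta \gg \delta$, so your differential inequality with $\kappa \lesssim \delta$ cannot hold. You also drop the cubic term $\epsilon\,\E(U)^{\frac{3}{2}}$, which is present in the correct inequality and must be accounted for.

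The point you miss is elementary: since $\delta = \max(\epsilon,\beta^{2})$ and $\epsilon,\beta \leq 1$, one has $\max(\epsilon,\beta) \leq \sqrt{\delta}$, so the unrefined coefficient $\max(\epsilon,\beta)$ is already of size $\sqrt{\delta}$ --- exactly what produces the time scale $T_{0}/\sqrt{\delta}$. The paper keeps the estimate
\begin{equation*}
\frac{d}{dt}\mathcal{F}^{N}(U) \leq C\left(\epsilon\,\mathcal{F}^{N}(U)^{\frac{3}{2}} + \max(\epsilon,\beta)\,\mathcal{F}^{N}(U) + \mathcal{F}^{N}(U)^{\frac{1}{2}}\Bigl[\tfrac{\beta\lambda}{\epsilon}\left\lvert\partial_{t}b\right\rvert_{L^{\infty}_{t}H^{N}_{\!X}} + \left\lvert\nabla P\right\rvert_{L^{\infty}_{t}H^{N}_{\!X}}\Bigr]\right)
\end{equation*}
and rescales: setting $\widetilde{\mathcal{F}^{N}}(U)(\tau) = \delta^{2\alpha}\mathcal{F}^{N}(U)(\tau/\delta^{\alpha})$, the three terms acquire prefactors $\epsilon\delta^{-2\alpha} \leq \delta^{1-2\alpha} \leq 1$, $\max(\epsilon,\beta)\delta^{-\alpha} \leq \delta^{\frac{1}{2}-\alpha} \leq 1$, and $1$ respectively, so $\widetilde{\mathcal{F}^{N}}$ satisfies a parameter-independent ODE inequality and stays bounded on a fixed interval $[0,T_{0}]$. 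Your Gronwall and bootstrap would in fact go through verbatim with $\kappa = \max(\epsilon,\beta) \leq \sqrt{\delta}$ (and with the cubic term absorbed under the bootstrap hypothesis $\E(U) \leq K/\delta^{2\alpha}$, since then $\epsilon\,\E^{\frac{3}{2}} \leq \sqrt{K}\,\delta^{1-\alpha}\E \leq \sqrt{K}\,\sqrt{\delta}\,\E$ for $\alpha \leq \tfrac{1}{2}$); no refinement of the energy inequality is needed, and the one you propose is not available.
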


\noindent Notice that when $\partial_{t} b$ and $P$ are of size $\max(\epsilon, \beta)$, the existence time of Theorem \ref{existence_uniqueness} is better than the one of Theorem \ref{long_time_existence}. Theorem \ref{long_time_existence} is only useful when $\partial_{t} b$ and $P$ are not small. Notice finally, that Condition \eqref{rt_constraints} is satisfied if $\epsilon$ is small enough. Hence, since in the following, $\epsilon$ is small, it is reasonable to assume it.

\subsection{Quasilinearization}

\noindent Firstly, we give some controls of $|\mathfrak{P} \psi|_{H^{s}}$ and $|\mathfrak{P} \psia|_{H^{s}}$ with respect to the energy $\E(U)$.

\begin{prop}\label{psi_controls}
Let $T > 0$, $t_{0} > \frac{d}{2}$ and $N \geq 2 + \max(1,t_{0})$. Consider $U \in E^{N}_{T}$, $b \in W^{1,\infty}(\R^{+};H^{N}(\RD))$, such that $\zeta$ and $b$ satisfy Condition \eqref{nonvanishing} for all $0 \leq t \leq T$. We assume also that $\mu$ satisfies \eqref{parameters_constraints}. Then, for $0 \leq t \leq T$, for $ \alpha \in \mathbb{N}^{d}$ with $|\alpha| \leq N-1$ and for $0 \leq s \leq N - \frac{1}{2}$, 

\begin{equation*}
|\partial^{\alpha} \mathfrak{P} \psi |_{L^{2}} + |\mathfrak{P} \psi_{(\alpha)}|_{H^{1}} + |\mathfrak{P} \psi |_{H^{s}} \leq M_{N} \E(U)^{\frac{1}{2}} + \frac{\beta \lambda}{\epsilon} M_{N} \left\lvert \partial_{t} b \right\rvert_{L^{\infty}_{t} H^{N}_{\! X}}.
\end{equation*}
\end{prop}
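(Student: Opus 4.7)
The plan is to exploit the Alinhac good-unknown identity $\partial^\alpha\psi=\psia+\epsilon\,\w\,\zetaa$, combined with two sandwich bounds that one reads off immediately from the explicit symbol $\mathfrak{P}(\xi)=|\xi|/\sqrt{1+\sqrt{\mu}|\xi|}$:
\[
|\mathfrak{P}u|_{L^{2}}\le|u|_{H^{1/2}},\qquad |\nabla u|_{L^{2}}\le C(\mu_{\max})\,|\mathfrak{P}u|_{H^{1/2}}.
\]
A preliminary tame estimate
\[
|\w|_{H^{t_{0}+2}}\le M_{N}\,\E(U)^{1/2}+\tfrac{\beta\lambda}{\epsilon}\,M_{N}\,|\partial_{t}b|_{L^{\infty}_{t}H^{N}_{\!X}}
\]
is obtained by plugging the mapping properties of $\G$ and $\GNN$ established in Appendix~\ref{The Dirichlet-Neumann and the Neumann-Neumann operators} into the explicit formula for $\w$, and using that the denominator $1+\epsilon^{2}\mu|\nabla\zeta|^{2}$ is bounded from below under~\eqref{parameters_constraints}; this bound feeds into every subsequent estimate.

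For the first bound $|\partial^{\alpha}\mathfrak{P}\psi|_{L^{2}}$ with $|\alpha|\le N-1$, I apply $\mathfrak{P}$ to the Alinhac identity and split $\mathfrak{P}\partial^{\alpha}\psi=\mathfrak{P}\psia+\epsilon\,\mathfrak{P}(\w\,\zetaa)$. The first summand is exactly one of the terms in $\E(U)^{1/2}$. For the second, the upper symbol bound and the tame product estimate of Appendix~\ref{estimates} give
\[
|\mathfrak{P}(\w\,\zetaa)|_{L^{2}}\le|\w\,\zetaa|_{H^{1/2}}\le C\,|\w|_{H^{t_{0}+1}}\,|\zetaa|_{H^{1/2}},
\]
and $|\zetaa|_{H^{1/2}}\le|\zeta|_{H^{N}}\le\E(U)^{1/2}$ because $|\alpha|\le N-1$. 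The bound on $|\mathfrak{P}\psia|_{H^{1}}$ is obtained by the same scheme after the elementary identity $\partial_{i}\psia=\psi_{(\alpha+e_{i})}-\epsilon(\partial_{i}\w)\zetaa$ (the two contributions $\epsilon\w\,\partial_{i}\zetaa$ and $\epsilon\w\,\zeta_{(\alpha+e_{i})}$ cancel by symmetry of partial derivatives): the leading term $\mathfrak{P}\psi_{(\alpha+e_{i})}$ has $|\alpha+e_{i}|\le N$ and is still in the energy, at the cost of needing one extra derivative on $\w$, whence the appearance of $|\w|_{H^{t_{0}+2}}$.

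For the third estimate $|\mathfrak{P}\psi|_{H^{s}}$ with $0\le s\le N-\tfrac12$, interpolation between the energy norm $|\mathfrak{P}\psi|_{H^{3/2}}$ and the integer-order bound $|\mathfrak{P}\psi|_{H^{N-1}}$ (equivalent to $\sum_{|\alpha|\le N-1}|\partial^{\alpha}\mathfrak{P}\psi|_{L^{2}}^{2}$, already proved) handles $s\in[0,N-1]$; the remaining range $(N-1,N-\tfrac12]$ is covered by a direct Fourier computation exploiting the decay of $\mathfrak{P}(\xi)^{2}=|\xi|^{2}/(1+\sqrt{\mu}|\xi|)$ at high frequency to absorb the extra half-derivative in the Sobolev weight, together with a second pass of the Alinhac decomposition applied to each $\partial^{\beta}\psi$ with $1\le|\beta|\le N$ and a low/high-frequency split of $|\psi_{(\beta)}|_{L^{2}}$ that uses the vanishing of $\widehat{\partial^{\beta}\psi}$ at the origin on the low-frequency side and the lower symbol bound on the high-frequency side.

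The main obstacle is the tame bound on $\w$: $\w$ involves the full Dirichlet--Neumann and Neumann--Neumann operators, and one must exploit the "square root of $G$" nature of $\mathfrak{P}$ through the mapping properties of Appendix~\ref{The Dirichlet-Neumann and the Neumann-Neumann operators} so that only quantities already present in $\E(U)$ appear on the right-hand side. Once this is in place, the rest is careful but routine product-estimate bookkeeping.
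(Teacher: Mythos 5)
Your overall strategy --- the Alinhac identity, symbol bounds for $\mathfrak{P}$, a product estimate, and the mapping properties of $\w$ --- is the same as the paper's, but two of your key ingredients do not hold as stated. First, the bound $|\mathfrak{P}u|_{L^{2}}\le|u|_{H^{1/2}}$ is false: since $\mathfrak{P}(\xi)=|\xi|/\sqrt{1+\sqrt{\mu}|\xi|}\sim\mu^{-1/4}|\xi|^{1/2}$ at high frequency, the correct inequality (Proposition \ref{P_estimates}) is $|\mathfrak{P}u|_{L^{2}}\le\mu^{-1/4}|u|_{H^{1/2}}$, and the constant genuinely degenerates as $\mu\to0$. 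In the paper this loss of $\mu^{-1/4}$ is exactly compensated by the factor $\mu^{3/4}$ in the estimate $|\w|_{H^{1}}\le M\bigl(\mu^{3/4}|\mathfrak{P}\psi|_{H^{3/2}}+\mu|B|_{H^{1}}\bigr)$ of Proposition \ref{controls_w}; your ``preliminary tame estimate'' for $\w$ discards this $\mu^{3/4}$, so the cancellation is lost and the argument either invokes a false inequality or yields a bound blowing up like $\mu^{-1/4}$, defeating the uniformity in $\mu$ that the constant $M_{N}$ (which depends only on $\mu_{\max}$) is meant to encode.

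Second, the preliminary estimate $|\w|_{H^{t_{0}+2}}\le M_{N}\E(U)^{1/2}+\cdots$ is circular as presented. Controlling $\w$ in $H^{t_{0}+2}$ (or even in $H^{t_{0}+1}$, which your product estimate $|\w\,\zetaa|_{H^{1/2}}\le C|\w|_{H^{t_{0}+1}}|\zetaa|_{H^{1/2}}$ already requires) demands control of $|\mathfrak{P}\psi|_{H^{t_{0}+5/2}}$, and this is \emph{not} a term of $\E(U)$: the energy contains only $|\mathfrak{P}\psi|_{H^{3/2}}$ and the $L^{2}$-norms of the $\mathfrak{P}\psia$, and converting the latter into Sobolev norms of $\mathfrak{P}\psi$ is precisely the content of the first and third inequalities you are trying to prove (Proposition \ref{controls_w} is moreover only stated for $s\le t_{0}+\frac{1}{2}$). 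The paper avoids this by distributing the regularity in the product estimate the other way around: $|\w\,\partial^{\alpha}\zeta|_{H^{1/2}}\le C|\w|_{H^{1}}|\partial^{\alpha}\zeta|_{H^{1}}$, putting only $H^{1}$ on $\w$ --- which needs nothing beyond $|\mathfrak{P}\psi|_{H^{3/2}}$, explicitly present in $\E(U)$ --- and the full $H^{N}$ regularity on $\zeta$, also present in $\E(U)$. Your plan could be repaired by a genuine bootstrap (first inequality with the low-norm $\w$ bound, then $|\mathfrak{P}\psi|_{H^{N-1}}$, then the upgraded $\w$ bounds for the remaining inequalities), but as written the order of the argument is wrong. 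The identity $\partial_{i}\psia=\psi_{(\alpha+e_{i})}-\epsilon(\partial_{i}\w)\zetaa$ you use for the second inequality is correct.
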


\begin{proof}
\noindent For the first inequality, we have thanks to Proposition \ref{P_estimates},

\begin{align*}
|\partial^{\alpha} \mathfrak{P} \psi|_{L^{2}} &\leq |\mathfrak{P} \psia|_{L^{2}} + \epsilon |\mathfrak{P} (\w \partial^{\alpha} \zeta)|_{L^{2}},\\
&\leq |\mathfrak{P} \psia|_{L^{2}} + \frac{\epsilon}{\mu^{\frac{1}{4}}} |\w \partial^{\alpha} \zeta|_{H^{\frac{1}{2}}}.\\
\end{align*}

\noindent But $\psi \in \Hdot^{2}(\RD)$. Then by Proposition \ref{controls_w}, $\w \in H^{1}(\RD)$ and $\partial^{\alpha} \zeta \in H^{1}(\RD)$. Using Proposition \ref{product_estimate1}, we obtain

\begin{equation*}
|\partial^{\alpha} \mathfrak{P} \psi |_{L^{2}} \leq |\mathfrak{P} \psia|_{L^{2}} + C \epsilon \left\lvert \frac{\w}{\mu^{\frac{1}{4}}} \right\rvert_{H^{1}} \hspace{-0.3cm} |\zeta|_{H^{N}} \leq |\mathfrak{P} \psia|_{L^{2}} + M_{N} \epsilon |\zeta|_{H^{N}} \left( \hspace{-0.07cm} |\mathfrak{P} \psi|_{H^{\frac{3}{2}}} + \frac{\beta \lambda}{\epsilon} |\partial_{t} b|_{H^{1}} \hspace{-0.07cm} \right) \hspace{-0.07cm}.
\end{equation*}

\noindent The other inequalities follow with the same arguments, see for instance Lemma 4.6 in \cite{Lannes_ww}.
\end{proof}

\noindent The following statement is a first step to the quasilinearization of the water waves equations. It is essentially Proposition 4.5 in \cite{Lannes_ww} and Lemma 6.2 in \cite{Iguchi_tsunami}. However, we improve the minimal regularity of $U$ (we decrease the minimal value of $N$ to $4$ in dimension 1) and we provide the dependence in $\partial_{t} b$ which does not given in \cite{Iguchi_tsunami}. For those reasons, we give a proof of this Proposition.

\begin{prop}\label{linearization_1}
Let $t_{0} > \frac{d}{2}$, $T>0$, $N \geq \max(t_{0},1) + 3$, $b \in W^{1,\infty}(\R^{+};H^{N}(\RD))$ and $U \in E^{N}_{T}$, such that $\zeta$ and $b$ satisfy Condition \eqref{nonvanishing} for all $0 \leq t \leq T$. We assume also that $\mu$ satisfies \eqref{parameters_constraints}. Then, for all $\alpha \in \mathbb{N}^{d}$, $1 \leq |\alpha| \leq N$, we have,

\begin{equation*}
\begin{aligned}
\partial^{\alpha} \hspace{-0.07cm} \left( \frac{1}{\mu} \G(\psi) \hspace{-0.05cm} + \hspace{-0.05cm} \frac{\lambda \beta}{\epsilon} \GNN(\partial_{t} b) \hspace{-0.05cm} \right) \hspace{-0.05cm} = &\frac{1}{\mu} \G(\psia) \hspace{-0.05cm} + \hspace{-0.05cm} \frac{\beta \lambda}{\epsilon} \GNN(\partial^{\alpha} \partial_{t} b) \\
&-\epsilon \indicatrice_{\{|\alpha| =  N \}} \nabla \cdot(\zetaa \V) + \Ra.
\end{aligned}
\end{equation*}

\noindent Furthermore $\Ra$ is controlled

\begin{equation*}
|\Ra|_{L^{2}} \leq M_{N} |(\epsilon \zeta,\beta b)|_{H^{N}} \E(U)^{\frac{1}{2}} + \frac{\beta \lambda}{\epsilon} M_{N} \left\lvert \partial_{t} b \right\rvert_{L^{\infty}_{t} H^{N}_{\! X}}.
\end{equation*}
\end{prop}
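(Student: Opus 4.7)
The plan is to derive the identity by iterating the shape derivative formulas for $\G$ and $\GNN$ established in Appendix \ref{The Dirichlet-Neumann and the Neumann-Neumann operators}, which produce the Alinhac good unknown $\psia = \partial^{\alpha}\psi - \epsilon \w\, \partial^{\alpha}\zeta$ and the surface divergence term $\nabla \cdot (\zetaa\, \V)$ automatically. The key identities have the schematic form $d_{\zeta} \G(\psi) \cdot h = -\G(\epsilon h \w) - \epsilon \mu \nabla \cdot (h \V_\psi)$, with analogous formulas for the variation with respect to $b$ and for the two shape derivatives of $\GNN$. The crucial observation is that the vector $\V$ in the statement carries contributions from both $\psi$ and $\tfrac{\beta \lambda}{\epsilon} \partial_{t} b$, so the two pieces $\tfrac{1}{\mu} \G(\psi)$ and $\tfrac{\beta \lambda}{\epsilon} \GNN(\partial_{t}b)$ must be differentiated together in order to combine the two divergence contributions into the single term $-\epsilon \nabla \cdot (\zetaa \V)$ that appears in the conclusion.

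For $|\alpha|=1$, say $\alpha = e_{j}$, the shape derivative formulas applied to each of the two pieces (with Leibniz also distributing $\partial_{j}$ onto the argument $\partial_{t}b$ of $\GNN$) yield directly
\[
\partial_{j}\!\left( \tfrac{1}{\mu}\G(\psi) + \tfrac{\beta\lambda}{\epsilon}\GNN(\partial_{t}b)\right) = \tfrac{1}{\mu}\G(\partial_j\psi-\epsilon \w \partial_j\zeta) + \tfrac{\beta\lambda}{\epsilon}\GNN(\partial_{j}\partial_{t}b) - \epsilon \nabla \cdot (\partial_{j}\zeta\, \V) + R_{e_j},
\]
where $R_{e_j}$ collects the contributions generated when the shape derivative lands on $b$. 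For $|\alpha|\geq 2$ I proceed by induction, writing $\partial^{\alpha} = \partial_{j}\,\partial^{\alpha - e_{j}}$ and distributing the extra derivative via Leibniz: at each step the new derivative either lands on the active argument of $\G$ or $\GNN$ (producing a structurally identical term) or on $\zeta$ or $b$ (producing a shape derivative contribution). Only the term in which all derivatives accumulate on $\zeta$ produces a divergence $\nabla \cdot (\zetaa \V)$ whose top factor cannot be absorbed into a tame $L^{2}$ remainder; and this is a genuine obstruction only when $|\alpha|=N$, since for $|\alpha|<N$ the analogous contribution carries at most $N$ derivatives on $\zeta$ and can be estimated like the other error terms. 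This explains the indicator $\indicatrice_{\{|\alpha|=N\}}$.

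The remainder $R_{\alpha}$ is a finite sum of terms in which at least one factor is a derivative of $\zeta$ or $b$ of order strictly less than $|\alpha|$, multiplied by $\G$, $\GNN$, $\w$ or $\V$ applied to a less-differentiated argument. I estimate these in $L^{2}$ using the Moser-type product estimates of Appendix \ref{estimates} together with the tame elliptic bounds on $\G$, $\GNN$, $\w$ and $\V$ from Appendix \ref{The Dirichlet-Neumann and the Neumann-Neumann operators}, converting the $\nabla\psi$-norms that appear into $\E(U)^{1/2}$ via Proposition \ref{psi_controls}; the additive $\tfrac{\beta\lambda}{\epsilon}\left\lvert\partial_{t}b\right\rvert_{L^\infty_t H^{N}_X}$ term on the right-hand side arises both from this conversion and from the direct $\GNN$ contributions hidden inside $\w$ and $\V$. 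The regularity threshold $N \geq \max(t_{0},1)+3$ is precisely what ensures that the low-regularity factors are controlled in $L^{\infty}$ by Sobolev embedding and that the product estimates close with the announced dependence on $|(\epsilon \zeta,\beta b)|_{H^{N}}$. The main obstacle is the bookkeeping: because $\w$ and $\V$ themselves depend on $\G(\psi)$ and $\GNN(\partial_{t}b)$, iterated shape derivatives generate a cascade of nested terms, and one must carefully sort them by order to verify that no contribution other than the explicit $-\epsilon \nabla \cdot (\zetaa \V)$ at $|\alpha|=N$ concentrates more than $N$ derivatives on a single factor, and that every other term falls inside the tame $L^2$ bound on $R_\alpha$.
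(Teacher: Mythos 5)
Your proposal follows essentially the same route as the paper: expand $\partial^{\alpha}$ of the shape-dependent operators via iterated shape derivatives (Proposition \ref{differential_formula}), recognize that the $j=0$ term combined with the all-derivatives-on-$\zeta$ term produces $\frac{1}{\mu}\G(\psia)$ and the single divergence $-\epsilon\nabla\cdot(\zetaa\V)$ (non-tame only at $|\alpha|=N$), and estimate the remaining cross terms tamely, converting $\psi$-norms to energy via Proposition \ref{psi_controls}. The only thing your sketch defers is the part the paper spends most of its proof on, namely the borderline cases where $N-1$ derivatives concentrate on $\zeta$, $b$ or $\psi$: there the crude bounds on $d^{j}G_{\mu}$ and $d^{j}G^{N\!N}_{\mu}$ lose derivatives, and one must substitute the explicit shape-derivative formulas a second time (and re-insert the good unknown $\psi_{(\nu)}$ for $|\nu|=N-1$) — your closing "bookkeeping" remark correctly identifies this obstruction but does not carry it out.
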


\begin{proof}
\noindent We adapt and follow the proof of Proposition 4.5 in  \cite{Lannes_ww}. See also Proposition 6.4 in \cite{Iguchi_tsunami}. Using Proposition \ref{differential_formula}, we obtain

\begin{equation*}
\begin{aligned}
\partial^{\alpha} \hspace{-0.05cm} \left( \frac{1}{\mu} \G(\psi) \hspace{-0.05cm} + \hspace{-0.05cm} \frac{\lambda \beta}{\epsilon} \GNN(\partial_{t} b) \right) \hspace{-0.05cm} = &\frac{1}{\mu} \G(\psia) \hspace{-0.05cm} + \hspace{-0.05cm} \frac{\beta \lambda}{\epsilon} \GNN(\partial^{\alpha} \partial_{t} b) \\
&-\epsilon \indicatrice_{\{|\alpha| =  N \}} \nabla \cdot(\zetaa \V) + \beta \GNN \left(\nabla \cdot \left( \partial^{\alpha} b \, \Vs \right) \right)\\
& + \Ra,
\end{aligned}
\end{equation*}

\noindent where $\Vs = \Vs[\epsilon \zeta, \beta b](\psi, B)$ is defined in Equation \eqref{Vs} and $\Ra$ is a sum of terms of the form (we adopt the notation of Remark \ref{G_notation} in Appendix \ref{shape_derivatives_estimates})

\begin{equation*}
A_{j,\iota,\nu} := d^{j} \left(\frac{1}{\mu} G_{\mu}(\partial^{\nu} \psi) + \frac{\beta \lambda}{\epsilon} G_{\mu}^{N \! N}(\partial^{\nu} \partial_{t} b) \right).(\partial^{\iota^{1}} \zeta,...,\partial^{\iota^{j}} \zeta;\partial^{\iota^{1}} b,...,\partial^{\iota^{j}} b),
\end{equation*}

\noindent where $j$ is an integer and $\iota^{1},...,\iota^{j}$ and $\nu$ are multi-index, and 

\begin{equation*}
\underset{1 \leq l \leq j}{\sum}|\iota^{l}| + |\nu| = N,
\end{equation*}

\noindent with $(j,|\iota^{l_{0}}|,|\nu|) \neq (1,N,0) \text{ and } (0,0,N)$. Here $\iota^{l_{0}}$ is such that $\underset{1 \leq l \leq j}{\max} |\iota^{l}| = |\iota^{l_{0}}|$. In particular, $1 \leq |\iota^{l_{0}}| \leq N$. We distinguish several cases. 

\medskip
\medskip
\hspace{-0.8cm} \underline{\textbf{a)} $|\iota^{l_{0}}| + |\nu| \leq N - 2$ and $|\iota^{l_{0}}| \leq N-3$ or $|\iota^{l_{0}}| + |\nu| \leq N$, $|\iota^{l_{0}}| \leq N-3$ and $|\nu| \leq N-2$ :}
\medskip

\noindent Applying the second point of Theorem 3.28 in \cite{Lannes_ww} and the first point of Proposition \ref{controls_dGNN} with $s=\frac{1}{2}$ and $t_{0}=\min(t_{0},\frac{3}{2})$, we get that

\begin{equation*}
|A_{j,\iota,\nu}|_{L^{2}} \leq \hspace{-0.07cm} M_{N} \underset{l}{\prod} |(\epsilon \partial^{\iota^{l}} \zeta,\beta \partial^{\iota^{l}} b)|_{H^{3}} \hspace{-0.1cm} \left\lbrack |\mathfrak{P} \partial^{\nu} \psi|_{H^{1}} + \frac{\beta \lambda}{\epsilon} |\partial^{\nu} \partial_{t} b|_{L^{2}} \right\rbrack,
\end{equation*}

\noindent and the result follows by Proposition \ref{psi_controls}.

\medskip
\medskip
\hspace{-0.8cm} \underline{\textbf{b)} $|\iota^{l_{0}}| = N-2$ and $|\nu| = 0 \text{ ,} 1 \text{ or } 2$ :}
\medskip
\medskip

\noindent We apply the fourth point of Theorem 3.28 in \cite{Lannes_ww} and the second point of Proposition \ref{controls_dGNN} with $s=\frac{1}{2}$ and $t_{0}=\max(t_{0},1)$, 

\begin{equation*}
|A_{j,\iota,\nu}|_{L^{2}} \hspace{-0.07cm} \leq \hspace{-0.07cm} M_{N} |(\epsilon \partial^{\iota^{l_{0}}} \zeta,\beta \partial^{\iota^{l_{0}}} b)|_{H^{\frac{3}{2}}} \hspace{-0.05cm} \underset{l \neq l_{0}}{\prod} |(\epsilon \partial^{\iota^{l}} \zeta,\beta \partial^{\iota^{l}} b)|_{H^{N-2}} \hspace{-0.1cm} \left\lbrack \hspace{-0.05cm} |\mathfrak{P} \partial^{\nu} \psi|_{H^{N-2}} \hspace{-0.05cm}+\hspace{-0.05cm} \frac{\beta \lambda}{\epsilon} |\partial^{\nu} \partial_{t} b|_{H^{N-2}} \hspace{-0.05cm} \right\rbrack \hspace{-0.07cm}.
\end{equation*}

\noindent Then, we get the result thanks to Proposition \ref{psi_controls}.

\medskip
\medskip
\hspace{-0.8cm} \underline{\textbf{c)} $\iota^{1} = \iota$ with $|\iota|=N-1$, $|\nu|=j=1$ :}
\medskip
\medskip

\noindent We proceed as in Proposition 4.5 in \cite{Lannes_ww}, using Theorem 3.15 in \cite{Lannes_ww} and Propositions \ref{differential_formula}, \ref{controls_GNN}, \ref{controls_w}.

\medskip
\medskip
\hspace{-0.8cm} \underline{\textbf{d)} $|\iota^{l_{0}}| = N-1$ and $|\nu| = 0$ :}
\medskip
\medskip

\noindent Here $j=2$ and $|\iota^{2}|=1$. For instance we consider that $l_{0} = 1$ and $|\iota^{2}| = 1$. Using the second inequality of Proposition \ref{controls_dGNN} we have

\begin{equation*}
\left\lvert d^{2} G_{\mu}^{N \! N}(\partial_{t} b).(\partial^{\iota^{1}} \zeta, \partial^{\iota^{2}} \zeta; \partial^{\iota^{1}} b, \partial^{\iota^{2}} b) \right\rvert_{L^{2}} \hspace{-0.05cm} \leq M_{N} \hspace{-0.07cm} \left\lvert \left(\epsilon \partial^{\iota^{1}} \zeta, \beta \partial^{\iota^{1}} b \right) \right\vert_{H^{1}} \hspace{-0.07cm} \left\lvert \left(\epsilon \partial^{\iota^{2}} \zeta, \beta \partial^{\iota^{2}} b \right) \right\lvert_{H^{2}} \hspace{-0.07cm} \left\lvert \partial_{t} b \right\lvert_{H^{2}} \hspace{-0.05cm}.
\end{equation*}

\noindent Furthermore, using two times Proposition \ref{differential_formula}, we get

\begin{align*}
 \frac{1}{\mu} d^{2} G_{\mu}(\psi).(\partial^{\iota^{1}} \zeta, \partial^{\iota^{2}} \zeta; \partial^{\iota^{1}} b, \partial^{\iota^{2}} b)& = -\frac{\epsilon}{\sqrt{\mu}} d\G \left(\partial^{\iota^{1}} \zeta  \frac{1}{\sqrt{\mu}} \w(\psi,0) \right).(\partial^{\iota^{2}} \zeta,0)\\
&\;\; - \frac{\epsilon}{\sqrt{\mu}} \G \left(\partial^{\iota^{1}} \zeta \frac{1}{\sqrt{\mu}} d \w(\psi,0).(\partial^{\iota^{2}} \zeta,0) \right)\\
&\;\; - \epsilon \nabla \cdot \left( \partial^{\iota^{1}} \zeta d \V(\psi,0). (\partial^{\iota^{2}} \zeta,0) \right)\\
&\;\; + \beta d \GNN \left(\partial^{\iota^{1}} b \; \Vs(\psi,0) \right).(0,\partial^{\iota^{2}} b)\\
&\;\; + \beta \GNN \left(\partial^{\iota^{1}} b \; d \Vs(\psi,0).(0,\partial^{\iota^{2}} b) \right). \\
\end{align*}

\noindent The control follows from the first inequality of Theorem 3.15 and Proposition 4.4 in \cite{Lannes_ww}, and Propositions \ref{controls_dVs}, \ref{controls_w} and \ref{controls_ws}.

\medskip
\medskip
\medskip
\hspace{-0.8cm} \underline{\textbf{e)} $|\nu| = N-1$ and $|\iota^{l_{0}}| = 1$ :}
\medskip
\medskip
\medskip

\noindent Here, $j = 1$. It is clear that

\begin{equation*}
\left\lvert \frac{\beta \lambda}{\epsilon} dG^{N \!N}_{\mu}(\partial^{\nu} \partial_{t} b).(\partial^{\iota^{1}} \zeta ;\partial^{\iota^{1}} b) \right\rvert_{L^{2}} \leq \frac{\beta \lambda}{\epsilon} M_{N} \left\lvert \partial_{t} b \right\rvert_{H^{N}}.
\end{equation*}

\noindent Furthermore,

\begin{align*}
\frac{1}{\mu} dG_{\mu}(\partial^{\nu} \psi).(\partial^{\iota^{1}} \zeta ;\partial^{\iota^{1}} b) = \frac{1}{\mu} dG_{\mu}( \psi_{(\nu)}).(\partial^{\iota^{1}} \zeta ;\partial^{\iota^{1}} b) + \frac{1}{\sqrt{\mu}} dG_{\mu} \left(\frac{\epsilon}{\sqrt{\mu}} \w \partial^{\nu} \zeta \right).(\partial^{\iota^{1}} \zeta ;\partial^{\iota^{1}} b).
\end{align*}

\noindent Then, using Theorem 3.15 in \cite{Lannes_ww} and Proposition \ref{psi_controls}, we get the result.
\end{proof}

\noindent This Proposition enables to quasilinearize the first equation of the water waves equations. For the second equation, it is the purpose of the following proposition.

\begin{prop}\label{linearization_2}
Let $T>0$, $N \geq \max(t_{0},1) + 3$, $b \in W^{1,\infty}(\R^{+};H^{N}(\RD))$ and $U \in E_{T}^{N}$, such that $\zeta$ and $b$ satisfy \eqref{nonvanishing} for all $0 \leq t \leq T$. We assume also that $\mu$ satisfies \eqref{parameters_constraints}. Then, for all $\alpha \in \mathbb{N}^{d}$, $1 \leq |\alpha| \leq N$, we have,

\begin{equation*}
\begin{aligned}
\partial^{\alpha} \! \left[ \frac{\epsilon}{2} |\nabla \psi|^{2} - \frac{\epsilon}{2 \mu} (1 + \epsilon^{2} \mu |\nabla \zeta|^{2}) \w^{2} \right] \! = \! \epsilon \V \cdot(\nabla \psia &+ \epsilon \partial^{\alpha} \zeta \nabla \w) - \frac{\epsilon}{\mu} \w \, \partial^{\alpha} \! G_{\mu}(\psi) \\
& - \beta \lambda \w \, \partial^{\alpha} \! G^{N \! N}_{\mu}(\partial_{t} b) +  \Sa.\\
\end{aligned}
\end{equation*}

\noindent Furthermore $\Sa$ is controlled

\begin{equation*}
|\mathfrak{P} \Sa|_{L^{2}} \leq  \epsilon M_{N} \E(U) + C \left( M_{N}, \frac{\beta \lambda}{\epsilon} \left\lvert \partial_{t} b \right\rvert_{L^{\infty}_{t} H^{N}_{\! X}} \right) \epsilon \E(U)^{\frac{1}{2}} + M_{N} \left( \frac{\beta \lambda}{\sqrt{\epsilon}} \left\lvert \partial_{t} b \right\rvert_{L^{\infty}_{t} H^{N}_{\! X}} \right)^{2}.
\end{equation*}
\end{prop}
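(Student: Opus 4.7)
The plan mirrors the proof of Proposition \ref{linearization_1}: establish an exact algebraic identity at order one, propagate it to order $|\alpha|$ by Leibniz's rule, and control the lower-order products via the tame estimates and shape-derivative bounds of the appendices. Concretely, set $E := \frac{\epsilon}{2}|\nabla\psi|^{2} - \frac{\epsilon}{2\mu}(1+\epsilon^{2}\mu|\nabla\zeta|^{2})\w^{2}$. At $|\alpha|=1$ I would differentiate $E$ with respect to a single coordinate, using the defining relation
\[
(1+\epsilon^{2}\mu|\nabla\zeta|^{2})\,\w = \G(\psi) + \mu\tfrac{\beta\lambda}{\epsilon}\GNN(\partial_{t}b) + \epsilon\mu\,\nabla\zeta\cdot\nabla\psi
\]
to eliminate $\partial_{j}\w$, together with $\V = \nabla\psi - \epsilon\w\nabla\zeta$ and the algebraic identity $\nabla\psia + \epsilon\,\partial^{\alpha}\zeta\,\nabla\w = \partial^{\alpha}\nabla\psi - \epsilon\w\,\partial^{\alpha}\nabla\zeta$. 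The cubic $\epsilon^{3}\w^{2}\nabla\zeta\cdot\partial_{j}(\nabla\zeta)$ contribution and two $\epsilon^{2}\w\,\nabla\zeta\cdot\partial_{j}(\nabla\psi)$ terms of opposite sign cancel, and one recovers exactly the claimed identity with $\Sa = 0$.

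For $|\alpha|\ge 2$, I would apply Leibniz separately to the two quadratic blocks of $E$, writing
\[
\partial^{\alpha}|\nabla\psi|^{2} = 2\,\nabla\psi\cdot\partial^{\alpha}\nabla\psi + R_{1},\quad \partial^{\alpha}\bigl[(1+\epsilon^{2}\mu|\nabla\zeta|^{2})\w^{2}\bigr] = 2(1+\epsilon^{2}\mu|\nabla\zeta|^{2})\w\,\partial^{\alpha}\w + 2\epsilon^{2}\mu\w^{2}\,\nabla\zeta\cdot\partial^{\alpha}\nabla\zeta + R_{2},
\]
with $R_{1}$ and $R_{2}$ collecting strict splittings of $\alpha$, and then applying $\partial^{\alpha}$ to the $(1+\epsilon^{2}\mu|\nabla\zeta|^{2})\w$-relation to convert the leading $(1+\epsilon^{2}\mu|\nabla\zeta|^{2})\w\,\partial^{\alpha}\w$ term into $\partial^{\alpha}\G(\psi)$, $\partial^{\alpha}\GNN(\partial_{t}b)$ and $\partial^{\alpha}(\nabla\zeta\cdot\nabla\psi)$ contributions modulo commutators. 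The same cancellations as at order one then reproduce the three explicit terms of the claim, and $\Sa$ consists of $R_{1}$, $R_{2}$, and the commutators generated by the elimination of $\partial^{\alpha}\w$.

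Each contribution to $\Sa$ is a product of two or three factors taken from $\nabla\psi$, $\nabla\zeta$, $\w$, $\G(\psi)$, $\GNN(\partial_{t}b)$, with derivative orders summing to at most $N$ and at least two of them strictly positive, so no factor carries more than $N-1$ derivatives. I would bound $|\mathfrak{P}\Sa|_{L^{2}}$ by distributing the half-derivative associated with $\mathfrak{P}$ across each product via the tame product estimates of Appendix \ref{estimates} (Proposition \ref{product_estimate1}) and then controlling the factors with Proposition \ref{psi_controls} for $\mathfrak{P}\psi$, Proposition \ref{controls_w} for $\w$ and $\V$, and the Dirichlet--Neumann / Neumann--Neumann estimates of Appendix \ref{The Dirichlet-Neumann and the Neumann-Neumann operators} for $\G(\psi)$ and $\GNN(\partial_{t}b)$. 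The explicit $\epsilon$-prefactor of $E$ supplies the $\epsilon$ in the first two contributions of the target bound, while products of two factors each carrying the $\frac{\beta\lambda}{\epsilon}\partial_{t}b$-part of $\w$ (or of $\GNN$) produce the purely bottom term $\bigl(\tfrac{\beta\lambda}{\sqrt{\epsilon}}|\partial_{t}b|_{H^{N}}\bigr)^{2}$. The delicate case is when a single factor carries $N-1$ derivatives: a crude product rule on $\partial^{N-1}\G(\psi)$ would lose regularity, so, as in Proposition \ref{linearization_1}, one must rewrite this top derivative through the good unknown $\psia$ and use the high-regularity shape-derivative estimates (Proposition \ref{controls_dGNN} and its analogue for $\G$) before invoking the $\mathfrak{P}$-multiplier.
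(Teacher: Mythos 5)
Your proposal is correct and follows essentially the same route as the paper, whose own ``proof'' is only a pointer to Proposition 4.10 in \cite{Lannes_ww} and Proposition 6.4 in \cite{Iguchi_tsunami}: there too one applies Leibniz's rule to the two quadratic blocks, eliminates $\partial^{\alpha}\w$ through the defining relation $(1+\epsilon^{2}\mu|\nabla\zeta|^{2})\w = \G(\psi)+\mu\tfrac{\beta\lambda}{\epsilon}\GNN(\partial_{t}b)+\epsilon\mu\nabla\zeta\cdot\nabla\psi$, recombines the leading terms via $\nabla\psia+\epsilon\partial^{\alpha}\zeta\nabla\w=\partial^{\alpha}\nabla\psi-\epsilon\w\partial^{\alpha}\nabla\zeta$, and estimates the remainder with the good unknowns, Proposition \ref{controls_w} and the shape-derivative bounds. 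You correctly identify the only delicate point (factors carrying $N-1$ derivatives, handled through $\psia$ and Propositions \ref{psi_controls} and \ref{controls_dGNN}), so the sketch is sound.
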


\begin{proof}
\noindent The proof of this Proposition is similar to the proof of Proposition 4.10 in \cite{Lannes_ww} expect we use Propositions \ref{controls_w} and \ref{differential_formula}. See also Proposition 6.4 in \cite{Iguchi_tsunami}.
\end{proof}

\noindent Thanks to this linearization, we can "quasilinarize" equations \eqref{water_waves_equations}. It is the purpose of the next proposition. Let us introduce, the Rayleigh-Taylor coefficient

\begin{equation}\label{rt_coefficient}
\begin{aligned}
\rt := \rt(U, \beta b) = &1 + \epsilon \partial_{t} \left(\w[\epsilon \zeta, \beta b] \left( \psi, \frac{\beta \lambda}{\epsilon} \partial_{t} b \right) \right)\\
& + \epsilon^{2} \V[\epsilon \zeta, \beta b] \left( \psi, \frac{\beta \lambda}{\epsilon} \partial_{t} b \right) \cdot \nabla \left( \w[\epsilon \zeta, \beta b] \left( \psi, \frac{\beta \lambda}{\epsilon} \partial_{t} b \right) \right).
\end{aligned}
\end{equation} 

\noindent This quantity plays an important role. We also introduce two new operators, 

\begin{equation}
  \mathcal{A}[U,\beta b] := \begin{pmatrix}
\hspace{0.02cm} 0 \hspace{-0.02cm} & -\frac{1}{\mu} G_{\mu} [\epsilon \zeta, \beta b] \\
\rt(U, \beta b) & 0
\end{pmatrix}
\end{equation}

\noindent and

\begin{equation}
  \mathcal{B}[U, \beta b] := \begin{pmatrix}
\epsilon \nabla \cdot ( \bullet \hspace{0.01cm} \V)  & 0 \\
0 & \epsilon \V \cdot \nabla 
\end{pmatrix}.
\end{equation}

\noindent We can now quasilinearize the water waves equations. We use the same arguments as in Proposition 4.10 in \cite{Lannes_ww} and part \textbf{6} in \cite{Iguchi_tsunami}. Notice that we give here a precise estimate with respect to $\partial_{t} b$ and $P$ of the residuals $\Ra$ and $\Sa$ and that the minimal value of $N$, regularity of $U$, is smaller than in Proposition 4.10 in \cite{Lannes_ww}. 

\begin{prop}\label{quasilinearization}
Let $T > 0$, $N \geq \max(t_{0},1) + 3$, $b \in W^{2,\infty}(\R^{+}; H^{N}(\RD))$, $P \in L^{\infty}(\R^{+}; \Hdot^{N+1}(\RD))$ and $U \in E_{T}^{N}$  satisfies \eqref{nonvanishing} for all $0 \leq t \leq T$ and solving \eqref{water_waves_equations}. We assume also that $\mu$ satisfies \eqref{parameters_constraints}. Then, for all $\alpha \in \mathbb{N}^{d}$, $1 \leq |\alpha| \leq N$, we have,

\begin{equation}\label{quasilinear_system}
\begin{aligned}
\partial_{t} \Ua + \mathcal{A}[U, \beta b](\Ua) + \indicatrice_{\{|\alpha| =  N \}} \mathcal{B}[U, \beta b](\Ua) = & \left(\frac{\lambda \beta}{\epsilon} G^{N \! N}_{\mu}[0,0](\partial^{\alpha} \partial_{t} b),-  \partial^{\alpha} P \right)^{t}\\
&+ \left( \widetilde{\Ra},\Sa \right)^{t}.
\end{aligned}
\end{equation}

\noindent Furthermore, $\widetilde{\Ra}$ and $\Sa$ satisfy

\begin{equation*}
\left\{
\begin{aligned}
&|\widetilde{\Ra}|_{L^{2}} \leq M_{N} |(\epsilon \zeta,\beta b)|_{H^{N}} \E(U)^{\frac{1}{2}} + \frac{\beta \lambda}{\epsilon} M_{N} \left\lvert \partial_{t} b \right\rvert_{L^{\infty}_{t} H^{N}_{\! X}},\\
&|\mathfrak{P} \Sa|_{L^{2}} \leq  \epsilon M_{N} \E(U) + C \left( \hspace{-0.15cm} M_{N}, \frac{\beta \lambda}{\epsilon} \left\lvert \partial_{t} b \right\rvert_{L^{\infty}_{t} H^{N}_{\! X}} \hspace{-0.06cm} \right) \epsilon \E(U)^{\frac{1}{2}} + M_{N} \left( \frac{\beta \lambda}{\sqrt{\epsilon}} \left\lvert \partial_{t} b \right\rvert_{L^{\infty}_{t} H^{N}_{\! X}} \right)^{2} \hspace{-0.2cm}.
\end{aligned}
\right.
\end{equation*}
\end{prop}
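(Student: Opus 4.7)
The plan is to differentiate \eqref{water_waves_equations} by $\partial^{\alpha}$ for $1 \leq |\alpha| \leq N$ and rewrite each resulting equation in terms of the Alinhac good unknowns $\Ua = (\zetaa, \psia)^{t}$. The two structural identities of Propositions \ref{linearization_1}--\ref{linearization_2} do most of the work: they permute $\partial^{\alpha}$ with the nonlinearities of \eqref{water_waves_equations} up to controlled residuals. The terms that then survive explicitly on the left-hand side are exactly those assembled into $\mathcal{A}[U,\beta b]\Ua$ and $\indicatrice_{\{|\alpha|=N\}} \mathcal{B}[U,\beta b]\Ua$; everything else is collected into $\widetilde{\Ra}$ and $\Sa$, and bounded using the residual estimates already established in those propositions.

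For the kinematic equation, differentiating and applying Proposition \ref{linearization_1} directly gives
\begin{equation*}
\partial_{t}\zetaa - \tfrac{1}{\mu}\G(\psia) + \epsilon\, \indicatrice_{\{|\alpha|=N\}}\, \nabla \cdot (\zetaa \V) = \tfrac{\beta\lambda}{\epsilon} \GNN(\partial^{\alpha}\partial_{t} b) + \Ra.
\end{equation*}
To recover $G^{N \! N}_{\mu}[0,0]$ on the right as in the statement, I split $\GNN = G^{N \! N}_{\mu}[0,0] + (\GNN - G^{N \! N}_{\mu}[0,0])$ and push the difference into $\widetilde{\Ra}$; by the shape-variation estimates of Proposition \ref{controls_dGNN}, this correction is bounded by $\tfrac{\beta \lambda}{\epsilon} M_{N} |(\epsilon \zeta, \beta b)|_{H^{N}}\, |\partial_{t} b|_{H^{N}}$, which fits the required bound. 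For the Bernoulli equation, after rewriting its nonlinear block as $\tfrac{\epsilon}{2}|\nabla\psi|^{2} - \tfrac{\epsilon}{2\mu}(1+\epsilon^{2}\mu|\nabla\zeta|^{2})\w^{2}$ by the definition of $\w$, applying $\partial^{\alpha}$ and Proposition \ref{linearization_2}, the decisive step is the chain rule
\begin{equation*}
\partial_{t}\psia = \partial^{\alpha}\partial_{t}\psi - \epsilon(\partial_{t}\w)\zetaa - \epsilon\w\, \partial^{\alpha}\partial_{t}\zeta,
\end{equation*}
into which I substitute $\partial^{\alpha}\partial_{t}\zeta$ from the first equation. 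The last term then cancels exactly the $-\tfrac{\epsilon}{\mu}\w\,\partial^{\alpha}\G(\psi) - \beta\lambda\,\w\,\partial^{\alpha}\GNN(\partial_{t} b)$ generated by Proposition \ref{linearization_2}, and the coefficient of $\zetaa$ reassembles into $1 + \epsilon\partial_{t}\w + \epsilon^{2}\V\cdot\nabla\w$, which is precisely $\rt$ as defined in \eqref{rt_coefficient}. For $|\alpha| < N$, the surviving $\epsilon\V\cdot\nabla\psia$ term (and the analogous $\nabla\cdot(\zetaa\V)$ coming from the first row) is absorbed into the residual, using Proposition \ref{psi_controls} to control $\nabla\psia$ and $\zetaa$ at strictly subcritical order by $\E(U)^{1/2}$.

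The bounds on $\widetilde{\Ra}$ and $\mathfrak{P}\Sa$ then follow by summing the residual estimates of Propositions \ref{linearization_1}--\ref{linearization_2}, the shape-variation correction for the Neumann-Neumann swap, and routine product estimates for the absorbed convective terms when $|\alpha| < N$. I expect the main difficulty to be purely combinatorial: verifying that the cancellation triggered by substituting $\partial^{\alpha}\partial_{t}\zeta$ is exact (so that no uncontrolled derivative of order $N{+}1$ on $\psi$ survives), that the coefficient of $\zetaa$ reassembles exactly into the full expression of $\rt$ as stated in \eqref{rt_coefficient}, and that the $\partial_{t}b$ dependence is tracked throughout so as to obtain the sharp bound $\tfrac{\beta\lambda}{\epsilon} M_{N} |\partial_{t} b|_{L^{\infty}_{t} H^{N}_{\! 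X}}$ rather than a constant of unspecified size; this is precisely the refinement that distinguishes the present statement from the corresponding results in \cite{Iguchi_tsunami} and \cite{Lannes_ww}.
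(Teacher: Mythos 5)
Your proposal is correct and follows essentially the same route as the paper: Proposition \ref{linearization_1} plus the shape-derivative bound of Proposition \ref{controls_dGNN} to swap $\GNN$ for $G^{N\!N}_{\mu}[0,0]$ in the first row, and for the second row Proposition \ref{linearization_2} combined with the substitution of $\partial_{t}\partial^{\alpha}\zeta$ from the kinematic equation, which produces the exact cancellation and reassembles the Rayleigh--Taylor coefficient $\rt$. The residual bounds are obtained exactly as you describe, so no further comment is needed.
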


\begin{proof}
\noindent Thanks to Proposition \ref{controls_dGNN}, we get

\begin{align*}
\left\vert \GNN(\partial^{\alpha} \partial_{t} b) - G_{\mu}^{N \! N}[0,0](\partial^{\alpha} \partial_{t} b) \right\rvert_{L^{2}} &\hspace{-0.05cm} \leq \hspace{-0.05cm} \int_{0}^{1} \hspace{-0.05cm} \left\lvert dG_{\mu}^{N \! N}[z\epsilon \zeta, z \beta b](\partial^{\alpha} \partial_{t} b).(\zeta, b) \right\rvert_{L^{2}} dz\\
&\leq M_{N} |(\epsilon \zeta, \beta b)|_{H^{N}} \left\lvert \partial^{\alpha} \partial_{t} b \right\rvert_{L^{\infty}_{t} H^{N}_{\! X}}.
\end{align*}

\noindent Then, denoting $\widetilde{\Ra} = \Ra + \GNN(\partial^{\alpha} \partial_{t} b) - G_{\mu}^{N \! N}[0,0](\partial^{\alpha} \partial_{t} b)$, we obtain the first equation thanks to Proposition \ref{linearization_1}. For the second equation, using Proposition \ref{linearization_2} and the first equation of the water waves problem, we have 

\begin{align*}
\partial_{t} \partial^{\alpha} \psi &= -\partial^{\alpha} \zeta - \epsilon \V \cdot(\nabla \psia + \epsilon \partial^{\alpha} \zeta \nabla \w) + \frac{\epsilon}{\mu} \w \, \partial^{\alpha} \! G_{\mu}(\psi) + \hspace{-0.05cm} \beta \w \, \partial^{\alpha} \! G^{N \! N}_{\mu}(\partial_{t}b) - \hspace{-0.05cm} \partial^{\alpha} P + \hspace{-0.05cm} \Sa\\
&= -\partial^{\alpha} \zeta - \epsilon \V \cdot(\nabla \psia + \epsilon \partial^{\alpha} \zeta \nabla \w) + \epsilon \w \partial_{t} \partial^{\alpha} \zeta - \partial^{\alpha} P + \Sa\\
&= -\partial^{\alpha} \zeta (1 + \epsilon \partial_{t} \w + \epsilon^{2} \V \cdot \nabla \w) - \epsilon \V \cdot \nabla \psia + \epsilon \partial_{t} (\w \partial^{\alpha} \zeta) - \partial^{\alpha} P + \Sa\\
&= -\rt \partial^{\alpha} \zeta - \epsilon \V \cdot \nabla \psia + \epsilon \partial_{t} (\w \partial^{\alpha} \zeta) - \partial^{\alpha} P + \Sa,
\end{align*}

\noindent and the result follows.

\end{proof}

\medskip

\noindent In the case of a constant pressure at the surface and a fixed bottom, it is well-known that system \eqref{quasilinear_system} is  symmetrizable if 

\begin{equation}\label{rt_constraints}
  \exists \, \mathfrak{a}_{min} > 0 \text{ ,  } \rt(U, \beta b) \geq \mathfrak{a}_{\text{min}}.
\end{equation} 

\noindent Then, we introduce the symmetrizer

\begin{equation}
 \mathcal{S}[U, \beta b] := \begin{pmatrix}
\rt(U, \beta b) & 0 \\
\hspace{0.02cm} 0 \hspace{0.02cm} & \frac{1}{\mu} \G
\end{pmatrix}.
\end{equation}

\noindent This symmetrization has an associated energy  

\begin{equation}
\begin{aligned}
& \mathcal{F}^{\alpha}(U) = \frac{1}{2} \left(\mathcal{S}[U, \beta b](\Ua),\Ua \right)_{L^{2}}, \text{ if } \alpha \neq 0 , \\
& \mathcal{F}^{0}(U) = \frac{1}{2} |\zeta|^{2}_{H^{\frac{3}{2}}} + \frac{1}{2} \left( \Lambda^{\frac{3}{2}} \psi,\frac{1}{\mu} \G(\Lambda^{\frac{3}{2}} \psi) \right)_{L^{2}},\\
&\mathcal{F}^{[N]}(U) = \underset{|\alpha| \leq N}{\sum} \mathcal{F}^{\alpha}(U).
\end{aligned}
\end{equation}

\noindent As in Lemma 4.27 in \cite{Lannes_ww}, it can be shown that $\mathcal{F}^{[N]}$ and $\mathcal{E}^{[N]}$ are equivalent in the following sense.

\begin{prop}\label{energy_equivalence}
Let $T > 0$, $N \in \mathbb{N}$, $U \in E^{N}_{T}$ satisfying \eqref{nonvanishing} and \eqref{rt_constraints} for all $0 \leq t \leq T$. Then, for all $0 \leq k \leq N$, $\mathcal{F}^{[k]}$ is comparable to $\mathcal{E}^{k}$

\begin{equation}
\frac{1}{|\rt(U, \beta b)|_{L^{\infty}} + M_{N}} \mathcal{F}^{[k]}[U,b] \leq \mathcal{E}^{k}(U) \leq \left(M_{N} + \frac{1}{\mathfrak{a}_{\text{min}}} \right) \mathcal{F}^{[k]}[U,b].
\end{equation}
\end{prop}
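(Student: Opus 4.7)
The plan is to compare $\mathcal{F}^\alpha(U)$ to its natural components termwise, then sum over $|\alpha|\leq k$. The two key inputs are:

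(i) quantitative coercivity and continuity of $\tfrac{1}{\mu}\G$ with respect to the multiplier $\mathfrak{P}$ --- proved in the Appendix on the Dirichlet--Neumann operator --- namely
\begin{equation*}
\frac{1}{c(M_N)}\,|\mathfrak{P} u|_{L^{2}}^{2} \;\leq\; \frac{1}{\mu}\bigl(\G(u),u\bigr)_{L^{2}} \;\leq\; c(M_N)\,|\mathfrak{P} u|_{L^{2}}^{2};
\end{equation*}

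(ii) the pointwise bound $\mathfrak{a}_{\min}\leq \rt(U,\beta b)\leq |\rt|_{L^\infty}$, with $|\rt|_{L^\infty}$ controlled by an $M_N$-type constant obtained from the defining formula \eqref{rt_coefficient} and the estimates of Proposition \ref{controls_w} on $\w$ and $\V$.

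Given these two inputs, the proof unfolds as follows. For each multi-index $\alpha$ with $1\leq |\alpha|\leq k$ one has
\begin{equation*}
\mathcal{F}^\alpha(U)=\frac{1}{2}\int_{\RD} \rt\,|\zetaa|^{2}\,dX \;+\; \frac{1}{2\mu}\bigl(\G(\psia),\psia\bigr)_{L^{2}},
\end{equation*}
so combining (i) with $u=\psia$ and (ii) gives the two-sided estimate
\begin{equation*}
\frac{1}{c(M_N)+|\rt|_{L^\infty}}\mathcal{F}^\alpha(U) \;\leq\; |\zetaa|_{L^{2}}^{2}+|\mathfrak{P}\psia|_{L^{2}}^{2}\;\leq\; \Bigl(c(M_N)+\tfrac{1}{\mathfrak{a}_{\min}}\Bigr)\mathcal{F}^\alpha(U),
\end{equation*}
which is the required form. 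For $\alpha=0$, the same use of (i) with $u=\Lambda^{3/2}\psi$ converts $\frac{1}{\mu}(\Lambda^{3/2}\psi,\G(\Lambda^{3/2}\psi))_{L^{2}}$ into an equivalent of $|\mathfrak{P}\psi|_{H^{3/2}}^{2}$, while $|\zeta|_{H^{3/2}}^{2}$ is absorbed into $\sum_{|\alpha|\leq k}|\zetaa|_{L^{2}}^{2}$ as soon as $k\geq 2$ (if $k$ is smaller, the same argument goes through at the cost of paying an extra $M_N$ via the a priori regularity of $\zeta$ built into $M_N$). Summing over $\alpha$ yields the stated equivalence of $\mathcal{F}^{[k]}$ and $\mathcal{E}^{k}$.

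The main obstacle will be item (ii): producing a clean $L^\infty$ bound on the Rayleigh--Taylor coefficient. Since $\rt$ contains the time derivative $\partial_t\w$, one cannot estimate it directly; one must use the water waves equations \eqref{water_waves_equations} to trade $\partial_t$ against spatial derivatives and then invoke the shape-derivative/product estimates for $\w$ and $\V$. The rest of the argument is essentially bookkeeping: tracking which constants take the form $M_N$, which the form $|\rt|_{L^\infty}$, and which the form $1/\mathfrak{a}_{\min}$, in order to arrive at the precise inequality displayed in the statement.
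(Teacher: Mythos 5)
Your proof is correct and follows essentially the same route as the paper, which simply defers to Lemma 4.27 of \cite{Lannes_ww}: the two-sided estimate $\frac{1}{M}|\mathfrak{P}u|_{L^{2}}^{2}\leq\frac{1}{\mu}(\G(u),u)_{L^{2}}\leq M|\mathfrak{P}u|_{L^{2}}^{2}$ applied to $\psia$ (and to $\Lambda^{3/2}\psi$ for $\alpha=0$), combined with $\mathfrak{a}_{\min}\leq\rt\leq|\rt|_{L^{\infty}}$, then summation over $\alpha$. The only remark worth making is that your anticipated ``main obstacle'' is vacuous for this proposition: the constant on the left of the stated inequality contains $|\rt(U,\beta b)|_{L^{\infty}}$ explicitly, so no $M_{N}$-type bound on the Rayleigh--Taylor coefficient (and hence no use of the equations to eliminate $\partial_{t}\w$) is needed here; that control is established separately in Proposition \ref{controls_rt}.
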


\subsection{Local existence}

\noindent The water water equations can be written as follow :

\begin{equation}
\partial_{t} U + \mathcal{N}(U) = (0,- P)^{t},
\end{equation}

\noindent with $\mathcal{N}(U) = (\mathcal{N}_{1}(U),\mathcal{N}_{2}(U))^{t}$ and 

\begin{equation}
\begin{aligned}
&\mathcal{N}_{1}(U) := -\frac{1}{\mu} \G(\psi) - \frac{\beta \lambda}{\epsilon} \GNN (\partial_{t} b), \\
&\mathcal{N}_{2}(U) := \zeta + \frac{\epsilon}{2} |\nabla \psi|^{2} - \frac{\epsilon}{2 \mu} \left(1+\epsilon^{2} \mu |\nabla \zeta|^{2} \right)  \left( \underline{w} [\epsilon \zeta, \beta b] \left( \psi, \frac{\beta \lambda}{\epsilon} \partial_{t} b \right) \right)^{2}.
\end{aligned}
\end{equation}

\noindent According to our quasilinearization, we need that $\rt$ be a positive real number. Therefore, we have to express $\rt$ without partial derivative with respect to $t$, particularly when $t=0$. It is easy to check that (we adopt the notation of Remark \ref{G_notation} in Appendix \ref{shape_derivatives_estimates})

\begin{equation}\label{rt_expression}
\begin{aligned}
\rt(U, \beta b) &= 1 + \epsilon^{2} \V[\epsilon \zeta, \beta b] \left(\psi, \frac{\beta \lambda}{\epsilon} \partial_{t} b \right) \cdot \nabla \left[ \w[\epsilon \zeta, \beta b] \left(\psi, \frac{\beta \lambda}{\epsilon} \partial_{t} b \right) \right] \\
&+\epsilon d \underline{w} \left( \psi,  \frac{\beta \lambda}{\epsilon} \partial_{t} b \right). \left( - \mathcal{N}_{1}(U), \partial_{t} b \right) + \epsilon \w[\epsilon \zeta, \beta b] \left(- P - \mathcal{N}_{2}(U), \frac{\beta \lambda}{\epsilon} \partial_{t}^{2} b \right).
\end{aligned}
\end{equation}

\noindent The following Proposition gives estimates for $\rt(U, \beta b)$. It is adapted from Proposition 6.6 in \cite{Iguchi_tsunami}.

\begin{prop}\label{controls_rt}
Let $T > 0$, $t_{0}> \frac{d}{2}$, $N \geq \max(t_{0},1) + 3$, $(\zeta,\psi) \in E_{T}^{N}$ is a solution of the water waves equations \eqref{water_waves_equations}, $P \in L^{\infty}(\R^{+}; \Hdot^{N+1}(\RD))$ and $b \in W^{2,\infty}(\R^{+};H^{N}(\RD))$, such that Condition \eqref{nonvanishing} is satisfied. We assume also that $\mu$ satisfies \eqref{parameters_constraints}. Then, for all $0 \leq t \leq T$,

\begin{align*}
| \rt(U, \beta b) -1 |_{H^{t_{0}}} \hspace{-0.05cm} & \leq \hspace{-0.1cm} C \hspace{-0.1cm} \left( \hspace{-0.1cm} M_{N}, \max( \beta \lambda, \beta) \left\lvert \partial_{t} b \right\rvert_{L^{\infty}_{t} H_{\! X}^{N}} \hspace{-0.1cm}, \epsilon \E(U)^{\frac{1}{2}} \right) \epsilon \E(U)^{\frac{1}{2}} \\
& + \epsilon M_{N} \hspace{-0.1cm} \left( \hspace{-0.1cm} \left\lvert \nabla P \right\rvert_{L^{\infty}_{t} H_{\! X}^{N}} \hspace{-0.1cm} + \hspace{-0.1cm} \frac{\beta \lambda}{\epsilon} \hspace{-0.05cm} \left\lvert \partial_{t}^{2} b \right\rvert_{L^{\infty}_{t} H_{\! X}^{N}} \hspace{-0.1cm} \right) \hspace{-0.1cm}.
\end{align*}

\noindent Furthermore, if $\partial_{t}^{3} b \in L^{\infty}(\R^{+}; H^{N}(\RD))$ and $\partial_{t} P \in L^{\infty}(\R^{+}; \Hdot^{N}(\RD))$, then,

\begin{align*}
\left\lvert \partial_{t}  \hspace{-0.05cm} \left( \rt(U, \beta b)  \right) \right\rvert_{H^{t_{0}}}  \hspace{-0.05cm} & \leq  \hspace{-0.1cm} C \hspace{-0.1cm} \left( \hspace{-0.1cm} M_{N}, \hspace{-0.05cm}  \max( \beta \lambda, \beta)\left\lvert \partial_{t} b \right\rvert_{W^{1,\infty}_{t} H_{\! X}^{N}} , \left\lvert \nabla P \right\rvert_{L^{\infty}_{t} H_{\! X}^{N}}, \epsilon \E(U)^{\frac{1}{2}} \right) \epsilon \E(U)^{\frac{1}{2}} \\
&+  \hspace{-0.05cm} \epsilon C \hspace{-0.1cm} \left(\hspace{-0.1cm} M_{N}, \hspace{-0.05cm}  \max( \beta \lambda, \beta) \left\lvert \partial_{t} b \right\rvert_{L^{\infty}_{t} H_{\! X}^{N}}  \hspace{-0.05cm} \right) \hspace{-0.1cm} \left( \hspace{-0.1cm} \left\lvert \nabla P \right\rvert_{W^{1,\infty}_{t} H_{\! X}^{N}}  \hspace{-0.1cm} +  \hspace{-0.1cm} \frac{\beta \lambda}{\epsilon} \left\lvert \partial_{t}^{2} b \right\rvert_{W^{1,\infty}_{t} H_{\! X}^{N}} \hspace{-0.1cm} \right) \hspace{-0.1cm}.
\end{align*}
\end{prop}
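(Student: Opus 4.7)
The natural starting point is the identity \eqref{rt_expression}, which already rewrites $\mathfrak{a}(U,\beta b)-1$ without \emph{any} time derivative of the unknowns $(\zeta,\psi)$: all occurrences of $\partial_t\zeta$, $\partial_t\psi$ have been replaced, via the water waves equations \eqref{water_waves_equations}, by $-\mathcal{N}_1(U)$ and $-P-\mathcal{N}_2(U)$, and the only remaining time derivatives are those of the \emph{given} bottom $b$. The plan is therefore to estimate each of the three terms on the right-hand side of \eqref{rt_expression} in $H^{t_0}$, using that $H^{t_0}$ is a Banach algebra (since $t_0>d/2$) together with the control estimates on $\underline{w}$, $\underline{V}$ and their shape derivatives (Proposition \ref{controls_w} and Propositions \ref{controls_GNN}, \ref{controls_dVs}, \ref{controls_ws} from the appendices) and Proposition \ref{psi_controls}.

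More precisely, I would proceed term by term. For the purely spatial term $\epsilon^2 \underline{V}\cdot\nabla\underline{w}$, the product estimate in $H^{t_0}$ plus the control $|\underline{w}|_{H^{t_0+1}}+|\underline{V}|_{H^{t_0+1}}\le M_N(\mathcal{E}^N(U)^{1/2}+\frac{\beta\lambda}{\epsilon}|\partial_t b|_{H^N})$ gives a contribution of the form $C(M_N,\max(\beta\lambda,\beta)|\partial_t b|_{L^\infty_t H^N_X},\epsilon\mathcal{E}^N(U)^{1/2})\epsilon\mathcal{E}^N(U)^{1/2}$. For the shape-derivative term $\epsilon\, d\underline{w}(\psi,\tfrac{\beta\lambda}{\epsilon}\partial_t b).(-\mathcal{N}_1(U),\partial_t b)$, one splits $\mathcal{N}_1(U)=-\tfrac1\mu G_\mu(\psi)-\tfrac{\beta\lambda}{\epsilon}G_\mu^{NN}(\partial_t b)$ and applies the shape-derivative bound for $d\underline{w}$: this yields again a term bounded by $\epsilon$ times a polynomial in $\mathcal{E}^N(U)^{1/2}$ and $\tfrac{\beta\lambda}{\epsilon}|\partial_t b|_{L^\infty_t H^N_X}$. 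Finally the last term $\epsilon\,\underline{w}[\epsilon\zeta,\beta b](-P-\mathcal{N}_2(U),\tfrac{\beta\lambda}{\epsilon}\partial_t^2 b)$ is estimated by the linearity of $\underline{w}$ in its arguments (modulo $M_N$ prefactors from the geometry) and the explicit form of $\mathcal{N}_2(U)$: the $\zeta$ and $|\nabla\psi|^2$ and $\underline{w}^2$ pieces give contributions controlled by $\epsilon M_N\mathcal{E}^N(U)^{1/2}$, while $P$ and $\tfrac{\beta\lambda}{\epsilon}\partial_t^2 b$ give exactly the last term $\epsilon M_N(|\nabla P|_{L^\infty_t H^N_X}+\tfrac{\beta\lambda}{\epsilon}|\partial_t^2 b|_{L^\infty_t H^N_X})$ in the announced estimate. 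Summing these three contributions yields the first bound of the proposition.

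For the second estimate, apply $\partial_t$ to the identity \eqref{rt_expression} and estimate the result in $H^{t_0}$. The key observation is that \eqref{rt_expression} contains no $\partial_t(\zeta,\psi)$, so one time derivative produces only (i) spatial time derivatives of $b$ and $P$ of at most order $\partial_t^3 b$ and $\partial_t P$ (which justifies the extra regularity assumptions), and (ii) time derivatives of $\zeta$ and $\psi$ which, whenever they appear, can be \emph{replaced once again} by $-\mathcal{N}_1(U)$ and $-P-\mathcal{N}_2(U)$ via \eqref{water_waves_equations}. One then reapplies the same ingredients (Sobolev algebra, $\underline{w}$/$\underline{V}$ estimates, shape-derivative bounds for $\underline{w}$ and now also for $d\underline{w}$) to control each resulting factor. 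Collecting all the terms produces the claimed bound.

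The main obstacle, in my view, is purely bookkeeping: making sure that the differentiation of \eqref{rt_expression} in time really does not lose regularity and that every time derivative of $\zeta$ or $\psi$ that appears after applying the chain rule is re-expressed through the water waves equations, so that only $\partial_t b$, $\partial_t^2 b$, $\partial_t^3 b$, $P$, $\partial_t P$ (and the spatial quantities) remain. Once this substitution is performed systematically, the estimates reduce to the algebra property in $H^{t_0}$ together with the now-standard quantitative control of $\underline{w}$, $\underline{V}$, $\widetilde{\underline{w}}$, $\widetilde{\underline{V}}$ and their first shape derivatives, exactly as in the proof of Proposition 6.6 in \cite{Iguchi_tsunami}, but with the precise tracking of $\partial_t b$, $\partial_t^2 b$, $P$ and $\nabla P$ that the statement requires.
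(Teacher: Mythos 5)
Your proposal follows exactly the paper's route: it starts from the identity \eqref{rt_expression}, estimates the three terms ($\epsilon^2\underline{V}\cdot\nabla\underline{w}$, the shape-derivative term applied to $(-\mathcal{N}_1(U),\partial_t b)$, and $\underline{w}$ applied to $(-P-\mathcal{N}_2(U),\tfrac{\beta\lambda}{\epsilon}\partial_t^2 b)$) via the product estimate in $H^{t_0}$, Propositions \ref{controls_w}, \ref{controls_dGNN} and \ref{psi_controls}, and treats the time-differentiated estimate by the same substitution mechanism. This matches the paper's (terser) argument, so the proposal is correct and essentially identical in approach.
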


\begin{proof}
\noindent Using the first point of Proposition \ref{controls_w} and Product estimate \ref{product_estimate1} we have

\begin{equation*}
\left\lvert \V[\epsilon \zeta, \beta b] \hspace{-0.05cm} \left(\epsilon \psi, \beta \lambda \partial_{t} b \right) \hspace{-0.05cm} \cdot \hspace{-0.05cm} \nabla \left[ \w[\epsilon \zeta, \beta b] \left(\epsilon \psi, \beta \lambda \partial_{t} b \right) \right] \right\rvert_{H^{t_{0}}} \hspace{-0.05cm} \leq \hspace{-0.05cm} M_{N} \left( \hspace{-0.05cm} \left\lvert \mathfrak{P} \epsilon \psi \right\rvert_{H^{t_{0}+\frac{1}{2}}} \hspace{-0.05cm} + \hspace{-0.05cm} \beta \lambda \left\lvert \partial_{t} b \right\rvert_{L^{\infty}_{t} H_{\! X}^{t_{0}}} \right)^{\hspace{-0.05cm} 2} \hspace{-0.2cm}.
\end{equation*}

\noindent Furthermore, thanks to the first point of Proposition \ref{controls_dGNN} and the first point of Theorem 3.28 in \cite{Lannes_ww} we obtain

\begin{equation*}
\left\lvert \hspace{-0.03cm} \epsilon d \underline{w} \hspace{-0.08cm} \left( \hspace{-0.1cm} \psi, \hspace{-0.05cm}  \frac{\beta \lambda}{\epsilon} \partial_{t} b \hspace{-0.09cm} \right) \hspace{-0.12cm} . \hspace{-0.07cm} \left( - \mathcal{N}_{1}(\hspace{-0.02cm} U \hspace{-0.02cm}), \hspace{-0.05cm} \partial_{t} b \right) \hspace{-0.03cm} \right\rvert_{H^{t_{0}}} \hspace{-0.6cm} \leq \hspace{-0.12cm} M_{N} \hspace{-0.1cm} \left\lvert \left( \epsilon \mathcal{N}_{1}(U), \beta  \partial_{t} b \right) \right\rvert_{H^{t_{0}+1}} \hspace{-0.1cm} \left(\hspace{-0.1cm} \left\lvert \mathfrak{P} \epsilon \psi \right\rvert_{H^{t_{0}+\frac{1}{2}}} \hspace{-0.1cm}+\hspace{-0.1cm}  \beta \lambda \left\lvert \partial_{t} b \right\rvert_{L^{\infty}_{t} H_{\! X}^{t_{0}}} \hspace{-0.1cm} \right) \hspace{-0.12cm} .
\end{equation*}

\noindent Then, the first inequality follows easily from Proposition \ref{controls_w}, Proposition \ref{psi_controls} and Product estimate \ref{product_estimate1}. The second inequality can be proved similarly.
\end{proof}

\noindent We can now prove Theorems \ref{existence_uniqueness} and \ref{long_time_existence}. We recall that $\delta := \max(\epsilon,\beta^{2})$.

\begin{proof}
\noindent We slice up this proof in three parts. First we regularize and symmetrize the equations, then we find some energy estimates and finally we conclude by convergence. We only give the energy estimates in this paper and a carefully study of the nonlinearities of the water waves equations is done. We refer to the proof of Theorem 4.16 in \cite{Lannes_ww} for the regularization, the convergence and the uniqueness (see also part 7 in \cite{Iguchi_tsunami}). For Theorem \ref{existence_uniqueness} (respectively Theorem \ref{long_time_existence}), we assume that $U$ solves \eqref{water_waves_equations} on $\left[0,T \right]$ $\left( \text{ respectively on } \left[0, \frac{T}{ \sqrt{\delta}} \right] \right)$ and that \eqref{nonvanishing} and \eqref{rt_constraints} are satisfied for $\frac{h_{min}}{2}$ and $\frac{\mathfrak{a}_{min}}{2}$ on $\left[0,T \right]$ $\left( \text{ respectively on } \left[0, \frac{T}{ \sqrt{\delta}} \right] \right)$ for some $T > 0$.

\medskip
\medskip
\hspace{-0.8cm} \underline{\textbf{a) $|\alpha|=0$}, The 0 - energy}
\medskip
\medskip

\noindent We proceed as in Subsection 4.3.4.3 in \cite{Lannes_ww} and part 6 in \cite{Iguchi_tsunami}. We have

\begin{equation}
\begin{aligned}
\frac{d}{dt} \mathcal{F}^{0}(U) = &\frac{1}{2 \mu} \left(d \G (\Lambda^{\frac{3}{2}} \psi).(\partial_{t} \zeta, \partial_{t} b), \Lambda^{\frac{3}{2}} \psi \right)_{\hspace{-0.05cm} L^{2}} \hspace{-0.1cm} + \frac{\beta \lambda}{\epsilon} \left(\Lambda^{\frac{3}{2}} \GNN(\partial_{t} b), \Lambda^{\frac{3}{2}} \zeta \right )_{\hspace{-0.05cm} L^{2}} \\
& - \left(\Lambda^{\frac{3}{2}} \left(\mathcal{N}_{2}(U) - \zeta \right),\frac{1}{\mu} \G(\Lambda^{\frac{3}{2}} \psi) \right)_{\hspace{-0.05cm} L^{2}} \hspace{-0.3cm} - \left(\frac{1}{\mu} \G(\Lambda^{\frac{3}{2}} \psi), \Lambda^{\frac{3}{2}} P \right)_{\hspace{-0.05cm} L^{2}}.
\end{aligned}
\end{equation}

\noindent We have to control all the term in the r.h.s.

\medskip
\medskip
\noindent $\star$ Control of $\frac{\beta \lambda}{\epsilon} \left(\Lambda^{\frac{3}{2}} \GNN(\partial_{t} b), \Lambda^{\frac{3}{2}} \zeta \right)_{\hspace{-0.05cm} L^{2}}$.
\medskip
\medskip

\noindent Using Proposition \ref{controls_GNN}, we get

\begin{equation*}
\left\lvert \frac{\beta \lambda}{\epsilon} \left(\Lambda^{\frac{3}{2}} \GNN(\partial_{t} b), \Lambda^{\frac{3}{2}} \zeta \right)_{\hspace{-0.05cm} L^{2}} \right\lvert \leq M_{N} \frac{\beta \lambda}{\epsilon} \left\lvert \partial_{t} b \right\rvert_{L^{\infty}_{t} H_{\! X}^{N}} \E(U)^{\frac{1}{2}}.
\end{equation*}

\medskip
\medskip
\noindent $\star$ Control of $\left(\Lambda^{\frac{3}{2}} \left(\mathcal{N}_{2}(U) - \zeta \right),\frac{1}{\mu} \G(\Lambda^{\frac{3}{2}} \psi) \right)_{\hspace{-0.05cm} L^{2}}$.
\medskip
\medskip

\noindent Using Proposition \ref{psi_controls} and Proposition \ref{controls_w}, we get

\begin{align*}
\left\lvert \hspace{-0.05cm} \left(\hspace{-0.1cm} \Lambda^{\frac{3}{2}} \hspace{-0.1cm} \left(\mathcal{N}_{2}(U) \hspace{-0.05cm} - \hspace{-0.05cm} \zeta \right) \hspace{-0.05cm} , \hspace{-0.05cm} \frac{1}{\mu} \G(\Lambda^{\frac{3}{2}} \psi) \hspace{-0.05cm} \right)_{\hspace{-0.06cm} L^{2}} \hspace{-0.06cm} \right\lvert \hspace{-0.05cm} &\leq \hspace{-0.05cm} \left\lvert \mathcal{N}_{2}(U) - \zeta \right\rvert_{H^{\frac{3}{2}}} \left\lvert \frac{1}{\mu} \G(\Lambda^{\frac{3}{2}} \psi) \right\rvert_{L^{2}},\\
&\leq  \hspace{-0.05cm} \epsilon M_{N} \E(U)^{\frac{3}{2}} \hspace{-0.1cm} + M_{N} \left(\frac{\beta \lambda}{\epsilon} \left\lvert \partial_{t} b \right\vert_{L^{\infty}_{t} H_{\! X}^{N}}\right)^{2} \hspace{-0.2cm} \epsilon \E(U).
\end{align*}

\medskip
\medskip
\noindent $\star$ Control of $\left(\frac{1}{\mu} \G(\Lambda^{\frac{3}{2}} \psi), \Lambda^{\frac{3}{2}} P \right)_{\hspace{-0.05cm} L^{2}}$.
\medskip
\medskip

\noindent We get, using Remark 3.13 in \cite{Lannes_ww},

\begin{equation*}
 \left\lvert \left(\frac{1}{\mu} \G(\Lambda^{\frac{3}{2}} \psi), \Lambda^{\frac{3}{2}} P \right)_{\hspace{-0.05cm} L^{2}}  \right\lvert \leq  M_{N} E^{N}(U)^{\frac{1}{2}} |\nabla P|_{L^{\infty}_{t} H_{\! X}^{N}}.
\end{equation*}

\medskip
\medskip
\noindent $\star$ Control of $\frac{1}{2 \mu} \left(d \G (\Lambda^{\frac{3}{2}} \psi).(\partial_{t} \zeta, \partial_{t} b), \Lambda^{\frac{3}{2}} \psi \right)_{\hspace{-0.05cm} L^{2}}$.
\medskip
\medskip

\noindent Using Proposition 3.29 in \cite{Lannes_ww}, the second point of Theorem 3.15 in \cite{Lannes_ww}, Proposition \ref{controls_GNN} and Proposition \ref{psi_controls} , we get

\begin{align*}
 \left\lvert\frac{1}{\mu} \hspace{-0.1cm} \left( \hspace{-0.05cm} d \G (\Lambda^{\frac{3}{2}} \psi).(\partial_{t} \zeta, \partial_{t} b), \Lambda^{\frac{3}{2}} \psi \hspace{-0.08cm} \right)_{\hspace{-0.07cm} L^{2}} \hspace{-0.05cm} \right\lvert \hspace{-0.05cm} &\leq  M_{N} \left\lvert \left(\epsilon \mathcal{N}_{1}(U), \beta \partial_{t} b \right) \right\rvert_{H^{N-2}} \left\lvert \mathfrak{P} \psi \right\rvert_{H^{\frac{3}{2}}}^{2}\\
&\leq \hspace{-0.05cm} M_{N} \epsilon \E(U)^{\frac{3}{2}} + \hspace{-0.05cm} \max(\beta, \beta \lambda) \left\lvert \partial_{t} b \right\vert_{L^{\infty}_{t} H_{\! X}^{N}} \E(U).
\end{align*}

\noindent Finally, gathering all the previous estimates, we get that

\begin{equation}\label{zero_control}
\begin{aligned}
\frac{d}{dt} \mathcal{F}^{0}(U) &\leq \epsilon M_{N} \E (U)^{\frac{3}{2}} + \hspace{-0.01cm} M_{N} C \left(\rho_{\max}, \left\lvert \partial_{t} b \right\vert_{L^{\infty}_{t} H_{\! X}^{N}} \right) \max(\epsilon,\beta) \E(U)\\
&+ M_{N} \sqrt{E^{N}(U)} \left( |\nabla P|_{L^{\infty}_{t} H^{N}_{\! X}} + \frac{\beta \lambda}{\epsilon} \left\lvert \partial_{t} b \right\rvert_{L^{\infty}_{t} H^{N}_{\! X}} \right).
\end{aligned}
\end{equation}

\medskip
\medskip
\hspace{-0.8cm} \underline{\textbf{b) $|\alpha| > 0$}, the higher orders energies}
\medskip
\medskip

\noindent \noindent We proceed as in Subsection 4.3.4.3 in \cite{Lannes_ww} and part 6 in \cite{Iguchi_tsunami}. A simple computation gives

\begin{equation}
\begin{aligned}
\frac{d}{dt} \hspace{-0.05cm} &\left( \mathcal{F}^{\alpha}(U) \right) \hspace{-0.05cm} = - \epsilon \indicatrice_{\{|\alpha| =  N \}} \left( \rt \zetaa, \nabla \cdot \left(  \zetaa  \V \right) \right)_{\hspace{-0.05cm} L^{2}} + \left( \rt \zetaa, \frac{\beta \lambda}{\epsilon} G^{N \! N}_{\mu} [0,0](\partial_{t} \partial^{\alpha} b) + \widetilde{\Ra} \right)_{\hspace{-0.05cm} L^{2}} \\
&- \epsilon \indicatrice_{\{|\alpha| =  N \}} \left( \frac{1}{\mu} \G(\psia), [\V \cdot \nabla \psia]\right)_{\hspace{-0.05cm} L^{2}} \hspace{-0.1cm} + \left( \frac{1}{\mu} \G(\psia),  \Sa - \partial^{\alpha} P \right)_{\hspace{-0.05cm} L^{2}}\\
&+ \frac{1}{2} \left( \partial_{t} \rt \zetaa, \zetaa \right)_{\hspace{-0.05cm} L^{2}} + \frac{1}{2} \left( \frac{1}{\mu} d \G(\psia).(\partial_{t} \zeta, \partial_{t} b), \psia \right)_{\hspace{-0.05cm} L^{2}}.
\end{aligned}
\end{equation}

\noindent We have to control all the term in the r.h.s.

\medskip
\medskip
\noindent $\star$ Control of $\left( \partial_{t} \rt \zetaa, \zetaa \right)_{\hspace{-0.05cm} L^{2}}$.
\medskip
\medskip

\noindent Using the second point of Proposition \ref{controls_rt} we get

\begin{align*}
\left\lvert \hspace{-0.07cm} \left( \partial_{t} \rt \zetaa, \zetaa \right)_{\hspace{-0.05cm} L^{2}} \hspace{-0.05cm} \right\rvert \hspace{-0.07cm} &\leq  M_{N} C\left( \hspace{-0.1cm} \rho_{\max}, \left\lvert \partial_{t} b \right\rvert_{W^{1,\infty}_{t} H_{\! X}^{N}} , \left\lvert \nabla P \right\rvert_{L^{\infty}_{t} H_{\! X}^{N}}, \epsilon \E(U)^{\frac{1}{2}} \right)  \epsilon \E(U)^{\frac{3}{2}}\\
&+  \hspace{-0.05cm} C \hspace{-0.1cm} \left(\hspace{-0.1cm} M_{N}, \hspace{-0.05cm} \beta \lambda \left\lvert \partial_{t} b \right\rvert_{L^{\infty}_{t} H_{\! X}^{N}}  \hspace{-0.05cm} \right) \hspace{-0.1cm} \left( \hspace{-0.1cm} \left\lvert \nabla P \right\rvert_{W^{1,\infty}_{t} H_{\! X}^{N}}  \hspace{-0.1cm} +  \hspace{-0.1cm} \frac{\beta \lambda}{\epsilon} \left\lvert \partial_{t}^{2} b \right\rvert_{W^{1,\infty}_{t} H_{\! X}^{N}} \hspace{-0.1cm} \right) \epsilon \E(U).
\end{align*}

\medskip
\medskip
\noindent $\star$ Control of $\left( \rt \zetaa, \frac{\beta \lambda}{\epsilon} G^{N \! N}_{\mu} [0,0](\partial_{t} \partial^{\alpha} b) \right)_{\hspace{-0.05cm} L^{2}}$.
\medskip
\medskip

\noindent We get, thanks to Proposition \ref{controls_rt} and \ref{controls_GNN},

\begin{equation*}
\left\lvert \hspace{-0.1cm} \left( \hspace{-0.14cm} \rt \zetaa \hspace{-0.03cm}, \hspace{-0.08cm} \frac{\beta \hspace{-0.03cm} \lambda}{\epsilon} \hspace{-0.03cm} G^{N \! N}_{\mu} \hspace{-0.08cm} [\hspace{-0.03cm} 0 \hspace{-0.03cm}, \hspace{-0.09cm} 0 \hspace{-0.03cm}] \hspace{-0.06cm} (\hspace{-0.08cm} \partial_{\hspace{-0.02cm} t} \hspace{-0.03cm} \partial^{\hspace{-0.02cm} \alpha} \hspace{-0.03cm} b \hspace{-0.04cm}) \hspace{-0.14cm} \right)_{\hspace{-0.19cm} L^{2}} \hspace{-0.07cm} \right\rvert \hspace{-0.13cm} \leq  \hspace{-0.13cm} C \hspace{-0.12cm} \left( \hspace{-0.12cm} \rho_{\max} \hspace{-0.03cm},\hspace{-0.07cm} \mu_{\max} \hspace{-0.03cm}, \hspace{-0.08cm} \left\lvert \hspace{-0.03cm} b \hspace{-0.03cm} \right\rvert_{W^{2,\hspace{-0.03cm}\infty}_{t} \hspace{-0.08cm} H_{\! X}^{N}} \hspace{-0.12cm} , \hspace{-0.08cm} \left\lvert \hspace{-0.04cm} \nabla \hspace{-0.06cm} P \hspace{-0.03cm} \right\rvert_{L^{\infty}_{t} \hspace{-0.08cm} H_{\! X}^{N}} \hspace{-0.13cm} , \hspace{-0.07cm} \epsilon \hspace{-0.03cm} \E \hspace{-0.09cm} (\hspace{-0.03cm} U \hspace{-0.03cm})^{\frac{1}{2}} \hspace{-0.09cm} \right) \hspace{-0.15cm} \frac{\beta \hspace{-0.03cm} \lambda}{\epsilon} \hspace{-0.08cm} \left\lvert \hspace{-0.04cm} \partial_{\hspace{-0.01cm} t} \hspace{-0.03cm} b \hspace{-0.03cm} \right\rvert_{L^{\infty}_{t} \hspace{-0.08cm} H_{\! X}^{N}} \E(U)^{\frac{1}{2}} \hspace{-0.03cm}.
\end{equation*}

\medskip
\medskip
\noindent $\star$ Controls of $\epsilon \indicatrice_{\{|\alpha| =  N \}} \left( \rt \zetaa,  \nabla \cdot \left(  \zetaa  \V \right) \right)_{\hspace{-0.05cm} L^{2}}$.
\medskip
\medskip

\noindent Inspired by Subsection 4.3.4.3 in \cite{Lannes_ww}, a simple computation gives

\begin{align*}
\left\lvert \hspace{-0.03cm}  \epsilon \hspace{-0.1cm} \left( \rt \zetaa, \hspace{-0.1cm} \nabla \hspace{-0.1cm} \cdot \hspace{-0.1cm} \left( \zetaa \V \right) \right)_{\hspace{-0.06cm} L^{2}} \hspace{-0.03cm} \right\rvert \hspace{-0.03cm} &= \left\lvert \epsilon \left( \rt \zetaa \nabla \cdot \V, \zetaa \right)_{\hspace{-0.03cm} L^{2}} \hspace{-0.03cm}  \right\rvert\\
&\leq \hspace{-0.1cm} C \hspace{-0.1cm} \left(\hspace{-0.07cm} \rho_{\max}, \hspace{-0.05cm} \mu_{\max}, \hspace{-0.04cm} \left\lvert b \right\rvert_{W^{2,\infty}_{t} H_{\! X}^{N}} \hspace{-0.07cm} , \hspace{-0.07cm} \left\lvert \nabla P \right\rvert_{L^{\infty}_{t} H_{\! X}^{N}} \hspace{-0.07cm} , \hspace{-0.05cm} \delta \E(U) \hspace{-0.05cm} \right) \hspace{-0.1cm} \epsilon \hspace{-0.1cm} \left[\hspace{-0.03cm} \E(U)^{\frac{3}{2}} \hspace{-0.07cm} + \hspace{-0.07cm} \E(U) \hspace{-0.03cm} \right] \hspace{-0.13cm} .
\end{align*}

\noindent using Proposition \ref{controls_rt} and Proposition \ref{controls_w}.

\medskip
\noindent $\star$ Controls of $\left(\hspace{-0.04cm} \frac{1}{\mu} \G(\psia), \hspace{-0.02cm} \Sa \hspace{-0.05cm} - \hspace{-0.05cm} \partial^{\alpha} \hspace{-0.02cm} P \hspace{-0.03cm} \right)_{\hspace{-0.05cm} L^{2}} \hspace{-0.03cm}$,  $\hspace{-0.03cm} \left( \hspace{-0.03cm} \frac{1}{\mu} d \G(\psia).(\partial_{t} \zeta, \partial_{t} b), \psia \right)_{\hspace{-0.07cm} L^{2}} \hspace{-0.1cm}$ and $\left( \rt \zetaa, \widetilde{\Ra}\right)_{\hspace{-0.05cm} L^{2}}$.
\medskip

\noindent We can use the same arguments as in the third and the fourth point of part a) using 
Propositions \ref{quasilinearization} and \ref{controls_rt}.

\medskip
\noindent $\star$ Control of $\epsilon \left( \frac{1}{\mu} \G(\psia), [\V \cdot \nabla \psia]\right)_{\hspace{-0.05cm} L^{2}}$.
\medskip
\medskip

\noindent We refer to the Subsection 4.3.4.3 and Proposition 3.30 in \cite{Lannes_ww} for this control.

\medskip
\medskip
\noindent Gathering the previous estimates and using Proposition \ref{energy_equivalence}, we obtain that

\begin{equation}\label{alpha_control}
\small{
\begin{aligned}
\frac{d}{dt} \mathcal{F}^{N} \hspace{-0.1cm} (\hspace{-0.03cm} U \hspace{-0.03cm}) \leq  C \hspace{-0.06cm} \Big( \hspace{-0.07cm} \rho_{\max},  \hspace{-0.06cm} \frac{1}{h_{\text{min}}}, &\hspace{-0.01cm} \mu_{\max},\hspace{-0.06cm}  \frac{1}{\mathfrak{a}_{\text{min}}}, \hspace{-0.05cm} \left\lvert \hspace{-0.02cm} b \hspace{-0.02cm} \right\rvert_{W^{3,\infty}_{t} \hspace{-0.03cm} H_{\! X}^{N}} \hspace{-0.08cm} , \hspace{-0.06cm} \left\lvert \hspace{-0.03cm} \nabla P \hspace{-0.03cm} \right\rvert_{W^{1,\infty}_{t} \hspace{-0.03cm} H_{\! X}^{N}} \hspace{-0.08cm}, \hspace{-0.03cm} \epsilon  \mathcal{F}^{N} \hspace{-0.1cm} (U)^{\frac{1}{2}} \hspace{-0.1cm} \Big) \hspace{-0.08cm} \times\\
& \left(\hspace{-0.08cm} \epsilon \mathcal{F}^{N} \hspace{-0.1cm} (\hspace{-0.03cm} U \hspace{-0.03cm})^{\frac{3}{2}} \hspace{-0.08cm} + \hspace{-0.08cm} \max(\epsilon, \beta) \mathcal{F}^{N} \hspace{-0.1cm} (\hspace{-0.03cm} U \hspace{-0.03cm}) \hspace{-0.08cm} + \hspace{-0.08cm} \mathcal{F}^{N} \hspace{-0.1cm} (\hspace{-0.03cm} U \hspace{-0.03cm})^{\frac{1}{2}} \hspace{-0.13cm} \left[ \frac{\beta \lambda}{\epsilon} \left\lvert \partial_{t} b \right\rvert_{L^{\infty}_{t} H_{\! X}^{N}} \hspace{-0.08cm} + \hspace{-0.08cm} \left\lvert \nabla P \right\rvert_{L^{\infty}_{t} H_{\! X}^{N}}  \right] \right) \hspace{-0.08cm}.\\
\end{aligned}
}
\end{equation}

\noindent Then, we easily prove Theorem \ref{existence_uniqueness}, using the same arguments as Subsection 4.3.4.4 in \cite{Lannes_ww}. Furthermore, for $\alpha \in \left[0, \frac{1}{2} \right]$, defining $\widetilde{ \mathcal{F}^{N}}(U)(\tau) =  \delta^{2 \alpha} \mathcal{F}^{N}(U) \left( \frac{\tau}{\delta^{\alpha}} \right)$, we get

\begin{equation*}
\frac{d}{d\tau} \widetilde{\mathcal{F}^{N}}(U) \leq C \hspace{-0.1cm} \left(\hspace{-0.05cm} \rho_{\max}, \hspace{-0.05cm} \mu_{\max}, \hspace{-0.05cm} \frac{1}{\mathfrak{a}_{\text{min}}},  \hspace{-0.05cm} \frac{1}{h_{\text{min}}}, \hspace{-0.05cm} \left\lvert b \right\rvert_{W^{3,\infty}_{t} H_{\! X}^{N}} , \left\lvert \nabla P \right\rvert_{W^{1,\infty}_{t} H_{\! X}^{N}}, \widetilde{\mathcal{F}^{N}}(U) \right).
\end{equation*}

\noindent We can also apply the same arguments as Subsection 4.3.4.4 in \cite{Lannes_ww} and Theorem \ref{long_time_existence} follows.
\end{proof}

\subsection{Hamiltonian system}

\noindent In this section we prove that the water waves problem \eqref{water_waves_equations} is a Hamiltonian system in the Sobolev framework. This extends the classical result of Zakharov (\cite{Zakharov}) to the case where the bottom is moving and the atmospheric pressure is not constant (see also \cite{Craig_Guyenne_Hamiltonian}). In the case of a moving bottom, P. Guyenne and D. P. Nicholls already pointed out it in \cite{numeric_Guyenne_moving_bott} \footnote{It seems that there is a typo in their hamiltonian; "$-\zeta v$" should read "$+\zeta v$".}. We have to introduce the  Dirichlet-Dirichlet and the Neumann-Dirichlet operators 

\begin{equation}
\left\{
\begin{array}{l}
\GDD(\psi) = \left(\Phi^{S}\right)_{|z=-1+\beta b},\\
\GND(\partial_{t} b) = \left(\Phi^{B} \right)_{|z=-1+\beta b},
\end{array}
\right.
\end{equation}

\noindent where $\Phi^{S}$ is defined in \eqref{Laplace_surf} and $\Phi^{B}$ is defined in \eqref{Laplace_bott}.  We postpone the study of these operators to appendix \ref{The Dirichlet-Neumann and the Neumann-Neumann operators}.

\begin{remark}\label{value_Psi_boundary}
\noindent If we denote $\Phi := \Phi^{S} + \frac{\beta \lambda \mu}{\epsilon} \Phi^{B}$, $\Phi$ satisfies

 \begin{equation*}
  \left\{
  \begin{aligned}
   & \Delta^{\mu}_{\! X,z} \Phi = 0 \text{ in } \Omega_{t} \text{ ,} \\
   &\Phi_{|z=\epsilon \zeta} = \psi \text{ , } \sqrt{1+\beta^{2} |\nabla b|^{2}} \partial_{\textbf{n}} \Phi_{|z=-1+\beta b} = \frac{\beta \lambda \mu}{\epsilon} \partial_{t} b.
  \end{aligned}
  \right.
\end{equation*} 

\noindent Then 

\begin{equation}
\sqrt{1+\epsilon^{2} |\nabla \zeta|^{2}} \partial_{\textbf{n}} \Phi_{|z=\epsilon \zeta} = G_{\mu}[\epsilon \zeta, \beta b](\psi) + \frac{\beta \mu \lambda}{\epsilon} G_{\mu}^{N \! N}[\epsilon \zeta, \beta b](\partial_{t} b),
\end{equation}

\noindent and

\begin{equation}
\Phi_{|z=-1+\beta b} = G_{\mu}^{D \! D}[\epsilon \zeta, \beta b](\psi) + \frac{\beta \mu \lambda}{\epsilon} G_{\mu}^{N \! D}[\epsilon \zeta, \beta b](\partial_{t} b).
\end{equation}

\end{remark}

\begin{thm}
\noindent Let $T > 0$, $t_{0} > \frac{d}{2}$, $\zeta, b \in \mathcal{C}^{0}([0,T]; H^{t_{0}+1}(\RD))$, $\psi \in \mathcal{C}^{0}([0,T]; H^{2}(\RD))$, $\partial_{t} b \in \mathcal{C}^{0}([0,T]; H^{1}(\RD))$, $P \in \mathcal{C}^{0}([0,T]; L^{2}(\RD))$ such that $(\zeta, \psi)$ is a solution of  \eqref{water_waves_equations}. Define $H = H(\zeta,\psi) = \mathcal{T}(\zeta,\psi) + \mathcal{U}(\zeta,\psi)$, where $\mathcal{T}(\zeta,\psi) = \mathcal{T}$ is

\begin{equation}\label{kinetic_energy}
\mathcal{T} \hspace{-0.1cm} = \hspace{-0.1cm} \frac{1}{2 \mu} \hspace{-0.1cm} \int_{\Omega_{t}} \hspace{-0.1cm} \left\lvert \nabla^{\mu}_{\! X,z} \hspace{-0.1cm} \left( \Phi^{S} + \frac{\beta \lambda \mu}{\epsilon} \Phi^{B} \right) \right\rvert^{2} \hspace{-0.15cm} + \hspace{-0.1cm} \int_{\RD} \frac{\beta \lambda}{\epsilon} \partial_{t} b \left( \hspace{-0.1cm} G^{D \! D}_{\mu}[\epsilon \zeta,\beta b](\psi) \hspace{-0.05cm} + \hspace{-0.05cm} \frac{\beta \lambda \mu}{\epsilon} G^{N \!D}_{\mu}[\epsilon \zeta,\beta b](\partial_{t} b) \hspace{-0.1cm} \right) \hspace{-0.05cm} ,
\end{equation}

\noindent and $\mathcal{U}(\zeta,\psi) = \mathcal{U}$ is

\begin{equation}\label{potential_energy}
\mathcal{U} = \frac{1}{2} \int_{\RD} \zeta^{2} dX + \int_{\RD} \zeta P dX.
\end{equation} 

\noindent Then, the water waves equations \eqref{water_waves_equations} take the form

\begin{equation*}
\partial_{t} \begin{pmatrix}
\zeta \\
\psi
\end{pmatrix} = \begin{pmatrix}
\;0 \quad I \\
-I \quad 0
\end{pmatrix} \begin{pmatrix}
\partial_{\zeta} H \\
\partial_{\psi} H
\end{pmatrix}. 
\end{equation*}

\end{thm}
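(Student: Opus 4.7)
The plan is to compute the variational derivatives $\partial_{\psi}H$ and $\partial_{\zeta}H$ and match them against the two equations of \eqref{water_waves_equations}: we need $\partial_{\psi}H = \frac{1}{\mu}\G(\psi) + \frac{\beta\lambda}{\epsilon}\GNN(\partial_{t}b)$ and $\partial_{\zeta}H = \zeta + P + \frac{\epsilon}{2}|\nabla\psi|^{2} - \frac{\epsilon}{2\mu}\w^{2}(1+\epsilon^{2}\mu|\nabla\zeta|^{2})$. The contributions from $\mathcal{U}$ are immediate: $\partial_{\psi}\mathcal{U} = 0$ and $\partial_{\zeta}\mathcal{U} = \zeta + P$. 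The substance of the proof is to compute $\partial_{\psi}\mathcal{T}$ and $\partial_{\zeta}\mathcal{T}$ through Green's identity applied to the $\mu$-harmonic function $\Phi := \Phi^{S} + \frac{\beta\lambda\mu}{\epsilon}\Phi^{B}$ of Remark \ref{value_Psi_boundary}.

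For $\partial_{\psi}\mathcal{T}$, I would vary $\psi$ by $\delta\psi$ at fixed $\zeta$, $b$, $\partial_{t}b$. The induced variation $\delta\Phi = \delta\Phi^{S}$ solves the homogeneous $\mu$-Laplace problem with Dirichlet datum $\delta\psi$ at the top and vanishing conormal derivative at the bottom. Integration by parts of $\int_{\Omega_{t}}\nabla^{\mu}_{\!X,z}\Phi\cdot\nabla^{\mu}_{\!X,z}\delta\Phi$ using $\Delta^{\mu}_{\!X,z}\Phi = 0$ leaves only boundary integrals: the top yields $\int\delta\psi[\frac{1}{\mu}\G(\psi) + \frac{\beta\lambda}{\epsilon}\GNN(\partial_{t}b)]$, and the bottom, after invoking the boundary condition satisfied by $\Phi$, yields $-\frac{\beta\lambda}{\epsilon}\int\GDD(\delta\psi)\,\partial_{t}b$. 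This bottom contribution cancels exactly the $\psi$-variation of the second term in \eqref{kinetic_energy}, leaving $\partial_{\psi}\mathcal{T} = \frac{1}{\mu}\G(\psi) + \frac{\beta\lambda}{\epsilon}\GNN(\partial_{t}b)$ as desired.

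For $\partial_{\zeta}\mathcal{T}$, I would vary $\zeta$ by $\delta\zeta$. Two effects appear: the domain $\Omega_{t}$ changes, producing the surface term $\frac{\epsilon\delta\zeta}{2\mu}|\nabla^{\mu}_{\!X,z}\Phi|^{2}_{|z=\epsilon\zeta}$, and $\Phi$ itself varies. For the second effect, integration by parts against $\Phi$ again reduces everything to the boundary. At the top, the identity $\Phi_{|z=\epsilon\zeta} = \psi$ forces $\delta\Phi_{|z=\epsilon\zeta} = -\epsilon\delta\zeta\,\w$, yielding $-\frac{\epsilon\delta\zeta}{\mu}\w[\G(\psi) + \frac{\beta\lambda\mu}{\epsilon}\GNN(\partial_{t}b)]$. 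At the bottom, $\delta\Phi_{|z=-1+\beta b}$ involves the shape derivatives of $\GDD$ and $\GND$ applied to $(\delta\zeta,0)$, and this contribution cancels against the $\zeta$-variation of the second term in \eqref{kinetic_energy}. This precise cancellation is the central, non-obvious step of the proof and is in fact the design principle behind the Neumann-Dirichlet boundary correction in the definition of $\mathcal{T}$; once it is identified, the remainder is algebra.

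The surviving top-surface contributions are then simplified via the identities $(\partial_{z}\Phi)_{|z=\epsilon\zeta} = \w$ and $(\nabla_{\!X}\Phi)_{|z=\epsilon\zeta} = \V$, which follow from \eqref{DN_operator}--\eqref{NN_operator} together with the chain rule $\nabla\psi = (\nabla_{\!X}\Phi)_{|z=\epsilon\zeta} + \epsilon\nabla\zeta\,(\partial_{z}\Phi)_{|z=\epsilon\zeta}$, combined with the expansion $|\V|^{2} = |\nabla\psi|^{2} - 2\epsilon\w\nabla\zeta\cdot\nabla\psi + \epsilon^{2}\w^{2}|\nabla\zeta|^{2}$. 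A direct computation then collapses the sum to $\partial_{\zeta}\mathcal{T} = \frac{\epsilon}{2}|\nabla\psi|^{2} - \frac{\epsilon}{2\mu}\w^{2}(1+\epsilon^{2}\mu|\nabla\zeta|^{2})$, and adding $\partial_{\zeta}\mathcal{U} = \zeta + P$ reproduces exactly $-\partial_{t}\psi$ as given by the Bernoulli equation of \eqref{water_waves_equations}. This completes the identification of the system as Hamiltonian with Hamiltonian $H = \mathcal{T} + \mathcal{U}$.
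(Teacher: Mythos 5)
Your proposal is correct and reaches the same two identities, $\partial_{\psi}H = \frac{1}{\mu}\G(\psi) + \frac{\beta\lambda}{\epsilon}\GNN(\partial_{t}b)$ and $\partial_{\zeta}H = \zeta + P + \frac{\epsilon}{2}|\nabla\psi|^{2} - \frac{\epsilon}{2\mu}\w^{2}(1+\epsilon^{2}\mu|\nabla\zeta|^{2})$, but it gets there by a somewhat different route. The paper first applies Green's formula once and for all to rewrite $\mathcal{T}$ in boundary form (Remark \ref{T_formula}); $\partial_{\psi}H$ then follows from the linearity and symmetry of $G_{\mu}$ together with the adjoint relation $(\GNN)^{*}=\GDD$ of Proposition \ref{main_properties_GNN}, and $\partial_{\zeta}H$ follows by plugging the catalogued shape-derivative formulas of Proposition \ref{differential_formula} (for $dG_{\mu}+\mu\,dG^{N\!N}_{\mu}$ and $dG^{D\!D}_{\mu}+\mu\,dG^{N\!D}_{\mu}$ in the direction $(h,0)$) into that boundary form; the closing algebra with $\V=\nabla\psi-\epsilon\w\nabla\zeta$ and $\w(1+\epsilon^{2}\mu|\nabla\zeta|^{2})=\mu(\frac{1}{\mu}\G(\psi)+\frac{\beta\lambda}{\epsilon}\GNN(\partial_{t}b))+\epsilon\mu\nabla\zeta\cdot\nabla\psi$ is identical to yours. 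You instead keep the volume integral of \eqref{kinetic_energy} and redo the Green/moving-domain computation by hand, which amounts to re-deriving those shape-derivative formulas rather than citing them; in exchange you make visible the structural reason for the second term of $\mathcal{T}$, namely that it is the exact counter-term to the bottom boundary integrals produced by integration by parts (in both the $\psi$- and $\zeta$-variations). The one thing your route leaves implicit, and which the paper's citation of Proposition \ref{differential_formula} covers, is the rigorous differentiability of $\zeta\mapsto\Phi$ and the validity of the formal identities $\delta\Phi_{|z=\epsilon\zeta}=-\epsilon\,\delta\zeta\,\w$ and $(\partial_{z}\Phi,\nabla_{X}\Phi)_{|z=\epsilon\zeta}=(\w,\V)$; if you invoke that proposition at this point your argument is complete, otherwise you owe a justification of the shape differentiability that the appendix supplies.
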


\begin{remark}\label{T_formula}
\noindent $\mathcal{T}$ is the sum of the kinetic energy and the moving bottom contribution and $\mathcal{U}$ the sum of the potential energy and the pressure contribution. Using Green's formula and Remark \ref{value_Psi_boundary} we obtain that 

\begin{align*}
\mathcal{T} = & \frac{1}{2} \int_{\RD} \psi \left( \frac{1}{\mu} G_{\mu}[\epsilon \zeta,\beta b](\psi) + \frac{\beta \lambda}{\epsilon} G^{N \! N}_{\mu}[\epsilon \zeta,\beta b](\partial_{t} b) \right) dX\\
& + \frac{1}{2} \int_{\RD} \frac{\beta \lambda}{\epsilon} \partial_{t} b \left( G^{D \! D}_{\mu}[\epsilon \zeta,\beta b](\psi)  + \frac{\beta \lambda \mu}{\epsilon} G^{N \!D}_{\mu}[\epsilon \zeta,\beta b](\partial_{t} b) \right)dX ,
\end{align*}
\end{remark}

\begin{proof}

\noindent Using the linearity of the Dirichlet-Neumann and the Dirichlet-Dirichlet operators with respect to $\psi$ and the fact that the adjoint of $\GNN$ is $\GDD$ (see Proposition \ref{main_properties_GNN}), we get that 

\begin{equation*}
\partial_{\psi} H = \frac{1}{\mu} G_{\mu}[\epsilon \zeta,\beta b](\psi) + \frac{\beta \lambda}{\epsilon} G^{N \! N}_{\mu}[\epsilon \zeta,\beta b](\partial_{t} b).
\end{equation*}

\noindent Applying Proposition \ref{differential_formula} (which provides explicit expressions for shape derivatives) and remark \ref{T_formula},  we obtain that

\begin{align*}
2 &\partial_{\zeta} H = - \frac{\epsilon}{\mu} G_{\mu}[\epsilon \zeta,\beta b](\psi) \w + \epsilon \nabla \psi \cdot \V - \epsilon \frac{\beta \lambda}{\epsilon} G^{N \! N}_{\mu}[\epsilon \zeta,\beta b](\partial_{t} b) \w + 2 P + 2 \zeta,\\
&= - \frac{\epsilon}{\mu} G_{\mu}[\epsilon \zeta,\beta b](\psi) \w + \epsilon \nabla \psi \cdot \nabla \psi - \epsilon^{2} \w \nabla \psi \cdot \nabla \zeta - \epsilon \frac{\beta \lambda}{\epsilon} G^{N \! N}_{\mu}[\epsilon \zeta,\beta b](\partial_{t} b) \w + 2 P + 2 \zeta,\\
&= \epsilon \left\lvert \nabla \psi \right\rvert^{2} - \frac{\epsilon}{\mu} \w^{2} \left( 1 + \epsilon^{2} \mu |\nabla \zeta|^{2} \right) + 2 P + 2 \zeta,
\end{align*}

\noindent which ends the proof.
\end{proof}

\noindent In fact, working in the Beppo Levi framework for $\psi$ requires that $\frac{1}{|D|} \partial_{t} b \in L^{2}(\RD)$ and results that are not dealing with this paper. 

\section{Asymptotic models}\label{Asymptotic models}

\noindent In this part, we derive some asymptotic models in order to model two different types of tsunamis. The most important phenomenon that we want to catch is the Proudman resonance (see for instance \cite{MVR_Meteotsunamis} or \cite{vilibic_Proudman_resonance} for an explanation of the Proudman resonance) and the submarine landslide tsunami phenomenon (see \cite{Levin_tsunamis}, \cite{Energy_landslide_tsunami} or \cite{Tinti_numerical_simulation}). These resonances occur in a linear case. The duration of the resonance depends on the phenomenon. For a meteotsunami, the duration of the resonance corresponds to the time the meteorological disturbance  takes to reach the coast (see \cite{MVR_Meteotsunamis}). However, for a landslide tsunami, the duration of the resonance corresponds to the duration of the landslide (which depends on the size of the slope, see \cite{Levin_tsunamis} or \cite{Energy_landslide_tsunami}). If the landslide is offshore, it is unreasonable to assume that the duration of the landslide is the time the water waves take to reach the coast. A variation of the pressure of $1$ hPa creates a water wave of $1$ cm whereas a moving bottom of $1$ cm tends to create a water wave of $1$ cm. Therefore we assume in the following that $a_{\text{bott,m}} = a$ (and hence $\beta \lambda = \epsilon$). However, it is important to notice that even if for storms, a variation of the pressure of $100$ hPa is very huge, it is quite ordinary that a submarine landslide have a thickness of $1$ m. Typically, a storm makes a variation of few Hpa, and the thickness of a submarine landslide is few dm (we refer to \cite{Levin_tsunamis}). In this part, we only study the propagation of such phenomena. Therefore, we take $d=1$. In the following, we give three linear asymptotic models of the water waves equations and we give examples of pressures and moving bottoms that create a resonance. The pressure at the surface $P$ and the moving bottom $b_{m}$ move from the left to the right. We consider that the system is initially at rest. We start this part by giving an asymptotic expansion with respect to $\mu$ and $\max(\epsilon,\beta)$ of $\GNN$.

\begin{prop}\label{approximation_GNN_delta}
Let $t_{0} > \frac{d}{2}$, $\zeta \text{ and } b \in H^{t_{0}+2}(\RD)$ such that Condition \eqref{nonvanishing} is satisfied. We suppose that the parameters $\epsilon$, $\beta$ and $\mu$ satisfy \eqref{parameters_constraints}. Then, for all $B \in H^{s-\frac{1}{2}}(\RD)$ with $0 \leq s \leq t_{0}+\frac{3}{2}$, we have

\begin{equation*}
\left\lvert \hspace{-0.04cm} G^{N \! N}_{\mu} \hspace{-0.03cm} [\hspace{-0.01cm} \epsilon \zeta \hspace{-0.01cm}, \hspace{-0.02cm} \beta b \hspace{-0.01cm}] \hspace{-0.03cm} (\hspace{-0.03cm} B \hspace{-0.03cm}) \hspace{-0.06cm} - \hspace{-0.06cm} G^{N \! N}_{\mu} \hspace{-0.03cm} [\hspace{-0.01cm} 0 \hspace{-0.01cm}, \hspace{-0.03cm} 0 \hspace{-0.01cm}] \hspace{-0.03cm} (\hspace{-0.03cm} B \hspace{-0.03cm}) \hspace{-0.04cm} \right\rvert_{H^{s-\frac{1}{2}}} \hspace{-0.09cm} \leq \hspace{-0.08cm} M_{0} |(\epsilon \zeta, \beta b)|_{H^{t_{0}+2}} \left\lvert \hspace{-0.03cm} B \hspace{-0.03cm} \right\rvert_{H^{s-\frac{1}{2}}} \hspace{-0.1cm}
\end{equation*} 

\noindent and

\begin{equation*}
\left\lvert \hspace{-0.04cm} G^{N \! N}_{\mu} \hspace{-0.03cm} [\hspace{-0.01cm} 0 \hspace{-0.01cm}, \hspace{-0.02cm} 0 \hspace{-0.01cm}] \hspace{-0.03cm} (\hspace{-0.03cm} B \hspace{-0.03cm}) \hspace{-0.06cm} - \hspace{-0.06cm} B \hspace{-0.04cm} \right\rvert_{H^{s-\frac{1}{2}}} \hspace{-0.09cm} \leq C \mu \left\lvert \hspace{-0.03cm} B \hspace{-0.03cm} \right\rvert_{H^{s+\frac{3}{2}}} \hspace{-0.1cm}.
\end{equation*} 

\end{prop}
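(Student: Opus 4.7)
The plan is to treat the two estimates by very different methods: the first is a standard Lipschitz-in-shape bound obtained via the fundamental theorem of calculus combined with the shape-derivative estimates of Proposition \ref{controls_dGNN}, whereas the second is an explicit Fourier-multiplier computation on the flat strip. For the first inequality I would write
\[
G_\mu^{N\!N}[\epsilon\zeta,\beta b](B) - G_\mu^{N\!N}[0,0](B) = \int_0^1 dG_\mu^{N\!N}[\tau\epsilon\zeta,\tau\beta b](B).(\epsilon\zeta,\beta b)\,d\tau,
\]
and apply the first point of Proposition \ref{controls_dGNN} to each integrand in the $H^{s-1/2}$ norm. The nonvanishing condition \eqref{nonvanishing} is preserved along the segment $\tau \in [0,1]$, because $1 + \tau(\epsilon\zeta - \beta b)$ depends linearly on $\tau$ and thus remains bounded below by $\min(1, h_{\min})$ uniformly, so the constant produced by Proposition \ref{controls_dGNN} can be absorbed into a single $M_0$. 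Integrating in $\tau$ then yields the claimed bilinear bound.

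For the second inequality I would exploit that at $\zeta = b = 0$ the boundary-value problem \eqref{Laplace_bott} reduces to a constant-coefficient problem on the flat strip $S = \RD \times (-1,0)$ that can be solved explicitly via Fourier transform in $X$. The system becomes
\[
\mu \Delta_X \Phi^B + \partial_z^2 \Phi^B = 0 \text{ in } S, \quad \Phi^B_{|z=0} = 0, \quad \partial_z \Phi^B_{|z=-1} = B,
\]
and solving the resulting ODE in $z$ with these two boundary conditions gives
\[
\widehat{\Phi^B}(\xi,z) = \frac{\sinh(\sqrt{\mu}|\xi|z)}{\sqrt{\mu}|\xi|\cosh(\sqrt{\mu}|\xi|)}\widehat{B}(\xi).
\]
Since at $\zeta = 0$ we have $G_\mu^{N\!N}[0,0](B) = \partial_z \Phi^B_{|z=0}$, this operator is the Fourier multiplier with symbol $1/\cosh(\sqrt{\mu}|\xi|)$, and consequently $G_\mu^{N\!N}[0,0](B) - B$ has symbol $\cosh(\sqrt{\mu}|\xi|)^{-1} - 1$.

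It then remains to prove the pointwise multiplier bound $|\cosh(\sqrt{\mu}|\xi|)^{-1} - 1| \leq C\mu(1+|\xi|^2)$. This follows from the elementary inequality $|1/\cosh(y) - 1| \leq C\min(1, y^2)$ for $y \geq 0$: in the regime $\sqrt{\mu}|\xi| \leq 1$ one obtains the bound $C\mu|\xi|^2 \leq C\mu(1+|\xi|^2)$, and in the regime $\sqrt{\mu}|\xi| > 1$ the crude bound $|1/\cosh - 1| \leq 1 \leq \mu|\xi|^2$ closes the estimate. Squaring, multiplying by $(1+|\xi|^2)^{s-1/2}$ and applying Plancherel produces exactly the $H^{s+3/2} \to H^{s-1/2}$ inequality. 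I do not expect a serious obstacle; the only mild verification needed is that the first point of Proposition \ref{controls_dGNN} yields a constant of the form $M_0$ depending solely on the $H^{t_0+2}$-regularity of $(\zeta,b)$, consistently with the hypothesis of the statement.
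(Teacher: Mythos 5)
Your proof is correct and follows essentially the same route as the paper, which simply invokes Proposition \ref{controls_dGNN} for the first bound (the integral-along-the-segment argument you write out appears verbatim in the proof of Proposition \ref{quasilinearization}) and Remark \ref{operator_in_0} for the explicit multiplier $1/\cosh(\sqrt{\mu}|D|)$. The only detail worth noting is that to reach the full range $0\leq s\leq t_{0}+\tfrac{3}{2}$ one applies Proposition \ref{controls_dGNN} with $t_{0}$ replaced by $t_{0}+1$, which is precisely why the hypothesis demands $\zeta,b\in H^{t_{0}+2}(\RD)$ and why the constant is the corresponding $M_{0}$.
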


\begin{proof}

\noindent The first inequality follows from Proposition \ref{controls_dGNN} and the second from Remark \ref{operator_in_0}.

\end{proof}

\begin{remark}\label{G_approx_Laplacian}
\noindent In the same way and under the assumptions of the previous proposition, we can prove that (see Proposition 3.28 in \cite{Lannes_ww}), for $0 \leq s \leq t_{0} + \frac{3}{2}$, 

\begin{equation*}
\left\lvert \hspace{-0.04cm} G_{\mu} \hspace{-0.02cm} [\hspace{-0.01cm} \epsilon \zeta \hspace{-0.01cm}, \hspace{-0.02cm} \beta b \hspace{-0.01cm}] \hspace{-0.03cm} (\hspace{-0.03cm} \psi \hspace{-0.03cm}) \hspace{-0.06cm} - \hspace{-0.06cm} G_{\mu} \hspace{-0.02cm} [\hspace{-0.01cm} 0 \hspace{-0.01cm}, \hspace{-0.03cm} 0 \hspace{-0.01cm}] \hspace{-0.03cm} (\hspace{-0.03cm} \psi \hspace{-0.03cm}) \hspace{-0.04cm} \right\rvert_{H^{s-\frac{1}{2}}} \hspace{-0.09cm} \leq \mu M_{0} |(\epsilon \zeta, \beta b)|_{H^{t_{0}+2}} \hspace{-0.09cm} \left\lvert \hspace{-0.03cm} \mathfrak{P} \psi \hspace{-0.03cm} \right\rvert_{H^{s+\frac{1}{2}}} \hspace{-0.1cm}
\end{equation*}

\noindent and 

\begin{equation*}
\left\lvert \hspace{-0.04cm} \frac{1}{\mu} G_{\mu} \hspace{-0.03cm} [\hspace{-0.01cm} 0 \hspace{-0.01cm}, \hspace{-0.02cm} 0 \hspace{-0.01cm}] \hspace{-0.03cm} (\hspace{-0.03cm} \psi \hspace{-0.03cm}) \hspace{-0.06cm} + \hspace{-0.06cm} \Delta \psi \right\rvert_{H^{s-\frac{1}{2}}} \hspace{-0.09cm} \leq \mu C \left\lvert \hspace{-0.03cm} \nabla \psi \hspace{-0.03cm} \right\rvert_{H^{s+\frac{5}{2}}} \hspace{-0.1cm}.
\end{equation*} 
\end{remark}

\noindent We denote by $\overline{V}$ the vertically averaged horizontal component,

\begin{equation}\label{med_V_def}
\overline{V} = \overline{V}[\epsilon \zeta, \beta b] \left(\psi, \partial_{t} b \right) =  \frac{1}{1+\epsilon \zeta - \beta b} \int_{-1+\beta b}^{\epsilon \zeta} \nabla_{\! X} \hspace{-0.05cm} \left(\Phi[\epsilon \zeta, \beta b] \left(\psi, \partial_{t} b \right)(\cdot,z) \right) dz,
\end{equation}

\noindent where $\Phi = \Phi[\epsilon \zeta, \beta b] \left(\psi, \partial_{t} b \right)$ satisfies 

\begin{equation*}
\left\{
\begin{aligned}
&\Delta^{\mu}_{\! X,z} \Phi = 0 \text{, } -1 + \beta b \leq z \leq \epsilon \zeta,\\
&\Phi_{\hspace{-0.02cm} |z=\epsilon \zeta} = \psi \text{ , } \sqrt{1+\beta^{2} |\nabla b|^{2}}\partial_{\textbf{n}} \Phi_{\hspace{-0.02cm} |z=-1+\beta b} = \mu \partial_{t} b.
\end{aligned}
\right.
\end{equation*}

\noindent The following Proposition is Remark 3.36 and a small adaptation of Proposition 3.37 and Lemma 5.4 in \cite{Lannes_ww} (see also Subsection A.5.5 in \cite{Lannes_ww}).

\begin{prop}\label{shallow water approx}

\noindent Let $T > 0$, $t_{0} > \frac{d}{2}$, $0 \leq s \leq t_{0}$ and $\zeta, b \in W^{1,\infty} \left([0,T]; \Hzeta \right)$ such that Condition \eqref{nonvanishing} is satisfied on $[0,T]$. We suppose that the parameters $\epsilon$, $\beta$ and $\mu$ satisfy \eqref{parameters_constraints}. We also assume that $\psi \in W^{1,\infty} \left([0,T]; \Hdot^{s+ 3}(\RD) \right)$. Then, 

\begin{equation*}
 \G(\psi) + \mu \GNN(\partial_{t} b) = - \mu \nabla \cdot \left( (1+\epsilon \zeta - \beta b) \overline{V} \right) + \mu \partial_{t} b,
\end{equation*}

\noindent and 

\begin{equation*}
\left\{
\begin{aligned}
&\hspace{-0.2cm} \left\lvert \overline{V} - \nabla \psi \right\rvert_{H^{s}} \leq \mu C \hspace{-0.07cm} \left( \hspace{-0.07cm} \frac{1}{h_{\min}}, \hspace{-0.04cm} \mu_{\max}, \hspace{-0.05cm} \epsilon |\hspace{-0.01cm} \zeta \hspace{-0.01cm}|_{H^{t_{0} + 2}}, \hspace{-0.05cm} \beta |\hspace{-0.01cm} b \hspace{-0.01cm}|_{L^{\infty}_{t} \hspace{-0.02cm} H^{t_{0}+2}_{\! X}} \hspace{-0.1cm} \right) \max \left( \left\lvert \nabla \psi \right\rvert_{H^{s+2}}, \left\lvert \partial_{t} b \right\rvert_{L^{\infty}_{t} \hspace{-0.02cm} H^{s+1}_{\! X}} \right),\\
&\hspace{-0.2cm} \left\lvert \hspace{-0.03cm} \partial_{t} \overline{V} \hspace{-0.08cm} - \hspace{-0.08cm} \nabla \partial_{t} \psi \hspace{-0.03cm} \right\rvert_{H^{s}} \hspace{-0.12cm} \leq \hspace{-0.08cm} \mu C \hspace{-0.08cm} \left( \hspace{-0.08cm} \frac{1}{h_{\min}}, \hspace{-0.04cm} \mu_{\max}, \hspace{-0.07cm} |\hspace{-0.02cm} \zeta \hspace{-0.02cm}|_{H^{t_{0} + 2}}, \hspace{-0.07cm} \left \lvert \hspace{-0.02cm} \partial_{t} \zeta \hspace{-0.02cm} \right\rvert_{H^{t_{0}+2}} \hspace{-0.06cm}, \hspace{-0.05cm} |\hspace{-0.01cm} b \hspace{-0.01cm}|_{W^{2,\infty}_{t} \hspace{-0.02cm} H^{t_{0}+2}_{\! X}}, \hspace{-0.02cm} \left\lvert \hspace{-0.02cm} \nabla \psi \hspace{-0.02cm} \right\rvert_{H^{s+2}} \hspace{-0.05cm} , \hspace{-0.02cm} \left\lvert \hspace{-0.02cm} \partial_{t} \nabla \psi \hspace{-0.02cm} \right\rvert_{H^{s+2}} \hspace{-0.1cm} \right) \hspace{-0.14cm}.
\end{aligned}
\right.
\end{equation*}
\end{prop}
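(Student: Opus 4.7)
The plan is to separate the two parts of the statement. The divergence identity is a direct vertical integration argument, while the $\overline{V}-\nabla\psi$ estimate reduces to a shallow water elliptic estimate, which is where the only genuinely new work lies (the rest is already in Remark 3.36, Proposition 3.37 and Lemma 5.4 of \cite{Lannes_ww}).

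First, for the identity, I would combine $\Phi^{S}$ and $\Phi^{B}$ into a single potential $\Phi := \Phi^{S} + \mu \Phi^{B}$ as in Remark \ref{value_Psi_boundary} (recalling that we are in the regime $\beta\lambda=\epsilon$, so the prefactor $\frac{\beta\lambda\mu}{\epsilon}$ reduces to $\mu$). This $\Phi$ is exactly the harmonic extension described in the definition \eqref{med_V_def} of $\overline{V}$, i.e.\ it solves $\Delta^{\mu}_{\!X,z}\Phi=0$ with $\Phi|_{z=\epsilon\zeta}=\psi$ and $\sqrt{1+\beta^{2}|\nabla b|^{2}}\partial_{\textbf{n}}\Phi|_{z=-1+\beta b}=\mu\partial_{t}b$. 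Integrating $\mu\Delta_{\!X}\Phi+\partial_{z}^{2}\Phi=0$ in $z$ from $-1+\beta b$ to $\epsilon\zeta$ and using Leibniz together with the conormal boundary conditions, the boundary terms at $z=\epsilon\zeta$ reassemble into $\G(\psi)+\mu\GNN(\partial_{t}b)$ (using the definitions \eqref{DN_operator}, \eqref{NN_operator}), while the boundary contributions at $z=-1+\beta b$ telescope to $-\mu\partial_{t}b$. What remains is precisely $-\mu\nabla\cdot((1+\epsilon\zeta-\beta b)\overline{V})$, which gives the claimed formula.

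Second, for the control of $\overline{V}-\nabla\psi$, I would flatten the fluid domain by the usual diffeomorphism $\Sigma:S\to\Omega_{t}$, $\Sigma(X,z)=(X,\,z+(1+z)\epsilon\zeta+z\beta b\,\text{(or a regularized variant)})$ and set $\phi(X,z):=\Phi(X,\Sigma_{2}(X,z))$. Then $\phi$ satisfies a uniformly elliptic problem on the flat strip $S=\RD\times(-1,0)$ with a metric $\mathfrak{P}_{\mu}$ whose deviation from $\mathrm{diag}(\mu I,1)$ is $O(\mu)+O(\max(\epsilon,\beta))$, Dirichlet data $\psi$ on top and Neumann data $\mu\partial_{t}b$ on the bottom. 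The shallow-water limit corresponds to $\partial_{z}^{2}\phi=O(\mu)$, so that $\phi$ is affine in $z$ up to $O(\mu)$; the affine profile determined by $\phi|_{z=0}=\psi$ and $\partial_{z}\phi|_{z=-1}=\mu\partial_{t}b$ gives $\nabla_{\!X}\phi=\nabla\psi+O(\mu)$ uniformly in $z$. Averaging in $z$ then yields the first estimate; the constants are tracked by applying the elliptic estimates from Appendix \ref{Laplace_problem} to $\phi-\psi-\mu(z+1)\partial_{t}b$ (which has homogeneous Dirichlet/Neumann data) and using the product estimates of the paper's appendices to handle the metric perturbation.

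Finally, for the time-derivative estimate, I would differentiate in $t$ the elliptic problem satisfied by $\phi$: $\partial_{t}\phi$ solves the same elliptic equation as $\phi$ but with a right-hand side involving $\partial_{t}\zeta$ and $\partial_{t}b$, Dirichlet data $\partial_{t}\psi$, and Neumann data $\mu\partial_{t}^{2}b$. Reapplying the same shallow-water argument to $\partial_{t}\phi$ and using the kinematic condition $\partial_{t}\zeta=\frac{1}{\mu}\G(\psi)+\frac{\beta\lambda}{\epsilon}\GNN(\partial_{t}b)$ to express $\partial_{t}\zeta$ in terms of the other data produces the second estimate. The main obstacle, as in \cite{Lannes_ww}, is tracking the $\mu$-dependence precisely through the flattening and the elliptic estimates; once Appendix \ref{Laplace_problem} is available, the adaptation from the fixed-bottom case amounts to carrying along the extra Neumann source $\mu\partial_{t}b$, which enters linearly and is straightforward to bound by the norms that appear on the right-hand sides.
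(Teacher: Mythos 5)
Your proposal follows essentially the same route as the paper, which gives no proof of its own and simply invokes Remark 3.36, Proposition 3.37 and Lemma 5.4 of \cite{Lannes_ww}: vertical integration of $\Delta^{\mu}_{\! X,z}\Phi=0$ with the conormal boundary conditions for the identity, elliptic estimates on the flattened strip to get $\partial_{z}\phi=O(\mu)$ and hence the first bound, and time-differentiation of the elliptic problem for the second. One cosmetic slip: the corrector with homogeneous Dirichlet data at $z=0$ and Neumann data $\mu\partial_{t}b$ at $z=-1$ is $\psi+\mu z\,\partial_{t}b$, not $\psi+\mu(z+1)\partial_{t}b$.
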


\noindent In this part, we will consider symmetrizable linear hyperbolic systems of the first order. We refer to \cite{Benzoni_Serre} for more details about the wellposedness. In the following, we will only give the energy associated to the symmetrization.

\subsection{A shallow water model when $\beta$ is small}\label{A shallow water model with small topography variations}

\subsubsection{Linear asymptotic}

\noindent We consider the case that $\epsilon$, $\beta$, $\mu$ are small. Physically, this means that we consider small amplitudes for the surface and the bottom (compared to the mean depth) and waves with large wavelengths (compared to the mean depth). The asymptotic regime (in the sense of Definition 4.19 in \cite{Lannes_ww}) is

\begin{equation}
\mathcal{A}_{LW} = \left\lbrace (\epsilon,\beta,\lambda,\mu) \text{, } 0 <  \mu \text{, } \epsilon \text{, } \beta \leq \delta_{0} \text{, }  \beta \lambda  = \epsilon \right\rbrace,
\end{equation}

\noindent with $\delta_{0} \ll 1$.

\begin{prop}\label{approx_ww}
Let $t_{0} \hspace{-0.05cm} > \hspace{-0.05cm} \frac{d}{2}$, $N \geq \max(1,t_{0}) + 3$, $U^{0} \in E_{0}^{N} \hspace{-0.05cm}$, $P \hspace{-0.05cm} \in \hspace{-0.05cm} W^{1,\infty}(\mathbb{R}^{+}; \hspace{-0.05cm} \Hdot^{N+1}(\RD))$ and $b \in W^{3,\infty}(\R^{+}; H^{N}(\RD))$. We suppose \eqref{nonvanishing} and \eqref{rt_constraints} are satisfied initially. Then, there exists $T > 0$, such that for all  $ (\epsilon,\beta,\lambda,\mu) \in \mathcal{A}_{LW}$, there exists a solution $U = (\zeta,\psi) \in E^{N}_{\frac{T}{\sqrt{\delta_{0}}}}$ to the water waves equations with initial data $U^{0}$ and this solution is unique. Furthermore, for all $\alpha \in \left[0, \frac{1}{3} \right)$,

\begin{equation*}
\left\lvert \zeta - \widetilde{\zeta} \right\rvert_{L^{\infty}\left(\left[0, \frac{T}{\delta_{0}^{\alpha}} \right]; H^{N-4}(\RD) \right)} + \left\lvert \nabla \psi - \nabla \widetilde{\psi} \right\rvert_{L^{\infty}\left(\left[0, \frac{T}{\delta_{0}^{\alpha}} \right]; H^{N-2}(\RD) \right)} \leq T \delta_{0}^{1-3\alpha} \widetilde{C},
\end{equation*}

\noindent where 

\begin{equation*}
\widetilde{C} = C \left(\E \left(U^{0} \right), \frac{1}{h_{\min}}, \frac{1}{\mathfrak{a}_{\min}}, |b|_{W^{3,\infty}_{t} H^{N}_{\! X}}, \left\lvert \nabla P \right\rvert_{W^{1,\infty}_{t} H^{N}_{\! X}} \right),
\end{equation*}

\noindent and with, $(\widetilde{\zeta},\widetilde{\psi})$ solution of the waves equation

\begin{equation}\label{waves_equation}
  \left\{
  \begin{aligned}
   &\partial_{t} \widetilde{\zeta} + \Delta_{\! X} \widetilde{\psi} = \partial_{t} b,\\
   &\partial_{t} \widetilde{\psi} +  \widetilde{\zeta}  = - P,\\
  \end{aligned}  
  \right.
\end{equation}

\noindent with initial data $U^{0}$.
\end{prop}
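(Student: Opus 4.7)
The plan is to combine Theorem \ref{long_time_existence}, which provides the water waves solution on the required time window, with a consistency--stability argument comparing this solution to the linear wave equation \eqref{waves_equation}.

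First, since $(\epsilon,\beta,\lambda,\mu)\in\mathcal{A}_{LW}$ forces $\delta:=\max(\epsilon,\beta^{2})\leq\delta_{0}$, Theorem \ref{long_time_existence} delivers a unique $U=(\zeta,\psi)\in E^{N}_{T/\sqrt{\delta_{0}}}$ together with the non--uniform bound $\sup_{t\in[0,T/\delta_{0}^{\alpha}]}\E(U)\leq c^{3}\,\delta_{0}^{-2\alpha}$ for every $\alpha\in[0,\tfrac{1}{2}]$. The range $\alpha\in[0,\tfrac{1}{3})$ demanded by the proposition lies strictly inside $[0,\tfrac{1}{2})$, so both the existence window $T/\delta_{0}^{\alpha}\leq T/\sqrt{\delta_{0}}$ and the energy bound are at our disposal. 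The linear wave system \eqref{waves_equation} is classical and produces $(\widetilde\zeta,\widetilde\psi)$ globally in the Sobolev scale inherited from $U^{0}$, $b$ and $P$.

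Next, I would compute the residuals obtained by plugging $(\zeta,\psi)$ into \eqref{waves_equation}. Using Proposition \ref{shallow water approx} together with the identity $\beta\lambda=\epsilon$, the first water waves equation rewrites as
\[
\partial_{t}\zeta+\Delta\psi-\partial_{t}b = -\nabla\!\cdot\!\bigl((\epsilon\zeta-\beta b)\overline{V}\bigr)-\nabla\!\cdot\!(\overline{V}-\nabla\psi) =: R_{1},
\]
so that tame product estimates and the quantitative bound of Proposition \ref{shallow water approx} yield $|R_{1}|_{H^{N-4}}\leq \max(\epsilon,\beta,\mu)\,\widetilde{C}\,\E(U)$. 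For the second equation, Remark \ref{G_approx_Laplacian} and Proposition \ref{approximation_GNN_delta} guarantee that $G_{\mu}(\psi)+\mu G_{\mu}^{N\!N}(\partial_{t}b)$ is of size $O(\mu)$ in Sobolev norm, so the Bernoulli residual
\[
R_{2} := \frac{\epsilon}{2}|\nabla\psi|^{2}-\frac{\epsilon}{2\mu}\frac{\bigl(G_{\mu}(\psi)+\mu G_{\mu}^{N\!N}(\partial_{t}b)+\epsilon\mu\nabla\zeta\cdot\nabla\psi\bigr)^{2}}{1+\epsilon^{2}\mu|\nabla\zeta|^{2}}
\]
obeys $|\nabla R_{2}|_{H^{N-2}}\leq \epsilon\,\widetilde{C}\,\E(U)$. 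Combining these with $\E(U)\leq c^{3}\delta_{0}^{-2\alpha}$ gives $|R_{1}|_{H^{N-4}}+|\nabla R_{2}|_{H^{N-2}}\leq\delta_{0}^{1-2\alpha}\widetilde{C}$ on $[0,T/\delta_{0}^{\alpha}]$.

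Finally, the difference $(\eta,\phi):=(\zeta-\widetilde\zeta,\psi-\widetilde\psi)$ solves the linear symmetric system
\[
\partial_{t}\eta+\Delta\phi=R_{1},\qquad \partial_{t}\phi+\eta=R_{2},
\]
with vanishing initial data. Differentiating the natural energy $\mathcal{E}_{w}(t):=\tfrac{1}{2}(|\eta|_{H^{N-4}}^{2}+|\nabla\phi|_{H^{N-2}}^{2})$, the coupling contributions $\int\eta\,\Delta\phi+\int\nabla\phi\cdot\nabla\eta$ cancel after integration by parts, leaving only the forcing terms; Cauchy--Schwarz then yields $\tfrac{d}{dt}\mathcal{E}_{w}\leq C\sqrt{\mathcal{E}_{w}}\,(|R_{1}|_{H^{N-4}}+|\nabla R_{2}|_{H^{N-2}})$. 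Gronwall's inequality with zero initial data gives $\sqrt{\mathcal{E}_{w}}(t)\leq t\cdot\delta_{0}^{1-2\alpha}\widetilde{C}$, and evaluation at $t=T/\delta_{0}^{\alpha}$ produces the announced bound $T\delta_{0}^{1-3\alpha}\widetilde{C}$.

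The main obstacle I anticipate is the regularity bookkeeping combined with the tracking of the exponent $1-3\alpha$. The shallow--water approximation of Proposition \ref{shallow water approx} loses two derivatives on $\nabla\psi$, and once one further divergence is applied in forming $R_{1}$, this forces the drop from $N$ to $N-4$ for the $\zeta$ component; similarly the Bernoulli quadratic terms must be absorbed at the $H^{N-2}$ level for $\nabla\phi$, which is consistent with the $\mathfrak{P}$ weights in the definition of $\E$. Matching the precise power $\delta_{0}^{1-3\alpha}$ requires balancing three ingredients: the $\delta_{0}$ smallness of each residual, the $\delta_{0}^{-2\alpha}$ non--uniform growth of $\E(U)$ coming from Theorem \ref{long_time_existence}, and the additional $\delta_{0}^{-\alpha}$ factor picked up by integrating the energy inequality over the time window of length $T/\delta_{0}^{\alpha}$.
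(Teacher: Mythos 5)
Your proposal follows the paper's proof essentially verbatim: uniform existence together with the bound $\sup\E(U)\leq c\,\delta_{0}^{-2\alpha}$ from Theorem \ref{long_time_existence}, consistency residuals of size $O(\delta_{0}^{1-2\alpha})$ obtained by comparing the water waves system with \eqref{waves_equation}, and a Gronwall estimate for the zero-data difference system over the window of length $T/\delta_{0}^{\alpha}$, yielding the exponent $1-3\alpha$. The only cosmetic difference is that you derive the first residual through the shallow-water identity of Proposition \ref{shallow water approx} (writing $G_{\mu}(\psi)+\mu G^{N\!N}_{\mu}(\partial_{t}b)$ via $\overline{V}$) rather than through the flat-strip expansions of Proposition \ref{approximation_GNN_delta} and Remark \ref{G_approx_Laplacian} used in the paper; both give the same $O(\max(\epsilon,\beta,\mu))$ consistency bound, and your energy functional and derivative bookkeeping match the paper's.
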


\begin{proof}
\noindent First, the system \eqref{waves_equation} is wellposed since it can be symmetrized thanks to the energy

\begin{equation*}
\mathcal{E}(t) = \left\lvert \widetilde{\zeta} \right\rvert_{L^{2}}^{2} + \left\lvert \nabla \widetilde{\psi} \right\rvert_{L^{2}}^{2}.
\end{equation*}

\noindent Using Theorem \ref{long_time_existence} we get a uniform time of existence $\frac{T}{\sqrt{\delta_{0}}} > 0$ for the water waves equation and for all parameters in $\mathcal{A}_{LW}$. Then, using Proposition \ref{approximation_GNN_delta}, Remark \ref{G_approx_Laplacian}, Proposition \ref{controls_w} and \ref{P_estimates} and standard controls we get that

\begin{equation}\label{approximation_waves_equation_water_waves}
  \left\{
  \begin{aligned}
   &\partial_{t} \zeta + \Delta_{\! X} \psi = \partial_{t} b + R_{1},\\
   &\partial_{t} \psi +  \zeta  = - P + R_{2} \text{,}
  \end{aligned}
  \right.
\end{equation} 

\noindent with

\begin{equation*}
  \left\{
  \begin{aligned}
&\left\lvert R_{1} \right\rvert_{H^{N-4}} \leq C  \hspace{-0.1cm} \left(\epsilon |\zeta|_{H^{N}}, |b|_{L^{\infty}_{t} H^{N}_{\! X}} \right)\hspace{-0.1cm} \left( |(\epsilon \zeta, \beta b)|_{H^{N}} + \mu \right) \max \left( |\mathfrak{P} \psi|_{H^{N-\frac{1}{2}}}, |\partial_{t} b|_{H^{N}} \right),\\
&\left\lvert R_{2} \right\rvert_{H^{N-1}} \leq \epsilon C \hspace{-0.1cm} \left(\epsilon |\zeta|_{H^{N}}, |b|_{L^{\infty}_{t} H^{N}_{\! X}} \right) \hspace{-0.1cm} \max \left(|\mathfrak{P} \psi|^{2}_{H^{N-\frac{1}{2}}}, |\partial_{t} b|^{2}_{H^{N}} \right) \hspace{-0.1cm}.
\end{aligned}
\right.
\end{equation*}

\noindent If we denote $\zeta_{1} = \zeta - \widetilde{\zeta}$ and $\psi_{1} = \psi - \widetilde{\psi}$, we see that $(\zeta_{1},\psi_{1})$ satisfies

\begin{equation*}
  \left\{
  \begin{aligned}
   &\partial_{t} \zeta_{1} + \Delta_{\! X} \psi_{1} =  R_{1},\\
   &\partial_{t} \psi_{1} +  \zeta_{1}  =  R_{2}.
  \end{aligned}
  \right.
\end{equation*} 

\noindent Differentiating the energy

\begin{equation*}
\mathcal{E}^{N}(t) = \frac{1}{2} \left\lvert \zeta_{1} \right\rvert_{H^{N-4}}^{2} + \frac{1}{2} \left\lvert \nabla \psi_{1} \right\rvert_{H^{N-2}}^{2},
\end{equation*}

\noindent we get the estimate thanks to Proposition \ref{psi_controls} and energy estimate in Theorem \ref{long_time_existence}.
\end{proof}

\noindent This model is well-known in the physics literature (see \cite{Proudman_resonnance}).

\subsubsection{Resonance in shallow waters when $\beta$ is small}

\noindent We consider the equation \eqref{waves_equation} for $d=1$. We transform it in order to have a unique equation for $h :=  \widetilde{\zeta} - b$, 

\begin{equation}\label{waves_h}
  \left\{
  \begin{aligned}
  &\partial_{t}^{2} h - \partial^{2}_{X} h = \partial^{2}_{X} \left(P + b \right),\\
  &h_{|t=0} = - b(0,.),\\
  &\partial_{t} h_{|t=0} = 0.\\
  \end{aligned}
  \right.
\end{equation}

\noindent We denote $f(t,X) := \left( P + b \right)(t,X)$, which represents a disturbance. We want to understand the resonance for landslide and meteo tsunamis. In both cases, it is a linear respond, in the shallow water case, of a body of water due to a moving pressure or a moving bottom, when the speed of the storm or the landslide is close to the typical wave celerity (here $1$). We can compute $h$ thanks to the d'Alembert's formula

\begin{align*}
h(t,X) = &\underbrace{-\frac{1}{2} \left( b(0,X-t) + b(0,X+t) \right) }_{h_{T}(t,X)} + \underbrace{\frac{1}{2} \int_{0}^{t} \partial_{X} f(\tau, X+t-\tau) d\tau}_{:=h_{L}(t,X)}\\
&- \underbrace{\frac{1}{2} \int_{0}^{t} \partial_{X} f(\tau, X-t+\tau) d\tau}_{:=h_{R}(t,X)}.
\end{align*}

\noindent We are interesting in disturbances $f$ moving from the left to the right (propagation to a coast). Therefore, we study only $h_{R}$. The following Proposition shows that a disturbance moving with a speed equal to $1$ makes appear a resonance.

\begin{prop}
\noindent Let $f \in L^{\infty}(\R^{+}; H^{1}(\RD))$ and $\partial_{X} f \in L^{\infty}_{t \times X}(\R \times \RD)$. Then, for all $X \in \R$, $t > 0$,

\begin{equation*}
\left\lvert h_{R}(t,X) \right\rvert \leq \frac{t}{2} \left\lvert \partial_{X} f\right\rvert_{\infty}.
\end{equation*}

\noindent Furthermore, if $f(t,X) = f_{0}(X-t)$, $f_{0} \in H^{1}(\RD)$ and $\left\lvert f^{\prime}_{0}(X_{0}-t_{0}) \right\rvert = \left\lvert f' \right\rvert_{\infty}$ the equality holds for $(t_{0}, X_{0})$. If $f(t,X) = f_{0}(X-Ut)$ with $f_{0} \in H^{1}(\RD)$ and $U \neq 1$,

\begin{equation*}
\left\lvert h_{R} \right\rvert_{\infty} \leq \min\left( \frac{\left\lvert f_{0} \right\rvert_{\infty}}{|1-U|}, \frac{t}{2} \left\lvert f^{\prime}_{0} \right\rvert_{\infty} \right).
\end{equation*}
\end{prop}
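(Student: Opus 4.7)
The proof is essentially a direct computation starting from the explicit d'Alembert-type formula for $h_R$ given just before the proposition. Recall
\[
h_R(t,X) = \frac{1}{2}\int_0^t \partial_X f(\tau, X-t+\tau)\, d\tau.
\]

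For the first inequality, I would simply bound the integrand pointwise by $|\partial_X f|_\infty$ and integrate in $\tau \in [0,t]$, which yields the announced estimate $|h_R(t,X)| \le \frac{t}{2}|\partial_X f|_\infty$. This is the trivial step.

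For the equality statement in the case $f(t,X) = f_0(X-t)$, the crucial observation is that $\partial_X f(\tau, X-t+\tau) = f_0'(X-t+\tau-\tau) = f_0'(X-t)$ is independent of the integration variable $\tau$. Hence $h_R(t,X) = \frac{t}{2}\, f_0'(X-t)$, and evaluating at the point $(t_0,X_0)$ where $|f_0'(X_0-t_0)| = |f_0'|_\infty = |\partial_X f|_\infty$ gives equality. This is precisely the resonance mechanism: the integrand is constant in $\tau$ because the disturbance travels at the same speed as the characteristics of the wave equation.

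For the case $U \neq 1$ with $f(t,X) = f_0(X-Ut)$, I would perform the change of variables $u = X-t+(1-U)\tau$ in the integral defining $h_R$. Since $1-U \neq 0$, this yields the closed-form expression
\[
h_R(t,X) = \frac{1}{2(1-U)}\bigl[f_0(X-Ut) - f_0(X-t)\bigr],
\]
from which the bound $|h_R(t,X)| \le \frac{|f_0|_\infty}{|1-U|}$ follows immediately by the triangle inequality. The other term of the minimum, $\frac{t}{2}|f_0'|_\infty$, comes from specializing the first inequality to this $f$ (noting $|\partial_X f|_\infty = |f_0'|_\infty$). The smallest of the two bounds applies. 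There is no real obstacle in this proof; the only conceptual point is to recognize and exploit the change of variables that trivializes the integral when $U \neq 1$, thereby contrasting the bounded response off-resonance with the linear-in-$t$ growth exactly at $U=1$.
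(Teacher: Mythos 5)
Your proposal is correct and follows exactly the same route as the paper: the pointwise bound of the integrand for the first inequality, the observation that the integrand $f_0'(X-t+(1-U)\tau)$ is constant in $\tau$ when $U=1$ (giving equality at the maximizing point), and the explicit integration (change of variables) when $U\neq 1$ yielding $h_R=\frac{1}{2(1-U)}\bigl[f_0(X-Ut)-f_0(X-t)\bigr]$ and hence the bound $\frac{|f_0|_\infty}{|1-U|}$. The paper merely states the substituted integrand and says "the result follows," so your write-up is a faithful, slightly more detailed version of the same argument.
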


\begin{proof}
\noindent If $f(t,X) = f_{0}(X-Ut)$, 

\begin{equation*}
h_{R}(t,X) = - \frac{1}{2} \int_{0}^{t} f^{\prime}_{0}(X-t+(1-U)\tau) d\tau,
\end{equation*}

\noindent and the result follows.
\end{proof}

\noindent This Proposition corresponds to the historical work of J. Proudman (\cite{Proudman_resonnance}). We rediscover the fact that the resonance occurs if the speed of the disturbance is $1$. For a disturbance with a speed different from $1$, we notice a saturation effect (also pointed out in \cite{Energy_landslide_tsunami}). The graph in Figure \ref{maxi_W}, gives the typical evolution of $\left\lvert h(t,\cdot) \right\rvert_{\infty}$ with respect to the time $t$ for different values of the speed. We can see the saturation effect. We compute $h$ with a finite difference method and we take $f(t,X)=e^{-\frac{1}{2}(X-Ut)^{2}}$. We see also that the landslide resonance and the Proudman resonance have the same effects. There are however two important differences that we exposed in the introduction of this part. The first one is the duration of the resonance. A landslide is quicker than a meteorological effect. The second one, is the fact that the typical size of the landslide (few dm) is bigger than the size of a storm (few hPa). For instance, for a moving storm which creates a variation of the pressure of 3 hPa during $15 t_{0}$, the final wave can reach a amplitude of $13$ cm (it is for example the case of the meteotsunami in Nagasaki in 1979, see \cite{MVR_Meteotsunamis}). Conversely, an offshore landslide with a thickness of $1$ m that lasts $t_{0}$, can create a wave of $50$ cm (which corresponds to the results in \cite{Energy_landslide_tsunami}). Therefore, we see that the principal difference between an offshore landslide and a moving storm is the size. 

\begin{center}
\begin{figure}[!h]
   \includegraphics[scale=0.2]{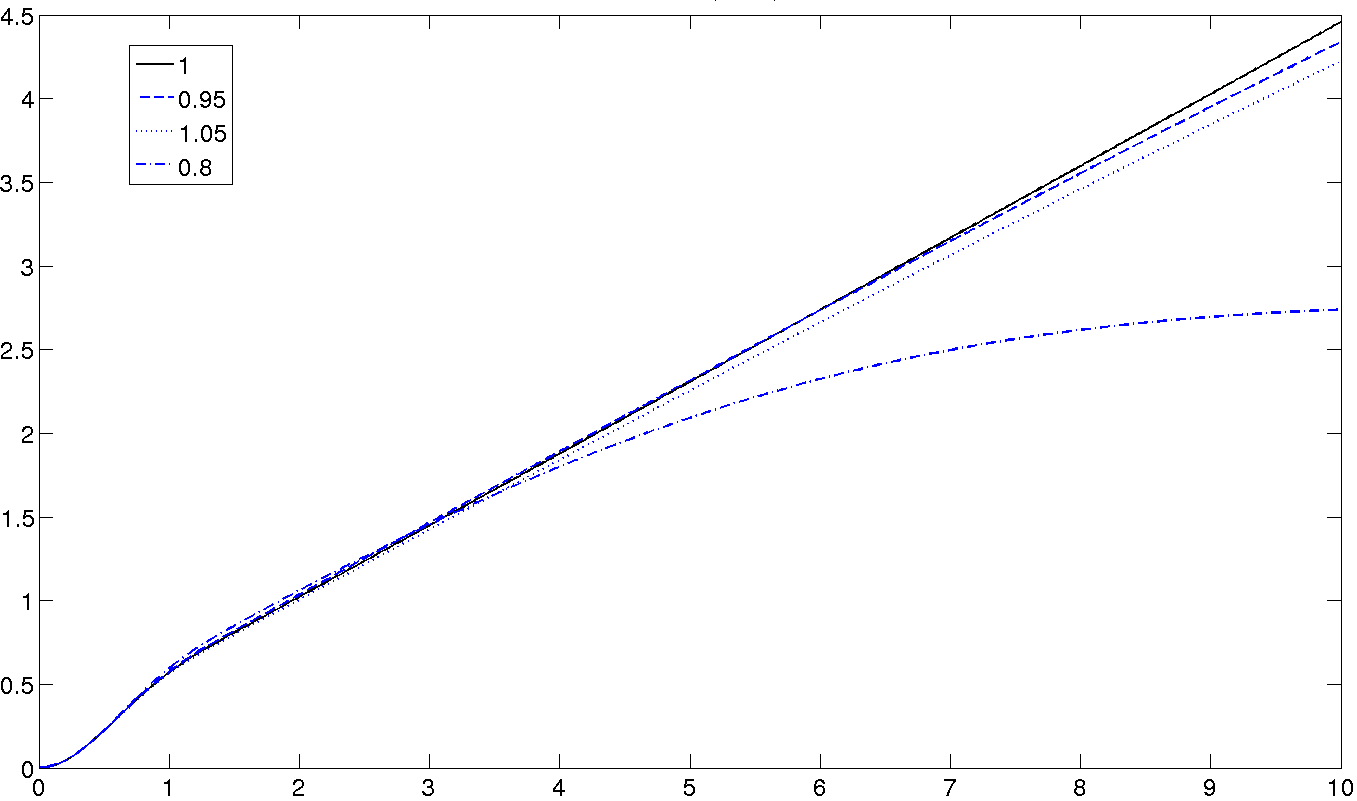}
   \caption{Evolution of the maximum of $h$, solution of equation \eqref{waves_h}, with different values of the speed $U$.}
   \label{maxi_W}
\end{figure}
\end{center}

\subsection{A shallow water model when $\beta$ is large}\label{A shallow water model with a variable topography}

\subsubsection{Linear asymptotic}

\noindent In this case, we suppose only that $\epsilon$ and $\mu$ are small. We recall that $\beta b(t,X) = \beta b_{0}(X) + \beta \lambda b_{m}(t,X)$. Then, we assume also that $1 - b_{0} \geq h_{\min} > 0$. In the following, we denote $h_{0} := 1 - \beta b_{0}$. The asymptotic regime is

\begin{equation}
\mathcal{A}_{LVW} = \left\lbrace (\epsilon,\beta,\lambda,\mu) \text{, } 0 < \epsilon \text{, }  \mu \leq \delta_{0} \text{, } 0 < \beta \leq 1 \text{, } \beta \lambda = \epsilon  \right\rbrace,
\end{equation}

\noindent with $\delta_{0} \ll 1$. We can now give a asymptotic model.

\begin{prop}
Let $t_{0} \hspace{-0.05cm} > \hspace{-0.05cm} \frac{d}{2}$, $N \geq \max(1,t_{0}) + 4$, $b \in W^{3,\infty}(\R^{+}; H^{N}(\RD))$, $U^{0} = \left(\zeta_{0}, \psi_{0} \right) \in E_{0}^{N} \hspace{-0.05cm}$, and $P \hspace{-0.05cm} \in \hspace{-0.05cm} W^{1,\infty}(\mathbb{R}^{+}; \hspace{-0.05cm} \Hdot^{N+1}(\RD))$. We suppose that \eqref{nonvanishing} and \eqref{rt_constraints} are satisfied initially. We suppose also that $b_{0} \in H^{N}(\RD)$ and that $h_{0} = 1-\beta b_{0} \geq h_{\min}$. Then, there exists $T > 0$, such that for all  $(\epsilon,\beta,\lambda,\mu) \in \mathcal{A}_{LVW}$, there exists a unique solution $U =(\zeta,\psi) \in E^{N}_{T}$ to the water waves equations with initial data $U^{0}$. Furthermore, for $\overline{V}$ as in \eqref{med_V_def},

\begin{equation*}
\left\lvert \zeta - \zeta_{1} \right\rvert_{L^{\infty}([0,T]; H^{N-4}(\RD))} + \left\lvert \overline{V} - \overline{V}_{1} \right\rvert_{L^{\infty}([0,T]; H^{N-4}(\RD))} \leq T \delta_{0} \widetilde{C},
\end{equation*}

\noindent where

\begin{equation*}
\widetilde{C} = C \hspace{-0.1cm} \left(\mathcal{E}^{N} \left(U^{0} \right), \frac{1}{h_{\min}}, \frac{1}{\mathfrak{a}_{\min}}, |b|_{W^{3,\infty}_{t} H^{N}_{\! X}}, \left\lvert \nabla P \right\rvert_{W^{1,\infty}_{t} H^{N}_{\! X}} \right) \hspace{-0.1cm} ,
\end{equation*}

\noindent and $(\zeta_{1},\overline{V}_{1})$ solution of the waves equation

\begin{equation}\label{variable_waves_equation}
  \left\{
  \begin{aligned}
   &\partial_{t} \zeta_{1} + \nabla \cdot \left( h_{0} \overline{V}_{1} \right) = \partial_{t} b_{m},\\
   &\partial_{t} \overline{V}_{1} + \nabla \zeta_{1}  = - \nabla P,\\
   &\left(\zeta_{1}\right)_{|t=0} = \zeta_{0} \text{, } \left( V_{1} \right)_{|t=0} = \overline{V}\left[ \epsilon \zeta_{0}, \beta b_{|t=0} \right] \left(\psi_{0}, \left( \partial_{t} b \right)_{|t=0} \right). 
  \end{aligned}  
  \right.
\end{equation} 

\end{prop}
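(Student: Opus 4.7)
The plan is to mirror the proof of Proposition \ref{approx_ww}: obtain $U=(\zeta,\psi)$ on a uniform time interval via Theorem \ref{existence_uniqueness}, check that the target system \eqref{variable_waves_equation} is wellposed, show by consistency that $(\zeta,\overline{V})$ satisfies it up to a $O(\delta_{0})$ residual, and finally close a linear energy estimate for the difference with $(\zeta_{1},\overline{V}_{1})$. The uniform existence is automatic: in $\mathcal{A}_{LVW}$ one has $\max(\epsilon,\beta)\leq 1$ and $\tfrac{\beta\lambda}{\epsilon}=1$, so the hypothesized bounds on $U^{0}$, $b$, $P$ translate into bounds uniform in the regime, and Theorem \ref{existence_uniqueness} yields a time $T>0$ depending only on the quantities making up $\widetilde{C}$. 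Wellposedness of \eqref{variable_waves_equation} is standard: its principal part is symmetrized by the energy $\mathcal{F}(\zeta_{1},\overline{V}_{1})=\tfrac{1}{2}\int_{\RD}\bigl(|\zeta_{1}|^{2}+h_{0}|\overline{V}_{1}|^{2}\bigr)\,dX$, which is equivalent to $|\zeta_{1}|_{L^{2}}^{2}+|\overline{V}_{1}|_{L^{2}}^{2}$ because $h_{0}\geq h_{\min}$, so Friedrichs theory (cf.\ \cite{Benzoni_Serre}) gives a unique solution in $\mathcal{C}([0,T];H^{N-4}(\RD))^{2}$.

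The consistency step is the heart of the proof. Proposition \ref{shallow water approx} together with $\tfrac{\beta\lambda}{\epsilon}=1$ gives the exact identity
\[
\partial_{t}\zeta + \nabla\cdot\!\bigl((1+\epsilon\zeta-\beta b)\overline{V}\bigr) = \partial_{t}b.
\]
Decomposing $(1+\epsilon\zeta-\beta b)\overline{V} = h_{0}\overline{V} + (\epsilon\zeta-\beta\lambda b_{m})\overline{V}$ and invoking $\beta\lambda=\epsilon$ absorbs the topographic correction into a first residual $R_{1}$ of size $\epsilon$ in $H^{N-4}$, once $|\overline{V}|_{H^{N-4}}$ is uniformly controlled through Propositions \ref{psi_controls}, \ref{shallow water approx} and Theorem \ref{existence_uniqueness}. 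For the velocity equation I would apply $\nabla$ to the second line of \eqref{water_waves_equations}, dominate the $\epsilon$-quadratic nonlinearities in $H^{N-4}$ by $\epsilon\widetilde{C}$ via Proposition \ref{controls_w}, replace $\nabla\psi$ by $\overline{V}$ through the first inequality of Proposition \ref{shallow water approx}, and handle $\partial_{t}(\overline{V}-\nabla\psi)=O(\mu)$ by its second inequality. This yields
\[
\partial_{t}\overline{V}+\nabla\zeta = -\nabla P + R_{2},\qquad |R_{2}|_{H^{N-4}}\leq(\epsilon+\mu)\widetilde{C}\leq\delta_{0}\widetilde{C}.
\]

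Setting $\zeta_{2}=\zeta-\zeta_{1}$ and $\overline{V}_{2}=\overline{V}-\overline{V}_{1}$, the initial data vanish by the prescribed choice of $(\overline{V}_{1})_{|t=0}$, and $(\zeta_{2},\overline{V}_{2})$ solves the same linear system as $(\zeta_{1},\overline{V}_{1})$ with source $(R_{1},R_{2})$ of size $\delta_{0}\widetilde{C}$. Applying $\partial^{\alpha}$ for $|\alpha|\leq N-4$, closing the symmetrized energy estimate through $\mathcal{F}$ and Grönwall delivers $|\zeta_{2}|_{L^{\infty}([0,T];H^{N-4})}+|\overline{V}_{2}|_{L^{\infty}([0,T];H^{N-4})}\leq T\delta_{0}\widetilde{C}$, which is the stated bound.

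The main obstacle is the velocity part of the consistency step: controlling $\partial_{t}(\overline{V}-\nabla\psi)$ requires a uniform bound on $\partial_{t}\nabla\psi$, obtained by time-differentiating the quasilinear system \eqref{quasilinear_system} and invoking once more the energy estimate of Theorem \ref{existence_uniqueness}. This is the point where the hypothesis $b\in W^{3,\infty}_{t}H^{N}_{\! X}$ is genuinely consumed, and where one must verify that the $\beta$-dependent constants hidden in $M_{N}$, $\w$ and $\overline{V}$ remain uniformly bounded over $\mathcal{A}_{LVW}$ even though $\beta$ is only controlled from above.
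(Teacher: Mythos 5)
Your proposal follows essentially the same route as the paper: uniform existence from the local well-posedness theorem, symmetrization of \eqref{variable_waves_equation} by the $h_{0}$-weighted energy, consistency of $(\zeta,\overline{V})$ via Proposition \ref{shallow water approx} with $O(\epsilon+\mu)=O(\delta_{0})$ residuals, and a Gr\"onwall estimate on the difference with the energy $\frac{1}{2}|\zeta_{2}|_{H^{N-4}}^{2}+\frac{1}{2}\bigl(h_{0}\Lambda^{N-4}\overline{V}_{2},\Lambda^{N-4}\overline{V}_{2}\bigr)_{L^{2}}$. The argument is correct (your final worry about $\partial_{t}\nabla\psi$ is handled more directly by reading it off the second water waves equation rather than time-differentiating the quasilinear system, but either works).
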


\begin{proof}
\noindent The system \eqref{variable_waves_equation} is wellposed since it can be symmetrized thanks to the energy

\begin{equation*}
\mathcal{E}(t) = \frac{1}{2} \left\lvert \zeta_{1} \right\rvert_{L^{2}}^{2} + \frac{1}{2} \left( h_{0} \overline{V}_{1}, \overline{V}_{1} \right)_{L^{2}}.
\end{equation*}

\noindent For the inequality, we proceed as in Proposition \ref{approx_ww}, differentiating the energy 

\begin{equation*}
\mathcal{E}^{N}(t) = \frac{1}{2} \left\lvert \zeta_{2} \right\rvert_{H^{N-4}}^{2} + \frac{1}{2} \left( h_{0} \Lambda^{N-4} \overline{V}_{2}, \Lambda^{N-4} \overline{V}_{2} \right)_{L^{2}},
\end{equation*}

\noindent with $\zeta_{2} = \zeta - \zeta_{1}$ and $\overline{V}_{2} = \overline{V} - \overline{V}_{1}$. Using Gronwall's Lemma, Proposition \ref{shallow water approx} and standard controls, we get result.
\end{proof}

\noindent This model is well-known in the physics literature to investigate the landslide tsunami phenomenon (see \cite{Energy_landslide_tsunami}).

\subsubsection{Amplification in shallow waters when $\beta$ is large}

\noindent In this part, $d=1$ and we suppose that $P = 0$. The same study can be done for a non constant pressure. For the sake of simplicity, we assume also that initially the velocity of the landslide is zero and hence that $\left(\partial_{t} b_{m} \right)_{|t=0} = 0$ (the bottom does not move at the beginning). We transform the system \eqref{variable_waves_equation} in order to get an equation for $\zeta_{1}$ only. We obtain that $\zeta_{1}$ satisfies

\begin{equation}\label{variable_waves_equation_zeta}
\partial^{2}_{t} \zeta_{1} - \partial_{X} \left( h_{0} \partial_{X} \zeta_{1} \right) = \partial^{2}_{t} b_{m},
\end{equation} 

\noindent with $\left(\zeta_{1}\right)_{|t=0} = 0$ and $\left(\partial_{t} \zeta_{1} \right)_{|t=0} = 0$. We wonder now if we can catch an elevation of the sea level with this asymptotic model. Therefore, we are looking for solutions of the form

\begin{equation}\label{amplfification_form}
\zeta_{2}(t,X)= t \zeta_{3}(t,X).
\end{equation}

\noindent The following proposition gives example of such solutions for bounded moving bottoms (with finite energy).

\begin{prop}
Suppose that $h_{0} \geq h_{\min} > 0$ with $h_{0} \in H^{1}(\R)$. Let $\left(\zeta_{3},\overline{V}_{3} \right)$ be a solution of 

\begin{equation*}
  \left\{
  \begin{aligned}
   &\partial_{t} \zeta_{3} + \partial_{X} \left(h_{0} \overline{V}_{3} \right) = 0,\\
   &\partial_{t} \overline{V}_{3} + \partial_{X} \zeta_{3}  = 0,\\
  \end{aligned}  
  \right.
\end{equation*}

\noindent with $\left( \zeta_{3}, \overline{V}_{3} \right)_{|t=0} = \left(0, f^{\prime} \right)$ with $f \in H^{1}(\R)$. Then, $\zeta_{1}(t,X)= t \zeta_{3}(t,X)$ is a non trivial solution of \eqref{variable_waves_equation_zeta} with 

\begin{equation}\label{bm}
b_{m}(t,X)= 2 \int_{0}^{t} \zeta_{3}(s,X) ds,
\end{equation}

\noindent and $b_{m}(t,\cdot)$ is bounded in $L^{2}(\RD)$ and in $L^{\infty}(\RD)$ uniformly with respect to $t$

\begin{equation*}
\left\vert b_{m}(t,\cdot) \right|_{L^{2}} + \left\vert b_{m}(t,\cdot) \right|_{L^{\infty}} \leq C,
\end{equation*}

\noindent where $C$ is independent on $t$.
\end{prop}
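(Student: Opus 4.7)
The plan is to split the proof into three parts: a direct verification of the PDE, a reduction of the bound on $b_m$ to a free wave equation, and an $L^2$--$L^\infty$ estimate on the latter.

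\textbf{Step 1 (the PDE is satisfied).} First I would differentiate $\zeta_1 = t\zeta_3$ twice in time to get $\partial_t^2 \zeta_1 = 2\partial_t \zeta_3 + t\,\partial_t^2 \zeta_3$. Differentiating the first equation of the $(\zeta_3, \overline V_3)$ system in $t$ and using the second to substitute $\partial_t \overline V_3 = -\partial_X \zeta_3$ gives $\partial_t^2 \zeta_3 = \partial_X(h_0 \partial_X \zeta_3)$. Plugging this in yields $\partial_t^2 \zeta_1 - \partial_X(h_0 \partial_X \zeta_1) = 2\partial_t \zeta_3$, which equals $\partial_t^2 b_m$ by definition \eqref{bm}. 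The initial conditions $\zeta_1(0,\cdot) = 0$ and $\partial_t \zeta_1(0,\cdot) = \zeta_3(0,\cdot) = 0$ are then immediate, and nontriviality follows from taking any $f \not\equiv 0$.

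\textbf{Step 2 (a velocity potential).} To control $b_m$ I would introduce $\phi(t,X) := f(X) - \tfrac12 b_m(t,X)$. Using $\partial_t b_m = 2\zeta_3$ together with the identity $\partial_X b_m = 2(f' - \overline V_3)$, which follows by integrating $\partial_t \overline V_3 = -\partial_X \zeta_3$ in time from $0$, one checks $\partial_t \phi = -\zeta_3$ and $\partial_X \phi = \overline V_3$. The first equation of the system then gives $\partial_t^2 \phi = -\partial_t \zeta_3 = \partial_X(h_0 \overline V_3) = \partial_X(h_0 \partial_X \phi)$, so $\phi$ solves the free wave equation with Cauchy data $(\phi, \partial_t \phi)|_{t=0} = (f, 0)$. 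Since $b_m = 2(f-\phi)$ and $f \in H^1(\R) \hookrightarrow L^2 \cap L^\infty$, the problem reduces to showing that $|\phi(t)|_{L^2}$ and $|\phi(t)|_\infty$ are bounded uniformly in $t$.

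\textbf{Step 3 (the key estimate and the main obstacle).} The standard energy identity
\[
\int_\R (\partial_t \phi)^2 + h_0 (\partial_X \phi)^2 \, dX = \int_\R h_0 (f')^2 \, dX,
\]
combined with $h_0 \geq h_{\min}$, controls $|\partial_X \phi(t)|_{L^2}$ uniformly in $t$. The main obstacle is that the analogous bound for $|\phi(t)|_{L^2}$ is \emph{not} a consequence of energy conservation, and the naive estimate $|\phi(t)|_{L^2} \leq |f|_{L^2} + t \sup_s |\partial_t \phi(s)|_{L^2}$ grows linearly in $t$. To get around this I would invoke the functional calculus for the non-negative self-adjoint operator $A := -\partial_X(h_0 \partial_X)$ on $L^2(\R)$ (self-adjointness follows from $h_0 \in L^\infty$ with $h_0 \geq h_{\min}$): since $\partial_t \phi(0) = 0$, the solution is $\phi(t) = \cos(t\sqrt A)\, f$, and the spectral theorem yields $|\cos(t\sqrt A)|_{L^2 \to L^2} \leq 1$, so $|\phi(t)|_{L^2} \leq |f|_{L^2}$ for all $t$. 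The one-dimensional Gagliardo--Nirenberg inequality $|g|_\infty^2 \leq 2 |g|_{L^2} |g'|_{L^2}$ applied to $\phi(t)$ then delivers the $L^\infty$ bound. Assembling the two estimates with $b_m = 2(f - \phi)$ gives the claimed uniform-in-$t$ control of $|b_m(t)|_{L^2}$ and $|b_m(t)|_\infty$.
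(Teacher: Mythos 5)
Your proof is correct, and its backbone is the same as the paper's: you recognize that the relevant object is a time-antiderivative of $(\zeta_{3},\overline{V}_{3})$ solving the homogeneous problem, and you reduce the uniform bound on $b_{m}$ to uniform $L^{2}$ and $\dot H^{1}$ bounds on that antiderivative, finishing with the 1D Sobolev/Gagliardo--Nirenberg embedding. Your $\phi$ is exactly $-\eta$, where the paper introduces $(\eta,W)$ as the solution of the \emph{same first-order system} with data $(-f,0)$ and identifies $(\zeta_{3},\overline{V}_{3})=(\partial_{t}\eta,\partial_{t}W)$ by uniqueness. The one genuine divergence is at the step you correctly flag as the main obstacle, the uniform $L^{2}$ bound on the antiderivative: the paper gets it for free from the conserved symmetrizer energy of the first-order system, $\int_{\R}\eta^{2}+h_{0}W^{2}=\int_{\R}f^{2}$, which immediately gives $\lvert\eta(t)\rvert_{L^{2}}\le\lvert f\rvert_{L^{2}}$ without any operator theory; you instead invoke the functional calculus for $A=-\partial_{X}(h_{0}\partial_{X})$ and the contraction bound on $\cos(t\sqrt{A})$. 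Both are valid; the paper's route is more elementary and stays within the hyperbolic-systems framework already set up for wellposedness, while yours is more self-contained at the level of the scalar wave equation (and makes explicit why naive integration in time would lose a factor of $t$). One small point to keep in mind in your Step 2: the identity $\partial_{X}b_{m}=2(f'-\overline{V}_{3})$ requires enough regularity to interchange $\partial_{X}$ with the time integral and to integrate $\partial_{t}\overline{V}_{3}=-\partial_{X}\zeta_{3}$ from $0$ to $t$; this is harmless here but is exactly the kind of step the paper's uniqueness argument for the first-order system packages more cleanly.
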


\begin{proof}
\noindent Plugging the expression of $\zeta$ and $b_{m}$ in \eqref{variable_waves_equation_zeta}, we get the first result. We have to show that $\zeta_{3} \in L^{1}(\R^{+}; L^{2}(\RD))$. Consider the linear hyperbolic equation

\begin{equation*}
  \left\{
  \begin{aligned}
   &\partial_{t} \eta + \partial_{X} \left(h_{0} W \right) = 0,\\
   &\partial_{t} W + \partial_{X} \eta  = 0,\\
  \end{aligned}  
  \right.
\end{equation*}

\noindent with $\left( \eta, W \right)_{|t=0} = \left(-f,0\right)$. This system has a unique solution $\left( \eta, W \right) \in \mathcal{C}^{0}(\R; H^{1}(\R))$. Furthermore, $\left( \partial_{t} \eta, \partial_{t} W \right) \in \mathcal{C}^{0}(\R; L^{2}(\R))$, and $\left( \partial_{t} \eta, \partial_{t} W \right)$ satisfies the same linear hyperbolic system as $\left( \zeta_{3}, \overline{V}_{3} \right)$. By uniqueness, $\zeta_{3} = \partial_{t} \eta$ and 

\begin{equation*}
b_{m}(t,X)= 2 \eta(t,X) + 2 f(X).
\end{equation*}

\noindent Since, for all $t$,

\begin{equation*}
\int_{\R} \eta(t,X)^{2} + h_{0}(X) W(t,X)^{2} dX = \int_{\R} f(X)^{2} dX,
\end{equation*}

\noindent and $h_{0} \geq h_{\min} > 0$, we get the control of $\left\lvert b_{m}(t,\cdot) \right\rvert_{L^{2}}$. Finally, $\eta$ satisfies the waves equation 

\begin{equation*}
\partial_{t}^{2} \eta - \partial_{X} \left( h_{0} \partial_{X} \eta \right) = 0,
\end{equation*}

\noindent with $\left(\eta,\partial_{t} \eta \right)_{|t=0} = (-f,0) \in H^{1}(\RD)$. Then,
for all $t$,

\begin{equation*}
\int_{\R} \left\lvert \partial_{t} \eta(t,X) \right\rvert^{2} + h_{0}(X) \left\lvert \partial_{X} \eta(t,X) \right\rvert^{2} dX = \int_{\R} h_{0}(X) f^{\prime}(X)^{2} dX.
\end{equation*}

\noindent Therefore, $\left\lvert \eta \right\rvert_{H^{1}}$ (and $\left\lvert \eta \right\rvert_{L^{\infty}}$ by Sobolev embedding) is controlled uniformly with respect to $t$.
\end{proof}

\noindent In the following, we compute numerically some solutions of Equations \eqref{variable_waves_equation_zeta} of the form \eqref{amplfification_form} with a finite difference method. We take $b_{0}(X)= - \tanh(X)$, $\beta = \frac{1}{2}$ and $\left(\partial_{t} \zeta_{3} \right)_{|t=0} = (4X^{2}-2) e^{-X^2}$. The figure \ref{maxivariables_1} is the evolution of the maximum of $\zeta_{1}$. The figure \ref{wavesolution_1} is the graph at different times of the waves and the landslide. The dashed curves are the landslide, the solid curves are the waves and the dotted curve is the slope. Therefore, we see that an important elevation of the sea level is possible even if we do not consider that the seabed is flat.

\begin{remark}
\noindent In order to simplify, we consider that the system is initially at rest. But our study can easily be extended to waves with non trivial initial data. In particular, we can study a wave amplified by a landslide. This is what happened during the tsunami in Fukushima in 2011 (see \cite{landslide_Fukushima}). We compute numerically this amplification. We consider a wave moving with a speed equal to $1$ (typical speed in the sea after nondimensionalization) that is amplified by a landslide. Figure \ref{amplifiedwaves} represents the evolution of the maximum of this wave. We can see an amplification.
\end{remark}

\begin{center}
\begin{figure}[!h]
   \includegraphics[scale=0.25]{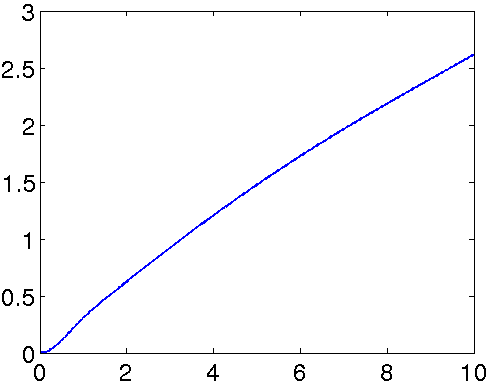}
   \caption{Evolution of the maximum of $\zeta_{1}$, solution of \eqref{variable_waves_equation_zeta}, for a non flat bottom $b_{0}$.}
   \label{maxivariables_1}
\end{figure}
\end{center}

\begin{center}
\begin{figure}[!h]
   \includegraphics[scale=0.15]{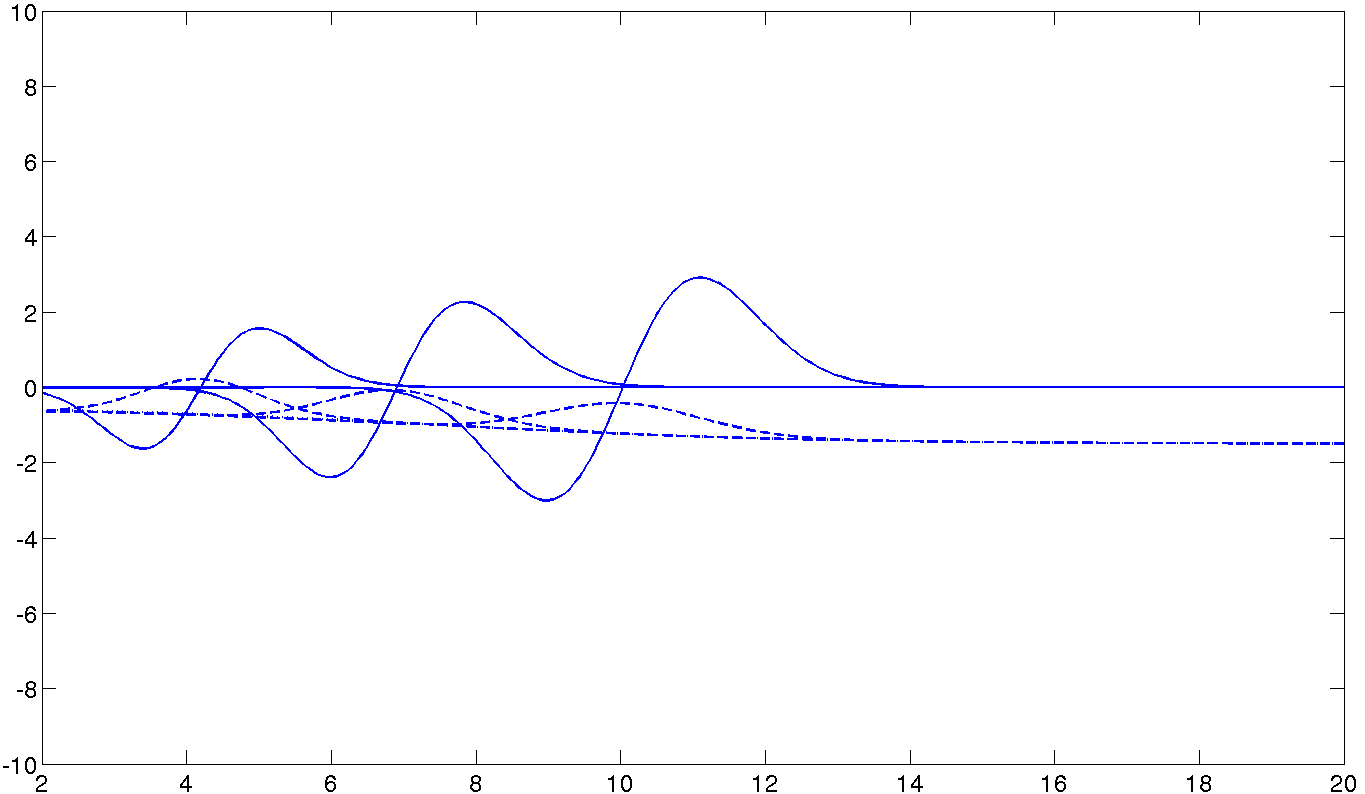}
   \caption{Evolution of the surface $\zeta_{1}$ (solid line), solution of \eqref{variable_waves_equation_zeta}, and the landslide $b_{m}$ (dashed line).}
   \label{wavesolution_1}
\end{figure}
\end{center}

\begin{center}
\begin{figure}[!h]
   \includegraphics[scale=0.15]{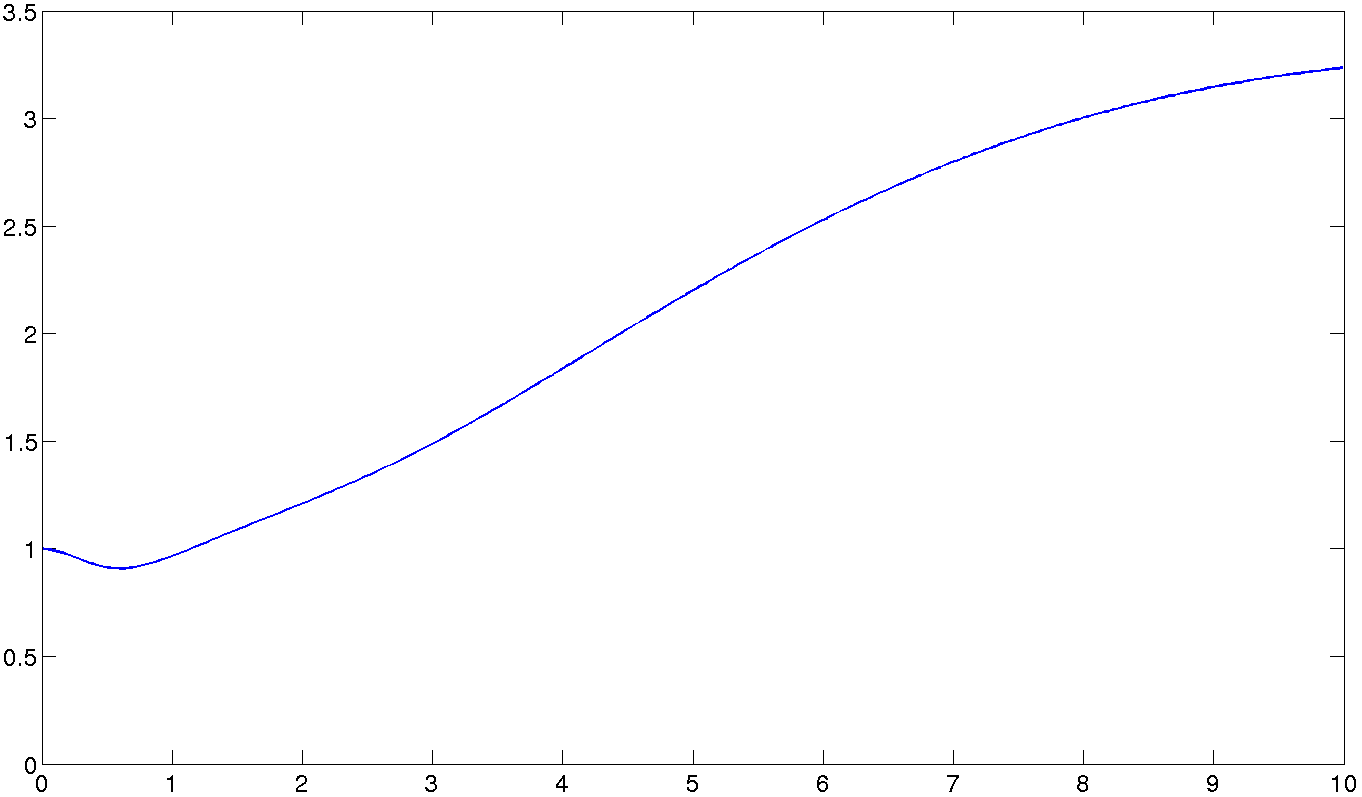}
   \caption{Evolution of the maximum of $h$, solution of \eqref{variable_waves_equation_zeta}, with non trivial initial data and with $b_{m}$ like in Figure \ref{wavesolution_1}.}
   \label{amplifiedwaves}
\end{figure}
\end{center}

\subsection{Linear asymptotic and resonance in intermediate depths}\label{Linear asymptotic and resonance in intermediate depths}

\noindent In this case, we consider only that $\epsilon$, $\beta$ are small. Physically, this means that we consider small amplitudes for the surface and the bottom (compared to the mean depth) and that the depth is not small compared to wavelength of the waves. In this part, we generalize the Proudman resonance in deeper waters. The asymptotic regime is

\begin{equation}
\mathcal{A}_{LWW} = \left\lbrace (\epsilon,\beta,\lambda, \mu) \text{, } 0 < \epsilon \text{, } \beta \leq \delta_{0} \text{, } \beta \lambda = \epsilon \text{ and } 0 < \mu \leq \mu_{\max} \right\rbrace,
\end{equation}

\noindent with $\delta_{0} \ll 1$ and $0 < \mu_{\max}$. Using the energy

\begin{equation*}
\mathcal{E}(t) = \frac{1}{2} \left\lvert \zeta \right\rvert_{L^{2}}^{2} + \frac{1}{2} \left( \frac{1}{\mu} G_{\mu}[0,0](\psi), \psi \right)_{L^{2}},
\end{equation*}

\noindent and proceeding as in Proposition \ref{approx_ww}  (we need also Proposition 3.12 in \cite{Lannes_ww}), we get a new asymptotic model.

\begin{prop}
Let $t_{0} \hspace{-0.05cm} > \hspace{-0.05cm} \frac{d}{2}$, $N \hspace{-0.05cm} \geq \hspace{-0.05cm} \max(1,t_{0}) +3$, $b \in W^{3,\infty}(\R^{+}; H^{N}(\RD))$, $U^{0} \hspace{-0.05cm} = \hspace{-0.05cm} \left(\zeta_{0}, \psi_{0} \right) \in E_{0}^{N}$ and $P \hspace{-0.05cm} \in \hspace{-0.05cm} W^{1,\infty}(\mathbb{R}^{+}; \hspace{-0.05cm} \Hdot^{N+1}(\RD))$. We suppose that \eqref{nonvanishing} and \eqref{rt_constraints} are satisfied initially. $\hspace{-0.3cm}$ Then, there exists $T > 0$, such that for all  $(\epsilon,\beta,\lambda, \mu) \hspace{-0.1cm} \in \hspace{-0.1cm} \mathcal{A}_{LWW}$, there exists a unique solution $U = (\zeta,\psi) \hspace{-0.05cm} \in E^{N}_{\frac{T}{\sqrt{\delta_{0}}}}$ to the water waves equations with initial data $U^{0}$. Furthermore, for all $\alpha \in \left[0, \frac{1}{3} \right)$,

\begin{equation*}
\left\lvert \zeta - \widetilde{\zeta} \right\rvert_{L^{\infty}\left(\left[0, \frac{T}{\delta^{\alpha}_{0}} \right]; H^{N-2}(\RD) \right)} + \left\lvert \frac{|D|}{\sqrt{1+|D|}} \left( \psi - \widetilde{\psi} \right) \right\rvert_{L^{\infty}\left(\left[0, \frac{T}{\delta^{\alpha}_{0}} \right]; H^{N-2}(\RD) \right)} \leq T \delta_{0}^{1-3\alpha} \widetilde{C},
\end{equation*}

\noindent where 

\begin{equation*}
\widetilde{C} = C \left(\E \left(U^{0} \right), \frac{1}{h_{\min}}, \frac{1}{\mathfrak{a}_{\min}}, \mu_{\max}, |b|_{W^{3,\infty}_{t} H^{N}_{\! X}}, \left\lvert \nabla P \right\rvert_{W^{1,\infty}_{t} H^{N}_{\! X}} \right),
\end{equation*}

\noindent where $\left(\widetilde{\zeta},\widetilde{\psi}\right)$ is a solution of the waves equation

\begin{equation}\label{linear_water_waves_equations}
  \left\{
  \begin{aligned}
   &\partial_{t} \widetilde{\zeta} - \frac{1}{\mu} G_{\mu}[0,0] (\widetilde{\psi}) = G_{\mu}^{N \! N}[0,0](\partial_{t} b),\\
   &\partial_{t} \widetilde{\psi} + \widetilde{\zeta}  = -P, \\
  \end{aligned}
  \right.
\end{equation} 

\noindent with initial data $U^{0}$. 

\end{prop}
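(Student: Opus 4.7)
The strategy mirrors the proof of Proposition \ref{approx_ww}, so I would proceed in three steps: wellposedness of the linear model, estimation of residuals, and a Gronwall argument on an $N-2$ order energy. First, I would observe that \eqref{linear_water_waves_equations} is a linear system symmetrized by the suggested energy $\mathcal{E}(t) = \frac{1}{2}|\widetilde{\zeta}|_{L^{2}}^{2} + \frac{1}{2}\bigl(\frac{1}{\mu}G_{\mu}[0,0](\widetilde{\psi}), \widetilde{\psi}\bigr)_{L^{2}}$, and by Proposition 3.12 in \cite{Lannes_ww} this quantity is equivalent to $|\widetilde{\zeta}|_{L^{2}}^{2} + \bigl|\frac{|D|}{\sqrt{1+|D|}}\widetilde{\psi}\bigr|_{L^{2}}^{2}$ uniformly in $\mu \in (0,\mu_{\max}]$, which explains the choice of norm in the statement. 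Standard hyperbolic theory then gives a unique global solution for \eqref{linear_water_waves_equations} in the appropriate class. Parallelly, Theorem \ref{long_time_existence} applied to $(\epsilon,\beta,\lambda,\mu) \in \mathcal{A}_{LWW}$ with $\delta = \max(\epsilon,\beta^{2}) \leq \delta_{0}$ delivers the existence of $(\zeta,\psi) \in E^{N}_{T/\sqrt{\delta_{0}}}$ for a $T > 0$ independent of the parameters, together with the energy bound $\sup_{[0,T/\delta_{0}^{\alpha}]} \mathcal{E}^{N}(U) \leq c^{3}/\delta_{0}^{2\alpha}$.

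Second, I would rewrite the exact water waves equations \eqref{water_waves_equations} as a perturbation of \eqref{linear_water_waves_equations}:
\begin{equation*}
\left\{
\begin{aligned}
&\partial_{t}\zeta - \tfrac{1}{\mu}G_{\mu}[0,0](\psi) = G_{\mu}^{N\!N}[0,0](\partial_{t}b) + R_{1},\\
&\partial_{t}\psi + \zeta = -P + R_{2},
\end{aligned}
\right.
\end{equation*}
where $R_{1} = \tfrac{1}{\mu}(\G(\psi) - G_{\mu}[0,0](\psi)) + \tfrac{\beta\lambda}{\epsilon}(\GNN(\partial_{t}b) - G_{\mu}^{N\!N}[0,0](\partial_{t}b))$ is controlled in $H^{N-2}$ using Remark \ref{G_approx_Laplacian} and Proposition \ref{approximation_GNN_delta}, each contributing factors $|(\epsilon\zeta,\beta b)|_{H^{N}}$ or $\mu$, all bounded by $\delta_{0}$ (since $\beta\lambda = \epsilon$ in $\mathcal{A}_{LWW}$); and $R_{2}$ collects the quadratic terms $\frac{\epsilon}{2}|\nabla\psi|^{2} - \frac{\epsilon}{2\mu}(1+\epsilon^{2}\mu|\nabla\zeta|^{2})\underline{w}^{2}$, controlled in $H^{N-1}$ by a factor $\epsilon \leq \delta_{0}$ times powers of the energy via Propositions \ref{psi_controls} and \ref{controls_w}. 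Thus both residuals are bounded by $\delta_{0}$ times a constant depending on the energy $\mathcal{E}^{N}(U)$.

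Third, setting $(\zeta_{1},\psi_{1}) := (\zeta - \widetilde{\zeta}, \psi - \widetilde{\psi})$, I would differentiate the higher order energy
\begin{equation*}
\mathcal{E}^{N-2}_{\mathrm{lin}}(t) = \tfrac{1}{2}|\zeta_{1}|_{H^{N-2}}^{2} + \tfrac{1}{2}\bigl(\tfrac{1}{\mu}G_{\mu}[0,0](\Lambda^{N-2}\psi_{1}),\Lambda^{N-2}\psi_{1}\bigr)_{L^{2}},
\end{equation*}
which is equivalent (uniformly in $\mu$) to the square of the norm appearing in the statement. Since $G_{\mu}[0,0]$ is a constant coefficient Fourier multiplier, it commutes with $\Lambda^{N-2}$, so the antisymmetric part cancels and the time derivative reduces to the pairing of $R_{1},R_{2}$ with $(\zeta_{1},\psi_{1})$. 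Applying Cauchy--Schwarz and Gronwall on $[0,T/\delta_{0}^{\alpha}]$ then yields
\begin{equation*}
\mathcal{E}^{N-2}_{\mathrm{lin}}(t)^{1/2} \lesssim \int_{0}^{t} \delta_{0}\, C\bigl(\mathcal{E}^{N}(U)(s)\bigr)\, ds \lesssim \tfrac{T}{\delta_{0}^{\alpha}} \cdot \delta_{0} \cdot \tfrac{1}{\delta_{0}^{2\alpha}} \widetilde{C} = T\,\delta_{0}^{1-3\alpha}\widetilde{C},
\end{equation*}
where the factor $\delta_{0}^{-2\alpha}$ encodes the worst-case energy growth from Theorem \ref{long_time_existence}. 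The constraint $\alpha < 1/3$ is thus exactly what is needed to keep the right-hand side small.

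The main technical point, and the step I expect to be the most delicate, is the trade-off in the third paragraph between the length of the time interval $T/\delta_{0}^{\alpha}$ and the precise way the energy of the full solution enters through $\widetilde{C}$; one has to use the polynomial-in-$\mathcal{E}^{N}$ character of the bounds on $R_{1},R_{2}$ (via Propositions \ref{psi_controls}, \ref{controls_w}, \ref{approximation_GNN_delta}, and Remark \ref{G_approx_Laplacian}) together with the quantitative estimate $\mathcal{E}^{N}(U) \lesssim \delta_{0}^{-2\alpha}$ of Theorem \ref{long_time_existence}, so that the product of these three ingredients remains $O(\delta_{0}^{1-3\alpha})$. Everything else is a routine adaptation of the argument for Proposition \ref{approx_ww}, with $-\Delta$ replaced by the Fourier multiplier $\frac{1}{\mu}G_{\mu}[0,0]$ and the corresponding Sobolev norm on $\psi$ adjusted accordingly.
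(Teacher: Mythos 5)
Your proposal is correct and follows essentially the same route as the paper, which itself only sketches this proof by pointing to the energy $\mathcal{E}(t) = \frac{1}{2}|\zeta|_{L^{2}}^{2} + \frac{1}{2}\left(\frac{1}{\mu}G_{\mu}[0,0](\psi),\psi\right)_{L^{2}}$, to Proposition 3.12 in \cite{Lannes_ww} for the uniform equivalence with the $\frac{|D|}{\sqrt{1+|D|}}$ norm, and to the argument of the shallow-water approximation proposition; your residual decomposition, the use of the long-time existence theorem with $\mathcal{E}^{N}(U)\lesssim \delta_{0}^{-2\alpha}$, and the bookkeeping $\frac{T}{\delta_{0}^{\alpha}}\cdot\delta_{0}\cdot\delta_{0}^{-2\alpha}=T\delta_{0}^{1-3\alpha}$ are exactly what is intended. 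One small caution: in $\mathcal{A}_{LWW}$ the parameter $\mu$ is \emph{not} bounded by $\delta_{0}$, so the smallness of $R_{1}$ must come solely from the factors $|(\epsilon\zeta,\beta b)|_{H^{N}}$ in the first estimates of Proposition \ref{approximation_GNN_delta} and Remark \ref{G_approx_Laplacian} (your definition of $R_{1}$ already respects this; only the phrase ``or $\mu$, all bounded by $\delta_{0}$'' is inaccurate here).
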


\noindent The Proudman resonance is a phenomenon which occurs in shallow water regime. We wonder if there is also a resonance in deeper waters. In this part, we only work with a non constant pressure and hence $\partial_{t} b = 0$. The same study can be done for a moving bottom. We consider the equation \eqref{linear_water_waves_equations} for $d=1$. Since, the initial data does not affect the possible resonance, we suppose in the following that $U^{0} = 0$.  We transform the system \eqref{linear_water_waves_equations} in order to have a unique equation for $\widetilde{\zeta}$ (in the following we denote $\widetilde{\zeta}$ by $\zeta$ to simplify the notation) 

\begin{equation*}
\left\{
\begin{aligned}
&\partial_{t}^{2} \zeta + \frac{1}{\mu} G_{\mu}[0,0] (\zeta) = - \frac{1}{\mu} G_{\mu}[0,0] (P),\\
&\zeta_{|t=0} = 0 \text{, }  \partial_{t} \zeta_{|t=0} = 0.
\end{aligned}
\right.
\end{equation*}

\noindent We can solve explicitly the previous equation, we get that

\begin{align*}
\widehat{\zeta}(t,\xi) &= \underbrace{\frac{i}{2} \int_{0}^{t} \xi \sqrt{\frac{\tanh(\sqrt{\mu} |\xi|)}{\sqrt{\mu} |\xi|}} \widehat{P}(\tau,\xi) e^{i(t-\tau) \xi \sqrt{\frac{\tanh(\sqrt{\mu} |\xi|)}{\sqrt{\mu} |\xi|}}}  d\tau}_{:=\widehat{\zeta}_{L}(t,\xi)}\\
&- \underbrace{\frac{i}{2} \int_{0}^{t} \xi \sqrt{\frac{\tanh(\sqrt{\mu} |\xi|)}{\sqrt{\mu} |\xi|}} \widehat{P}(\tau,\xi) e^{i(\tau - t) \xi \sqrt{\frac{\tanh(\sqrt{\mu} |\xi|)}{\sqrt{\mu} |\xi|}}}  d\tau}_{:=\widehat{\zeta}_{R}(t,\xi)}.
\end{align*}

\noindent In order to find a resonant pressure, we suppose that $P$ has the form $e^{-it a(D)} P_{0}$, where $a$ is a real smooth odd function which is sublinear, there exists $C > 0$ such that $|a(\xi)| \leq C |\xi|$. We also suppose that the phase velocity of the disturbance is positive, $\frac{a(\xi)}{\xi} \geq 0$. $P_{0}$ is a smooth function in a Sobolev space with $\widehat{P_{0}}(0) \neq 0$. We denote $\omega(\xi) = \sqrt{\frac{\tanh(\xi)}{\xi}}$. A simple computation gives that

\begin{equation*}
| \zeta_{L}(t,\cdot)| \leq | \widehat{\zeta_{L}}(t,\cdot)|_{L^{1}} \leq \left\lvert \widehat{P_{0}} \right\rvert_{L^{1}}.
\end{equation*}

\noindent Furthermore, we have

\begin{align*}
|\widehat{\zeta_{R}}(t,\xi)| &= \frac{1}{2} \left\lvert \int_{0}^{t} \xi \omega(\sqrt{\mu} \xi) \widehat{P}_{0}(\xi) e^{i \tau \left(\xi \omega(\sqrt{\mu} \xi) - a(\xi) \right)}  d\tau \right\rvert \\
&\leq \frac{t}{2} \left\lvert \xi\omega(\sqrt{\mu} \xi) \widehat{P}_{0}(\xi) \right\rvert,
\end{align*}

\noindent with an equality if and only if $a(\xi) = \xi \omega(\sqrt{\mu} \xi)$. Hence, it is natural to consider that

\begin{equation}\label{resonantP}
\widehat{P}(t, \xi) = e^{-it \xi \omega(\sqrt{\mu} \xi)} P_{0}(\xi).
\end{equation}

\noindent A simple computation gives

\begin{equation}\label{zetaR}
\zeta_{R}(t,X) = - \frac{i t}{2} \int_{\R} \xi \omega(\sqrt{\mu} \xi) \widehat{P_{0}}(\xi) e^{-it \xi \omega(\sqrt{\mu} \xi)} e^{i X \xi} d\xi.
\end{equation}

\noindent We wonder now if a resonance occurs. We need a dispersion estimate for the linear water waves equation.

\begin{prop}
\noindent Let $f \in W^{1,1}(\R)$ such that $\widehat{f}(0)=0$. Then, 

\begin{equation*}
\left\lvert \int_{\R} e^{-it \xi \omega(\sqrt{\mu} \xi)} e^{i X \xi} \widehat{f}(\xi) d\xi \right\rvert \leq \frac{C}{\sqrt{t}} \left( \frac{1}{\sqrt{\mu}} \left\lvert \frac{1}{\sqrt{\left\lvert \xi \right\rvert}} \left(\widehat{f}\right)^{\prime} \right\rvert_{L^{1}(\R)} + \mu^{\frac{1}{8}} \left\lvert \left\lvert \xi \right\rvert^{\frac{3}{4}} \left(\widehat{f}\right)^{\prime} \right\rvert_{L^{1}(\R)} \right).
\end{equation*}
\end{prop}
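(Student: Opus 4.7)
Set $\Phi(\xi) = X\xi - t\xi\omega(\sqrt{\mu}\xi)$ and $\psi(\xi) = \xi\omega(\sqrt{\mu}\xi)$; the integral to bound is $I = \int_{\R} e^{i\Phi(\xi)}\widehat{f}(\xi)\,d\xi$. The idea is to apply Van der Corput's second-order lemma on a dyadic frequency decomposition, using $\widehat{f}(0)=0$ to absorb the degeneracy of $\psi''$ at the origin. Analyzing the phase via $\omega(\eta) = 1 - \eta^{2}/6 + O(\eta^{4})$ at $0$ and $\omega(\eta) = \eta^{-1/2} + O(e^{-2\eta})$ at infinity, one checks that $\psi''$ has constant sign on each of $\R^{\pm}$ and
$$ |\psi''(\xi)| \asymp \mu|\xi|\ \text{for}\ |\xi|\lesssim 1/\sqrt{\mu}, \qquad |\psi''(\xi)| \asymp \mu^{-1/4}|\xi|^{-3/2}\ \text{for}\ |\xi|\gtrsim 1/\sqrt{\mu}. $$
These are the shallow- and deep-water regimes producing the two terms in the target bound.

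Using $\widehat{f}(0)=0$, I write $\widehat{f}(\xi)=\int_{0}^{\xi}g(\eta)\,d\eta$ with $g=(\widehat{f})'$, and after smooth truncation apply Fubini to obtain $I_{+}=\int_{0}^{\infty}g(\eta)K(\eta)\,d\eta$, $K(\eta)=\int_{\eta}^{\infty}e^{i\Phi(\xi)}\,d\xi$ (with the analogous formula for $I_-$). I then decompose $[\eta,\infty)$ dyadically in $|\xi|$ and apply Van der Corput's second-order lemma on each shell, which yields a bound of order $1/\sqrt{t|\psi''|_{\min}}$. In the low-frequency range $|\xi|\lesssim 1/\sqrt\mu$, the per-shell bound $C/\sqrt{t\mu\cdot 2^{k}\eta}$ is a geometric series dominated by the lowest scale, giving $C/\sqrt{t\mu\eta}$; multiplying by $|g(\eta)|$ and integrating produces exactly the first term of the bound. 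In the high-frequency range, the shell $|\xi|\sim 2^{j}/\sqrt{\mu}$ contributes $C\mu^{1/8}(2^{j}/\sqrt{\mu})^{3/4}/\sqrt{t}$, whose scale weight $\mu^{1/8}|\xi|^{3/4}$ matches that of the second term.

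The main obstacle is the high-frequency tail: the per-shell Van der Corput bound grows like $2^{3j/4}$, so no pointwise bound on $K(\eta)$ of the desired form can hold uniformly in $X$. I would circumvent this by exchanging the order of the dyadic sum and the integration in $\eta$, applying Van der Corput with amplitude on each high-frequency shell $A_{j}=[2^{j}/\sqrt\mu, 2^{j+1}/\sqrt\mu]$ to get $|\int_{A_j}e^{i\Phi}\widehat{f}|\leq C\mu^{1/8}(2^{j}/\sqrt\mu)^{3/4}/\sqrt{t}\cdot(|\widehat{f}(a_{j})|+\lvert g\rvert_{L^{1}(A_{j})})$, where $a_{j}=2^{j}/\sqrt\mu$. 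Summation of the $\lvert g\rvert_{L^{1}(A_{j})}$ pieces reverses into $\frac{\mu^{1/8}}{\sqrt{t}}\lvert|\xi|^{3/4}g\rvert_{L^{1}}$. The boundary values $|\widehat{f}(a_{j})|$ are controlled by the telescoping identity $\widehat{f}(a_{j})=\widehat{f}(1/\sqrt\mu)+\sum_{k<j}\int_{A_{k}}g$ combined with the weighted Cauchy--Schwarz estimate $|\widehat{f}(1/\sqrt\mu)|\leq\mu^{-1/4}\lvert|\xi|^{-1/2}g\rvert_{L^{1}}$; a careful Abel summation folds these back into the two weighted norms in the statement. The integral over $\R^{-}$ is handled by the odd symmetry of $\psi$.
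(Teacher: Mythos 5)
Your strategy is fundamentally the same as the paper's: exploit $\widehat{f}(0)=0$ to trade $\widehat{f}$ for $(\widehat{f})'$ via the fundamental theorem of calculus, then apply the second-derivative Van der Corput lemma, with the two asymptotic regimes $|\psi''(\xi)|\asymp \mu|\xi|$ and $|\psi''(\xi)|\asymp \mu^{-1/4}|\xi|^{-3/2}$ producing the two weighted norms. The difference is in execution. The paper rescales $y=\sqrt{\mu}\,\xi$, lets $y_{0}$ be the unique extremum of $\phi''$ where $\phi(y)=y\omega(y)-\tfrac{X}{t}y$, and anchors the antiderivative of the oscillatory factor at $y_{0}$: it writes $e^{i\Phi(y)}=\mp\frac{d}{dy}\int_{y}^{y_{0}}e^{i\Phi(z)}dz$ and integrates by parts, so that on $[y,y_{0}]$ (resp.\ $[y_{0},y]$) the monotonicity of $|\phi''|$ gives $\min|\phi''|=|\phi''(y)|$ and Van der Corput yields the exact weight $y^{-1/2}$ (resp.\ $y^{3/4}$) in a single stroke, with the boundary terms vanishing at $0$ (by $\widehat f(0)=0$) and at $y_{0}$ (trivially). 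Your dyadic decomposition reproves the same per-scale bounds shell by shell and then resums geometric series; this is more laborious but buys nothing here, and it is precisely your choice to anchor the antiderivative at $+\infty$ that creates the high-frequency convergence problem you then have to patch with the amplitude version of Van der Corput.

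There is one concrete defect in the patch. In the high-frequency sum you must control $\sum_{j\ge 0}a_{j}^{3/4}\,|\widehat f(a_{j})|$ with $a_{j}=2^{j}/\sqrt{\mu}$, and the telescoping identity you propose, $\widehat f(a_{j})=\widehat f(1/\sqrt{\mu})+\sum_{k<j}\int_{A_{k}}g$, injects the constant $|\widehat f(1/\sqrt{\mu})|$ into every shell; since $\sum_{j}a_{j}^{3/4}=\infty$ and the shell bounds are in absolute value (no cancellation is available for an Abel summation to exploit), this sum diverges as written. The repair is to telescope downward from infinity: $f\in L^{1}$ gives $\widehat f\in C_{0}$ by Riemann--Lebesgue, so $\widehat f(a_{j})=-\int_{a_{j}}^{\infty}g$, whence $\sum_{j}a_{j}^{3/4}|\widehat f(a_{j})|\le \sum_{k}\lvert g\rvert_{L^{1}(A_{k})}\sum_{j\le k}a_{j}^{3/4}\le C\,\lvert\,|\xi|^{3/4}g\rvert_{L^{1}}$. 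With that correction (and with the Fubini step justified on truncated integrals, as you indicate), your argument closes and gives the stated bound uniformly in $X$.
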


\begin{proof}
\noindent We denote $I(t)$,

\begin{align*}
I(t) &:= \int_{\R} e^{-it \xi \omega( \sqrt{\mu} \xi)} e^{i X \xi} \widehat{f}(\xi) d\xi\\
&= \frac{1}{\sqrt{\mu}} \int_{\R} e^{-i \frac{t}{\sqrt{\mu}} \left( y \omega(y)  - \frac{X}{t} y \right)} \widehat{f} \left(\frac{y}{\sqrt{\mu}} \right) dy.
\end{align*}

\noindent We denote $\phi$,

\begin{equation*}
\phi(y)=y \omega(y) - \frac{X}{t} y,
\end{equation*}

\noindent and $y_{0}$ the unique minimum of $\phi^{\prime \prime}$. Figure \ref{profilphase2} represents $\phi^{\prime \prime}$ on $[0,+\infty[$.

\begin{center}
\begin{figure}[!h]
   \includegraphics[scale=0.35]{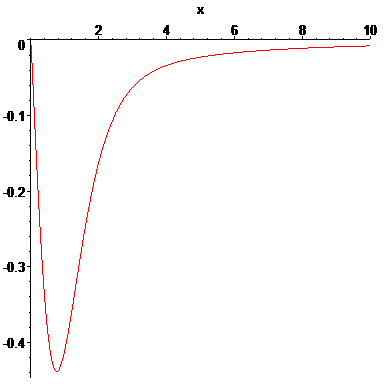}
   \caption{Profile of $\phi^{\prime \prime}$ .}
   \label{profilphase2}
\end{figure}
\end{center}

\noindent To estimate $I(t)$ we decompose $I(t)$ into four parts.

\begin{align*}
I_{1}(t) &= \frac{1}{\sqrt{\mu}} \int_{0}^{y_{0}} e^{-i \frac{t}{\sqrt{\mu}} \phi(y)} \widehat{f} \left(\frac{y}{\sqrt{\mu}} \right) dy\\
&= \frac{1}{\sqrt{\mu}} \int_{0}^{y_{0}} - \frac{d}{dy} \left( \int_{y}^{y_{0}} e^{-i \frac{t}{\sqrt{\mu}} \phi(z)} dz \right) \widehat{f} \left(\frac{y}{\sqrt{\mu}} \right) dy\\
&= \frac{1}{\mu} \int_{0}^{y_{0}} \int_{y}^{y_{0}} e^{-i \frac{t}{\sqrt{\mu}} \phi(z)} dz \left(\widehat{f}\right)^{\prime} \left(\frac{y}{\sqrt{\mu}} \right) dy.\\
\end{align*}

\noindent Then, using Van der Corput's Lemma (see \cite{Stein_harmonic}) and the fact that for $z \in [y,y_{0}]$,
\newline
$|\phi^{\prime \prime}(z) | \geq |\phi^{\prime \prime}(y)|$ and $|\phi^{\prime \prime}(z)| \geq Cz$,

\begin{align*}
|I_{1}(t)| &\leq \frac{C}{\mu^{\frac{3}{4}} \sqrt{t}} \int_{0}^{y_{0}} \left\lvert \frac{1}{\sqrt{y}} \left(\widehat{f}\right)^{\prime} \left(\frac{y}{\sqrt{\mu}} \right) \right\rvert  dy\\
&\leq \frac{C}{\sqrt{\mu} \sqrt{t}} \int_{0}^{+\infty} \left\lvert \frac{1}{\sqrt{\xi}} \left(\widehat{f}\right)^{\prime} \left(\xi \right) \right\rvert  d\xi.
\end{align*}

\noindent Furthermore, for $M > y_{0}$ large enough,

\begin{align*}
I_{2}(t) &= \frac{1}{\sqrt{\mu}} \int_{y_{0}}^{M} e^{-i \frac{t}{\sqrt{\mu}} \phi(y)} \widehat{f} \left(\frac{y}{\sqrt{\mu}} \right) dy\\
&= \frac{1}{\sqrt{\mu}} \int_{y_{0}}^{M} \frac{d}{dy} \left( \int_{y_{0}}^{y} e^{-i \frac{t}{\sqrt{\mu}} \phi(z)} dz \right) \widehat{f} \left(\frac{y}{\sqrt{\mu}} \right) dy\\
&= \int_{y_{0}}^{M} \hspace{-0.1cm} e^{-i \frac{t}{\sqrt{\mu}} \phi(z)} \frac{dz}{\sqrt{\mu}} \widehat{f} \left(\frac{M}{\sqrt{\mu}} \right) - \frac{1}{\mu} \int_{y_{0}}^{M} \int_{y_{0}}^{y} e^{-i \frac{t}{\sqrt{\mu}} \phi(z)} dz \left(\widehat{f}\right)^{\prime} \left(\frac{y}{\sqrt{\mu}} \right) dy.\\
\end{align*}

\noindent Then, \! using Van der Corput's Lemma and the fact that for $z \in [y_{0},y]$,
\newline
$|\phi^{\prime \prime}(z) | \geq |\phi^{\prime \prime}(y)|$ and $|\phi^{\prime \prime}(z)| \geq Cz^{-\frac{3}{2}}$,

\begin{align*}
|I_{2}(t)| &\leq \left\lvert \frac{M}{\sqrt{\mu}} \widehat{f}\left(\frac{M}{\sqrt{\mu}}\right) \right\rvert + \frac{C}{\mu^{\frac{3}{4}} \sqrt{t}} \int_{y_{0}}^{M} \left\lvert y^{\frac{3}{4}} \left(\widehat{f}\right)^{\prime} \left(\frac{y}{\sqrt{\mu}} \right) \right\rvert  dy\\
&\leq \left\lvert\widehat{f^{\prime}}\left(\frac{M}{\sqrt{\mu}}\right) \right\rvert  + \frac{C \mu^{\frac{1}{8}}}{\sqrt{t}} \int_{0}^{+\infty} \left\lvert \xi^{\frac{3}{4}} \left(\widehat{f}\right)^{\prime} \left(\xi \right) \right\rvert.
\end{align*}

\noindent Tending $M$ to $+\infty$ we get the result. The control for $\xi < 0$ is similar.
\end{proof}

\noindent Therefore, in the linear case, we have also a resonance.

\begin{cor}\label{resonance_p_resonant}

\noindent Let $P_{0} \in H^{3}(\R) \cap W^{2,1}(\R)$ such that $X P_{0} \in H^{3}(\R)$ and let 
\newline
$0 < \mu \leq \mu_{\max}$. Consider,

\begin{equation*}
\zeta_{R}(t,X) = - \frac{i t}{2} \int_{\R} \xi \omega(\sqrt{\mu} \xi) \widehat{P_{0}}(\xi) e^{-it \xi \omega(\sqrt{\mu} \xi)} e^{i X \xi} d\xi.
\end{equation*}

\noindent Then,

\begin{equation*}
\left\lvert \zeta_{R}(t,\cdot) \right\rvert_{\infty} \leq  C(\mu_{\max}) \sqrt{\frac{t}{\mu}} \left( \left\lvert P_{0} \right\rvert_{H^{3}} + \left\lvert P_{0} \right\rvert_{L^{1}} + \left\lvert X P_{0} \right\rvert_{H^{3}} \right),
\end{equation*}

\noindent and 

\begin{equation*}
\underset{t \shortrightarrow +\infty}{\underline{\lim}} \left\lvert \frac{1}{\sqrt{t}} \zeta_{R}(t,\cdot) \right\rvert_{\infty} \geq  C(P_{0}) > 0.
\end{equation*}
\end{cor}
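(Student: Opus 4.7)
The plan is to split the two bounds, treating the upper bound by a direct application of the dispersion estimate of the previous proposition and the lower bound by a stationary phase analysis.

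\emph{Upper bound.} I would set $\widehat{f}(\xi) := \xi\omega(\sqrt{\mu}\xi)\widehat{P_0}(\xi)$ so that $\zeta_R(t,X) = -\tfrac{it}{2}\int_{\R} e^{iX\xi - it\xi\omega(\sqrt{\mu}\xi)}\widehat{f}(\xi)\,d\xi$. Since $\omega(0)=1$ and $\xi$ vanishes at the origin, $\widehat{f}(0)=0$; since $P_0 \in W^{2,1}(\R)$ and $\omega$ is smooth with the appropriate growth, $f \in W^{1,1}(\R)$, so the hypothesis of the dispersion estimate is satisfied. It then yields
$$|\zeta_R(t,X)| \leq \tfrac{t}{2}\cdot\tfrac{C}{\sqrt{t}}\Bigl(\tfrac{1}{\sqrt{\mu}}\bigl||\xi|^{-1/2}\widehat{f}'\bigr|_{L^1} + \mu^{1/8}\bigl||\xi|^{3/4}\widehat{f}'\bigr|_{L^1}\Bigr).$$
Writing $\widehat{f}'(\xi) = g'(\sqrt{\mu}\xi)\widehat{P_0}(\xi) + \xi\omega(\sqrt{\mu}\xi)\widehat{P_0}'(\xi)$ with $g(y)=y\omega(y)=\sqrt{y\tanh y}$, I would split the integrals at $|\xi|=1$: on the low-frequency piece use $|\widehat{P_0}|_{L^\infty}\leq|P_0|_{L^1}$ together with a Cauchy--Schwarz bound on $\widehat{P_0}' = i\widehat{XP_0}$ to absorb the singular weight $|\xi|^{-1/2}$ (controlled by $|XP_0|_{L^2}\leq|XP_0|_{H^3}$); on the high-frequency piece use $|g'(y)|\leq C\min(1,|y|^{-1/2})$ and $|y\omega(y)|\leq C\min(|y|,|y|^{1/2})$ combined with Cauchy--Schwarz against $\langle\xi\rangle^{-3}$ to reduce to $|P_0|_{H^3}$ and $|XP_0|_{H^3}$. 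Finally, since $\mu \leq \mu_{\max}$ one has $\mu^{1/8} \leq \mu_{\max}^{5/8}/\sqrt{\mu}$, and both contributions collapse into the single factor $\sqrt{t/\mu}$, giving the claimed inequality.

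\emph{Lower bound.} Write $\zeta_R(t,X) = -\tfrac{it}{2}\int a(\xi)\,e^{it\Phi(\xi)}\,d\xi$ with amplitude $a(\xi) = \xi\omega(\sqrt{\mu}\xi)\widehat{P_0}(\xi)$ and phase $\Phi(\xi) = (X/t)\xi - g(\sqrt{\mu}\xi)/\sqrt{\mu}$. The critical points satisfy $g'(\sqrt{\mu}\xi_c) = X/t$; as $g$ is odd and smooth with Taylor expansion $g(y)=y-y^3/6+O(y^5)$, its derivative $g'$ is even, equals $1$ at $0$, and is strictly decreasing from $1$ to $0$ on $[0,\infty)$. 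Since $\widehat{P_0}$ is continuous with $\widehat{P_0}(0)\neq 0$, I may fix some small $\xi_0>0$ such that $\widehat{P_0}(\pm\xi_0)\neq 0$ and $g''(\sqrt{\mu}\xi_0)\neq 0$ (using $g''(y)=-y+O(y^3)$). Set $v := g'(\sqrt{\mu}\xi_0)\in(0,1)$ and consider the family $X = vt+s$: the two critical points are $\pm\xi_0$, both non-degenerate, and standard stationary phase (with the required smoothness furnished by $P_0\in H^3$ and $XP_0\in H^3$) gives
$$\zeta_R(t,vt+s) = -\tfrac{it}{2}\sum_{\sigma=\pm 1}\sqrt{\tfrac{2\pi}{t\sqrt{\mu}|g''(\sqrt{\mu}\xi_0)|}}\,a(\sigma\xi_0)\,e^{it\Phi_v(\sigma\xi_0)+is\sigma\xi_0+i\tfrac{\pi}{4}\mathrm{sgn}\Phi''(\sigma\xi_0)} + O(t^{-1/2}),$$
where $\Phi_v(\xi) := v\xi - g(\sqrt{\mu}\xi)/\sqrt{\mu}$. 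As $s$ ranges over one period $[0,2\pi/\xi_0]$, the relative phase $2s\xi_0$ sweeps a full circle, so for a well-chosen $s^*(t)$ depending on the global $t$-phase $2t\Phi_v(\xi_0)$ the two contributions add constructively. Multiplying by $t/2$ yields $|\zeta_R(t,vt+s^*(t))| \geq C(P_0,\mu)\sqrt{t}+O(1)$, which passes to the liminf of $|\zeta_R(t,\cdot)|_\infty/\sqrt{t}$.

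\emph{Main obstacle.} The delicate step is making the stationary phase argument rigorous with the minimal regularity $P_0\in H^3\cap W^{2,1}$ and $XP_0\in H^3$: one must show that the remainder beyond the two principal contributions is of order $t^{-1/2}$ relative to the main $t^{-1/2}$ term and that the part of the integral away from neighbourhoods of $\pm\xi_0$ decays faster (by integration by parts against the non-stationary phase, or by the Van der Corput argument already used in the proof of the previous dispersion proposition). The non-degeneracy $g''(\sqrt{\mu}\xi_0)\neq 0$ for small positive $\xi_0$ is precisely what makes the $\sqrt{t}$ scaling possible; it degenerates as $\mu\to 0$, in agreement with the linear-in-$t$ Proudman growth recovered in the shallow-water regime treated earlier.
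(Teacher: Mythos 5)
Your argument is correct and follows essentially the same route as the paper: the upper bound is the dispersion estimate of the preceding proposition applied to $\widehat{f}(\xi)=\xi\,\omega(\sqrt{\mu}\xi)\widehat{P_0}(\xi)$ (using $|\widehat{f}'(\xi)|\leq(1+\sqrt{\mu}|\xi|)|\widehat{P_0}(\xi)|+|\xi||\widehat{P_0}'(\xi)|$ to absorb the weighted $L^1$ norms into $|P_0|_{H^3}+|P_0|_{L^1}+|XP_0|_{H^3}$), and the lower bound is a stationary phase evaluation along a ray moving at a group velocity $g'(\sqrt{\mu}\xi_0)$. The only real difference is in the details of the lower bound: you pick $\xi_0$ near the origin (using $\widehat{P_0}(0)\neq 0$) and optimize over the shift $s$ to rule out destructive interference between the two symmetric critical points $\pm\xi_0$, whereas the paper picks $\xi_0$ maximizing $|\xi\widehat{P_0}(\xi)|$ and evaluates at the single ray $X=tX_\mu$ without addressing that cancellation — your version is the more careful one, and both leave the same technical core (rigorous stationary phase with remainder at the stated regularity) at the same level of detail.
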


\begin{proof}
\noindent We take $\widehat{f}(\xi)=\xi \omega(\sqrt{\mu} \xi) \widehat{P}_{0}(\xi)$. Then,

\begin{equation*}
\left\lvert \left(\widehat{f}\right)^{\prime} \hspace{-0.2cm}(\xi) \right\rvert \leq \left( 1+\sqrt{\mu}|\xi| \right) \left\lvert  \widehat{P}_{0}(\xi)\right\rvert + |\xi| \left\lvert  \left(\widehat{P}_{0}\right)^{\prime} \hspace{-0.2cm} (\xi)\right\rvert,
\end{equation*}

\noindent and the first inequality follows from the previous Proposition. For the second inequality, we use a stationary phase approximation. We denote $\phi(\xi) = \xi \omega(\xi)$. Let $\xi_{0} > 0$, such that $\left\lvert \xi_{0} \widehat{P_{0}}(\xi_{0}) \right\rvert = \left\lvert \xi \widehat{P_{0}} \right\rvert_{L^{\infty}}$, and $X_{\mu} < 0$, such that $\phi^{\prime}(\sqrt{\mu} \xi_{0}) = X_{\mu}$. Then, we have,

\begin{align*}
\underset{t \shortrightarrow +\infty}{\lim} \left\lvert \frac{1}{\sqrt{t}} \zeta_{R}(t,t X_{\mu}) \right\rvert &= \underset{t \shortrightarrow +\infty}{\lim}  \frac{\sqrt{t}}{2 \mu} \left\lvert \int_{\R} \xi \omega(\xi) \widehat{P_{0}}\left(\frac{\xi}{\sqrt{\mu}} \right) e^{-i \frac{t}{\sqrt{\mu}} \xi \left(\omega(\xi) - X_{\mu} \right)} d\xi \right\rvert\\
&= \frac{\sqrt{2 \pi}}{2 \mu^{\frac{1}{4}}} \left\lvert \frac{\omega(\xi_{0} \sqrt{\mu}) \xi_{0} \widehat{P_{0}}(\xi_{0})}{\sqrt{|\phi^{\prime \prime}(\xi_{0} \sqrt{\mu})|}} \right\rvert.\\
\end{align*}

\noindent Since $|\phi^{\prime \prime}(\xi)| \leq C |\xi|$ and $\omega(\xi_{0} \sqrt{\mu}) \geq C(\xi_{0}) \sqrt{\mu}$, we get the result.

\end{proof}

\begin{remark}
\noindent Notice that for all $s\in \R$, 

\begin{equation*}
\left\lvert \zeta_{R}(t,\cdot) + \frac{t}{2} P_{0}^{\prime}(\cdot - t) \right\rvert_{H^{s}} \leq \sqrt{\mu} t^{2} \left\lvert \nabla P_{0} \right\rvert_{H^{s+2}}.
\end{equation*}

\noindent Hence, by tending formally $\mu$ to $0$, we rediscover the result we get in the shallow water case (section \ref{A shallow water model with small topography variations}).
\end{remark}

\begin{remark}
\noindent Notice that for a general pressure term $P(t,X)$ we can show that the amplitude $\zeta$ satisfying

\begin{align*}
\widehat{\zeta}(t,\xi) &= \frac{i}{2} \int_{0}^{t} \xi \sqrt{\frac{\tanh(\sqrt{\mu} |\xi|)}{\sqrt{\mu} |\xi|}} \widehat{P}(\tau,\xi) e^{i(t-\tau) \xi \sqrt{\frac{\tanh(\sqrt{\mu} |\xi|)}{\sqrt{\mu} |\xi|}}}  d\tau\\
&-\frac{i}{2} \int_{0}^{t} \xi \sqrt{\frac{\tanh(\sqrt{\mu} |\xi|)}{\sqrt{\mu} |\xi|}} \widehat{P}(\tau,\xi) e^{i(\tau - t) \xi \sqrt{\frac{\tanh(\sqrt{\mu} |\xi|)}{\sqrt{\mu} |\xi|}}}  d\tau,
\end{align*}

\noindent satisfies also 

\begin{equation*}
\left\lvert \zeta(t,\cdot) \right\rvert_{\infty} \leq  C(\mu_{\max}) \sqrt{\frac{t}{\mu}} \left( \left\lvert P \right\rvert_{L^{\infty}(\R^{+}; L^{1}(\RD))} + \left\lvert P \right\rvert_{L^{\infty}(\R^{+}; H^{3}(\RD)} + \left\lvert X P \right\rvert_{L^{\infty}(\R^{+}; H^{3}(\RD)} \right).
\end{equation*}

\noindent Hence, contrary to the shallow water case, we can not hope a linear amplification with respect to the time $t$. Corollary \ref{resonance_p_resonant} also shows that the factor of amplification of $\sqrt{t}$ is optimal.
\end{remark}

\noindent Hence, we observe that in intermediate water depths, a resonance can occur but with a factor of amplification of $\sqrt{t}$ and not $t$. But we saw that in the shallow water case, the resonance occurs for a moving pressure with a speed equal to $1$, $P(t,X) = P_{0}(X-t)$. We wonder if this pressure can create a resonance. The following Proposition shows that the previous pressure can create a resonance with a factor of amplification of $t^{\frac{1}{3}}$.

\begin{prop}
\noindent Let $0 < \mu \leq \mu_{\max}$. Let $P_{0} \in L^{1}(\R) \cap H^{1}(\R)$ such that $\widehat{P_{0}}(0) \neq 0$. Consider, the amplitude $\zeta_{R}$ created by $P(t,X)=P_{0}(X-t)$,

\begin{equation}\label{zetaR1}
\widehat{\zeta_{R}}(t,\xi) = -\frac{i}{2} \xi \omega(\sqrt{\mu} \xi) \widehat{P_{0}}(\xi) e^{-it\xi} \int_{-t}^{0} e^{is \xi \left(\omega( \sqrt{\mu}\xi) -1 \right)} ds.
\end{equation}

\noindent Then, 

\begin{equation*}
\left\lvert \zeta_{R}(t,\cdot) \right\rvert_{\infty} \leq C(\mu_{\max}) \left( \frac{t^{\frac{1}{3}}}{\mu} \left\lvert P_{0} \right\rvert_{L^{1}} + \mu^{\frac{1}{4}} \left\lvert P_{0} \right\rvert_{H^{1}} \right).
\end{equation*}

\noindent Furthermore, if $X P_{0} \in H^{1}(\R)$,

\begin{equation*}
\underset{t \shortrightarrow +\infty}{\underline{\lim}} \left\lvert \frac{1}{t^{\frac{1}{3}}} \zeta_{R}(t,\cdot) \right\rvert_{\infty} \geq \frac{C}{\mu^{\frac{2}{3}}} \left\lvert \widehat{P_{0}}(0) \right\rvert.
\end{equation*}
\end{prop}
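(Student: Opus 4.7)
My plan is first to integrate out the $s$-variable in \eqref{zetaR1}. Using $\int_{-t}^{0} e^{is\xi(\omega(\sqrt{\mu}\xi)-1)}\,ds=\frac{1-e^{-it\xi(\omega(\sqrt{\mu}\xi)-1)}}{i\xi(\omega(\sqrt{\mu}\xi)-1)}$, one obtains the representation
\begin{equation*}
\widehat{\zeta_R}(t,\xi) = -\frac{1}{2}\,\frac{\omega(\sqrt{\mu}\xi)}{\omega(\sqrt{\mu}\xi)-1}\,\widehat{P_0}(\xi)\bigl(e^{-it\xi}-e^{-it\xi\omega(\sqrt{\mu}\xi)}\bigr),
\end{equation*}
which makes explicit the cancellation between the non-dispersive propagation $e^{-it\xi}$ (carrying the shallow-water resonance at speed $1$) and the true dispersion $e^{-it\xi\omega(\sqrt{\mu}\xi)}$. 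The Taylor expansion $\omega(\sqrt{\mu}\xi)-1 = -\mu\xi^{2}/6 + O(\mu^{2}\xi^{4})$ near $\xi=0$ shows that the prefactor $\omega/(\omega-1)$ is singular like $-6/(\mu\xi^{2})$ but that the difference of exponentials cancels to matching order, so the integrand is integrable and everything is governed by the balance between these two competing terms.

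For the upper bound I split at $|\xi|\sim 1/\sqrt{\mu}$. On the high-frequency region $|\xi|\geq c/\sqrt{\mu}$, the ratio $\omega/(\omega-1)$ is uniformly bounded (because $\omega(y)$ stays away from $1$ for $|y|\geq c$) and the difference of exponentials is at most $2$, so Cauchy--Schwarz against $\int_{|\xi|\geq c/\sqrt{\mu}}\xi^{-2}\,d\xi$ gives $|\zeta_R^{H}(t,\cdot)|_{\infty}\leq C\mu^{1/4}|P_0|_{H^{1}}$. On the low-frequency region $|\xi|\leq c/\sqrt{\mu}$, I use $|\omega(\sqrt{\mu}\xi)-1|\leq C\mu\xi^{2}$, hence $|\omega/(\omega-1)|\leq C/(\mu\xi^{2})$, together with the elementary inequality $|e^{-it\xi}-e^{-it\xi\omega(\sqrt{\mu}\xi)}|\leq \min(2,Ct\mu|\xi|^{3})$. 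Using $|\widehat{P_0}|_\infty\leq |P_0|_{L^{1}}$, the pointwise integrand in $\xi$ is then dominated by $C|P_0|_{L^{1}}\min\bigl(1/(\mu\xi^{2}),\,t|\xi|\bigr)$, and splitting the $\xi$-integral at the balance point $|\xi|=(t\mu)^{-1/3}$ produces a contribution of order $t^{1/3}/\mu^{2/3}$, which is dominated by $C(\mu_{\max})t^{1/3}/\mu$ under $\mu\leq\mu_{\max}$, as claimed.

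For the lower bound I evaluate at the resonant direction $X=t$, where both phases $X\xi-t\xi$ and $X\xi-t\xi\omega(\sqrt{\mu}\xi)$ are stationary at $\xi=0$ and the cubic nature of the second phase dominates. Applying the representation above at $X=t$ and performing the rescaling $u=(t\mu/6)^{1/3}\xi$, the leading term of the Taylor expansion of $\omega$ yields
\begin{equation*}
\zeta_R(t,t) \sim \frac{C\,t^{1/3}}{\mu^{2/3}}\,\widehat{P_0}(0)\int_{\R}\frac{1-e^{iu^{3}}}{u^{2}}\,du,
\end{equation*}
the error coming from the higher-order Taylor remainder in $\omega$ being controlled uniformly in $u$ thanks to $P_{0}\in H^{3}\cap W^{2,1}$; the hypothesis $XP_{0}\in H^{3}$ makes $\widehat{P_0}$ locally Lipschitz at the origin, so that dominated convergence gives $\widehat{P_0}(u(t\mu/6)^{-1/3})\to \widehat{P_0}(0)$ under the integral. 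Since $\int_{\R}(1-e^{iu^{3}})/u^{2}\,du$ is an explicit nonzero constant (identifiable through $\mathrm{Ai}'(0)$ after differentiation of the Airy integral), this gives the claimed lower bound on $\underline{\lim}_{t\to+\infty}|t^{-1/3}\zeta_R(t,\cdot)|_{\infty}$.

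The main obstacle is the low-frequency analysis: one must simultaneously resolve the singular prefactor $1/(\mu\xi^{2})$ reflecting the shallow-water resonance and the cubic cancellation $|e^{-it\xi}-e^{-it\xi\omega(\sqrt{\mu}\xi)}|\sim t\mu|\xi|^{3}$, whose balance at the frequency $|\xi|\sim(t\mu)^{-1/3}$ is exactly what produces the Airy-type $t^{1/3}$ growth appearing in both the upper and lower bounds.
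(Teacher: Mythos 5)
Your argument is correct and follows essentially the same route as the paper: explicit integration of the $s$-variable to expose the factor $\frac{\omega}{\omega-1}\left(e^{-it\xi}-e^{-it\xi\omega}\right)$, a low/high frequency splitting in which Cauchy--Schwarz yields the $\mu^{1/4}\lvert P_{0}\rvert_{H^{1}}$ tail and the balance of $1/(\mu\xi^{2})$ against $t\lvert\xi\rvert$ at $\lvert\xi\rvert\sim(t\mu)^{-1/3}$ yields the $t^{1/3}$ growth, and a degenerate cubic stationary-phase analysis at $X=t$ for the lower bound (your absolutely convergent $\int(1-e^{iu^{3}})u^{-2}\,du$ is just an integration by parts away from the paper's $\int ze^{iz^{3}}\,dz$ obtained via the order-$3$ Morse lemma). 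The only blemish is that for the lower bound you invoke $P_{0}\in H^{3}\cap W^{2,1}$ and $XP_{0}\in H^{3}$, which are the hypotheses of the preceding corollary rather than of this proposition; your dominated-convergence argument in fact needs only $P_{0}\in L^{1}$ (giving $\lvert\widehat{P_{0}}\rvert_{\infty}\le\lvert P_{0}\rvert_{L^{1}}$ and continuity of $\widehat{P_{0}}$ at $0$) once the region $\sqrt{\mu}\lvert\xi\rvert\gtrsim 1$ is discarded using the $t$-independent high-frequency bound you already established.
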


\begin{proof}
\noindent We have

\begin{align*}
\zeta_{R}(t,X) &= - \frac{i}{2} \int_{\R} \xi \omega(\sqrt{\mu} \xi) \widehat{P_{0}}(\xi) e^{-it\xi} \int_{-t}^{0} e^{is \xi \left(\omega( \sqrt{\mu}\xi) -1 \right)} e^{i X \xi} ds d\xi\\
&= - \frac{i}{2} \frac{1}{\mu} \int_{\R} \xi \omega(\xi) \widehat{P_{0}}\left(\frac{\xi}{\sqrt{\mu}} \right) e^{-i \frac{t}{\sqrt{\mu}} \xi} \int_{-t}^{0} e^{i \frac{s}{\sqrt{\mu}} \xi \left(\omega(\xi) -1 \right)} e^{i \frac{X}{\sqrt{\mu}} \xi} ds d\xi.
\end{align*}

\noindent We decompose this integral into $3$ parts.

\begin{align*}
\left\lvert I_{1}(t) \right\rvert &= \left\lvert \frac{1}{\mu} \int_{|\xi| \leq t^{-\frac{1}{3}}} \xi \omega(\xi) \widehat{P_{0}}\left(\frac{\xi}{\sqrt{\mu}} \right) e^{-i \frac{t}{\sqrt{\mu}} \xi} \int_{-t}^{0} e^{i \frac{s}{\sqrt{\mu}} \xi \left(\omega(\xi) -1 \right)} e^{i \frac{X}{\sqrt{\mu}} \xi} d\xi ds \right\rvert\\
&\leq \frac{t^{\frac{1}{3}}}{\mu} \left\lvert \widehat{P_{0}} \right\rvert_{\infty}.
\end{align*}

\noindent Furthermore, since $\left\lvert \omega(\xi) - 1 \right\rvert \geq C \xi^{2}$ for $0 \leq |\xi| \leq 1$, we have

\begin{align*}
\left\lvert I_{2}(t) \right\rvert &= \left\lvert \frac{1}{\mu} \int_{t^{-\frac{1}{3}} \leq |\xi| \leq 1} \xi \omega(\xi) \widehat{P_{0}}\left(\frac{\xi}{\sqrt{\mu}}\right) e^{-i \frac{t}{\sqrt{\mu}} \xi} \int_{-t}^{0} e^{i \frac{s}{\sqrt{\mu}} \xi \left(\omega(\xi) -1 \right)} e^{i \frac{X}{\sqrt{\mu}} \xi} d\xi ds \right\rvert\\
& = \left\lvert \frac{1}{\sqrt{\mu}} \int_{t^{-\frac{1}{3}} \leq |\xi| \leq 1} e^{i \frac{X}{\sqrt{\mu}} \xi} \frac{\omega(\xi)}{\omega(\xi)-1} \widehat{P_{0}}\left(\frac{\xi}{\sqrt{\mu}} \right) \left( e^{-i \frac{t}{\sqrt{\mu}} \xi}- e^{-i \frac{t}{\sqrt{\mu}} \xi \omega(\xi)} \right) d\xi \right\rvert\\
&\leq  C \frac{t^{\frac{1}{3}}}{\sqrt{\mu}} \left\lvert \widehat{P_{0}} \right\rvert_{\infty}.
\end{align*}

\noindent Finally,

\begin{align*}
\left\lvert I_{3}(t) \right\rvert &= \left\lvert \frac{1}{\mu} \int_{|\xi| \geq 1} \xi \omega(\xi) \widehat{P_{0}} \left(\frac{\xi}{\sqrt{\mu}} \right) e^{-i \frac{t}{\sqrt{\mu}} \xi} \int_{-t}^{0} e^{i \frac{s}{\sqrt{\mu}} \xi \left(\omega( \sqrt{\mu}\xi) -1 \right)} e^{i \frac{X}{\sqrt{\mu}} \xi} d\xi ds \right\rvert\\
& = \left\lvert \frac{1}{\sqrt{\mu}} \int_{|\xi| \geq 1} e^{i \frac{X}{\sqrt{\mu}} \xi} \frac{\omega(\xi)}{\omega(\xi)-1} \widehat{P_{0}}\left(\frac{\xi}{\sqrt{\mu}} \right) \left( e^{-i \frac{t}{\sqrt{\mu}} \xi}- e^{-i \frac{t}{\sqrt{\mu}} \xi \omega(\xi)} \right) d\xi \right\rvert\\
&\leq C \int_{|\xi| \geq \frac{1}{\sqrt{\mu}}} \left\lvert \widehat{P_{0}}(\xi) \right\rvert d\xi,\\
&\leq C \mu^{\frac{1}{4}} \left\lvert P_{0} \right\rvert_{H^{1}},
\end{align*}

\noindent and the first inequality follows. For the second inequality, we use a stationary phase approximation. We denote $\phi(\xi) := \xi(\omega(\xi) - 1)$. We recall that $\phi(\xi) = -\frac{1}{6} \xi^{3} + o(\xi^{3})$. Using a generalization of Morse Lemma at the order $3$, there exists $a > 0$ and $\psi \in \mathcal{C}^{\infty} \left([-a,a]\right)$, such that for all $|y| \leq a$,

\begin{equation*}
\phi(\psi(y)) = \frac{1}{6} \phi^{\prime \prime \prime}(0) y^{3}, \psi(0)= 0 \text{ and } \psi^{\prime}(0)=1.
\end{equation*}

\noindent Then,

\begin{align*}
I(s) &:= \int_{\R} \omega(\xi) \xi \widehat{P_{0}} \left( \frac{\xi}{\sqrt{\mu}} \right) e^{i \frac{s}{\sqrt{\mu}} \xi (\omega(\xi) -1)} d\xi\\
&= \int_{-a}^{a} \psi^{\prime}(y) \omega(\psi(y)) \psi(y) \widehat{P_{0}} \left( \frac{\psi(y)}{\sqrt{\mu}} \right) e^{i \frac{s}{6 \sqrt{\mu}} y^{3}} dy + o(s^{-\frac{2}{3}}) \\
&= \left(\frac {6 \sqrt{\mu}}{s} \right)^{\frac{2}{3}} \widehat{P_{0}}(0) \int_{z \in \R} z e^{i z^{3}} dz + o(s^{-\frac{2}{3}}).\\
\end{align*}

\noindent Therefore,

\begin{equation*}
\underset{t \shortrightarrow +\infty}{\lim} \left\lvert \frac{1}{t^{\frac{1}{3}}} \zeta_{R}(t,t) \right\rvert = \frac{C}{\mu^{\frac{2}{3}}} \left\lvert \widehat{P_{0}}(0) \right\rvert.
\end{equation*}
\end{proof}

\noindent Then, in intermediate water depths, a traveling pressure with a constant speed equal to $1$ is also resonant, but it takes more time to obtain a significant elevation of the level of the sea. In the following, we compute numerically some solutions. We take $P_{0}(X) = - e^{-X^{2}}$ and $\mu = 1$. The figure \ref{profil} is the evolution of a water wave because of a pressure of the form  \eqref{resonantP}. The solid curve is the wave and the dashed curve is the moving pressure. The figure \ref{profilspeed1} is the evolution is a water wave when the pressure moves with a speed $1$. The figure \ref{comparisonmax} compares the evolution of the maximum of the resonant case and the case when the speed is equal to $1$.

\vspace{-3cm}
\begin{center}
\begin{figure}[!h]
   \includegraphics[scale=0.15]{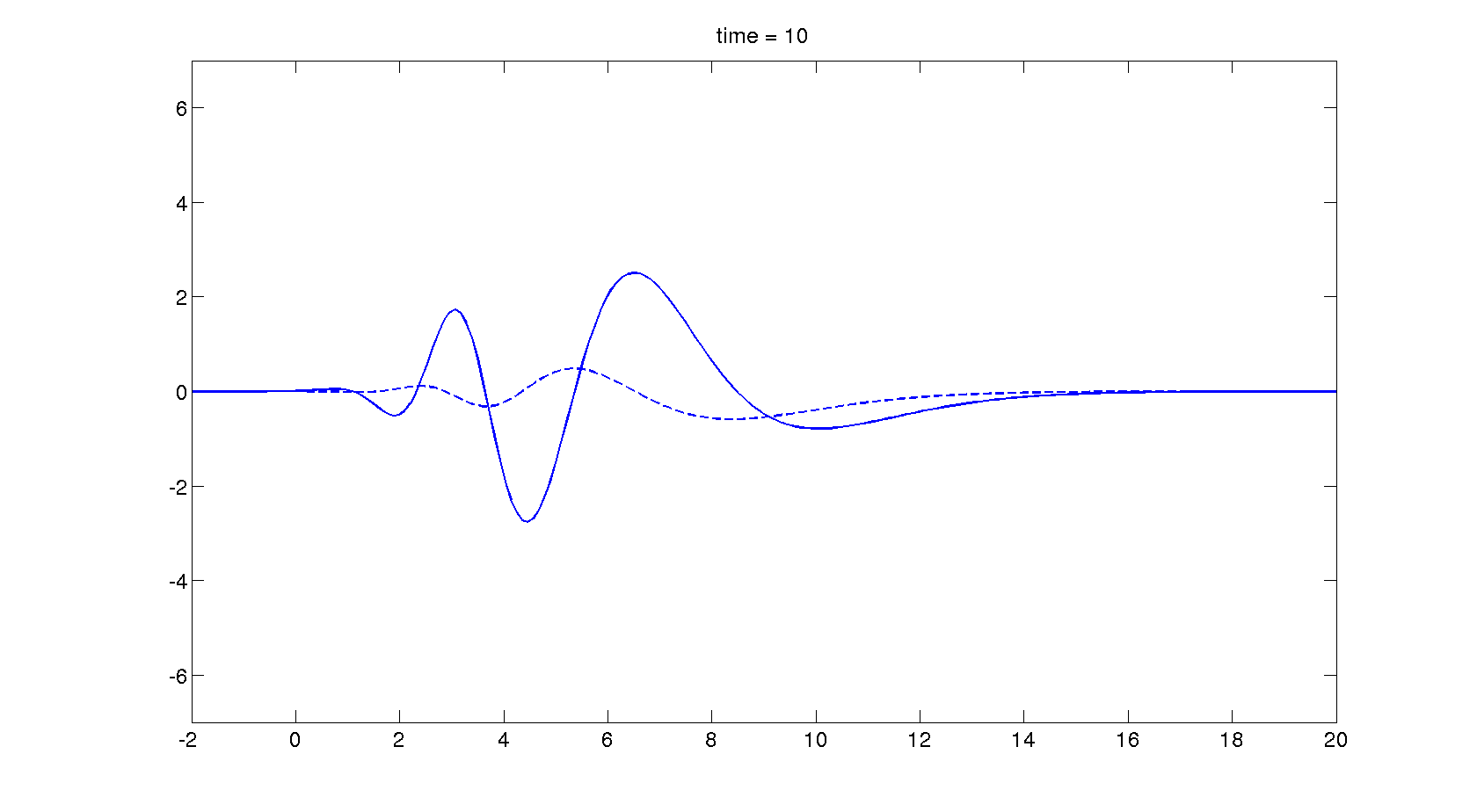}
   \caption{Evolution of the surface elevation $\zeta_{R}$ in \eqref{zetaR} (solid line) because of a resonant moving pressure $P$ in \eqref{resonantP} (dashed line).}
   \label{profil}
\end{figure}
\end{center}

\begin{center}
\begin{figure}[!h]
   \includegraphics[scale=0.15]{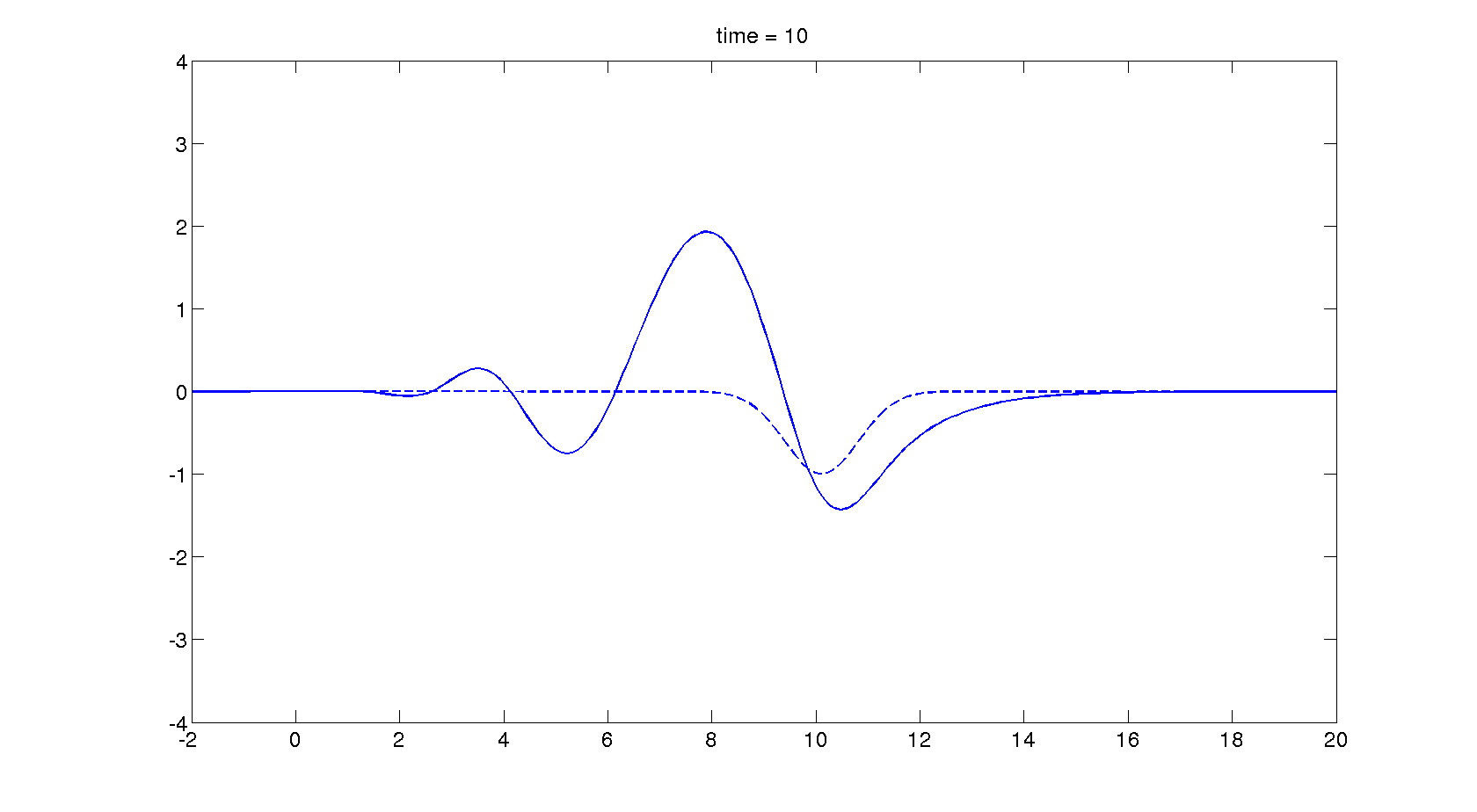}
   \caption{Evolution of the surface elevation $\zeta_{R}$ in \eqref{zetaR1} (solid line) because of a moving pressure $P$ with a speed of $1$ (dashed line).}
   \label{profilspeed1}
\end{figure}
\end{center}

\begin{center}
\begin{figure}[!h]
   \includegraphics[scale=0.15]{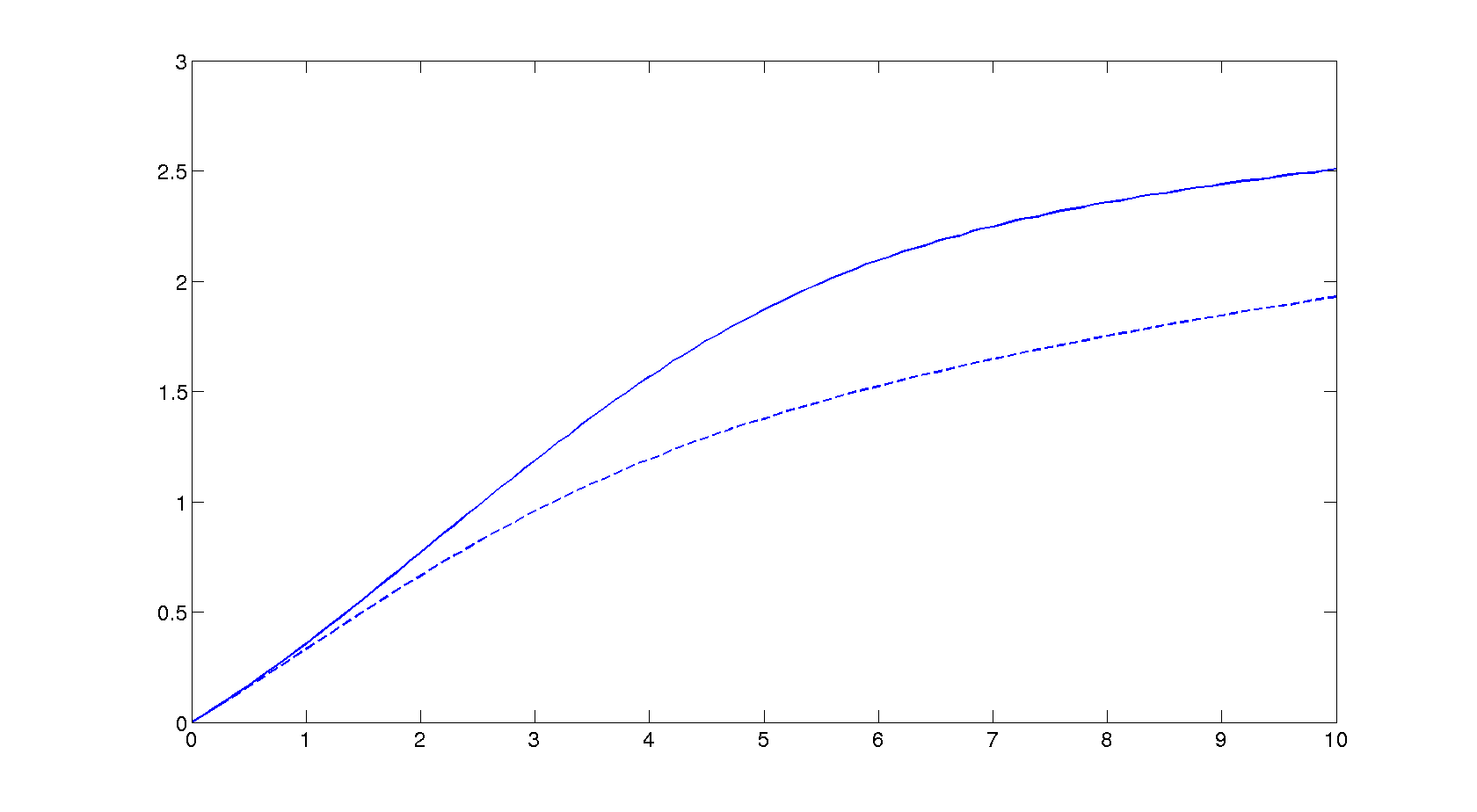}
   \caption{Evolution of the maximum of $\zeta_{R}$ in the resonant case (solid line) and the moving pressure with a speed of $1$ (dashed line).}
   \label{comparisonmax}
\end{figure}
\end{center}

\begin{remark}
\noindent In our work, we neglect the Coriolis effect. However, in view of the duration of the meteotsunami phenomenon, it would be more realistic to consider it. It will be studied in a future work (\cite{my_coriolis}) based on the work of A. Castro and D. Lannes (\cite{Castro_Lannes_shallow_water} and \cite{Castro_Lannes_vorticity}).
\end{remark}

\newpage
\appendix

\section{The Laplace problem}\label{Laplace_problem}

\subsection{Formulation of the problems}

\noindent In this part, we recall some results of Chapter 2 in \cite{Lannes_ww} and Section 4 of \cite{Iguchi_tsunami} and study the Laplace problem \eqref{Laplace_bott} in the Beppo Levi spaces. We suppose that the parameters $\epsilon$, $\mu$ and $\beta$ satisfy Condition \eqref{parameters_constraints}. The Laplace problem \eqref{Laplace_bott} is

\begin{equation*}
  \left\{
  \begin{aligned}
   & \Delta^{\mu}_{\! X,z} \Phi^{B} = 0 \text{ in } \Omega_{t} \text{ ,} \\
   &\Phi^{B}_{\;\;\, |z=\epsilon \zeta} = 0 \text{ , } \sqrt{1+\beta^{2} |\nabla b|^{2}} \partial_{\textbf{n}} \Phi^{B}_{\;\;\, |z=-1+\beta b} = B,
  \end{aligned}
  \right.
\end{equation*}

\noindent where $B = \partial_{t} b$. Notice that $\partial_{\textbf{n}}$ is here the upward \textit{conormal} derivative

\begin{equation*}
\partial_{\textbf{n}} \Phi^{B}= \textbf{n} \cdot \begin{pmatrix} \sqrt{\mu} I_{d} \hspace{0.1cm} 0 \\ \hspace{0.5cm} 0 \hspace{0.3cm} 1 \end{pmatrix} \nabla^{\mu}_{\! X,z} \Phi^{B}_{\;\;\, |z=-1+\beta b} .
\end{equation*}

\noindent For the study of \eqref{Laplace_surf} we refer to \cite{Lannes_ww}. We work with Beppo Levi spaces. We refer to \cite{Deny_Lions_beppo_levi} and Proposition 2.3 in \cite{Lannes_ww} for general results about these spaces. We recall that, for $s \geq 0$, 

\begin{equation*}
  \Hdot^{s}(\RD) := \left\lbrace \psi \in L^{2}_{\rm loc}(\RD) \text{, } \nabla \psi \in H^{s-1}(\RD) \right\rbrace,
\end{equation*}

\noindent that $\Hdot^{1}(\RD \times (-1,0))/\raisebox{-.65ex}{\ensuremath{\R}}$ is a Hilbert space for the norm $|\nabla_{\! X,z} \cdot |_{L^{2}}$ and that $H^{s}(\RD)$ is dense in $\Hdot^{s}(\RD)$. In order to fix the domain, we transform the problem into variable coefficients elliptic problem on $S := \RD \times (-1,0)$ (the flat strip). We introduce a regularizing diffeomorphism. Let $\theta :\R \shortrightarrow \R$ be a positive, compactly supported, smooth, even function equal to one near $0$. For $\delta > 0$ we define

\begin{equation*}
\Sigma := \begin{array}{l}
\quad S \quad \longrightarrow \quad \quad \quad \quad \Omega \\
(X,z) \mapsto \left( X,z + \sigma(X,z) \right),
\end{array}
\end{equation*}

\noindent and

\begin{equation*}
\sigma(X,z) := \left[\theta(\delta z |D|) \epsilon \zeta(X) - \theta(\delta (z+1) |D|) \beta b(X) \right] z + \epsilon \theta(\delta z |D|) \zeta(X).
\end{equation*}

\noindent We omit the dependence on $t$ here. In the following, we denote by $M$ a constant of the form 

\begin{equation*}
M = C \left( \frac{1}{h_{\min}},\mu_{\max},\epsilon |\zeta|_{H^{t_{0} + 1}(\RD)}, \beta |b|_{H^{t_{0} + 1}(\RD)} \right).
\end{equation*}

\noindent In order to study the Laplace problems in $S$, we have to treat the regularity in the direction $X$ and in the direction $z$ one at a time. We introduce the following spaces.

\begin{definition}
\noindent Let $s \in \R$. We define $\left( H^{s,1}(S), \lvert \cdot \rvert_{s,1} \right)$ and 
$\left( H^{s,0}(S), \lvert \cdot \rvert_{0,1} \right)$

\begin{equation*}
H^{s,1}(S) := L^{2}_{z} H^{s}_{X}(S) \cap H^{1}_{z} H^{s-1}_{X}(S) \text{, and } \lvert u \rvert_{H^{s,1}}^{2} = \lvert \Lambda^{s} u \rvert_{L^{2}}^{2} + \lvert \Lambda^{s-1} \partial_{z} u \rvert_{L^{2}}^{2},
\end{equation*}

\noindent and

\begin{equation*}
H^{s,0}(S) := L^{2}_{z} H^{s}_{X}(S) \text{, and } \lvert u \rvert_{H^{s,0}}^{2} = \lvert \Lambda^{s} u \rvert_{L^{2}}^{2}.
\end{equation*}

\end{definition}

\begin{remark}\label{L_infty_H_s}
\noindent We have the following embedding (see Proposition 2.10 in \cite{Lannes_ww}) for $s\in \R$

\begin{equation*}
H^{s+\frac{1}{2},1}(S) \subset L^{\infty}_{z} H^{s}_{\! X}(S).
\end{equation*}
\end{remark}

\noindent In the following, we fix $\delta > 0$ small enough. Then, we can transform our equations. We denote $\phi^{B} := \Phi^{B} \! \circ \! \Sigma$ and we get that

\begin{equation}\label{Laplace_bott_S}
  \left\{
  \begin{aligned}
   & \nabla^{\mu}_{\! X,z} \cdot P(\Sigma) \nabla^{\mu}_{\! X,z} \phi^{B} = 0 \text{ in } S \text{,}\\
   &\phi^{B}_{\;\, |z=0} = 0 \text{ , } \partial_{\textbf{n}} \phi^{B}_{\;\, |z=-1} = B,
  \end{aligned}
  \right.
\end{equation} 

\noindent with $P(\Sigma) = I_{d+1 \times d+1} + Q(\Sigma)$ and

\begin{equation}\label{Q_Sigma}
Q(\Sigma) := \begin{pmatrix}
\partial_{z} \sigma I_{d \times d} & -\sqrt{\mu} \nabla_{\! X} \sigma \\
-\sqrt{\mu} \nabla_{\! X} \sigma^{t} & \frac{-\partial_{z} \sigma + \mu \left\lvert \nabla_{\! X} \sigma \right\lvert^{2}}{1+ \partial_{z} \sigma}
\end{pmatrix}.
\end{equation}

\noindent Notice that $P(\Sigma)$ is well defined if $\delta$ is small enough and that $\partial_{\textbf{n}} := \textbf{e}_{z} \cdot (P(\Sigma) \nabla^{\mu}_{\! X,z} \; \cdot \; )$. We have to know the regularity of $P(\Sigma)$. It is the subject of the next proposition (see Proposition 2.18 and Lemma 2.26 in \cite{Lannes_ww}).

\begin{prop}\label{P_Sigma_regularity}
Let $t_{0} > \frac{d}{2}$, $\zeta,b \in \Hzetaa$ such that Condition \eqref{nonvanishing} is satisfied. Then,

\begin{equation*}
\left\lvert Q(\Sigma) \right\lvert_{H^{t_{0}+\frac{1}{2},1}} \text{,} \left\lvert \Lambda^{t_{0}} Q(\Sigma) \right\lvert_{L^{\infty}_{z} L^{2}_{X} (S)} \text{,} \left\lvert \Lambda^{t_{0}-1} \partial_{z} Q(\Sigma) \right\lvert_{L^{\infty}_{z} L^{2}_{X} (S)} \leq M.
\end{equation*}

\noindent Furthermore, $P(\Sigma)$ is coercive. There exist a constant $k(\Sigma) > 0$ such that $\frac{1}{k(\Sigma)} \leq M$ and

\begin{equation*}
\forall \Theta \in \RDD \text{ , } \forall (X,z) \in S \text{ , } P(\Sigma)(X,z) \Theta \cdot \Theta \geq k(\Sigma) |\Theta|^{2}.
\end{equation*}
\end{prop}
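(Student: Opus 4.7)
The plan is to prove the regularity bounds and the coercivity separately. The starting point is that $\sigma$ is built from $\zeta$ and $b$ through the regularizing Fourier multipliers $\theta(\delta z |D|)$ and $\theta(\delta(z+1)|D|)$; since $\theta$ is smooth and compactly supported, these multipliers act continuously on $H^{s}(\RD)$ uniformly in $z \in [-1,0]$. First I would establish
\begin{equation*}
|\nabla_{X} \sigma|_{H^{t_{0}+\frac{1}{2},1}(S)} + |\partial_{z} \sigma|_{H^{t_{0}+\frac{1}{2},1}(S)} + |\Lambda^{t_{0}} \nabla^{\mu}_{X,z} \sigma|_{L^{\infty}_{z} L^{2}_{X}(S)} \leq M
\end{equation*}
directly from the explicit formula for $\sigma$ and the mapping properties of $\theta(\delta z|D|)$ on Sobolev spaces. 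This immediately controls the linear entries $Q_{11} = \partial_{z}\sigma\, I_{d}$ and $Q_{12} = -\sqrt{\mu}\nabla_{X}\sigma$ of $Q(\Sigma)$.

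Next, for the nonlinear entry $Q_{22} = \frac{-\partial_{z}\sigma + \mu|\nabla_{X}\sigma|^{2}}{1+\partial_{z}\sigma}$, I would combine product estimates in $H^{t_{0}+\frac{1}{2},1}(S)$ (which is a Sobolev algebra since $t_{0}>\frac{d}{2}$) to control $\mu|\nabla_{X}\sigma|^{2}$, with composition estimates of the form $|F(u)|_{H^{s,1}} \leq C(|u|_{L^{\infty}})|u|_{H^{s,1}}$ applied to $F(x)=1/(1+x)$. This step requires a uniform lower bound on $1+\partial_{z}\sigma$, which I obtain as follows. Differentiating the definition of $\sigma$ yields
\begin{equation*}
1 + \partial_{z}\sigma = 1 + \theta(\delta z|D|)\epsilon\zeta - \theta(\delta(z+1)|D|)\beta b + R_{\delta},
\end{equation*}
where $R_{\delta}$ gathers the terms proportional to $\delta$ coming from the $z$-derivatives of the multipliers. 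Condition \eqref{nonvanishing} yields $1+\epsilon\zeta - \beta b \geq h_{\min}$ pointwise, and since the $L^{\infty}$ defects $|R_{\delta}|_{L^{\infty}}$ and $|(\theta(\delta z|D|)-1)\zeta|_{L^{\infty}} + |(\theta(\delta(z+1)|D|)-1)b|_{L^{\infty}}$ are $\leq M\delta$ by Sobolev embedding applied to the frequency-localization gain of $\theta$, choosing $\delta$ small enough (depending only on $M$ and $h_{\min}$) gives $1+\partial_{z}\sigma \geq h_{\min}/2$ on all of $S$.

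For coercivity, a direct algebraic manipulation provides the sum of squares decomposition
\begin{equation*}
P(\Sigma)\Theta\cdot\Theta = (1+\partial_{z}\sigma)\left|\Theta_{X} - \frac{\sqrt{\mu}\,\nabla_{X}\sigma}{1+\partial_{z}\sigma}\Theta_{z}\right|^{2} + \frac{1}{1+\partial_{z}\sigma}\Theta_{z}^{2},
\end{equation*}
which is nonnegative as soon as $1+\partial_{z}\sigma > 0$. To upgrade this into the coercivity $P(\Sigma)\Theta\cdot\Theta \geq k(\Sigma)|\Theta|^{2}$, I would set $Y$ equal to the bracketed vector, use $|\Theta_{X}|^{2} \leq 2|Y|^{2} + 2\mu|\nabla_{X}\sigma/(1+\partial_{z}\sigma)|^{2}\Theta_{z}^{2}$, and invoke both the two-sided bound on $1+\partial_{z}\sigma$ obtained above and the $L^{\infty}$ control on $\sqrt{\mu}\nabla_{X}\sigma/(1+\partial_{z}\sigma)$, which follows from the embedding $H^{t_{0}+\frac{1}{2},1}(S) \hookrightarrow L^{\infty}(S)$ applied to the numerator and the uniform lower bound on the denominator.

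The main obstacle is precisely the uniform lower bound on $1+\partial_{z}\sigma$: it is the only place where the geometric nondegeneracy condition \eqref{nonvanishing} and the freedom to shrink $\delta$ are both exploited, and its failure would compromise both the well-definedness of $Q_{22}$ and the coercivity of $P(\Sigma)$. Once that bound is secured, the rest of the argument reduces to standard Sobolev product, composition, and embedding estimates applied to the explicit formulas for the entries of $Q(\Sigma)$, essentially following the strategy of Proposition 2.18 and Lemma 2.26 in \cite{Lannes_ww}.
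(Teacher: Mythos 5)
Your overall architecture is the right one and coincides with the argument the paper itself points to (the paper offers no proof, only the citation to Proposition 2.18 and Lemma 2.26 of \cite{Lannes_ww}): explicit differentiation of $\sigma$, a uniform lower bound $1+\partial_{z}\sigma\geq h_{\min}/2$ obtained by making $\delta$ small so that the mollification defects and the $\theta'$-terms are $O(\delta)$ in $L^{\infty}$, product/quotient estimates for the entry $Q_{22}$, and the sum-of-squares identity for $P(\Sigma)\Theta\cdot\Theta$, which you state correctly.

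There is, however, one step where the tool you invoke is strictly weaker than what the conclusion requires. You justify the first display, $|\nabla_{X}\sigma|_{H^{t_{0}+\frac12,1}(S)}+|\partial_{z}\sigma|_{H^{t_{0}+\frac12,1}(S)}\leq M$, by the fact that $\theta(\delta z|D|)$ is bounded on $H^{s}(\RD)$ uniformly in $z$. Uniform boundedness only yields the $L^{\infty}_{z}H^{t_{0}}_{X}$-type controls (the second and third estimates of the Proposition): since $\zeta,b$ are merely in $H^{t_{0}+1}(\RD)$, the function $\nabla_{X}\sigma$ is built from $\nabla\zeta,\nabla b\in H^{t_{0}}(\RD)$, and no uniform-in-$z$ operator bound can place it in $L^{2}_{z}H^{t_{0}+\frac12}_{X}(S)$. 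The missing half derivative is precisely what the regularizing diffeomorphism is designed to produce, via the smoothing estimate
\begin{equation*}
\int_{-1}^{0}\left\lvert \Lambda^{s+\frac12}\,\theta(\delta z|D|)u\right\rvert_{L^{2}(\RD)}^{2}dz\leq \frac{C}{\delta}\,\lvert u\rvert_{H^{s}}^{2},
\end{equation*}
which comes from integrating in $z$ over the set $\{\delta|z||\xi|\leq R\}$ where the cutoff is supported (and its analogues for the terms $\delta|D|\theta'(\delta z|D|)$ and $\delta^{2}|D|^{2}\theta''(\delta z|D|)$ arising in $\partial_{z}\sigma$ and $\partial_{z}^{2}\sigma$, where each extra $z$-derivative costs one $X$-derivative of $\zeta,b$ compensated by a factor $\delta$). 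Without invoking this gain the $H^{t_{0}+\frac12,1}$ bound does not follow from your stated premises. A second, much smaller point: in the coercivity step, inserting $|Y|^{2}\geq\frac12|\Theta_{X}|^{2}-C^{2}\Theta_{z}^{2}$ directly into $J|Y|^{2}+J^{-1}\Theta_{z}^{2}$ (with $J=1+\partial_{z}\sigma$) can leave a negative coefficient on $\Theta_{z}^{2}$; one must absorb only a small fraction of the $|Y|^{2}$ term, or bound $\Theta_{z}^{2}$ and $|Y|^{2}$ separately by $P(\Sigma)\Theta\cdot\Theta$ and recombine. This is a one-line fix.
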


\noindent We have a variational formulation of the Laplace problem \eqref{Laplace_bott_S}. We introduce

\begin{equation*}
H^{1}_{0,surf}(S) := \overline{\mathcal{D}(S \cup \{z=-1 \})}^{\; |\;|_{H^{1}(S)}} = \overline{\mathcal{D}(S \cup \{z=-1 \})}^{\; |\;|_{\Hdot^{1}(S)}}.
\end{equation*}

\noindent See Proposition 2.3 (3) in \cite{Lannes_ww} for a proof of the second equality.

\begin{definition}\label{variational_formulation_Bd}
Let $B \in H^{-\frac{1}{2}}(\RD)$. We say that $\phi \in H^{1}_{0,surf}(S)$ is a variational solution of \eqref{Laplace_bott_S} if for all $\varphi \in H^{1}_{0,surf}(S)$,

\begin{equation*}
\int_{S} \nabla^{\mu}_{\! X,z} \phi \cdot P(\Sigma) \nabla^{\mu}_{\! X,z} \varphi =  - \langle B , \varphi_{|z=-1} \rangle_ {H^{-\frac{1}{2}} - H^{\frac{1}{2}}}.
\end{equation*}
\end{definition}

\noindent We have also the following trace result that we can prove easily using a density argument.

\begin{lemma}\label{trace_formula}
For all $\varphi \in H^{1}_{0,surf}(S)$ we have

\begin{equation*}
\left\lvert \sqrt{1+\sqrt{\mu}|D|} \;  \varphi_{|z=-1} \right\lvert_{L^{2}(\RD)} \leq 2 \left\lvert \nabla^{\mu}_{\! X,z} \varphi \right\lvert_{L^{2}(S)}.
\end{equation*}
\end{lemma}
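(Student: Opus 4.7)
The plan is to first use the density of $\mathcal{D}(S \cup \{z=-1\})$ in $H^{1}_{0,surf}(S)$ (built into its definition) so that it suffices to establish the inequality for test functions $\varphi$ that are smooth with compact support in $\overline{S}$ and satisfy $\varphi_{|z=0} = 0$. Both sides of the desired inequality are continuous in the $\nabla^{\mu}_{X,z}$ norm (this is exactly the content of the inequality for the trace), so the extension to all of $H^{1}_{0,surf}(S)$ will be automatic.

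Working in the Fourier variable in $X$, I write $\widehat{\varphi}(\xi,z)$. Since $\widehat{\varphi}(\xi,0)=0$, the fundamental theorem of calculus gives two identities which I would exploit separately. First, the $L^2$ bound:
\begin{equation*}
\widehat{\varphi}(\xi,-1) = -\int_{-1}^{0} \partial_{z}\widehat{\varphi}(\xi,z)\,dz,
\end{equation*}
so Cauchy--Schwarz in $z$ and Plancherel in $\xi$ yield $|\varphi_{|z=-1}|_{L^{2}(\RD)}^{2} \leq |\partial_{z}\varphi|_{L^{2}(S)}^{2}$. Second, and this is the crux for the $\sqrt{\mu}|D|$ weight, I would use
\begin{equation*}
|\widehat{\varphi}(\xi,-1)|^{2} = -\int_{-1}^{0} \partial_{z}\bigl(|\widehat{\varphi}(\xi,z)|^{2}\bigr)\,dz = -2\,\mathrm{Re}\int_{-1}^{0} \widehat{\varphi}\,\overline{\partial_{z}\widehat{\varphi}}\,dz,
\end{equation*}
multiply by $\sqrt{\mu}|\xi|$, and apply Young's inequality $2ab \leq a^{2}+b^{2}$ with $a=\sqrt{\mu}|\xi||\widehat{\varphi}|$ and $b=|\partial_{z}\widehat{\varphi}|$ to obtain
\begin{equation*}
\sqrt{\mu}|\xi|\,|\widehat{\varphi}(\xi,-1)|^{2} \leq \int_{-1}^{0}\bigl(\mu|\xi|^{2}|\widehat{\varphi}|^{2} + |\partial_{z}\widehat{\varphi}|^{2}\bigr)\,dz.
\end{equation*}
Integrating in $\xi$ and invoking Plancherel, the right-hand side becomes exactly $|\nabla^{\mu}_{X,z}\varphi|_{L^{2}(S)}^{2}$.

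Adding the two bounds,
\begin{equation*}
\bigl|\sqrt{1+\sqrt{\mu}|D|}\,\varphi_{|z=-1}\bigr|_{L^{2}(\RD)}^{2} = |\varphi_{|z=-1}|_{L^{2}}^{2} + \bigl|\sqrt{\sqrt{\mu}|D|}\,\varphi_{|z=-1}\bigr|_{L^{2}}^{2} \leq |\partial_{z}\varphi|_{L^{2}(S)}^{2} + |\nabla^{\mu}_{X,z}\varphi|_{L^{2}(S)}^{2} \leq 2\,|\nabla^{\mu}_{X,z}\varphi|_{L^{2}(S)}^{2},
\end{equation*}
and taking square roots gives the bound with constant $\sqrt{2}\leq 2$. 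No real obstacle here; the only subtle point is the choice of the identity $|\widehat{\varphi}(\xi,-1)|^{2}=-\int_{-1}^{0}\partial_{z}|\widehat{\varphi}|^{2}$ rather than $|\widehat\varphi(\xi,-1)|\leq\int|\partial_z\widehat\varphi|$, because the latter only yields an $L^{2}$ trace inequality without the $\sqrt{\mu}|D|^{1/2}$ gain, whereas the former accommodates the multiplier through the $\widehat\varphi\,\overline{\partial_{z}\widehat\varphi}$ product and Young's inequality.
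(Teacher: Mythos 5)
Your proof is correct and is exactly the standard argument the paper alludes to when it says the lemma "can be proved easily using a density argument": reduce to smooth functions vanishing at $z=0$, then combine the fundamental theorem of calculus in $z$ with Plancherel in $X$ and Young's inequality to absorb the $\sqrt{\mu}|D|$ weight. The computations (including the constant $\sqrt{2}\leq 2$) check out.
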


\noindent We can now establish existence and uniqueness results. 

\begin{prop}\label{existence_variational_problems}
Let $B \in H^{-\frac{1}{2}}(\RD)$ and $\zeta \text{, } b \in \Hzetaa$ satisfying \eqref{nonvanishing}. Then, the problem \eqref{Laplace_bott_S} has a unique variational solution named $B^{\mathfrak{d}} \in H^{1}_{0,surf}(S)$.
\end{prop}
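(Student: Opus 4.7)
The plan is to apply the Lax--Milgram theorem on the Hilbert space $(H^{1}_{0,surf}(S), (\nabla^{\mu}_{\! X,z} \cdot , \nabla^{\mu}_{\! X,z} \cdot)_{L^{2}})$ to the bilinear form
\begin{equation*}
a(\phi,\varphi) := \int_{S} \nabla^{\mu}_{\! X,z} \phi \cdot P(\Sigma) \nabla^{\mu}_{\! X,z} \varphi
\end{equation*}
and the linear form $L(\varphi) := -\langle B, \varphi_{|z=-1} \rangle_{H^{-\frac{1}{2}}-H^{\frac{1}{2}}}$. A preliminary step is to verify that $|\nabla^{\mu}_{\! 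X,z} \cdot|_{L^{2}(S)}$ is a full Hilbert norm on $H^{1}_{0,surf}(S)$: since elements of this space vanish at $z=0$, the one-dimensional Poincar\'e inequality in $z$ gives $|\varphi|_{L^{2}(S)} \lesssim |\partial_{z} \varphi|_{L^{2}(S)}$, hence $|\nabla^{\mu}_{\! X,z} \varphi|_{L^{2}(S)}$ controls the entire $H^{1}(S)$-norm (with a $\mu$-dependent constant), so completeness is inherited from $H^{1}(S)$.

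Continuity and coercivity of $a$ are then immediate consequences of Proposition \ref{P_Sigma_regularity}. For coercivity, the pointwise bound $P(\Sigma)(X,z)\Theta \cdot \Theta \geq k(\Sigma)|\Theta|^{2}$ directly yields $a(\varphi,\varphi) \geq k(\Sigma) |\nabla^{\mu}_{\! X,z} \varphi|^{2}_{L^{2}(S)}$ with $1/k(\Sigma) \leq M$. For continuity, I would use that $P(\Sigma) = I + Q(\Sigma)$ with $|Q(\Sigma)|_{H^{t_{0}+\frac{1}{2},1}} \leq M$, combined with the embedding of Remark \ref{L_infty_H_s} together with the Sobolev injection $H^{t_{0}}(\RD) \hookrightarrow L^{\infty}(\RD)$ (since $t_{0} > d/2$), to conclude $Q(\Sigma) \in L^{\infty}(S)$ with norm controlled by $M$; a standard Cauchy--Schwarz argument then gives $|a(\phi,\varphi)| \leq C(M)|\nabla^{\mu}_{\! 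X,z} \phi|_{L^{2}} |\nabla^{\mu}_{\! X,z} \varphi|_{L^{2}}$.

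The delicate point is the continuity of $L$, which requires a trace estimate compatible with the $H^{-\frac{1}{2}}$ regularity of $B$. I would rely on Lemma \ref{trace_formula}, which bounds the trace at $z=-1$ in the weighted norm $|\sqrt{1+\sqrt{\mu}|D|}\,\varphi_{|z=-1}|_{L^{2}}$ by $2|\nabla^{\mu}_{\! X,z} \varphi|_{L^{2}(S)}$. The main obstacle is that the duality pairing with $H^{-\frac{1}{2}}(\RD)$ naturally uses the unweighted $H^{\frac{1}{2}}$ norm; but comparing Fourier multipliers one has $(1+|\xi|^{2})^{\frac{1}{2}} \leq C(\mu)(1+\sqrt{\mu}|\xi|)$ for any fixed $\mu > 0$ (with $C(\mu) = \max(1,\mu^{-1/2})$), so
\begin{equation*}
|\varphi_{|z=-1}|_{H^{\frac{1}{2}}(\RD)} \leq C(\mu)\,|\sqrt{1+\sqrt{\mu}|D|}\,\varphi_{|z=-1}|_{L^{2}(\RD)} \leq 2C(\mu)\,|\nabla^{\mu}_{\! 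X,z} \varphi|_{L^{2}(S)},
\end{equation*}
and hence $|L(\varphi)| \leq 2C(\mu)|B|_{H^{-\frac{1}{2}}} |\nabla^{\mu}_{\! X,z} \varphi|_{L^{2}(S)}$, which is the desired continuity.

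With coercivity, continuity of $a$, and continuity of $L$ established, the Lax--Milgram theorem produces a unique $\phi^{B} \in H^{1}_{0,surf}(S)$ satisfying the variational identity of Definition \ref{variational_formulation_Bd}, which concludes the proof.
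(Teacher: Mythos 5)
Your proposal is correct and follows essentially the same route as the paper, which proves this proposition in one line by invoking the Lax--Milgram theorem together with the Poincar\'e inequality in $H^{1}_{0,surf}(S)$ and the uniform coercivity of $P(\Sigma)$; you have simply filled in the details (boundedness of $Q(\Sigma)$ for continuity of the bilinear form, Lemma \ref{trace_formula} plus the elementary multiplier comparison for continuity of the linear form), all of which check out.
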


\begin{proof}
Because $S$ is bounded in the direction $z$ and that $P(\Sigma)$ is uniformly coercive, the results follow from the Lax-Milgram theorem and Poincar\'e inequality in $H^{1}_{0,surf}(S)$.
\end{proof}

\noindent In this part, we study the Laplace problem \eqref{Laplace_bott}, but the same work can be done for \eqref{Laplace_surf} (see Chapter 2 in \cite{Lannes_ww}) and we can transform \eqref{Laplace_bott} as follows

\begin{equation}\label{Laplace_surf_S}
  \left\{
  \begin{aligned}
   & \nabla^{\mu}_{\! X,z} \cdot P(\Sigma) \nabla^{\mu}_{\! X,z} \phi^{S} = 0 \text{ in } S \text{,}\\
   &\phi^{S}_{\;\, |z=0} = \psi \text{ , } \partial_{\textbf{n}} \phi^{S}_{\;\, |z=-1} = 0.
  \end{aligned}
  \right.
\end{equation} 

\noindent In the following, we denote by $\psih$, the unique solution of \eqref{Laplace_surf_S}.

\subsection{Regularity estimates of the solutions}

\noindent In this part, we give some regularity estimates.

\begin{thm}\label{Bd_regularity}
Let $t_{0} > \frac{d}{2}$ and $0 \leq s \leq t_{0} + \frac{1}{2}$. Let $\zeta,b \in \Hzetaa$ be such that Condition \eqref{nonvanishing} is satisfied. Then, for all $B \in H^{s-\frac{1}{2}}(\RD)$, we have 

\begin{equation*}
\left\lvert \Lambda^{s} \nabla^{\mu}_{\! X,z} B^{\mathfrak{d}} \right\lvert_{L^{2}(S)} \leq  M \left\lvert \frac{1}{\sqrt{1+\sqrt{\mu}|D|}} \; B \right\lvert_{H^{s}}.
\end{equation*}

\noindent Futhermore, if $s \geq \max(0,1-t_{0})$, we have

\begin{equation*}
\left\lvert \Lambda^{s-1} \partial_{z} \nabla^{\mu}_{\! X,z} B^{\mathfrak{d}} \right\lvert_{L^{2}(S)} \leq  M \left\lvert \frac{1}{\sqrt{1+\sqrt{\mu}|D|}} \; B \right\lvert_{H^{s}}.
\end{equation*}
\end{thm}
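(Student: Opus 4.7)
The plan is to establish the base case $s=0$ via the variational formulation, then propagate the estimate to higher tangential regularity by commuting $\Lambda^{s}$ through the equation, and finally recover the normal derivative directly from the PDE.

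For $s=0$, I test the variational problem of Definition \ref{variational_formulation_Bd} against $\varphi = B^{\mathfrak{d}}$. The uniform coercivity of $P(\Sigma)$ from Proposition \ref{P_Sigma_regularity} gives $k(\Sigma)\,|\nabla^{\mu}_{\!X,z} B^{\mathfrak{d}}|_{L^{2}(S)}^{2} \leq |\langle B, B^{\mathfrak{d}}_{|z=-1}\rangle_{H^{-1/2}-H^{1/2}}|$. Splitting the pairing symmetrically as $B = (1+\sqrt{\mu}|D|)^{1/2}\cdot(1+\sqrt{\mu}|D|)^{-1/2}B$ and applying the trace bound of Lemma \ref{trace_formula} to the second factor controls the right hand side by $2\,|(1+\sqrt{\mu}|D|)^{-1/2}B|_{L^{2}}\,|\nabla^{\mu}_{\!X,z} B^{\mathfrak{d}}|_{L^{2}}$. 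Dividing and using $1/k(\Sigma)\leq M$ yields the $s=0$ estimate.

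For $0 < s \leq t_{0}+\frac{1}{2}$, I would apply the tangential multiplier $\Lambda^{s}$ to the equation, first regularized as $\Lambda^{s}/(1+\delta|D|)^{s}$ so that the result remains in $H^{1}_{0,\mathrm{surf}}(S)$, then letting $\delta\to 0$. The function $\Lambda^{s}B^{\mathfrak{d}}$ satisfies an inhomogeneous version of \eqref{Laplace_bott_S} with Neumann datum $\Lambda^{s}B$ and source $\nabla^{\mu}_{\!X,z}\cdot[\Lambda^{s},Q(\Sigma)]\nabla^{\mu}_{\!X,z} B^{\mathfrak{d}}$. Testing against $\Lambda^{s}B^{\mathfrak{d}}$ and using coercivity yields
\begin{equation*}
k(\Sigma)\,|\Lambda^{s}\nabla^{\mu}_{\!X,z}B^{\mathfrak{d}}|_{L^{2}}^{2}\leq \bigl|\langle \Lambda^{s}B, \Lambda^{s}B^{\mathfrak{d}}_{|z=-1}\rangle\bigr| + \bigl|\bigl([\Lambda^{s},Q(\Sigma)]\nabla^{\mu}_{\!X,z}B^{\mathfrak{d}},\nabla^{\mu}_{\!X,z}\Lambda^{s}B^{\mathfrak{d}}\bigr)_{L^{2}}\bigr|.
\end{equation*}
The boundary pairing is handled exactly as in the base case. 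The commutator is controlled by Kato--Ponce type estimates applied fiber-by-fiber in $z$, using the bounds $|Q(\Sigma)|_{H^{t_{0}+1/2,1}}\leq M$ and $|\Lambda^{t_{0}}Q(\Sigma)|_{L^{\infty}_{z}L^{2}_{\!X}}\leq M$ of Proposition \ref{P_Sigma_regularity} together with the embedding $H^{t_{0}+1/2,1}\hookrightarrow L^{\infty}_{z}H^{t_{0}}_{\!X}$ of Remark \ref{L_infty_H_s}. After absorbing one factor of $|\Lambda^{s}\nabla^{\mu}_{\!X,z}B^{\mathfrak{d}}|_{L^{2}}$ into the left hand side, the estimate at level $s$ follows from the one at a strictly lower level, and one concludes by bootstrapping from $s=0$.

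For the normal-derivative bound, I would solve the PDE \eqref{Laplace_bott_S} pointwise for $\partial_{z}^{2}B^{\mathfrak{d}}$, using that the $(d+1,d+1)$ entry of $P(\Sigma)$ is bounded below by $k(\Sigma)$. This expresses $\partial_{z}^{2}B^{\mathfrak{d}}$ as a sum of products of entries of $P(\Sigma)$ and $\partial_{z}P(\Sigma)$ with $\nabla^{\mu}_{\!X,z}B^{\mathfrak{d}}$, $\partial_{z}\nabla_{\!X}B^{\mathfrak{d}}$, and $\mu\Delta_{\!X}B^{\mathfrak{d}}$. Applying $\Lambda^{s-1}$, taking $L^{2}(S)$-norms, and using the bounds on $Q(\Sigma)$ and $\partial_{z}Q(\Sigma)$ together with the first inequality at level $s$ controls everything; the mixed derivative $\Lambda^{s-1}\partial_{z}\nabla_{\!X}B^{\mathfrak{d}}$ itself follows directly from the first part since $\partial_{z}$ and $\nabla_{\!X}$ commute. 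The restriction $s\geq \max(0,1-t_{0})$ arises precisely from the threshold in the Sobolev product estimates used to multiply $\partial_{z}Q(\Sigma)\in L^{\infty}_{z}H^{t_{0}-1}_{\!X}$ against factors of regularity $H^{s-1}_{\!X}$. I expect the main obstacle to be the commutator estimate in the second step: one must arrange the pairing so that the weaker bound $\Lambda^{t_{0}-1}\partial_{z}Q(\Sigma)\in L^{\infty}_{z}L^{2}_{\!X}$ is never required to provide more regularity than it carries, and so that no spurious negative powers of $\mu$ sneak into the constant through the $\sqrt{\mu}$-weighted tangential gradient.
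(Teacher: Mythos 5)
Your outline follows the paper's proof almost step for step: the regularized tangential multiplier $\Lambda^{s}_{\delta}$, the test function $\Lambda^{2s}_{\delta}B^{\mathfrak{d}}$, coercivity plus the trace lemma for the boundary pairing, a commutator term $[\Lambda^{s}_{\delta},Q(\Sigma)]\nabla^{\mu}_{\! X,z}B^{\mathfrak{d}}$ closed by a finite induction on $s$, and recovery of $\partial_{z}^{2}B^{\mathfrak{d}}$ from the equation using the lower bound on $1+q_{d+1}$.

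The one genuine gap is in the range $t_{0} < s \leq t_{0}+\frac{1}{2}$, and it sits exactly where you suspected trouble. A fiber-by-fiber commutator estimate using $Q(\Sigma)\in L^{\infty}_{z}H^{t_{0}}_{\! X}$ controls $[\Lambda^{s}_{\delta},Q]v$ by $|\Lambda^{s-t_{1}}_{\delta}v|_{L^{2}}$ only for $s\leq t_{0}+t_{1}$ with $t_{1}<t_{0}-\frac{d}{2}$ (Proposition \ref{commutator_estimate1}); when $t_{0}$ is close to $\frac{d}{2}$ this does not reach $s=t_{0}+\frac{1}{2}$. The paper's fix (Proposition \ref{commutator_estimate2}) trades integrability in $z$: it uses $Q(\Sigma)\in L^{2}_{z}H^{t_{0}+\frac{1}{2}}_{\! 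X}$ but then demands $\nabla^{\mu}_{\! X,z}B^{\mathfrak{d}}\in L^{\infty}_{z}H^{s-t_{1}}_{\! X}$, which by the embedding of Remark \ref{L_infty_H_s} costs a control of $\Lambda^{s-t_{1}-\frac{1}{2}}\partial_{z}\nabla^{\mu}_{\! X,z}B^{\mathfrak{d}}$ in $L^{2}(S)$. Consequently the first inequality at a level $s>t_{0}$ requires the \emph{second} inequality at a strictly lower level, so your linear ordering (first inequality for all $s$, then the normal-derivative bound as a final separate step) cannot be carried out as written; the two estimates must be established by a single interleaved induction, advancing both by a small increment at each step. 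Everything else in your sketch, including the base case and the identification of where the restriction $s\geq\max(0,1-t_{0})$ enters, matches the paper.
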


\begin{proof}
\noindent Let $\delta > 0$ and $\chi$ be a smooth compactly supported real function that is equal to $1$ near $0$. We introduce the smoothing operator  $\Lambda^{s}_{\delta} :=  \chi(\delta \Lambda) \Lambda^{s}$. We know that $B^{\mathfrak{d}} \in H^{1}_{0,surf}(S)$. Therefore, using $\Lambda^{2s}_{\delta} B^{\mathfrak{d}}$ a test function, we have

\begin{equation*}
\int_{S} \nabla^{\mu}_{\! X,z} B^{\mathfrak{d}} \cdot P(\Sigma) \nabla^{\mu}_{\! X,z} \Lambda^{2s}_{\delta} B^{\mathfrak{d}} =  -\int_{\RD} B (\Lambda^{2s}_{\delta} B^{\mathfrak{d}})_{|z=-1}.
\end{equation*}

\noindent Since $P(\Sigma)$ is symmetric, $\Lambda^{s}_{\delta}$ commutes with $\nabla^{\mu}_{\! X,z}$ and is independent of $z$ we obtain that

\begin{align*}
\int_{S} P(\Sigma) \Lambda^{s}_{\delta} \nabla^{\mu}_{\! X,z} B^{\mathfrak{d}} \cdot  \nabla^{\mu}_{\! X,z} \Lambda^{s}_{\delta} B^{\mathfrak{d}} =& -\int_{S} [\Lambda^{s}_{\delta},Q(\Sigma)] \nabla^{\mu}_{\! X,z} B^{\mathfrak{d}} \cdot  \nabla^{\mu}_{\! X,z} \Lambda^{s}_{\delta} B^{\mathfrak{d}}\\
& -\int_{\RD} \frac{\Lambda^{s}_{\delta}}{\sqrt{1+\sqrt{\mu} |D|}} B \left(\sqrt{1+\sqrt{\mu} |D|} \Lambda^{s}_{\delta} B^{\mathfrak{d}} \right)_{|z=-1}.
\end{align*}

\noindent Then by coercivity of $P(\Sigma)$ and trace inequality \ref{trace_formula}

\begin{align*}
k(\Sigma) \left\lvert \Lambda^{s}_{\delta} \nabla^{\mu}_{\! X,z} B^{\mathfrak{d}} \right\lvert^{2}_{L^{2}(S)} &\leq \left\lvert [\Lambda^{s}_{\delta},Q(\Sigma)] \nabla^{\mu}_{\! X,z} B^{\mathfrak{d}} \right\lvert_{L^{2}} \left\lvert \Lambda^{s}_{\delta} \nabla^{\mu}_{\! X,z} B^{\mathfrak{d}} \right\lvert_{L^{2}(S)} \\
&+ 2 \left\lvert \Lambda^{s}_{\delta} \nabla^{\mu}_{\! X,z} B^{\mathfrak{d}} \right\lvert_{L^{2}(S)} \left\lvert \frac{\Lambda^{s}_{\delta}}{\sqrt{1+\sqrt{\mu} |D|}} B \right\lvert_{L^{2}} \hspace{-0.3cm},
\end{align*}

\noindent and

\begin{equation*}
k(\Sigma) \left\lvert \Lambda^{s}_{\delta} \nabla^{\mu}_{\! X,z} B^{\mathfrak{d}} \right\lvert_{L^{2}(S)} \leq \left\lvert [\Lambda^{s}_{\delta},Q(\Sigma)] \nabla^{\mu}_{\! X,z} B^{\mathfrak{d}} \right\lvert_{L^{2}(S)}  + 2 \left\lvert \frac{\Lambda^{s}_{\delta}}{\sqrt{1+\sqrt{\mu} |D|}} B \right\lvert_{L^{2}} \hspace{-0.9cm}.
\end{equation*}

\noindent We have to distinguish two cases.

\medskip
\medskip
\medskip
\hspace{-0.8cm} \underline{\textbf{a) $0 \leq s \leq t_{0}$ :}}
\medskip
\medskip
\medskip

\noindent The commutator estimate \ref{commutator_estimate1} (with $T_{0} = t_{0}$) and Proposition \ref{P_Sigma_regularity} give

\begin{align*}
k(\Sigma) \left\lvert \Lambda^{s}_{\delta} \nabla^{\mu}_{\! X,z} B^{\mathfrak{d}} \right\lvert_{L^{2}(S)} &\leq C \left\lvert Q(\Sigma) \right\lvert_{L^{\infty}_{z} H^{t_{0}}_{\! X}(S)} \left\lvert \Lambda^{s-\epsilon}_{\delta} \nabla^{\mu}_{\! X,z} B^{\mathfrak{d}} \right\lvert_{L^{2}(S)}  + 2 \left\lvert \frac{\Lambda^{s}_{\delta}}{\sqrt{1+\sqrt{\mu} |D|}} B \right\lvert_{L^{2}} \hspace{-0.9cm}\\
&\leq M \left\lvert \Lambda^{s-\epsilon}_{\delta} \nabla^{\mu}_{\! X,z} B^{\mathfrak{d}} \right\lvert_{L^{2}(S)}  + 2 \left\lvert \frac{\Lambda^{s}_{\delta}}{\sqrt{1+\sqrt{\mu} |D|}} B \right\lvert_{L^{2}}
\end{align*}

\noindent for some $\epsilon > 0$ small enough ($\epsilon < t_{0} - \frac{d}{2}$). Using a finite induction on $s$ and taking the limit when $\delta$ goes to $0$, the first inequality follows. For the second estimate, we only need to give a control of $\partial_{z}^{2} B^{\mathfrak{d}}$. We use Equation \eqref{Laplace_bott_S} satisfied by $B^{\mathfrak{d}}$. We express $P(\Sigma)$ as

\begin{equation*}
P(\Sigma) := \begin{pmatrix}
(1+a(X,z)) I_{d \times d} & \textbf{q}(X,z) \\
\textbf{q}^{t}(X,z) & 1+q_{d+1}(X,z)
\end{pmatrix}.
\end{equation*}

\noindent A simple computation gives

\begin{align*}
(1+q_{d+1}) \partial_{z}^{2} B^{\mathfrak{d}} = &-\sqrt{\mu} \nabla_{\! X} \cdot \left((1+ a)\sqrt{\mu} \nabla_{\! X} B^{\mathfrak{d}} \right) - \sqrt{\mu} \nabla_{\! X} \cdot \left( \partial_{z} B^{\mathfrak{d}} \textbf{q} \right) \\
&-\sqrt{\mu} \partial_{z} \textbf{q} \cdot \nabla_{\! X} B^{\mathfrak{d}} - \sqrt{\mu} \partial_{z} \nabla_{\! X} B^{\mathfrak{d}} \cdot \textbf{q} - \partial_{z} q_{d+1} \partial_{z} B^{\mathfrak{d}}.
\end{align*}

\noindent We have $a \text{, } \textbf{q} \text{, } q_{d+1} \in L^{\infty}_{z} H^{t_{0}}_{X}(S)$, $\partial_{z} \textbf{q} \text{, } \partial_{z} q_{d+1} \in L^{\infty}_{z} H^{t_{0}-1}_{X}(S)$ and $1+q_{d+1} \geq k(\Sigma)$. Then, since $s \geq 1-t_{0}$ and $\nabla_{\! X} B^{\mathfrak{d}} \in H^{s,1}(S)$, by the product estimates \ref{product_estimate2} and \ref{inverse_estimate1} (with $T_{0} = t_{0}$), we obtain the result.

\medskip
\medskip
\medskip
\hspace{-0.8cm} \underline{\textbf{b) $t_{0} \leq s \leq t_{0} + \frac{1}{2}$ :}}
\medskip
\medskip
\medskip

\noindent The commutator estimate \ref{commutator_estimate2} (with $T_{0} = t_{0} + \frac{1}{2}$ and $t_{1} > \frac{1}{2}$) and Proposition \ref{P_Sigma_regularity} give

\begin{align*}
k(\Sigma) \left\lvert \Lambda^{s}_{\delta} \nabla^{\mu}_{\! X,z} B^{\mathfrak{d}} \right\lvert_{L^{2}(S)}  &\leq M \Bigg[ \left\lvert \Lambda^{s + \frac{1}{2} - t_{1}}_{\delta} \nabla^{\mu}_{\! X,z} B^{\mathfrak{d}} \right\lvert_{L^{2}(S)}\\
& + \left\lvert \Lambda^{s - 1 + \frac{1}{2} -t_{1}}_{\delta} \partial_{z} \nabla^{\mu}_{\! X,z} B^{\mathfrak{d}} \right\lvert_{L^{2}(S)} + 2 \left\lvert \frac{\Lambda^{s}_{\delta}}{\sqrt{1+\sqrt{\mu} |D|}} B \right\lvert_{L^{2}} \Bigg].
\end{align*}

\noindent We denote $\epsilon := \frac{1}{2} -t_{1}$. We obtain the first inequality for $t_{0} \leq s \leq t_{0} + \epsilon$ thanks to the previous case. Furthermore, we saw that 

\begin{align*}
(1+q_{d+1}) \partial_{z}^{2} B^{\mathfrak{d}} = &-\sqrt{\mu} \nabla^{\mu}_{\! X} \cdot \left( (1+a)\sqrt{\mu} \nabla_{\! X} B^{\mathfrak{d}} \right) - \sqrt{\mu} \nabla_{\! X} \cdot \left( \partial_{z} B^{\mathfrak{d}} \textbf{q} \right) \\
&-\sqrt{\mu} \partial_{z} \textbf{q} \cdot \nabla_{\! X} B^{\mathfrak{d}} - \sqrt{\mu} \partial_{z} \nabla_{\! X} B^{\mathfrak{d}} \cdot \textbf{q} - \partial_{z} q_{d+1} \partial_{z} B^{\mathfrak{d}}.
\end{align*}

\noindent We have $a \text{, } \textbf{q} \text{, } q_{d+1} \in L^{2}_{z} H^{t_{0}+ \frac{1}{2}}_{X}(S)$, $\partial_{z} \textbf{q} \text{, } \partial_{z} q_{d+1} \in L^{2}_{z} H^{t_{0} - \frac{1}{2}}_{X}(S)$ and $1+q_{d+1} \geq k(\Sigma)$. Then, since $s \geq 1-t_{0}$ and $\nabla_{\! X} B^{\mathfrak{d}} \in L^{\infty}_{z} H^{s-\frac{1}{2}}_{\! X}(S)$, by the product estimates \ref{product_estimate2} and \ref{inverse_estimate2} (with $T_{0} = t_{0}$), and we obtain the second inequality for $t_{0} \leq s \leq t_{0} + \epsilon$. Using a finite induction, we obtain the first and the second inequality.
\end{proof}

\section{The Dirichlet-Neumann and the Neumann-Neumann operators}\label{The Dirichlet-Neumann and the Neumann-Neumann operators}

\noindent We refers to Chapter 3 of \cite{Lannes_ww} for more details about the Dirichlet-Neumann operator and Section 3 in \cite{Iguchi_tsunami} for the study of these operators.

\subsection{Main properties}

\noindent We can express the Neumann-Neumann operator with the formalism of the previous section.  For $\psi \in  \Hdot^{\frac{3}{2}}(\RD)$ and $B \in H^{\frac{1}{2}}(\RD)$ we have

\begin{equation}\label{DN_operator_S}
\G(\psi) = \left( \textbf{e}_{z} \cdot P(\Sigma) \nabla^{\mu}_{\! X,z} \psih \right)_{|z=0},
\end{equation}

\noindent and

\begin{equation}\label{NN_operator_S}
\GNN(B) = \left( \textbf{e}_{z} \cdot P(\Sigma) \nabla^{\mu}_{\! X,z} B^{\mathfrak{d}} \right)_{|z=0}.
\end{equation}

\begin{remark}\label{operator_in_0}
\noindent Notice that (see Proposition 3.2 in \cite{Iguchi_tsunami})

\begin{equation*}
\frac{1}{\mu} G_{\mu}[0,0](\psi) = |D|^{2} \frac{\tanh(\sqrt{\mu} |D|)}{\sqrt{\mu} |D|} \psi \text{  and  } G_{\mu}^{N \! N}[0,0](B) = \frac{1}{\cosh(\sqrt{\mu} |D|)} B.
\end{equation*}

\end{remark}

\noindent We recall that $\G$ is symmetric and maps continuously $\Hdot^{\frac{1}{2}}(\RD)$ into $\left( \Hdot^{\frac{1}{2}}(\RD)/\raisebox{-.65ex}{\ensuremath{\R}} \right)^{\prime}$ (see Paragraph 3.1. in \cite{Lannes_ww}). We need an extension result in $H^{-\frac{1}{2}}(\RD)$ in order to give a dual formulation of the Neumann-Neumann operator.

\begin{definition}\label{psidiese_definition}
\noindent Let $\varphi \in H^{-\frac{1}{2}}(\RD)$. We define $\varphi^{\#}$ as

\begin{equation*}
\varphi^{\#} = \frac{\sinh([z+1]\sqrt{\mu}|D|)}{\sinh(\sqrt{\mu}|D|)} \varphi.
\end{equation*} 

\end{definition}

\begin{remark}

\noindent $\varphi^{\#}$ satisfies weakly

\begin{equation*}
\left\{
  \begin{aligned}
    &\Delta^{\mu}_{\! X,z} \varphi^{\#} = 0 \text{ in } S \text{, }\\
    &\varphi^{\#}_{\; \; |z=0} = \varphi \text{, } \varphi^{\#}_{\; \; |z=-1} = 0.
  \end{aligned}
  \right.
\end{equation*}

\end{remark}

\noindent We can prove easily regularity results for $\varphi^{\#}$ similar to $\varphi^{\mathfrak{h}}$.

\begin{prop}\label{psidiese_regularity}
Let $s \geq 0$ and $\varphi \in H^{s-\frac{1}{2}}(\RD)$. Then, 

\begin{equation*}
\left\lvert \Lambda^{s-1} \nabla^{\mu}_{\! X,z} \varphi^{\#} \right\lvert_{L^{2}(S)} + \frac{1}{\sqrt{\mu}} \left\lvert \Lambda^{s-2} \partial_{z} \nabla^{\mu}_{\! X,z} \varphi^{\#} \right\lvert_{L^{2}(S)} \leq C \left\lvert \sqrt{1+\sqrt{\mu}|D|} \varphi \right\lvert_{H^{s-1}}.
\end{equation*}
\end{prop}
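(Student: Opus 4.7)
The plan is to prove this directly in Fourier space, since Definition \ref{psidiese_definition} presents $\varphi^{\#}$ as an explicit Fourier multiplier of $\varphi$. Setting $\eta := \sqrt{\mu}|\xi|$ and recalling $\nabla^{\mu}_{\! X,z} = (\sqrt{\mu}\nabla_{\! X}, \partial_z)^t$, one has
\begin{equation*}
\mathcal{F}_{\! X}\bigl(\sqrt{\mu}\, \nabla_{\! X}\varphi^{\#}\bigr)(\xi, z) = i\sqrt{\mu}\,\xi \, \frac{\sinh((z+1)\eta)}{\sinh(\eta)}\, \widehat{\varphi}(\xi), \qquad \mathcal{F}_{\! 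X}(\partial_z \varphi^{\#})(\xi, z) = \eta\, \frac{\cosh((z+1)\eta)}{\sinh(\eta)}\, \widehat{\varphi}(\xi).
\end{equation*}
Squaring, summing, and using the identities $\sinh^2 + \cosh^2 = \cosh(2\cdot)$ and $\int_{-1}^{0} \cosh(2(z+1)\eta)\,dz = \sinh(2\eta)/(2\eta)$, a short computation collapses to the clean identity
\begin{equation*}
\int_{-1}^{0} \bigl|\mathcal{F}_{\! X} \nabla^{\mu}_{\! X,z}\varphi^{\#}(\xi, z)\bigr|^{2}\, dz = \eta \coth(\eta)\, |\widehat{\varphi}(\xi)|^{2}.
\end{equation*}

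By Plancherel, the square of the first term in the desired inequality then equals $\int_{\RD}(1+|\xi|^{2})^{s-1}\, \eta \coth(\eta)\, |\widehat{\varphi}(\xi)|^{2}\, d\xi$, while the square of the right-hand side is $\int_{\RD}(1+|\xi|^{2})^{s-1}(1 + \eta)\,|\widehat{\varphi}(\xi)|^{2}\, d\xi$. Thus the first inequality reduces to the elementary pointwise bound $\eta \coth(\eta) \leq C(1+\eta)$ on $(0,+\infty)$, valid since $\eta\coth(\eta) \to 1$ as $\eta \to 0^{+}$ and $\eta \coth(\eta) \sim \eta$ as $\eta \to +\infty$.

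For the second inequality, the same $z$-integration strategy applied to $\partial_{z}\nabla^{\mu}_{\! X,z}\varphi^{\#}$ (using the harmonicity relation $\partial_{z}^{2}\varphi^{\#} = -\mu \Delta_{\! X}\varphi^{\#}$) produces the analogous identity
\begin{equation*}
\int_{-1}^{0} \bigl|\mathcal{F}_{\! X}\partial_{z} \nabla^{\mu}_{\! X,z}\varphi^{\#}(\xi, z)\bigr|^{2}\, dz = \eta^{3} \coth(\eta)\, |\widehat{\varphi}(\xi)|^{2}.
\end{equation*}
Writing $\eta^{3}/\mu = |\xi|^{2} \cdot \sqrt{\mu}|\xi|$, the factor $|\xi|^{2}$ absorbs the extra weight $(1+|\xi|^{2})^{s-2}$ into $(1+|\xi|^{2})^{s-1}$, and the remaining factor $\sqrt{\mu}|\xi|\coth(\eta) = \eta\coth(\eta)$ is again controlled by $C(1+\eta)$; Plancherel then closes the estimate.

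I do not expect any real obstacle; the whole argument reduces to two one-variable Fourier identities and the pointwise bound on $\eta\coth(\eta)$. The only bookkeeping point to keep in mind is that for small $s$ the weight $(1+|\xi|^{2})^{s-2}$ could appear singular, but this is harmless because $|\xi|^{2}\coth(\eta) \sim |\xi|/\sqrt{\mu}$ near $\xi = 0$, so the integrand is never singular there; and the various powers of $\sqrt{\mu}$ reassemble correctly into the advertised weight $(1+\sqrt{\mu}|D|)^{1/2}$ on the right-hand side.
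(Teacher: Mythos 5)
Your proof is correct, and every step checks out: the two $z$-integrated Plancherel identities
\begin{equation*}
\int_{-1}^{0}\bigl|\mathcal{F}_{\! X}\nabla^{\mu}_{\! X,z}\varphi^{\#}\bigr|^{2}dz=\eta\coth(\eta)|\widehat{\varphi}|^{2},\qquad \int_{-1}^{0}\bigl|\mathcal{F}_{\! X}\partial_{z}\nabla^{\mu}_{\! X,z}\varphi^{\#}\bigr|^{2}dz=\eta^{3}\coth(\eta)|\widehat{\varphi}|^{2},
\end{equation*}
are exact, the bound $\eta\coth(\eta)\leq 1+\eta$ holds with an absolute ($\mu$-independent) constant, and the bookkeeping $\eta^{3}/\mu=|\xi|^{2}\,\eta$ correctly converts the weight $(1+|\xi|^{2})^{s-2}$ into $(1+|\xi|^{2})^{s-1}$. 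The paper does not actually write out a proof; it only remarks that the result is obtained ``similarly to $\varphi^{\mathfrak{h}}$'', i.e.\ by the variational and commutator machinery of Theorem \ref{Bd_regularity} (coercivity of $P(\Sigma)$, the smoothing operators $\Lambda^{s}_{\delta}$, and the elliptic equation to recover $\partial_{z}^{2}$). Your route is genuinely different and, for this particular object, better suited: since $\varphi^{\#}$ lives on the flat strip and is given by an explicit constant-coefficient Fourier multiplier, the whole estimate reduces to a one-variable computation with sharp constants, with no need for the commutator estimates or the induction on $s$ that the variational approach requires. The only cosmetic point is your closing worry about a singularity at $\xi=0$: the weight $(1+|\xi|^{2})^{s-2}$ equals $1$ at $\xi=0$ and is never singular, and the relevant factor $|\xi|^{2}\eta\coth(\eta)\sim|\xi|^{2}$ vanishes there, so the remark can simply be dropped.
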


\noindent We can now give a dual formulation of the Neumann-Neumann operator. We introduce the Dirichlet-Dirichlet operator, for $\psi \in H^{\frac{1}{2}}(\RD)$, 

\begin{equation}\label{DD_operator_S}
\GDD(\psi) := \left(\psih \right)_{|z=-1}.
\end{equation}

\noindent The following result is Proposition 3.3 in \cite{Iguchi_tsunami}.

\begin{prop}\label{main_properties_GNN}
\noindent Let $t_{0} > \frac{d}{2}$, $B \in H^{-\frac{1}{2}}(\RD)$ and $\zeta,b \in \Hzetaa$ such that \eqref{nonvanishing} is satisfied. $\GNN(\cdot)$ can be extended to $H^{-\frac{1}{2}}(\RD)$ with the dual formulation 

\begin{equation}\label{dual_definition_GNN}
  \GNN(B) = \left\{
  \begin{aligned}
    &H^{\frac{1}{2}}(\RD) \; \longrightarrow \quad \quad \quad \R\\
    &\quad \quad \varphi  \quad \quad \longmapsto \int_{S} P(\Sigma) \nabla^{\mu}_{\! X,z}  B^{\mathfrak{d}} \cdot \nabla^{\mu}_{\! X,z} \varphi^{\#}.
  \end{aligned}
  \right.
\end{equation}

\noindent Furthermore, the adjoint of $\GNN$ is $\GDD$. For all $B \in H^{-\frac{1}{2}}(\RD)$ and $\varphi \in H^{\frac{1}{2}}(\RD)$,

\begin{equation*}
\left(\GNN(B), \varphi \right)_{H^{-\frac{1}{2}} - H^{\frac{1}{2}}} = \left(B, \GDD(\varphi) \right)_{H^{-\frac{1}{2}} - H^{\frac{1}{2}}}.\\
\end{equation*}
\end{prop}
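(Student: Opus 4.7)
The plan is to reduce the whole statement to one cleverly chosen application of the variational formulation of Definition \ref{variational_formulation_Bd}, after first verifying that both sides of the proposed dual identity make sense for $B \in H^{-\frac{1}{2}}(\RD)$. First, I would check that the map $\varphi \mapsto \int_S P(\Sigma) \nabla^{\mu}_{X,z} B^{\mathfrak{d}} \cdot \nabla^{\mu}_{X,z} \varphi^{\#}$ is a well-defined element of $(H^{\frac{1}{2}}(\RD))' = H^{-\frac{1}{2}}(\RD)$: Proposition \ref{existence_variational_problems} provides $B^{\mathfrak{d}} \in H^{1}_{0,surf}(S)$ for any $B \in H^{-\frac{1}{2}}$, Proposition \ref{P_Sigma_regularity} bounds $P(\Sigma)$ in $L^{\infty}$, and Proposition \ref{psidiese_regularity} with $s=1$ combined with Cauchy--Schwarz yields the required continuity estimate in $\varphi$. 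Agreement with the classical definition \eqref{NN_operator_S} when $B$ is smooth will follow from a direct integration by parts (using harmonicity of $B^{\mathfrak{d}}$ together with $\varphi^{\#}|_{z=0} = \varphi$ and $\varphi^{\#}|_{z=-1} = 0$), so density of $H^{\frac{1}{2}}$ in $H^{-\frac{1}{2}}$ gives the extension.

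The heart of the proof is the adjoint identity. The key idea is to introduce the variable-coefficient harmonic extension $\psih$ of $\varphi$ solving \eqref{Laplace_surf_S} with $\psih|_{z=0}=\varphi$ and $\partial_{\mathbf{n}} \psih|_{z=-1}=0$, and to set
\[
w := \psih - \varphi^{\#}.
\]
Since $w|_{z=0} = \varphi - \varphi = 0$, the function $w$ belongs to $H^{1}_{0,surf}(S)$ and is therefore an admissible test function in Definition \ref{variational_formulation_Bd}. Applied to $w$, that variational formulation reads
\[
\int_{S} \nabla^{\mu}_{X,z} B^{\mathfrak{d}} \cdot P(\Sigma) \nabla^{\mu}_{X,z} w \;=\; -\langle B,\, w|_{z=-1} \rangle \;=\; -\langle B,\, \GDD(\varphi)\rangle,
\]
because $\varphi^{\#}|_{z=-1}=0$, so that $w|_{z=-1} = \psih|_{z=-1} = \GDD(\varphi)$ by definition \eqref{DD_operator_S}.

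It then suffices to split the left-hand side into a $\psih$-piece and a $\varphi^{\#}$-piece. The $\psih$-piece $\int_S P(\Sigma)\nabla^{\mu}_{X,z} B^{\mathfrak{d}} \cdot \nabla^{\mu}_{X,z} \psih$ vanishes: by symmetry of $P(\Sigma)$ and integration by parts moving the derivative onto $B^{\mathfrak{d}}$, the interior term disappears because $\psih$ is harmonic for $P(\Sigma)$, while the two boundary contributions vanish thanks to $B^{\mathfrak{d}}|_{z=0}=0$ and $\partial_{\mathbf{n}} \psih|_{z=-1}=0$. The $\varphi^{\#}$-piece is precisely $\langle \GNN(B),\varphi\rangle$ by the dual formulation established in the first step. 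Combining these gives $\langle \GNN(B), \varphi\rangle = \langle B, \GDD(\varphi)\rangle$.

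The delicate point is making the integration-by-parts manipulations rigorous given that $P(\Sigma)$ only has $H^{t_{0}+\frac{1}{2},1}(S)$ regularity and that for $B \in H^{-\frac{1}{2}}$ the solution $B^{\mathfrak{d}}$ only lies in $H^{1}(S)$; traces on $\{z=0\}$ and $\{z=-1\}$ must be interpreted in $H^{-\frac{1}{2}}$--$H^{\frac{1}{2}}$ duality rather than pointwise. I would handle this by first doing the computation with smooth, compactly-supported approximations of $B$ and $\varphi$ — where every integration by parts is classical — and then passing to the limit using the continuity estimates of Theorem \ref{Bd_regularity} for $B^{\mathfrak{d}}$, Proposition \ref{psidiese_regularity} for $\varphi^{\#}$, and Lemma \ref{trace_formula} for the boundary traces. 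This limiting argument is the only technical step; the rest is a single, clean application of the variational formulation.
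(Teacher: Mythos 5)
Your argument is correct. The paper gives no proof of this proposition (it is quoted from Proposition 3.3 of Iguchi's paper \cite{Iguchi_tsunami}), and your variational/duality argument --- testing Definition \ref{variational_formulation_Bd} against $w=\psih-\varphi^{\#}\in H^{1}_{0,surf}(S)$, using $B^{\mathfrak{d}}$ as a test function for \eqref{Laplace_surf_S}, and invoking the symmetry of $P(\Sigma)$ --- is exactly the standard route such a reference relies on; the signs and boundary terms all check out. One small remark: the final smoothing step is really only needed to verify consistency of \eqref{dual_definition_GNN} with \eqref{NN_operator_S} for regular $B$; the adjoint identity itself is already rigorous as you set it up, since $w$ and $B^{\mathfrak{d}}$ are admissible test functions for the two variational problems with no extra regularity, and the duality pairing at $z=-1$ is justified by Lemma \ref{trace_formula}.
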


\noindent In order to study shape derivatives of the Dirichlet-Neumann and the Neumann-Neumann operators, we have to introduce the Neumann-Dirichlet operator. For $B \in H^{-\frac{1}{2}}(\RD)$,  we define

\begin{equation}\label{ND_operator_S}
\GND(B) := \left(B^{\mathfrak{d}} \right)_{|z=-1}.
\end{equation}

\noindent The following result is a symmetry property and a dual formulation of the Neumann-Dirichlet operator. 

\begin{prop}
Let $B \in H^{-\frac{1}{2}}(\RD)$ and $\zeta,b \in \Hzetaa$ such that \eqref{nonvanishing} is satisfied. $\GND(B)$ can be view as 

\begin{equation}\label{dual_definition_GND}
  \GND(B) = \left\{
  \begin{aligned}
    &H^{-\frac{1}{2}}(\RD) \; \longrightarrow \quad \quad \quad \R\\
    &\quad \quad C \quad \quad \longmapsto - \int_{S} P(\Sigma) \nabla^{\mu}_{\! X,z} B^{\mathfrak{d}} \cdot \nabla^{\mu}_{\! X,z} C^{\mathfrak{d}} .
  \end{aligned}
  \right.
\end{equation}

\noindent Furthermore, $\GND(\cdot)$ is a negative symmetric operator and,  for all \,$B_{1} \text{, } B_{2}$ in $H^{-\frac{1}{2}}(\RD)  \text{, }$

\begin{equation*}
\left( \GND(B_{1}),B_{2} \right)_{(H^{-1/2})^{'} - H^{-1/2}} = \left(\GND(B_{2}), B_{1} \right)_{(H^{-1/2})^{'} - H^{-1/2}}.
\end{equation*}
\end{prop}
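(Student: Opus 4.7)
The plan is to derive the dual formulation directly from the variational characterization of $B^{\mathfrak{d}}$ in Definition~\ref{variational_formulation_Bd}, using $C^{\mathfrak{d}}$ itself as the test function. First I would check that this choice is legitimate: since $C \in H^{-\frac{1}{2}}(\RD)$, Proposition~\ref{existence_variational_problems} (together with Theorem~\ref{Bd_regularity} applied with $s=0$) gives $C^{\mathfrak{d}} \in H^{1}_{0,surf}(S)$, so it is an admissible test function, and Lemma~\ref{trace_formula} ensures that $(C^{\mathfrak{d}})_{|z=-1} = \GND(C)$ lies in $H^{\frac{1}{2}}(\RD)$, so all duality pairings below make sense.

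Plugging $\varphi = C^{\mathfrak{d}}$ into the variational identity satisfied by $B^{\mathfrak{d}}$ yields
\begin{equation*}
\int_{S} P(\Sigma) \nabla^{\mu}_{\! X,z} B^{\mathfrak{d}} \cdot \nabla^{\mu}_{\! X,z} C^{\mathfrak{d}}
= - \langle B, (C^{\mathfrak{d}})_{|z=-1} \rangle_{H^{-\frac{1}{2}} - H^{\frac{1}{2}}}
= - \langle B, \GND(C) \rangle_{H^{-\frac{1}{2}} - H^{\frac{1}{2}}}.
\end{equation*}
Since $P(\Sigma)$ is pointwise symmetric (Proposition~\ref{P_Sigma_regularity} and the explicit form \eqref{Q_Sigma}), the left-hand side is invariant under the swap $B \leftrightarrow C$. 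Swapping the roles of $B$ and $C$ on the right-hand side immediately gives both the symmetry statement $\langle \GND(B_{1}), B_{2}\rangle = \langle \GND(B_{2}), B_{1}\rangle$ and the claimed dual formulation \eqref{dual_definition_GND}: the functional $C \mapsto - \int_{S} P(\Sigma) \nabla^{\mu}_{\! X,z} B^{\mathfrak{d}} \cdot \nabla^{\mu}_{\! X,z} C^{\mathfrak{d}}$ coincides with the pairing $C \mapsto \langle C, \GND(B)\rangle$.

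Finally, negativity follows by taking $B_{1}=B_{2}=B$ in the dual formulation:
\begin{equation*}
\langle \GND(B), B\rangle = - \int_{S} P(\Sigma) \nabla^{\mu}_{\! X,z} B^{\mathfrak{d}} \cdot \nabla^{\mu}_{\! X,z} B^{\mathfrak{d}} \leq - k(\Sigma) \left\lvert \nabla^{\mu}_{\! X,z} B^{\mathfrak{d}} \right\rvert_{L^{2}(S)}^{2} \leq 0,
\end{equation*}
by the coercivity of $P(\Sigma)$ from Proposition~\ref{P_Sigma_regularity}. The only mild subtlety I anticipate is bookkeeping the correct duality framework (making sure $\GND(C) \in H^{\frac{1}{2}}$ so that pairing with $B \in H^{-\frac{1}{2}}$ is valid); once this is set up via the trace lemma, the rest is a one-line application of the variational formulation, with no actual analytical work beyond what is already in Appendix~\ref{Laplace_problem}.
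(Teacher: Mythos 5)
Your proof is correct: the paper itself omits the argument (deferring to Proposition 3.3 in Iguchi's work), and what you give is precisely the natural proof — test the variational formulation of $B^{\mathfrak{d}}$ against $C^{\mathfrak{d}}$, use the symmetry of $P(\Sigma)$ to identify the resulting bilinear form with both $\langle B,\GND(C)\rangle$ and $\langle C,\GND(B)\rangle$, and read off negativity from coercivity. The only superfluous step is invoking Theorem~\ref{Bd_regularity}; Proposition~\ref{existence_variational_problems} already places $C^{\mathfrak{d}}$ in $H^{1}_{0,surf}(S)$, and Lemma~\ref{trace_formula} then justifies the duality pairings exactly as you say.
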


\noindent We refer to Proposition 3.3 in \cite{Iguchi_tsunami} for a proof of this result. 

\subsection{Regularity Estimates}\label{Operators_estimates}

\noindent In this part we give some controls the Neumann-Neumann operators.

\begin{prop}\label{controls_GNN}
Let $t_{0} > \frac{d}{2}$, $0 \leq s \leq t_{0} + \frac{1}{2}$ and $\zeta,b \in \Hzetaa$ such that Condition \eqref{nonvanishing} is satisfied. Then, $\GNN$ maps continuously $H^{s-\frac{1}{2}}(\RD)$ into itself

\begin{equation*}
|\GNN(B)|_{H^{s-\frac{1}{2}}} \leq M \left\lvert  B \right\lvert_{H^{s-\frac{1}{2}}}.
\end{equation*}
\end{prop}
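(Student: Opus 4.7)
The plan is to use the dual formulation from Proposition \ref{main_properties_GNN}: for any $\varphi \in H^{1/2}(\RD)$,
$$\langle G_\mu^{N \! N}(B),\varphi\rangle = \int_S P(\Sigma)\,\nabla^\mu_{\! X,z} B^{\mathfrak{d}}\cdot \nabla^\mu_{\! X,z}\varphi^{\#}.$$
Since $H^{1/2}(\RD)$ is dense in $H^{1/2-s}(\RD)$ for $s\geq 0$ and since $H^{s-1/2}$ is the dual of $H^{1/2-s}$, the conclusion is equivalent to the bilinear estimate
$$\left|\int_S P(\Sigma)\,\nabla^\mu B^{\mathfrak{d}}\cdot \nabla^\mu \varphi^{\#}\right| \leq M\,|B|_{H^{s-1/2}}\,|\varphi|_{H^{1/2-s}}, \qquad \varphi\in H^{1/2}(\RD),$$
which, once established, extends by density and yields the norm bound by duality.

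To prove this bilinear bound I would mimic the strategy used in the proof of Theorem \ref{Bd_regularity}. Insert the regularizing Fourier multiplier $\Lambda^{s-1/2}_\delta := \chi(\delta\Lambda)\Lambda^{s-1/2}$ and use Plancherel in the horizontal variable to rewrite
$$\int_S P(\Sigma)\nabla^\mu B^{\mathfrak{d}}\cdot \nabla^\mu\varphi^{\#} = \int_S \Lambda^{s-1/2}_\delta\bigl[P(\Sigma)\nabla^\mu B^{\mathfrak{d}}\bigr] \cdot \Lambda^{1/2-s}_\delta \nabla^\mu\varphi^{\#},$$
then split via the commutator $\Lambda^{s-1/2}_\delta[P(\Sigma)v] = P(\Sigma)\Lambda^{s-1/2}_\delta v + [\Lambda^{s-1/2}_\delta,P(\Sigma)]v$. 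The principal piece is controlled by Cauchy-Schwarz together with $|P(\Sigma)|_{L^\infty(S)}\leq M$ (Proposition \ref{P_Sigma_regularity} and Remark \ref{L_infty_H_s}), the bulk bound $|\Lambda^{s-1/2}\nabla^\mu B^{\mathfrak{d}}|_{L^2(S)} \leq M\,|(1+\sqrt{\mu}|D|)^{-1/2} B|_{H^{s-1/2}}$ from Theorem \ref{Bd_regularity}, and the companion bound $|\Lambda^{1/2-s}\nabla^\mu\varphi^{\#}|_{L^2(S)} \leq C\,|(1+\sqrt{\mu}|D|)^{1/2}\varphi|_{H^{1/2-s}}$, obtained from Definition \ref{psidiese_definition} by a direct Fourier computation that extends Proposition \ref{psidiese_regularity} to arbitrary Sobolev index. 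The $(1+\sqrt{\mu}|D|)^{\mp 1/2}$ weights cancel at matched frequencies (reflecting the flat-case identity $G^{N \! N}_\mu[0,0] = 1/\cosh(\sqrt{\mu}|D|)$ from Remark \ref{operator_in_0}, which is trivially a bounded $H^{s-1/2}\to H^{s-1/2}$ Fourier multiplier); combining this with $1+\sqrt{\mu}|\xi|\leq C(\mu_{\max})\langle\xi\rangle$ for $\mu\leq \mu_{\max}$ produces the target $M\,|B|_{H^{s-1/2}}\,|\varphi|_{H^{1/2-s}}$.

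The commutator term is handled by combining Proposition \ref{P_Sigma_regularity} with the commutator estimates of the appendix: Proposition \ref{commutator_estimate1} applies when $s-1/2\leq t_0$, and the refined Proposition \ref{commutator_estimate2} is used when $s-1/2>t_0$, together with a finite induction as in case (b) of the proof of Theorem \ref{Bd_regularity}. Passing $\delta\to 0$ closes the argument. The remaining low-regularity range $0\leq s<1/2$, in which Theorem \ref{Bd_regularity} cannot be applied at order $s-1/2$, is treated by a direct Cauchy-Schwarz argument using only the $s=0$ case of Theorem \ref{Bd_regularity}. The main obstacle is precisely the commutator near the upper endpoint $s=t_0+1/2$, where the $H^{t_0+1/2,1}$-regularity of $Q(\Sigma)$ is just barely sufficient; and a secondary delicate point is extracting the matched-frequency cancellation of the $(1+\sqrt{\mu}|D|)^{\pm 1/2}$ weights without losing a half derivative, which requires exploiting the diagonal structure of the bilinear form rather than a naive factorized Cauchy-Schwarz.
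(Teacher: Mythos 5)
Your route (the dual formulation of Proposition \ref{main_properties_GNN} together with the regularity of $B^{\mathfrak{d}}$ and $\varphi^{\#}$) is not the one the paper uses, and it stalls exactly at the point you yourself flag as ``delicate''. After splitting off the commutator, your principal term is bounded by a factorized Cauchy--Schwarz as
\begin{equation*}
M\,\bigl\lvert\Lambda^{s-\frac12}\nabla^{\mu}_{\! X,z}B^{\mathfrak{d}}\bigr\rvert_{L^{2}(S)}\,\bigl\lvert\Lambda^{\frac12-s}\nabla^{\mu}_{\! X,z}\varphi^{\#}\bigr\rvert_{L^{2}(S)}
\;\leq\; M\,\Bigl\lvert\tfrac{1}{\sqrt{1+\sqrt{\mu}\lvert D\rvert}}\,B\Bigr\rvert_{H^{s-\frac12}}\;\bigl\lvert\sqrt{1+\sqrt{\mu}\lvert D\rvert}\,\varphi\bigr\rvert_{H^{\frac12-s}},
\end{equation*}
and the two weights sit on \emph{different} functions, hence at independent frequency variables; since $P(\Sigma)$ is not a Fourier multiplier there is no ``diagonal structure'' that makes them cancel. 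Turning the second factor into $C\lvert\varphi\rvert_{H^{1/2-s}}$ costs either a factor $\mu^{-1/4}$ or half a derivative, and neither is admissible: $M$ may depend on $\mu_{\max}$ but not on $1/\mu$, and the estimate is used in the limit $\mu\to0$ in Section \ref{Asymptotic models}. Moreover the weight on $\varphi$ is intrinsic to the duality method: at fixed frequency $\xi$ the minimal $\nabla^{\mu}_{\! X,z}$-energy of \emph{any} extension of $\varphi$ vanishing at $z=-1$ is $\sqrt{\mu}\lvert\xi\rvert\coth(\sqrt{\mu}\lvert\xi\rvert)\lvert\widehat{\varphi}(\xi)\rvert^{2}\sim(1+\sqrt{\mu}\lvert\xi\rvert)\lvert\widehat{\varphi}(\xi)\rvert^{2}$, so no better choice than $\varphi^{\#}$ exists and a factorized Cauchy--Schwarz can only ever yield the $\mu$-weighted estimate, which is not uniformly equivalent to the stated one. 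Your proposal acknowledges the problem but supplies no mechanism to resolve it (one would have to commute $\sqrt{1+\sqrt{\mu}\lvert D\rvert}$ through $P(\Sigma)$, for which Appendix \ref{estimates} provides no commutator estimate); as written it proves only $\lvert\GNN(B)\rvert_{H^{s-1}}\leq M\lvert B\rvert_{H^{s-1/2}}$ or the weighted variant. The same defect affects your treatment of $0\leq s<\tfrac12$: the $s=0$ case of Theorem \ref{Bd_regularity} controls $\lvert\nabla^{\mu}_{\! X,z}B^{\mathfrak{d}}\rvert_{L^{2}(S)}$ by $\lvert B\rvert_{L^{2}}$, not by the weaker norm $\lvert B\rvert_{H^{s-1/2}}$.

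The paper's proof (the ``arguments of Theorem 3.15 in \cite{Lannes_ww}'') avoids duality for precisely this reason. It uses the trace representation \eqref{NN_operator_S}, $\GNN(B)=\bigl(\textbf{e}_{z}\cdot P(\Sigma)\nabla^{\mu}_{\! X,z}B^{\mathfrak{d}}\bigr)_{|z=0}$. Setting $U:=\textbf{e}_{z}\cdot P(\Sigma)\nabla^{\mu}_{\! X,z}B^{\mathfrak{d}}$, the interior equation \eqref{Laplace_bott_S} gives $\partial_{z}U=-\sqrt{\mu}\,\nabla_{\! X}\cdot V$ with $V$ the horizontal flux, and the product form of the trace inequality, $\lvert U_{|z=0}\rvert^{2}_{H^{s-1/2}}\lesssim\lvert\Lambda^{s-1/2}U\rvert^{2}_{L^{2}(S)}+\lvert\Lambda^{s}U\rvert_{L^{2}(S)}\lvert\Lambda^{s-1}\partial_{z}U\rvert_{L^{2}(S)}$, is where the half-powers of $\mu$ cancel: Theorem \ref{Bd_regularity} and the product estimates give $\lvert\Lambda^{s}U\rvert_{L^{2}(S)}\leq M\mu^{-1/4}\lvert B\rvert_{H^{s-1/2}}$, while the explicit $\sqrt{\mu}$ in $\partial_{z}U$ gives $\lvert\Lambda^{s-1}\partial_{z}U\rvert_{L^{2}(S)}\leq M\mu^{1/4}\lvert B\rvert_{H^{s-1/2}}$, so the product is uniform in $\mu$. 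This compensation between the $X$- and $z$-regularity of the flux is the idea missing from your argument.
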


\begin{proof}
\noindent This Proposition follows by Theorem \ref{Bd_regularity} and by using the same arguments that Theorem 3.15 in \cite{Lannes_ww}.
\end{proof}

\noindent We can extend these estimates to $\w[\epsilon \zeta, \beta b]$, the vertical velocity at the surface and to $\V[\epsilon \zeta, \beta b]$ the horizontal velocity at the surface. These operators appear naturally when we differentiate the Dirichlet-Neumann and the Neumann-Neumann operator with respect to the surface $\zeta$. We define

\begin{equation}
  \w[\epsilon \zeta, \beta b] := \left\{
  \begin{aligned}
   & \Hdot^{s+\frac{1}{2}}(\RD) \times H^{s-\frac{1}{2}}(\RD) \rightarrow \quad \quad \quad \quad H^{s-\frac{1}{2}}(\RD) \\
   &\quad \, \quad (\psi,B) \quad \;\; \mapsto \frac{\G(\psi) + \mu \GNN(B) + \epsilon \mu \nabla \zeta \cdot\nabla \psi}{1+\epsilon^{2} \mu |\nabla \zeta|^{2}},     
  \end{aligned}
   \right.
\end{equation}

\hspace{-0.8cm} and

\begin{equation}
  \hspace{-3.8cm} \V[\epsilon \zeta, \beta b] := \left\{
  \begin{aligned}
   & \Hdot^{s+\frac{1}{2}}(\RD) \times H^{s-\frac{1}{2}}(\RD) \rightarrow \quad H^{s-\frac{1}{2}}(\RD) \\
   &\quad \, \quad (\psi,B) \quad \;\; \mapsto \nabla \psi - \epsilon \w[\epsilon \zeta, \beta b](\psi,B) \nabla \zeta.     
  \end{aligned}
   \right.
\end{equation}

\begin{prop}\label{controls_w}

Let $t_{0} > \frac{d}{2}$, $0 \leq s \leq t_{0} + \frac{1}{2}$ and $\zeta,b \in \Hzetaa$ such that Condition \eqref{nonvanishing} is satisfied. Then, 
$\w[\epsilon \zeta, \beta b]$ maps continuously $\Hdot^{s+\frac{1}{2}}(\RD) \times H^{s-\frac{1}{2}}(\RD)$ into $H^{s-\frac{1}{2}}(\RD)$ 

\begin{equation*}
|\w[\epsilon \zeta, \beta b](\psi,B)|_{H^{s-\frac{1}{2}}} \leq M \left(\mu^{\frac{3}{4}}|\mathfrak{P} \psi|_{H^{s}} + \mu |B|_{H^{s-\frac{1}{2}}} \right).
\end{equation*}

\noindent Furthermore, if $1 \leq s \leq t_{0}$, $\w[\epsilon \zeta, \beta b]$ maps continuously $\Hdot^{s+1}(\RD) \times H^{s-\frac{1}{2}}(\RD)$ into $H^{s-\frac{1}{2}}(\RD)$

\begin{equation*}
|\w[\epsilon \zeta, \beta b](\psi,B)|_{H^{s-\frac{1}{2}}} \leq M \mu \left( |\mathfrak{P} \psi|_{H^{s+\frac{1}{2}}} + |B|_{H^{s-\frac{1}{2}}} \right).
\end{equation*}

\noindent Finally, we have the same continuity result for $\V[\epsilon \zeta,\beta b]$.
\end{prop}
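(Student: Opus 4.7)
The plan is to decompose the expression $\w[\epsilon \zeta, \beta b](\psi, B)$ into its numerator and denominator, estimate the three pieces of the numerator using the continuity results already established, and dispose of the denominator via a product/inverse estimate.

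First, I would write $\w = (1+\epsilon^{2}\mu|\nabla \zeta|^{2})^{-1} \bigl[\G(\psi) + \mu \GNN(B) + \epsilon\mu \nabla \zeta \cdot \nabla \psi\bigr]$. The factor $(1+\epsilon^{2}\mu|\nabla \zeta|^{2})^{-1}$ is a smooth bounded function of $\epsilon\sqrt{\mu}\nabla\zeta$ equal to $1$ at the origin, so by the composition and inverse estimates of Appendix \ref{estimates} it is bounded in $H^{t_{0}+1}(\RD)$ by $M$; the product estimate \ref{product_estimate1} then reduces the task of controlling $\w$ in $H^{s-1/2}$ to that of the bracketed numerator, since $s - \tfrac{1}{2} \leq t_{0}+1$.

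Next, I would estimate each summand in the numerator separately. The Neumann-Neumann contribution is immediate from Proposition \ref{controls_GNN}: one gets $|\mu \GNN(B)|_{H^{s-1/2}} \leq \mu M |B|_{H^{s-1/2}}$. The Dirichlet-Neumann term is controlled by the two parallel estimates from Theorem 3.15 in \cite{Lannes_ww}, which in the present notation give $|\G(\psi)|_{H^{s-1/2}} \leq M\mu^{3/4}|\mathfrak{P}\psi|_{H^{s}}$ for $0 \leq s \leq t_{0}+\tfrac{1}{2}$, and $|\G(\psi)|_{H^{s-1/2}} \leq M\mu |\mathfrak{P}\psi|_{H^{s+1/2}}$ in the higher range $1 \leq s \leq t_{0}$. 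For the cross-term $\epsilon\mu \nabla\zeta \cdot \nabla\psi$, I would use the product estimate \ref{product_estimate1} together with the identity $\nabla = \frac{D}{|D|}\sqrt{1+\sqrt{\mu}|D|}\,\mathfrak{P}$, which yields
\begin{equation*}
|\nabla \psi|_{H^{s-1/2}} \leq C\bigl(|\mathfrak{P}\psi|_{H^{s-1/2}} + \mu^{1/4}|\mathfrak{P}\psi|_{H^{s}}\bigr),
\end{equation*}
so that multiplication by $\epsilon\mu$ (and by $|\nabla\zeta|_{H^{t_{0}+1}} \leq M$) produces only contributions of size $\mu$ and $\mu^{5/4}$, each absorbed by one of the two target right-hand sides.

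Finally, since $\V = \nabla\psi - \epsilon\w\nabla\zeta$, the claim for $\V$ follows from the estimate just obtained for $\w$, the bound on $\nabla\psi$ above, and one more application of the product estimate with $\nabla\zeta$. The main delicate point is the bookkeeping of fractional powers of $\sqrt{\mu}$: because $\mathfrak{P}$ carries the denominator $\sqrt{1+\sqrt{\mu}|D|}$, the factor $\mu^{1/4}$ by which $|\nabla\psi|_{H^{s-1/2}}$ fails to be controlled by $|\mathfrak{P}\psi|_{H^{s-1/2}}$ must be balanced against the prefactor $\epsilon\mu$ in the nonlinearity, and similarly the transition from the inequality available for all $s \geq 0$ (at the price of $\mu^{3/4}$) to the sharper one requiring $s \geq 1$ (gaining a full $\mu$) stems precisely from choosing which of the two Lannes estimates for $\G(\psi)$ to invoke.
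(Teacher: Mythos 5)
Your proof is correct and follows exactly the route the paper leaves implicit (Proposition \ref{controls_w} is stated without proof, immediately after the remark that it ``extends'' the estimates on $\G$ and $\GNN$): term-by-term estimation of the numerator via Theorem 3.15 of \cite{Lannes_ww} for $\G(\psi)$, Proposition \ref{controls_GNN} for $\mu\GNN(B)$, and the product estimate for $\epsilon\mu\nabla\zeta\cdot\nabla\psi$, with the powers of $\mu$ tracked through Proposition \ref{P_estimates}. One cosmetic slip: since the hypothesis is only $\zeta\in H^{t_0+1}(\RD)$, you should place $\nabla\zeta$ in $H^{t_0}(\RD)$ rather than $H^{t_0+1}(\RD)$, which is harmless because $t_0>\tfrac{d}{2}$ and $s-\tfrac{1}{2}\le t_0$, so Proposition \ref{product_estimate1} still applies.
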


\noindent We can also give some regularity estimates for $\GDD$ since it is the adjoint of $\GNN$.

\begin{prop}\label{controls_GDD}
Let $t_{0} > \frac{d}{2}$, $0 \leq s \leq t_{0} + \frac{1}{2}$ and $\zeta,b \in \Hzetaa$ such that Condition \eqref{nonvanishing} is satisfied. Then, $\GDD$ maps continuously $\Hdot^{s+\frac{1}{2}}(\RD)$ into itself

\begin{equation*}
\left\lvert \nabla \GDD(\psi) \right\rvert_{H^{s-\frac{1}{2}}} \leq M \left\lvert  \nabla \psi \right\rvert_{H^{s-\frac{1}{2}}}.
\end{equation*}
\end{prop}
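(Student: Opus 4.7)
The strategy is to reduce the estimate on $\GDD$ to Proposition~\ref{controls_GNN} via the adjoint relation of Proposition~\ref{main_properties_GNN}. By Sobolev duality,
\begin{equation*}
|\nabla\GDD(\psi)|_{H^{s-1/2}} = \sup\left\{\left|\int_{\RD}\nabla\GDD(\psi)\cdot\vec{\phi}\,dX\right| : \vec{\phi}\in H^{1/2-s}(\RD)^{d},\ |\vec{\phi}|_{H^{1/2-s}}\leq 1\right\}.
\end{equation*}
For each such test field I integrate by parts in $\RD$ and use the adjointness of $\GDD$ and $\GNN$ to rewrite
\begin{equation*}
\int\nabla\GDD(\psi)\cdot\vec{\phi}\,dX = -\int\GDD(\psi)\,(\nabla\cdot\vec{\phi})\,dX = -\int\psi\,\GNN(\nabla\cdot\vec{\phi})\,dX.
\end{equation*}

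Since $\psi$ only lives in the Beppo--Levi space $\Hdot^{s+1/2}$ and need not be in $L^{2}$, this final pairing cannot be bounded by $|\psi|\cdot|\GNN(\nabla\cdot\vec{\phi})|$. The point is to transfer one derivative onto the $\vec{\phi}$-factor, using the divergence structure of $\nabla\cdot\vec{\phi}$. To this end I would use the variational characterisation \eqref{dual_definition_GNN} of $\GNN$ to establish an identity of the form
\begin{equation*}
\GNN(\nabla\cdot\vec{\phi}) \;=\; \nabla\cdot\vec{G}[\vec{\phi}] \;+\; R[\vec{\phi}],
\end{equation*}
where $\vec{G}[\vec{\phi}]$ is the componentwise action of $\GNN$ on $\vec{\phi}$ (so that $|\vec{G}[\vec{\phi}]|_{H^{1/2-s}}\leq M|\vec{\phi}|_{H^{1/2-s}}$ by Proposition~\ref{controls_GNN}), and $R[\vec{\phi}]$ collects the terms that arise when the divergence is pushed past the variable coefficients $P(\Sigma)$ in the harmonic extension. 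A further integration by parts then yields
\begin{equation*}
\int\psi\,\GNN(\nabla\cdot\vec{\phi})\,dX \;=\; -\int\nabla\psi\cdot\vec{G}[\vec{\phi}]\,dX \;+\; \int\psi\,R[\vec{\phi}]\,dX,
\end{equation*}
and the first term is immediately bounded by $M|\nabla\psi|_{H^{s-1/2}}|\vec{\phi}|_{H^{1/2-s}}$ as required.

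The main obstacle is the remainder $R[\vec{\phi}]$: since $P(\Sigma)$ depends nonlinearly on $\zeta$ and $b$ through the regularising diffeomorphism $\sigma$ (see \eqref{Q_Sigma}), the commutator $[\nabla_{\! X},P(\Sigma)]$ does not vanish; it involves derivatives of $\sigma$ which are controlled by $M$ (Proposition~\ref{P_Sigma_regularity}) acting on $\nabla^{\mu}_{\! X,z}\vec{\phi}^{\,\mathfrak{d}}$ which is controlled by Theorem~\ref{Bd_regularity}. Bounding this correctly reduces to iterating the divergence-extraction once more (or, equivalently, invoking the product and commutator estimates of Appendix~\ref{estimates}), while tracking the $\mu$-factors carefully so that the final constant stays uniform and does not blow up as $\mu\to 0$. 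That the estimate should indeed be $\mu$-robust is confirmed by the flat-case formula $G_{\mu}^{D\!D}[0,0]=1/\cosh(\sqrt{\mu}|D|)$ of Remark~\ref{operator_in_0}, which gives the bound with constant $1$. An alternative (equivalent but more computational) route is to treat $\psih$ directly as a solution of \eqref{Laplace_surf_S}, obtain the analogue of Theorem~\ref{Bd_regularity} for $\psih$ (Theorem~3.15 in \cite{Lannes_ww}), and then apply the trace inclusion $H^{s,1}(S)\hookrightarrow L^{\infty}_{z}H^{s-1/2}_{\! X}$ from Remark~\ref{L_infty_H_s} at $z=-1$ to $\nabla_{\! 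X}\psih$; this bypasses the commutator analysis but requires the full elliptic regularity machinery for $\psih$ imported from \cite{Lannes_ww}.
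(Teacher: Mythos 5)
Your main (duality) route has a genuine gap, and in fact two. First, the whole difficulty of the proposition is concentrated in the asserted decomposition $\GNN(\nabla\cdot\vec{\phi})=\nabla\cdot\vec{G}[\vec{\phi}]+R[\vec{\phi}]$ and in the pairing $\int\psi\,R[\vec{\phi}]$: since $\psi$ is only in $L^{2}_{\rm loc}$, this pairing is controlled by $|\nabla\psi|_{H^{s-1/2}}$ only if $R[\vec{\phi}]$ is itself (shown to be) in divergence form, i.e.\ if $|D|^{-1}R[\vec{\phi}]$ is estimated in $H^{1/2-s}$ — which is exactly as hard as the original statement. Saying that this ``reduces to iterating the divergence-extraction once more'' is not an argument; nothing in the paper (nor in the quoted appendices) supplies such a commutator identity for $\GNN$ with the required structure. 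Second, there is a range obstruction that cannot be fixed by more care: for $s$ close to $t_{0}+\tfrac12$ your test fields live in $H^{1/2-s}(\RD)$ with $1/2-s$ close to $-t_{0}$, and $\nabla\cdot\vec{\phi}\in H^{-1/2-s}$ with index close to $-t_{0}-1$. Proposition~\ref{controls_GNN} only controls $\GNN$ on $H^{\sigma-1/2}$ for $0\le\sigma\le t_{0}+\tfrac12$, i.e.\ down to index $-\tfrac12$; so neither $\GNN(\nabla\cdot\vec{\phi})$ nor the claimed componentwise bound $|\vec{G}[\vec{\phi}]|_{H^{1/2-s}}\le M|\vec{\phi}|_{H^{1/2-s}}$ is available in the range you need. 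Duality converts the high-regularity estimate you want into a very-negative-index estimate for $\GNN$ that the paper does not provide. (A smaller point: the adjoint identity of Proposition~\ref{main_properties_GNN} is stated for $\varphi\in H^{1/2}(\RD)$, so applying it with $\varphi=\psi\in\Hdot^{s+1/2}$ already requires a density argument whose passage to the limit presupposes an estimate of the type you are trying to prove.)

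The one-sentence ``alternative route'' at the end of your proposal is in fact the correct proof, and the one the paper implicitly relies on: write $\GDD(\psi)=(\psih)_{|z=-1}$, invoke the elliptic regularity for $\psih$ (the analogue of Theorem~\ref{Bd_regularity}, i.e.\ Theorem~3.15/2.36 in \cite{Lannes_ww}) to get $\nabla_{\! X}\psih\in H^{s,1}(S)$ with norm $\le M|\nabla\psi|_{H^{s-1/2}}$, and then take the trace at $z=-1$ via the embedding $H^{s,1}(S)\subset L^{\infty}_{z}H^{s-1/2}_{\! X}(S)$ of Remark~\ref{L_infty_H_s}. Had you developed that route instead of relegating it to a remark, the proof would be complete; as written, the argument you actually present does not close.
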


\noindent Finally, we can give some regularity estimates for $\GND$.

\begin{prop}\label{controls_GND}
Let $t_{0} > \frac{d}{2}$, $0 \leq s \leq t_{0} + \frac{1}{2}$ and $\zeta,b \in \Hzetaa$ such that Condition \eqref{nonvanishing} is satisfied. Then, $\GND$ maps continuously $H^{s-\frac{1}{2}}(\RD)$ into $H^{s+\frac{1}{2}}(\RD)$

\begin{equation*}
\left\lvert \GND(B) \right\rvert_{H^{s+\frac{1}{2}}} \leq M \left\lvert   B \right\rvert_{H^{s-\frac{1}{2}}}.
\end{equation*}
\end{prop}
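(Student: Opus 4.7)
The plan is to follow the strategy used just above for Proposition \ref{controls_GNN}: exploit the representation $\GND(B)=\left(B^{\mathfrak{d}}\right)_{|z=-1}$ from definition \eqref{ND_operator_S}, where $B^{\mathfrak{d}}\in H^{1}_{0,surf}(S)$ is the variational solution of \eqref{Laplace_bott_S}, and combine the elliptic regularity of Theorem \ref{Bd_regularity} with a suitable trace inequality. The new feature compared with $\GNN$ is that the Dirichlet trace at $\{z=-1\}$ is half a derivative smoother in $X$ than the corresponding Neumann trace, which is why the target space is $H^{s+\frac{1}{2}}$ instead of $H^{s-\frac{1}{2}}$.

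\textbf{Step 1: trace inequality.} Since $B^{\mathfrak{d}}_{|z=0}=0$, integrating $\frac{d}{dz}\bigl|\widehat{B^{\mathfrak{d}}}(\xi,z)\bigr|^{2}=2\mathrm{Re}\bigl(\partial_z\widehat{B^{\mathfrak{d}}}\,\overline{\widehat{B^{\mathfrak{d}}}}\bigr)$ in $z$ between $-1$ and $0$ yields
\begin{equation*}
\bigl|\widehat{B^{\mathfrak{d}}}(\xi,-1)\bigr|^{2}=-2\int_{-1}^{0}\mathrm{Re}\bigl(\partial_{z}\widehat{B^{\mathfrak{d}}}\,\overline{\widehat{B^{\mathfrak{d}}}}\bigr)\,dz.
\end{equation*}
Young's inequality with weight $\langle\xi\rangle$ gives $\langle\xi\rangle\bigl|\widehat{B^{\mathfrak{d}}}(\xi,-1)\bigr|^{2}\leq\int_{-1}^{0}\bigl(|\partial_{z}\widehat{B^{\mathfrak{d}}}|^{2}+\langle\xi\rangle^{2}|\widehat{B^{\mathfrak{d}}}|^{2}\bigr)\,dz$, and integrating against $\langle\xi\rangle^{2s}$ in $\xi$, together with Poincar\'e in $z$ (justified by $B^{\mathfrak{d}}_{|z=0}=0$), produces the anisotropic trace estimate
\begin{equation*}
\bigl|\GND(B)\bigr|_{H^{s+\frac{1}{2}}}\leq C\bigl(|\partial_{z}B^{\mathfrak{d}}|_{H^{s,0}(S)}+|\nabla_{\!X}B^{\mathfrak{d}}|_{H^{s,0}(S)}\bigr).
\end{equation*}

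\textbf{Step 2: elliptic regularity and symbol comparison.} Both terms on the right are controlled by Theorem \ref{Bd_regularity}, which provides $|\Lambda^{s}\nabla^{\mu}_{X,z}B^{\mathfrak{d}}|_{L^{2}(S)}\leq M\bigl|\frac{1}{\sqrt{1+\sqrt{\mu}|D|}}B\bigr|_{H^{s}}$. The factor $\sqrt{\mu}$ in $\nabla^{\mu}_{X,z}=(\sqrt{\mu}\nabla_{\!X},\partial_{z})^{t}$ is absorbed in $M$ (using $\mu\leq\mu_{\max}$), and the Fourier multiplier $\Lambda^{s}/\sqrt{1+\sqrt{\mu}|D|}$ is comparable to $\Lambda^{s-\frac{1}{2}}$, so $\bigl|\frac{B}{\sqrt{1+\sqrt{\mu}|D|}}\bigr|_{H^{s}}\leq M|B|_{H^{s-\frac{1}{2}}}$. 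Combining with Step 1 yields the announced estimate.

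\textbf{Main obstacle.} The subtle point is the trace inequality itself: a naive bound $|u_{|z=-1}|_{H^{s+\frac{1}{2}}}\leq|\partial_{z}u|_{L^{2}_{z}H^{s+\frac{1}{2}}_{\!X}}$ would demand half a derivative more in $X$ on $\partial_{z}B^{\mathfrak{d}}$ than Theorem \ref{Bd_regularity} provides. The refined inequality of Step 1 succeeds by splitting the extra weight between $\partial_{z}\widehat{B^{\mathfrak{d}}}$ and $\widehat{B^{\mathfrak{d}}}$ itself, with the latter recovered by Poincar\'e in $z$ from $\nabla_{\!X}B^{\mathfrak{d}}$. This half-and-half distribution of weight, made possible by the homogeneous boundary condition at $\{z=0\}$, is the key technical ingredient that delivers the gain of one full derivative characteristic of the Neumann-to-Dirichlet map.
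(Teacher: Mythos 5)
The paper states Proposition \ref{controls_GND} without proof, so there is no argument to compare against line by line; your route — write $\GND(B)=\left(B^{\mathfrak{d}}\right)_{|z=-1}$, prove a trace inequality at $\{z=-1\}$, and invoke Theorem \ref{Bd_regularity} — is the natural one, and Step 1 is correct: the identity $|\widehat{B^{\mathfrak{d}}}(\xi,-1)|^{2}=-2\int_{-1}^{0}\mathrm{Re}\bigl(\partial_{z}\widehat{B^{\mathfrak{d}}}\,\overline{\widehat{B^{\mathfrak{d}}}}\bigr)dz$, Young's inequality with weight $\langle\xi\rangle$, and Poincar\'e in $z$ do yield $|\GND(B)|_{H^{s+\frac{1}{2}}}\leq C\bigl(|\partial_{z}B^{\mathfrak{d}}|_{H^{s,0}}+|\nabla_{X}B^{\mathfrak{d}}|_{H^{s,0}}\bigr)$.

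The gap is in Step 2, where both conversions go the wrong way in $\mu$. Theorem \ref{Bd_regularity} controls $\Lambda^{s}\nabla^{\mu}_{X,z}B^{\mathfrak{d}}$, hence only $\sqrt{\mu}\,|\nabla_{X}B^{\mathfrak{d}}|_{H^{s,0}}$; recovering $|\nabla_{X}B^{\mathfrak{d}}|_{H^{s,0}}$ costs a factor $\mu^{-\frac{1}{2}}$, and the hypothesis $\mu\leq\mu_{\max}$ only lets you absorb \emph{positive} powers of $\mu$ into $M$, not negative ones. Likewise $\Lambda^{s}(1+\sqrt{\mu}|D|)^{-\frac{1}{2}}$ is not comparable to $\Lambda^{s-\frac{1}{2}}$ uniformly in $\mu$: at high frequency the ratio is $\mu^{-\frac{1}{4}}$ (compare Proposition \ref{P_estimates}). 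What your argument actually proves is $|\GND(B)|_{H^{s+\frac{1}{2}}}\leq M\mu^{-\frac{3}{4}}|B|_{H^{s-\frac{1}{2}}}$. Nor is this a fixable bookkeeping slip: in the flat case one computes, as in Remark \ref{operator_in_0}, that $G_{\mu}^{N\!D}[0,0]=-\tanh(\sqrt{\mu}|D|)/(\sqrt{\mu}|D|)$, and $\sup_{\xi}\langle\xi\rangle\tanh(\sqrt{\mu}|\xi|)/(\sqrt{\mu}|\xi|)\sim\mu^{-\frac{1}{2}}$, so a gain of a full derivative with a $\mu$-independent constant is impossible; the statement itself is only true with a constant degenerating at least like $\mu^{-\frac{1}{2}}$. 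The $\mu$-uniform estimates your method does deliver are $|\GND(B)|_{H^{s-\frac{1}{2}}}\leq M|B|_{H^{s-\frac{1}{2}}}$ (no gain) or a gain of the weight $(1+\sqrt{\mu}|D|)$ rather than $\langle D\rangle$, and these suffice where $\GND$ is actually used, since it always appears as $\mu\GND$ in \eqref{ws}--\eqref{Vs}. You should either carry the negative power of $\mu$ explicitly or note that the proposition's constant cannot be taken uniform in $\mu$ as written.
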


\noindent In the same way, we can extend also these estimates to $\ws[\epsilon \zeta, \beta b]$, the vertical velocity at the bottom and to $\Vs[\epsilon \zeta, \beta b]$ the horizontal velocity at the bottom. These operators appear naturally when we differentiate the Dirichlet-Neumann and the Neumann-Neumann operator with respect to the bottom $b$

\begin{equation}\label{ws}
\ws[\epsilon \zeta, \beta b](\psi,B)  = \frac{\mu B + \beta \mu \nabla b \cdot \nabla \left( \GDD(\psi) + \mu \GND(B) \right)}{1+\beta^{2} \mu |\nabla b|^{2}},     
\end{equation}

\hspace{-0.8cm} and

\begin{equation}\label{Vs}
\Vs[\epsilon \zeta, \beta b](\psi,B) = \nabla \left(\GDD(\psi) + \mu \GND(B) \right) - \beta \ws[\epsilon \zeta, \beta b](\psi,B) \nabla b.     
\end{equation}

\begin{prop}\label{controls_ws}
Let $t_{0} > \frac{d}{2}$, $0 \leq s \leq t_{0} + \frac{1}{2}$ and $\zeta,b \in \Hzetaa$ such that Condition \eqref{nonvanishing} is satisfied. Then, 
$\ws[\epsilon \zeta, \beta b]$ maps continuously $\Hdot^{s+\frac{1}{2}}(\RD) \times H^{s-\frac{1}{2}}(\RD)$ into $H^{s-\frac{1}{2}}(\RD)$ 

\begin{equation*}
|\ws[\epsilon \zeta, \beta b](\psi,B)|_{H^{s-\frac{1}{2}}} \leq M \left(|\nabla \psi|_{H^{s-\frac{1}{2}}} + \mu |B|_{H^{s-\frac{1}{2}}} \right).
\end{equation*}

\noindent Finally, we have the same continuity result for $\Vs[\epsilon \zeta,\beta b]$.
\end{prop}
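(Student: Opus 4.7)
The plan is to mirror the proof of Proposition \ref{controls_w} for $\w$, substituting the regularity results for the Dirichlet-Dirichlet and Neumann-Dirichlet operators (Propositions \ref{controls_GDD} and \ref{controls_GND}) in place of those for $\G$ and $\GNN$. Recall that
\begin{equation*}
\ws[\epsilon \zeta, \beta b](\psi,B) = \frac{\mu B + \beta \mu \nabla b \cdot \nabla \!\left( \GDD(\psi) + \mu \GND(B) \right)}{1+\beta^{2} \mu |\nabla b|^{2}},
\end{equation*}
so it is natural to estimate the numerator and denominator separately.

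First I would bound the numerator. The term $\mu B$ is trivially controlled by $\mu |B|_{H^{s-\frac{1}{2}}}$. For the remaining term, Proposition \ref{controls_GDD} gives $|\nabla \GDD(\psi)|_{H^{s-\frac{1}{2}}} \leq M |\nabla \psi|_{H^{s-\frac{1}{2}}}$, while Proposition \ref{controls_GND} gives $|\GND(B)|_{H^{s+\frac{1}{2}}} \leq M |B|_{H^{s-\frac{1}{2}}}$, and hence $|\nabla \GND(B)|_{H^{s-\frac{1}{2}}} \leq M |B|_{H^{s-\frac{1}{2}}}$. Since $\nabla b \in H^{t_0+1}(\RD) \subset H^{t_0}(\RD)$ and the range of exponents satisfies $s - \tfrac{1}{2} \leq t_0$, the product estimate (Proposition \ref{product_estimate1}) applies with $\nabla b$ in the high-regularity slot, yielding
\begin{equation*}
\left| \beta \mu \nabla b \cdot \nabla(\GDD(\psi) + \mu \GND(B)) \right|_{H^{s-\frac{1}{2}}} \leq M \mu \left( |\nabla \psi|_{H^{s-\frac{1}{2}}} + \mu |B|_{H^{s-\frac{1}{2}}} \right).
\end{equation*}
Absorbing $\mu \leq \mu_{\max}$ into $M$, the numerator is bounded by $M \bigl( |\nabla \psi|_{H^{s-\frac{1}{2}}} + \mu |B|_{H^{s-\frac{1}{2}}} \bigr)$.

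Next I would handle the denominator. Since $\beta^2 \mu |\nabla b|^2 \geq 0$, one has $1+\beta^2 \mu |\nabla b|^2 \geq 1$, and since $\nabla b \in H^{t_0+1}(\RD)$ with $t_0 > d/2$, the quantity $1+\beta^2 \mu |\nabla b|^2 - 1$ is controlled in $H^{t_0}(\RD)$ by $M$. The inverse estimate (Proposition \ref{inverse_estimate1}) then implies that multiplication by $1/(1+\beta^2 \mu |\nabla b|^2)$ is a bounded operator on $H^{s-\frac{1}{2}}(\RD)$ with norm bounded by $M$, for every $0 \leq s \leq t_0 + \tfrac{1}{2}$. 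Combining with the numerator bound gives the claimed inequality for $\ws$.

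Finally, the analogous estimate for $\Vs$ follows directly from its definition $\Vs = \nabla(\GDD(\psi) + \mu \GND(B)) - \beta \ws \nabla b$: the first piece is already controlled as above, and the second is handled by applying the just-proven bound on $\ws$ together with the product estimate $|\beta \ws \nabla b|_{H^{s-\frac{1}{2}}} \leq M |\ws|_{H^{s-\frac{1}{2}}}$. The main subtlety, as in the analogous estimate for $\w$, is the endpoint case $s = t_0 + \tfrac{1}{2}$, where one has to place the factor with the extra half-derivative of regularity correctly in the product inequality; this is why the assumption $\zeta, b \in H^{t_0+2}(\RD)$ enters through $\nabla b \in H^{t_0+1}(\RD)$ rather than merely $H^{t_0}(\RD)$.
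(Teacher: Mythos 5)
Your argument is correct and is exactly the intended one: the paper states this proposition without a written proof, and the natural route is precisely yours, namely applying Propositions \ref{controls_GDD} and \ref{controls_GND} to the explicit formula \eqref{ws}, with the product estimate handling the factor $\nabla b$ and the inverse estimate handling the denominator, and then reading off the bound for $\Vs$ from \eqref{Vs}. The only inaccuracy is your closing side remark: the product estimate already closes at the endpoint $s=t_{0}+\frac{1}{2}$ with $\nabla b$ merely in $H^{t_{0}}$ (since $s-\frac{1}{2}<2t_{0}-\frac{d}{2}$), so the $H^{t_{0}+2}$ regularity of $\zeta,b$ is really required by the underlying operator estimates (via Theorem \ref{Bd_regularity}) rather than by the product step.
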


\subsection{Shape derivatives}\label{shape_derivatives_estimates}

\noindent Let $t_{0} > \frac{d}{2}$. Given $B \in H^{\frac{1}{2}}(\RD)$. We denote by $\Gamma$ the set of functions $(\zeta,b)$ in $\Hzetaa$ satisfying \eqref{nonvanishing}. We introduce the map

\begin{equation}
  G^{N \! N}_{\mu}(B) := \left\{
  \begin{aligned}
   & \quad \Gamma  \;\; \rightarrow \; H^{\frac{1}{2}}(\RD) \\
   &(\zeta,b) \mapsto \GNN(B),     
  \end{aligned}
   \right.
\end{equation}

\noindent which is the Neumann-Neumann operator. We can also define $G_{\mu}(\psi)$, $\w(\psi,B)$ and $\V(\psi,B)$.

\begin{remark}\label{G_notation}
\noindent When no confusion is possible and to the sake of simplicity, we write $G_{\mu}(\psi)$, $G^{N \! N}_{\mu}(B)$, $\w(\psi,B)$ and $\V(\psi,B)$ instead of $\G(\psi)$, $\GNN(B)$,
\newline
$\w[\epsilon \zeta, \beta b](\psi,B)$ and $\V[\epsilon \zeta, \beta b](\psi,B)$.
\end{remark}

\noindent In order to linearize the water waves equations, we need a shape derivative formula for the Dirichlet-Neumann and the Neumann-Neumann operators. The following proposition is a summarize of Theorems 3.5 and 3.6 in \cite{Iguchi_tsunami} and Theorem 3.21 in \cite{Lannes_ww}.

\begin{prop}\label{differential_formula}
Let $t_{0} > \frac{d}{2}$, $\zeta,b \in \Hzetaa$, $\psi \in \Hdot^{\frac{3}{2}}(\RD)$ and $B \in H^{\frac{1}{2}}(\RD)$. Then, $G_{\mu}(\psi)$ and $G^{NN}_{\mu}(B)$ are Fr\'echet differentiable. For $(h,k) \in H^{t_{0}+1}(\RD)$, we have

\begin{align*}
dG_{\mu}(\psi).(h,0) + \mu dG^{N\! N}_{\mu}(B).(h,0) = &- \epsilon G_{\mu}[\epsilon \zeta, \beta b](h \, \w[\epsilon \zeta, \beta b](\psi,B) )\\
&- \epsilon \mu \nabla \cdot(h \, \V[\epsilon \zeta, \beta b](\psi,B)),
\end{align*}

\noindent and

\begin{equation*}
dG_{\mu}(\psi).(0,k) + \mu dG^{N \! N}_{\mu}(B).(0,k) =  \beta \mu \GNN \left(\nabla \cdot \left( k \, \Vs[\epsilon \zeta, \beta b](\psi, B) \right) \right).
\end{equation*}

\noindent Furthermore,

\begin{equation*}
dG^{D \! D}_{\mu}(\psi).(h,0) + \mu dG^{N \! D}_{\mu}(B).(h,0) = - \epsilon G^{D \! D}_{\mu}[\epsilon \zeta, \beta b](h \, \w[\epsilon \zeta, \beta b](\psi,B)).
\end{equation*}

\end{prop}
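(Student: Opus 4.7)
The plan is to derive the two identities simultaneously by working with the combined potential $\Phi = \Phi^{S} + \mu \Phi^{B}$, which solves the single mixed problem
\begin{equation*}
\Delta^{\mu}_{X,z} \Phi = 0 \text{ in } \Omega_{t}, \quad \Phi_{|z=\epsilon\zeta}=\psi, \quad \sqrt{1+\beta^{2}|\nabla b|^{2}}\partial_{\mathbf{n}}\Phi_{|z=-1+\beta b} = \mu B.
\end{equation*}
Thanks to Remark \ref{value_Psi_boundary}, the sum $G_{\mu}(\psi)+\mu G^{N\!N}_{\mu}(B)$ equals the conormal trace of $\Phi$ at $z=\epsilon\zeta$, and similarly $G^{D\!D}_{\mu}(\psi)+\mu G^{N\!D}_{\mu}(B)$ is the Dirichlet trace at $z=-1+\beta b$. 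Working at the level of $\Phi$ collapses the two formulas of the statement into one shape-derivative computation per boundary.

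First I would pull the problem back to the fixed strip $S$ via the diffeomorphism $\Sigma$ constructed in Appendix \ref{Laplace_problem}, obtaining the variational equation
\begin{equation*}
\int_{S} P(\Sigma)\nabla^{\mu}_{X,z}\phi \cdot \nabla^{\mu}_{X,z}\varphi = -\int_{\mathbb{R}^{d}} \mu B\, \varphi_{|z=-1}
\end{equation*}
for all $\varphi \in H^{1}_{0,\mathrm{surf}}(S)$ after lifting the Dirichlet datum $\psi$ by $\psi^{\#}$ (Definition \ref{psidiese_definition}). Fréchet differentiability of $(\zeta,b)\mapsto \phi$ follows from the implicit function theorem applied to this coercive symmetric bilinear form (the coercivity constant $k(\Sigma)$ controlled in Proposition \ref{P_Sigma_regularity} depends smoothly on $(\zeta,b)$), and the shape derivative $\dot\phi:=d\phi.(h,k)$ satisfies an analogous variational equation whose source is $d P(\Sigma).(h,k)$ applied to $\nabla^{\mu}_{X,z}\phi$.

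Next I would convert the variations in $\phi$ back to the physical domain by setting $\dot\Phi := \dot\phi\circ\Sigma^{-1} - (d\Sigma.(h,k))\circ\Sigma^{-1}\cdot \nabla_{X,z}\Phi$, the usual straightening correction. A direct computation shows that $\dot\Phi$ solves the Laplace problem on $\Omega_{t}$ with Dirichlet trace $-\epsilon h\,\underline{w}[\epsilon\zeta,\beta b](\psi,B)$ at the top and conormal trace $\beta k \,\partial_{t}b$-type data at the bottom (with straightforward cancellation when the Dirichlet data there is zero). Taking the conormal trace at $z=\epsilon\zeta$ and carefully tracking the contributions coming from $(i)$ the change of the evaluation surface, $(ii)$ the change of the normal, and $(iii)$ the change of $\Phi$ itself, the identity
\begin{equation*}
dG_{\mu}(\psi).(h,0)+\mu dG^{N\!N}_{\mu}(B).(h,0) = -\epsilon G_{\mu}(h\,\underline{w}) - \epsilon\mu\,\nabla\cdot(h\,\underline{V})
\end{equation*}
emerges exactly as in the proof of Theorem 3.21 in \cite{Lannes_ww}; the surface-divergence term encodes precisely the tangential contributions, while the $G_\mu$ term is the normal contribution. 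The $(0,k)$-derivative is dual: here the top data is untouched but the bottom surface moves, and the Neumann-Neumann operator naturally appears, giving $\beta\mu G^{N\!N}_{\mu}(\nabla\cdot(k\,\widetilde{\underline{V}}))$ with $\widetilde{\underline{V}}$ as in \eqref{Vs}. The Dirichlet-Dirichlet / Neumann-Dirichlet formula is obtained by the same procedure, taking traces at $z=-1+\beta b$ rather than at the top.

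The main obstacle is the bookkeeping at the moving boundary: one must simultaneously account for the change of the evaluation point, the change of the conormal direction encoded by $P(\Sigma)$, and the variation of $\Phi$ itself, then reorganize these three contributions into the compact form involving $\underline{w}$ and $\underline{V}$ (resp. $\widetilde{\underline{w}}$ and $\widetilde{\underline{V}}$). The regularity of the resulting terms is then controlled thanks to Propositions \ref{controls_GNN}, \ref{controls_w}, \ref{controls_ws} and the trace/product estimates in Appendix \ref{The Dirichlet-Neumann and the Neumann-Neumann operators}, which justify the differentiation in the claimed functional setting when $(h,k)\in H^{t_{0}+1}(\RD)$.
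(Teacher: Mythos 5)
Your proposal is correct and follows essentially the same route as the argument the paper relies on: the paper gives no proof of this proposition, presenting it as a summary of Theorems 3.5--3.6 in \cite{Iguchi_tsunami} and Theorem 3.21 in \cite{Lannes_ww}, and your combined-potential / flat-strip / straightening-correction computation is precisely the scheme carried out in those references. The one caveat is that the decisive step --- reorganizing the three boundary contributions into the compact $\underline{w}$, $\underline{V}$ (resp.\ $\widetilde{\underline{w}}$, $\widetilde{\underline{V}}$) form --- is deferred to ``bookkeeping'', but that is exactly what the cited theorems supply, so the sketch is faithful.
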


\noindent Thanks to these formulae we can give some controls to the first shape derivatives of the operators. For instance, we give an estimate for $d\ws$ and $d\Vs$.

\begin{prop}\label{controls_dVs}
Let $t_{0} > \frac{d}{2}$ and $(\zeta,b) \in \Hzetaa$ such that Condition \eqref{nonvanishing} is satisfied. Then, for $0 \leq s \leq t_{0} + \frac{1}{2}$, for $\psi \in \Hdot^{s+\frac{1}{2}}(\RD)$ and $B \in H^{s-\frac{1}{2}}(\RD)$, we have

\begin{equation*}
\left\lvert d \Vs (\psi,B).(h,k) \right\rvert_{H^{s-\frac{1}{2}}} \text{, } \hspace{-0.05cm} \left\lvert d \ws (\psi,B).(h,k) \right\rvert_{H^{s-\frac{1}{2}}} \hspace{-0.05cm} \leq M \left\lvert \left( h,k \right) \right\rvert_{H^{t_{0}+1}} \hspace{-0.05cm} \left( |\nabla \psi|_{H^{s-\frac{1}{2}}} \hspace{-0.05cm} + \hspace{-0.05cm} |B|_{H^{s-\frac{1}{2}}} \right) \hspace{-0.05cm}.
\end{equation*}
\end{prop}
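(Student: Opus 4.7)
The plan is to compute explicit expressions for $d\ws.(h,k)$ and $d\Vs.(h,k)$ by differentiating the formulas \eqref{ws} and \eqref{Vs}, and then estimate each term by combining the shape-derivative formula in Proposition \ref{differential_formula} with the regularity estimates of Appendix \ref{The Dirichlet-Neumann and the Neumann-Neumann operators}.

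First, set $\Psi := \GDD(\psi) + \mu \GND(B)$, so that
\[
\ws = \frac{\mu B + \beta\mu \nabla b \cdot \nabla \Psi}{1+\beta^{2}\mu|\nabla b|^{2}}, \qquad \Vs = \nabla \Psi - \beta \ws\, \nabla b.
\]
Applying the product and quotient rules in the direction $(h,k)$, the only non-elementary quantity appearing is $d\Psi.(h,k)$; every other term is an algebraic combination of $\ws$, $\Vs$, $\nabla b$, $B$ and the directions $\nabla k$, $k$. Then I would invoke the third identity of Proposition \ref{differential_formula} to get $d\Psi.(h,0) = -\epsilon \GDD(h\,\underline{w}(\psi,B))$, and, by adapting the computation that leads to the $k$-direction formula for $dG_{\mu}+\mu dG^{N\!N}_{\mu}$ in the same proposition (integration by parts in the variational formulation of Definition \ref{variational_formulation_Bd}, using the adjoint relation $(\GNN)^{*}=\GDD$ from Proposition \ref{main_properties_GNN}), one derives an expression for $d\Psi.(0,k)$ of the form $\beta\mu\, \GND(\nabla \cdot (k\,\Vs)) + (\text{trace/correction terms involving } \ws)$ —  structurally identical to the formula already given for $d G_{\mu}+\mu dG^{N\!N}_{\mu}$ in the $k$-direction, but on the Dirichlet side at $z=-1$.

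Once these formulas are in hand, the estimates reduce to combining three ingredients: (i) the continuity of $\GDD$ and $\GND$ of Propositions \ref{controls_GDD} and \ref{controls_GND}, which bound $|\nabla \GDD(h\underline{w})|_{H^{s-1/2}}$ and $|\GND(\nabla\cdot(k\Vs))|_{H^{s+1/2}}$ by $M|h\underline{w}|_{H^{s-1/2}}$ and $M|\nabla\cdot(k\Vs)|_{H^{s-1/2}}$ respectively; (ii) the bounds on $\underline{w}$, $\V$ of Proposition \ref{controls_w} and on $\ws$, $\Vs$ of Proposition \ref{controls_ws}, which control these fields in $H^{s-1/2}$ by $|\nabla \psi|_{H^{s-1/2}} + |B|_{H^{s-1/2}}$ (up to factors of $\mu$); (iii) the product and Moser estimates of Appendix \ref{estimates} to handle the pointwise products against $h$, $k$, $\nabla b$, and the rational function $1/(1+\beta^{2}\mu|\nabla b|^{2})$, which extract precisely a factor $|(h,k)|_{H^{t_{0}+1}}$ thanks to the embedding $H^{t_{0}+1}\hookrightarrow W^{1,\infty}$ for $t_{0}>d/2$.

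The main obstacle, as usual in this kind of paraproduct bookkeeping, is to keep the regularity indices consistent over the entire range $0\le s \le t_{0}+\tfrac{1}{2}$. In particular, in the $k$-direction formula the divergence $\nabla\cdot(k\Vs)$ sits in $H^{s-3/2}$, so the gain of one derivative in Proposition \ref{controls_GND} is essential to land back in $H^{s-1/2}$; and the products of type $\nabla b \cdot \nabla d\Psi.(h,k)$ must be estimated by distributing derivatives onto the factor in $H^{t_{0}+1}$ when $s-\tfrac{1}{2}$ is too small, and onto the high-regularity factor otherwise. Combining all pieces and using $\beta, \mu \le C$ from \eqref{parameters_constraints} absorbs every prefactor into the constant $M$, yielding the announced bounds on $|d\Vs(\psi,B).(h,k)|_{H^{s-1/2}}$ and $|d\ws(\psi,B).(h,k)|_{H^{s-1/2}}$.
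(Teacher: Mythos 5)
Your proposal is correct and follows essentially the same route as the paper: the paper's own proof is a one-line citation of Proposition \ref{differential_formula} together with the regularity estimates of Appendix \ref{The Dirichlet-Neumann and the Neumann-Neumann operators}, which is exactly the argument you spell out (including the observation, glossed over in the paper, that the $(0,k)$-direction formula for the Dirichlet-side operators must be derived by mimicking the $\GNN$ computation).
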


\begin{proof}
\noindent This result follows from Proposition \ref{differential_formula} and Proposition \ref{controls_GNN}.
\end{proof}

\noindent We end this part by giving some controls of the shape derivatives of $G_{\mu}$ and $G_{\mu}^{N \! N}$. We do not use the previous method, we differentiate $j$ times directly the dual formulation of both operators. We refer to Proposition 3.28 in \cite{Lannes_ww} for a control of $d^{j} G_{\mu} .(\textbf{h},\textbf{k})(\psi)$.

\begin{prop}\label{controls_dGNN}
Let $t_{0} > \frac{d}{2}$ and $(\zeta,b) \in H^{t_{0}+1}(\RD)$ such that Condition \eqref{nonvanishing} is satisfied. Then for all $0 \leq s \leq t_{0} + \frac{1}{2}$ and $B \in H^{s-\frac{1}{2}}(\RD)$, we have

\begin{equation*}
\left\lvert d^{j} G^{N \! N}_{\mu} .(\textbf{h},\textbf{k})(B) \right\lvert_{H^{s-\frac{1}{2}}} \leq M \underset{i \geq 1}{\prod} \left\lvert (\epsilon h_{i},\beta k_{i}) \right\lvert_{H^{t_{0}+1}}  \left\lvert B \right\lvert_{H^{s-\frac{1}{2}}}.
\end{equation*}

\noindent Furthermore, if $0 \leq s \leq t_{0}$ and $B \in H^{t_{0}}(\RD)$,

\begin{equation*}
\left\lvert d^{j} G^{N \! N}_{\mu} .(\textbf{h},\textbf{k})(B) \right\lvert_{H^{s-\frac{1}{2}}} \leq M \left\lvert \left(\epsilon h_{1}, \beta k_{1} \right) \right\vert_{H^{s+\frac{1}{2}}} \underset{i \geq 2}{\prod} \left\lvert \left(\epsilon h_{i},\beta k_{i} \right) \right\lvert_{H^{t_{0}+1}}  \left\lvert B \right\lvert_{H^{t_{0}}}.
\end{equation*}
\end{prop}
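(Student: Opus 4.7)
The plan is to start from the dual formulation in Proposition \ref{main_properties_GNN},
\[
\langle G^{N \! N}_\mu(B), \varphi\rangle_{H^{-\frac{1}{2}}-H^{\frac{1}{2}}} = \int_S P(\Sigma)\, \nabla^\mu_{\! X,z} B^{\mathfrak{d}}\cdot \nabla^\mu_{\! X,z} \varphi^{\#},
\]
and to combine it with the Sobolev duality $|u|_{H^{s-\frac{1}{2}}} = \sup \{\langle u, \varphi\rangle : |\varphi|_{H^{\frac{1}{2}-s}} \le 1\}$. The key observation is that $\varphi^{\#}$, defined by the explicit Fourier multiplier of Definition \ref{psidiese_definition}, is independent of $(\zeta, b)$, so the shape dependence enters only through $P(\Sigma)$ and $B^{\mathfrak{d}}$. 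Moreover $\sigma$ is linear in $(\zeta, b)$, which keeps the shape derivatives of $P(\Sigma)$ combinatorially tame.

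Differentiating the dual formulation $j$ times along directions $(\textbf{h}, \textbf{k})$ and applying the Leibniz rule produces a finite sum of terms of the form
\[
\int_S d^{j_1}[P(\Sigma)]\, \nabla^\mu_{\! X,z} d^{j_2}[B^{\mathfrak{d}}]\cdot \nabla^\mu_{\! X,z} \varphi^{\#}, \qquad j_1+j_2=j,
\]
with suitable splittings of the directions. By iterated differentiation of the variational formulation of Definition \ref{variational_formulation_Bd} (whose right-hand side $-\langle B, v|_{z=-1}\rangle$ does not depend on $(\zeta,b)$), the shape derivative $d^{j_2}[B^{\mathfrak{d}}] \in H^1_{0, \rm surf}(S)$ solves, for every $v \in H^1_{0, \rm surf}(S)$,
\[
\int_S P(\Sigma)\, \nabla^\mu_{\! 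X,z} d^{j_2}[B^{\mathfrak{d}}] \cdot \nabla^\mu_{\! X,z} v = -\sum_{\substack{j_1' + j_2' = j_2 \\ j_2' < j_2}} \binom{j_2}{j_1'} \int_S d^{j_1'}[P(\Sigma)]\, \nabla^\mu_{\! X,z} d^{j_2'}[B^{\mathfrak{d}}]\cdot \nabla^\mu_{\! X,z} v.
\]
The test-function strategy used in the proof of Theorem \ref{Bd_regularity} (via $\Lambda^{2s}_\delta d^{j_2}[B^{\mathfrak{d}}]$) applies equally well to this divergence source; combined with Proposition \ref{P_Sigma_regularity} to control $d^{j_1'}[P(\Sigma)]$ in the $H^{t_0 + \frac{1}{2}, 1}$-norm, a straightforward induction on $j_2$ yields
\[
|\Lambda^s \nabla^\mu_{\! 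X,z} d^{j_2}[B^{\mathfrak{d}}]|_{L^2(S)} \le M \prod_i |(\epsilon h_i, \beta k_i)|_{H^{t_0+1}} \left|\tfrac{1}{\sqrt{1+\sqrt{\mu}|D|}}\, B\right|_{H^s}
\]
for $0 \le s \le t_0 + \frac{1}{2}$.

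Substituting into the Leibniz sum, bounding $\nabla^\mu_{\! X,z} \varphi^{\#}$ by Proposition \ref{psidiese_regularity}, and applying Cauchy--Schwarz in $S$ together with the product estimates that control $d^{j_1}[P(\Sigma)]$, yields the bound for $\langle d^j G^{N \! N}_\mu(B).(\textbf{h}, \textbf{k}), \varphi\rangle$, and hence the first inequality upon taking the supremum over $\varphi$. The second inequality follows from the same argument but with a different allocation of regularity in the product step: since $B$ has the stronger regularity $H^{t_0}$, the alternative product estimate of Proposition \ref{product_estimate2} allows one factor $(h_1, k_1)$ to be placed in the weaker space $H^{s+\frac{1}{2}}$ while the remaining $(h_i, k_i)$ stay in $H^{t_0+1}$. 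The main obstacle is the careful Sobolev index bookkeeping, particularly for $s$ near the upper endpoint $t_0 + \frac{1}{2}$, where one must invoke the sharper commutator estimate from the second case of the proof of Theorem \ref{Bd_regularity}, and ensuring that the inductive constants collect cleanly as the product $\prod_i |(\epsilon h_i, \beta k_i)|_{H^{t_0+1}}$ without spurious cross terms arising from the nonlinear dependence of $Q(\Sigma)$ on $\partial_z \sigma$.
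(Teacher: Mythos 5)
Your approach coincides with the one the paper announces for this result: the paper does not write out the proof (it defers to the author's thesis), but states explicitly that the estimates are obtained by differentiating the dual formulation of the operator $j$ times and that the argument rests on shape derivatives of $B^{\mathfrak{d}}$, which is precisely your strategy, carried out with the same supporting tools (the variational/test-function argument of Theorem \ref{Bd_regularity}, Proposition \ref{P_Sigma_regularity} for $P(\Sigma)$, and Proposition \ref{psidiese_regularity} for $\varphi^{\#}$). Your sketch is consistent and correctly identifies the two delicate points (the endpoint $s$ near $t_{0}+\tfrac{1}{2}$ and the reallocation of regularity between $(h_{1},k_{1})$ and $B$ for the second inequality).
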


\noindent We do not prove this Proposition here (which is based on a shape derivative of $B^{\mathfrak{d}}$). We refer to \cite{my_thesis}. 

\section{Useful Estimates}\label{estimates}

\noindent In this part, we give some useful estimates, product and commutator estimates. We refer to Appendix B in \cite{Lannes_ww}, \cite{Lannes_sharp_estimates}  and Chapter II in \cite{alinhac_gerard} for the proofs. The first estimates are useful to control $\mathfrak{P} f$. We recall that $\mathfrak{P} =\frac{|D|}{\sqrt{1+ \sqrt{\mu} |D|}}$.

\begin{prop}\label{P_estimates}
Let $f \in H^{1}(\RD)$ and $g \in H^{\frac{1}{2}}(\RD)$. Then,

\begin{equation*}
|\mathfrak{P} g|_{L^{2}} \leq \mu^{-\frac{1}{4}} |g|_{H^{\frac{1}{2}}} \text{, } |\mathfrak{P} f|_{H^{\frac{1}{2}}} \leq \max (1,\mu^{-\frac{1}{4}}) |\nabla f|_{L^{2}} \text{ and } |\nabla f|_{L^{2}} \leq \max (1,\mu^{\frac{1}{4}}) |\mathfrak{P} f|_{H^{\frac{1}{2}}}.
\end{equation*}
\end{prop}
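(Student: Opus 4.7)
\smallskip

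The plan is to prove all three inequalities by going to the Fourier side via Plancherel, so that everything reduces to pointwise bounds on the symbols
$\tfrac{|\xi|^2}{1+\sqrt{\mu}|\xi|}$, $(1+|\xi|^2)^{1/2}$, and $|\xi|^2$. There is no hard analytic content; the only minor subtlety is tracking the $\mu$-dependence correctly so as to obtain exponents $\pm 1/4$ rather than $\pm 1/2$, which comes from the fact that $\mathfrak{P}$ scales as $|D|^{1/2}$ when $\sqrt{\mu}|\xi|\gg 1$ but as $|D|$ when $\sqrt{\mu}|\xi|\ll 1$.

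For the first bound, I would write
\begin{equation*}
|\mathfrak{P} g|_{L^{2}}^{2} = \int_{\RD} \frac{|\xi|^{2}}{1+\sqrt{\mu}|\xi|}\,|\widehat{g}(\xi)|^{2}\,d\xi
\end{equation*}
and use the crude pointwise estimate $\tfrac{|\xi|^{2}}{1+\sqrt{\mu}|\xi|}\le \tfrac{|\xi|}{\sqrt{\mu}}\le \mu^{-1/2}\,(1+|\xi|^{2})^{1/2}$, which yields $|\mathfrak{P} g|_{L^{2}}^{2}\le \mu^{-1/2}|g|_{H^{1/2}}^{2}$ and hence the claim after taking square roots.

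For the second inequality, I would compute
\begin{equation*}
|\mathfrak{P} f|_{H^{1/2}}^{2} = \int_{\RD}(1+|\xi|^{2})^{1/2}\,\frac{|\xi|^{2}}{1+\sqrt{\mu}|\xi|}\,|\widehat{f}(\xi)|^{2}\,d\xi
\end{equation*}
and bound this by $\max(1,\mu^{-1/2})|\nabla f|_{L^{2}}^{2}$ by verifying that $(1+|\xi|^{2})^{1/2}\le \max(1,\mu^{-1/2})\,(1+\sqrt{\mu}|\xi|)$. For $\mu\ge 1$ this follows from $(1+|\xi|^{2})^{1/2}\le 1+|\xi|\le 1+\sqrt{\mu}|\xi|$; for $\mu\le 1$ the factor $\mu^{-1/2}\ge 1$ absorbs the same bound. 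Symmetrically, for the third inequality I would check the reverse pointwise estimate $1+\sqrt{\mu}|\xi|\le \max(1,\mu^{1/2})\,(1+|\xi|^{2})^{1/2}$ (again splitting on $\mu\lessgtr 1$ and using $1+|\xi|\le \sqrt{2}(1+|\xi|^{2})^{1/2}$ if needed to match constants), which after dividing by the $\mathfrak{P}$ symbol gives $|\xi|^{2}\le \max(1,\mu^{1/2})\,(1+|\xi|^{2})^{1/2}\tfrac{|\xi|^{2}}{1+\sqrt{\mu}|\xi|}$.

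The only step that requires any care is the case analysis on whether $\mu\ge 1$ or $\mu\le 1$, in order to recover exactly the exponent $1/4$ stated in the proposition; this is also the place where a benign factor of $\sqrt{2}$ could creep in if one is not careful and uses $1+|\xi|\le\sqrt{2}\,(1+|\xi|^{2})^{1/2}$. Otherwise the proof is a one-line Plancherel computation for each estimate.
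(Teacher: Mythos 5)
Your proposal is correct and follows exactly the paper's argument: Plancherel plus the pointwise symbol bounds $1+\sqrt{\mu}|\xi|\ge\sqrt{\mu}|\xi|$ for the first estimate and the comparison of $(1+|\xi|^{2})^{1/2}$ with $1+\sqrt{\mu}|\xi|$ (split on $\mu\lessgtr 1$) for the other two. The harmless $\sqrt{2}$ you flag in the third inequality (from $1+|\xi|$ versus $(1+|\xi|^{2})^{1/2}$) is present in the paper's one-line proof as well and is immaterial here.
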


\begin{proof}
\noindent The first inequality follows from the fact that $1+\sqrt{\mu} |\xi| \geq \sqrt{\mu} |\xi|$, the second inequality from $\frac{(1+|\xi|^{2})^{\frac{1}{4}}}{\sqrt{1+\sqrt{\mu} |\xi|}} \leq \max (1,\frac{1}{\mu^{\frac{1}{4}}} )$ and the third from $\frac{\sqrt{1+\sqrt{\mu}|\xi|}}{\sqrt{1+|\xi|}} \leq \max (1,\mu^{\frac{1}{4}})$.
\end{proof}

\noindent We need some product estimates in $\RD$. The following Proposition is Proposition 2.1.2 in \cite{alinhac_gerard}.

\begin{prop}\label{product_estimate1}
Let $s,s_{1},s_{2} \in \R$ such that $s \leq s_{1}$, $s \leq s_{2}$, $s_{1}+s_{2}\geq 0$ and $s < s_{1} + s_{2} - \frac{d}{2}$. Then, there exists a constant $C > 0$ such that for all $f \in H^{s_{1}}(\RD)$ and for all $g \in H^{s_{2}}(\RD)$, we have $fg \in H^{s}(\RD)$ and 

\begin{equation*}
|fg|_{H^{s}} \leq C |f|_{H^{s_{1}}} |g|_{H^{s_{2}}}.
\end{equation*}
\end{prop}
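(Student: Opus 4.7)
The plan is to prove this classical product estimate by Fourier analysis combined with a Littlewood--Paley (paraproduct) decomposition, which is the standard tool for such bilinear Sobolev estimates; the alternative (direct Peetre inequality plus splitting the convolution domain) works here too, and I would keep both in mind.

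First I would take the Fourier transform to convert the product into a convolution: $\widehat{fg} = (2\pi)^{-d/2}\, \widehat{f}*\widehat{g}$, so that I must control
\[
\left( \int \langle\xi\rangle^{2s}\, |(\widehat{f}*\widehat{g})(\xi)|^2\, d\xi \right)^{1/2}
\]
by $|f|_{H^{s_1}} |g|_{H^{s_2}}$. Next I would perform a Littlewood--Paley decomposition $f = \sum_j \Delta_j f$, $g = \sum_k \Delta_k g$ and split $fg = T_f g + T_g f + R(f,g)$ into low--high, high--low, and high--high interactions (Bony's paraproduct), and handle the three pieces separately.

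For $T_f g$ (frequencies of $g$ dominate), the $j$-th dyadic piece has spectrum localized at frequency $\sim 2^j$, and one uses $|S_{j-2} f|_{L^\infty} \lesssim \sum_{k \leq j-2} 2^{kd/2} 2^{-k s_1}\, 2^{k s_1}|\Delta_k f|_{L^2}$; this is bounded (up to the summability of $2^{k(d/2 - s_1)}$ when $s_1 > d/2$, or else absorbed using $s \leq s_2$) by $|f|_{H^{s_1}}$ times an $\ell^2$ sequence, yielding the bound because $s \leq s_2$. The symmetric piece $T_g f$ is identical with the roles swapped, using $s \leq s_1$. These two terms only require $s \leq \min(s_1,s_2)$ and do not need the strict inequality $s < s_1 + s_2 - d/2$.

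The remainder $R(f,g) = \sum_{|j-k| \leq 1} \Delta_j f\, \Delta_k g$ is the delicate piece and is exactly where the hypothesis $s < s_1+s_2 - d/2$ (together with $s_1+s_2 \geq 0$) is used: the spectrum of $\Delta_j f\, \Delta_k g$ is contained in a ball of radius $\lesssim 2^j$, so one gets
\[
|R(f,g)|_{H^s} \lesssim \left( \sum_j 2^{2js}\Big(\sum_{k \geq j-3} 2^{-k(s_1+s_2)}\, c_k\Big)^{\! 2}\right)^{\! 1/2} |f|_{H^{s_1}} |g|_{H^{s_2}},
\]
with $(c_k) \in \ell^2$; the inner sum is summable precisely because $s_1+s_2 \geq 0$, and Young/Cauchy--Schwarz in the outer sum gives an $\ell^2$ bound provided $2^{j(s - s_1 - s_2 + d/2)}$ is summable in $j$, i.e.\ $s < s_1 + s_2 - d/2$. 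This last estimate on $R(f,g)$ is the main obstacle of the argument; once it is obtained, the three pieces combine to give the claim with a constant $C$ depending only on $s, s_1, s_2, d$.
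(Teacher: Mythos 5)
The paper does not actually prove this proposition: it is quoted verbatim from Proposition 2.1.2 of Alinhac--G\'erard, whose proof is the ``direct'' one you mention in passing --- write $\widehat{fg}=c\,\widehat f*\widehat g$, split the frequency domain into the regions $|\xi-\eta|\geq|\xi|/2$ and $|\eta|\geq|\xi|/2$, use Peetre's inequality $\langle\xi\rangle^{s}\leq 2^{|s|}\langle\xi-\eta\rangle^{|s|}\langle\eta\rangle^{s}$, and conclude by Cauchy--Schwarz/Schur, the integral $\int\langle\eta\rangle^{-2a}\,d\eta$ over the relevant region converging precisely because $s_1+s_2-s>\frac d2$ and $s_1+s_2\geq 0$. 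Your Littlewood--Paley/paraproduct route is a genuinely different (and equally standard) proof of the same statement; it is more modular --- each of $T_fg$, $T_gf$, $R(f,g)$ has its own sharp mapping property, which makes clear exactly where each hypothesis enters and generalizes immediately to Besov endpoints --- at the price of setting up the dyadic machinery, whereas the convolution argument is shorter and self-contained. Your outline of the three pieces is correct and would compile into a complete proof.

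One imprecision worth fixing: your claim that $T_fg$ and $T_gf$ ``only require $s\leq\min(s_1,s_2)$'' is true only when the low-frequency factor has regularity above $\frac d2$. When $s_1\leq\frac d2$, the Bernstein bound gives $|S_{j-2}f|_{L^\infty}\lesssim 2^{j(\frac d2-s_1)}|f|_{H^{s_1}}$ (up to a factor $j$ at the endpoint $s_1=\frac d2$), and this growing factor is absorbed by requiring $s+\frac d2-s_1-s_2\leq 0$, not by $s\leq s_2$ alone; so the condition $s\leq s_1+s_2-\frac d2$ is also used for the paraproduct terms in that regime. Since this condition is among the hypotheses, the proof is unaffected, but the parenthetical ``or else absorbed using $s\leq s_2$'' should be corrected accordingly.
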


\noindent We also need some product estimates in $S := \RD \times (-1,0)$. The following Proposition is the Corollary B.5 in \cite{Lannes_ww}.

\begin{prop}\label{product_estimate2}
Let $s,s_{1},s_{2} \in \R$ such that $s \leq s_{1}$, $s \leq s_{2}$, $s_{1}+s_{2}\geq 0$, $s < s_{1} + s_{2} - \frac{d}{2}$ and $p \in \lbrace 2,+\infty \rbrace$. Then, there exists a constant $C > 0$ such that for all $f \in L^{\infty}_{z} H^{s_{1}}_{\! X}(S)$ and for all $g \in L^{p}_{z} H^{s_{2}}_{\! X}(S)$, we have $fg \in L^{p}_{z} H^{s}_{\! X}(S)$ and 

\begin{equation*}
|\Lambda^{s} \left(fg \right)|_{L^{p}_{z} L^{2}_{\! X} (S)} \leq C |\Lambda^{s_{1}} f|_{L^{\infty}_{z} L^{2}_{\! X} (S)} |\Lambda^{s_{2}} g|_{L^{p}_{z} L^{2}_{\! X} (S)}.
\end{equation*}
\end{prop}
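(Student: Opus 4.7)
The strategy is to reduce the estimate on the strip $S = \RD \times (-1,0)$ to the pointwise-in-$z$ product estimate on $\RD$ given by Proposition \ref{product_estimate1}, and then integrate (or take essential supremum) in the vertical variable.

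First, I would fix $z \in (-1,0)$ and apply Proposition \ref{product_estimate1} to the slice functions $f(\cdot,z)$ and $g(\cdot,z)$. Under the stated hypotheses $s \leq s_1$, $s \leq s_2$, $s_1+s_2 \geq 0$ and $s < s_1 + s_2 - d/2$, this yields a constant $C > 0$ (independent of $z$, since Proposition \ref{product_estimate1} involves only the parameters $s, s_1, s_2, d$) such that
\begin{equation*}
\bigl| \Lambda^{s}\bigl( f(\cdot,z)\, g(\cdot,z) \bigr) \bigr|_{L^{2}_X} \;\leq\; C \,\bigl| \Lambda^{s_1} f(\cdot,z) \bigr|_{L^{2}_X} \,\bigl| \Lambda^{s_2} g(\cdot,z) \bigr|_{L^{2}_X}
\end{equation*}
for almost every $z$.

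Next, I would take $L^p$ norms in $z \in (-1,0)$. In the case $p = \infty$, I bound the right-hand side by $\bigl| \Lambda^{s_1} f \bigr|_{L^\infty_z L^2_X} \bigl| \Lambda^{s_2} g \bigr|_{L^\infty_z L^2_X}$ and then take $\sup_z$ of the left-hand side, which directly gives the desired inequality. In the case $p = 2$, I apply Hölder's inequality in the form $L^{\infty}_z \cdot L^{2}_z \hookrightarrow L^{2}_z$ to get
\begin{equation*}
\Bigl( \int_{-1}^{0} \bigl| \Lambda^{s}(fg)(\cdot,z) \bigr|_{L^2_X}^{2}\, dz \Bigr)^{\!1/2} \leq C \, \bigl| \Lambda^{s_1} f \bigr|_{L^\infty_z L^2_X} \Bigl( \int_{-1}^{0} \bigl| \Lambda^{s_2} g(\cdot,z) \bigr|_{L^2_X}^{2}\, dz \Bigr)^{\!1/2},
\end{equation*}
which is exactly the announced estimate.

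The argument is essentially mechanical and I do not expect any serious obstacle; the only minor care is to note that the product estimate on $\RD$ is applied with $f$ in the $L^{\infty}_z$ role (to pull its norm out of the $z$-integral in the case $p=2$) and $g$ in the $L^{p}_z$ role, which is consistent with the statement where $f$ appears on the right-hand side with an $L^{\infty}_z$ norm regardless of $p$.
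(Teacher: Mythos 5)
Your argument is correct: since $\Lambda^{s}$ acts only in the horizontal variable, applying Proposition \ref{product_estimate1} slice by slice in $z$ (with a constant depending only on $s,s_1,s_2,d$, hence uniform in $z$) and then taking the $L^{p}_{z}$ norm via H\"older in the form $L^{\infty}_{z}\cdot L^{p}_{z}\hookrightarrow L^{p}_{z}$ gives exactly the stated estimate. The paper gives no proof of its own — it simply cites Corollary B.5 of \cite{Lannes_ww} — and your slicing argument is precisely the standard derivation behind that reference.
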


\noindent The following propositions gives estimates for $1/(1+g)$ in the flat strip $S$. We refer to Corollary B.6 in \cite{Lannes_ww}.

\begin{prop}\label{inverse_estimate1}
Let $T_{0} > \frac{d}{2}$, $-T_{0} \leq s \leq T_{0}$, $k_{0} > 0$ and $p \in \lbrace 2,+\infty \rbrace$. Then, for all $f \in L^{p}_{z} H^{s}_{X}(S)$ and $g \in L^{\infty}_{z}  H^{T_{0}}_{X}(S)$ with $1+g \geq k_{0}$, we have

\begin{equation*}
\left\lvert \Lambda^{s} \frac{f}{1+g} \right\lvert_{L^{p}_{z} L^{2}(S)} \leq C \left(\frac{1}{k_{0}},|g|_{L^{\infty}_{z} H^{T_{0}}_{\! X}} \right) |\Lambda^{s} f|_{L^{p}_{z} L^{2}(S)}.
\end{equation*}

\end{prop}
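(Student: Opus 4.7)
The plan is to reduce the inequality to a pointwise-in-$z$ statement in $\RD$ with constant uniform in $z$: I would establish, for a.e.\ $z \in (-1,0)$,
$$\left|\Lambda^s\frac{f(\cdot,z)}{1+g(\cdot,z)}\right|_{L^2(\RD)} \leq C\!\left(\tfrac{1}{k_0},|g|_{L^\infty_z H^{T_0}_X}\right) |\Lambda^s f(\cdot,z)|_{L^2(\RD)},$$
which, after squaring and integrating in $z$ (case $p=2$) or taking the essential supremum in $z$ (case $p=\infty$), gives the stated bound. The reduction is legitimate because $|g(\cdot,z)|_{H^{T_0}(\RD)} \leq |g|_{L^\infty_z H^{T_0}_X}$ for a.e.\ $z$ and $1+g(\cdot,z) \geq k_0$ pointwise, so the constant can be chosen uniform in $z$. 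Writing $\tilde f := f(\cdot,z)$ and $\tilde g := g(\cdot,z)$, the task is thus to prove this pointwise estimate in $\RD$.

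For $0 \leq s \leq T_0$, I would decompose
$$\frac{\tilde f}{1+\tilde g} = \tilde f + \tilde f\, F(\tilde g), \qquad F(u) := -\frac{u}{1+u},$$
and note that $F$ is smooth on $(-1+k_0, +\infty)$ with $F(0) = 0$, and that $\tilde g$ takes values in this set. The standard Moser composition estimate, which uses $T_0 > d/2$ to ensure $\tilde g \in H^{T_0} \hookrightarrow L^\infty$, yields
$$|F(\tilde g)|_{H^{T_0}(\RD)} \leq C\!\left(\tfrac{1}{k_0},|\tilde g|_{H^{T_0}}\right) |\tilde g|_{H^{T_0}}.$$
Then Proposition \ref{product_estimate1} applied with $s_1 = T_0$ and $s_2 = s$ (the hypotheses $s \leq T_0$, $T_0 + s \geq 0$, and $s < T_0 + s - \tfrac{d}{2}$ all hold since $T_0 > d/2$ and $s \geq 0$) gives $|\tilde f\, F(\tilde g)|_{H^s} \leq C\, |F(\tilde g)|_{H^{T_0}}\, |\tilde f|_{H^s}$. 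Combining these two inequalities with $|\tilde f|_{H^s} = |\Lambda^s \tilde f|_{L^2}$ produces the desired pointwise estimate for $s \geq 0$.

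For $-T_0 \leq s < 0$, I would conclude by $H^s$--$H^{-s}$ duality: since multiplication by the real-valued function $1/(1+\tilde g)$ is formally self-adjoint,
$$\left|\tfrac{\tilde f}{1+\tilde g}\right|_{H^s} = \sup_{|v|_{H^{-s}}\leq 1} \left|\int_{\RD} \tilde f\,\tfrac{v}{1+\tilde g}\,dX \right| \leq |\tilde f|_{H^s} \sup_{|v|_{H^{-s}}\leq 1} \left|\tfrac{v}{1+\tilde g}\right|_{H^{-s}},$$
and the supremum on the right is bounded by applying the just-proved $s \geq 0$ case at exponent $-s \in (0,T_0]$ to $v$ in place of $\tilde f$. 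The main technical input is thus the Moser composition estimate: while classical, it requires care for non-integer $s \in [0,T_0]$. One route is to prove it for integer $s$ via the chain rule (each derivative of $F(\tilde g)$ expands into a sum of products of $F^{(k)}(\tilde g)$ with derivatives of $\tilde g$, and each $F^{(k)}(\tilde g)$ is controlled in $L^\infty$ uniformly in terms of $1/k_0$ and $|\tilde g|_\infty \lesssim |\tilde g|_{H^{T_0}}$) and then interpolate; alternatively one invokes Bony's paralinearization directly. Either way, the precise origin of the constant's dependence on $1/k_0$ and on $|g|_{L^\infty_z H^{T_0}_X}$ is the bound on $F$ and its derivatives on the interval $[-1+k_0, |\tilde g|_\infty]$.
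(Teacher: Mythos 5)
Your argument is correct. Note that the paper itself gives no proof of this proposition: it simply cites Corollary B.6 in \cite{Lannes_ww}, and your reconstruction — reduce to a fixed horizontal slice $z$ with a constant uniform in $z$, write $\frac{1}{1+\tilde g}=1+F(\tilde g)$ with $F(u)=-\frac{u}{1+u}$, control $F(\tilde g)$ in $H^{T_0}(\RD)$ by a Moser composition estimate (after extending $F$ smoothly off the range $[-1+k_0,|\tilde g|_\infty]$), apply Proposition \ref{product_estimate1} with $(s_1,s_2)=(T_0,s)$ for $0\leq s\leq T_0$, and pass to $-T_0\leq s<0$ by $H^{s}$--$H^{-s}$ duality using that multiplication by the real function $\frac{1}{1+\tilde g}$ is symmetric for the $L^2$ pairing — is exactly the standard proof of that corollary. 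The only soft spot is your suggestion to obtain the fractional-order composition estimate by ``integer case plus interpolation'': the map $g\mapsto F(g)$ is nonlinear, so this is not a routine interpolation of a linear operator and would need real work to justify; the paralinearization route you also mention (or the finite-difference characterization of $H^s$) is the safe way to get the Moser estimate for non-integer exponents, and with that input the proof is complete.
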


\begin{prop}\label{inverse_estimate2}
Let $T_{0} > \frac{d}{2}$, $s \geq - T_{0}$ and $k_{0} > 0$. Then, for all $f \text{,} g \in L^{\infty}_{z} H^{T_{0}}_{\! X}(S) \cap {H^{s,0}}(S)$ with $1+g \geq k_{0}$, we have 

\begin{equation*}
\left\lvert \frac{f}{1+g} \right\lvert_{H^{s,0}} \leq C \left(\frac{1}{k_{0}},|g|_{L^{\infty}_{z} H^{T_{0}}_{\! X}} \right) \left( |f|_{H^{s,0}} + \indicatrice_{\{s > T_{0} \}}  |f|_{L^{\infty}_{\! z}  H^{T_{0}}_{\! X}} |g|_{H^{s,0}} \right).
\end{equation*}
\noindent Notice that if $s \leq T_{0}$, $f \in {H^{s,0}}(S)$ is enough.

\end{prop}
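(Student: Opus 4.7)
I would write $\dfrac{f}{1+g} = f \cdot h$ with $h := \dfrac{1}{1+g}$ and treat the two regimes $-T_0 \leq s \leq T_0$ and $s > T_0$ separately, reducing the bound to a product estimate times a composition (Moser) estimate that controls $h$ through $g$. The lower bound $1+g \geq k_0 > 0$ ensures that $h$ is well-defined and that the smooth function $\Phi(u) := \dfrac{1}{1+u}$ satisfies $|\Phi^{(k)}(g)| \leq k!\, k_0^{-k-1}$ pointwise in $S$.

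For $-T_0 \leq s \leq T_0$, Proposition \ref{product_estimate2} applied with $(s_1,s_2,p)=(T_0,s,2)$ is admissible — its hypotheses reduce to $T_0 > d/2$ and $s+T_0 \geq 0$, both of which hold — and yields
\[
\bigl|\Lambda^{s}(fh)\bigr|_{L^{2}_{z}L^{2}_{\!X}(S)} \leq C\, |h|_{L^{\infty}_{z} H^{T_0}_{\!X}} \,|f|_{H^{s,0}}.
\]
The factor $|h|_{L^{\infty}_{z} H^{T_0}_{\!X}}$ is handled pointwise in $z$: since $T_0 > d/2$, $H^{T_0}(\RD)$ is a Banach algebra, and a standard composition argument (Taylor expansion of $\Phi$ around $0$ plus iterated Proposition \ref{product_estimate1}) gives $|h(\cdot,z)|_{H^{T_0}_{\!X}} \leq C\bigl(1/k_0,|g(\cdot,z)|_{H^{T_0}_{\!X}}\bigr)$; taking $L^{\infty}$ in $z$ absorbs the right-hand side into a constant depending only on $1/k_0$ and $|g|_{L^{\infty}_{z} H^{T_0}_{\!X}}$. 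No $L^\infty_z H^{T_0}_X$ control on $f$ is used, consistent with the final remark of the statement.

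For $s > T_0$, the preceding argument still bounds the part where $f$ carries the high derivatives. To treat the case where $h$ is the rough factor, I would apply a tame Kato--Ponce product estimate in $X$ slice by slice in $z$, use the Sobolev embedding $H^{T_0}_{\!X} \hookrightarrow L^{\infty}_{\!X}$, and then integrate in $L^{2}_{z}$, obtaining
\[
|fh|_{H^{s,0}} \leq C\Bigl( |f|_{H^{s,0}}\,|h|_{L^{\infty}_{z}H^{T_0}_{\!X}} + |f|_{L^{\infty}_{z}H^{T_0}_{\!X}}\,|h|_{H^{s,0}} \Bigr).
\]
A Moser composition estimate applied pointwise in $z$ to $\Phi(g) - \Phi(0)$ (which vanishes at $0$) then gives $|h|_{H^{s,0}} \leq C\bigl(1/k_0,|g|_{L^{\infty}_{z}H^{T_0}_{\!X}}\bigr)|g|_{H^{s,0}}$, and combining the two bounds produces the stated inequality with the prefactor $\indicatrice_{\{s>T_0\}}$ in front of $|f|_{L^{\infty}_{z}H^{T_0}_{\!X}}|g|_{H^{s,0}}$.

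The delicate point, and the main obstacle, is the Moser composition estimate in the anisotropic norm $L^{2}_{z}H^{s}_{\!X}$ for $s > T_0$. The cleanest route is to apply the classical $H^{s}_{\!X}$ Moser estimate slice by slice in $z$, with a constant depending only on $|g(\cdot,z)|_{L^{\infty}_{\!X}} \leq C |g|_{L^{\infty}_{z}H^{T_0}_{\!X}}$ (by Sobolev), and then take the $L^{2}$ norm in $z$; equivalently, a paraproduct decomposition of $\Phi(g)$ in the $X$ variable together with repeated use of Proposition \ref{product_estimate2} handles each low-high, high-low, and resonant interaction, each contributing an acceptable term to the right-hand side.
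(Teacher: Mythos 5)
Your overall architecture is the standard one and, as far as one can tell, the one behind the result the paper actually invokes here: the paper gives no proof of this proposition but refers to Corollary B.6 in \cite{Lannes_ww}, whose proof likewise reduces the quotient to the tame product estimate in the strip plus a composition estimate for $u\mapsto 1/(1+u)$. Your verification of the hypotheses of Proposition \ref{product_estimate2} with $(s_1,s_2,p)=(T_0,s,2)$ is correct, and the $s>T_0$ branch (slicewise tame product in $X$, Sobolev embedding $H^{T_0}_X\hookrightarrow L^\infty_X$, then $L^2_z$) does produce exactly the stated right-hand side. Two of your intermediate steps, however, fail as written and need the standard repairs.

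First, $h=1/(1+g)$ does not belong to $L^\infty_z H^{T_0}_X(S)$: for a.e.\ $z$ the slice $g(\cdot,z)$ lies in $H^{T_0}(\RD)$ and hence decays, so $h\to 1$ at spatial infinity and $|h(\cdot,z)|_{H^{T_0}_X}=\infty$. The bound $|h(\cdot,z)|_{H^{T_0}_X}\le C(1/k_0,|g(\cdot,z)|_{H^{T_0}_X})$ in your first regime is therefore false, and the product estimate must be applied to $f\cdot(h-1)$ after writing $f/(1+g)=f - fg/(1+g)$; the term $f\cdot 1$ contributes $|f|_{H^{s,0}}$ directly. You do make this subtraction in the second regime (via $\Phi(g)-\Phi(0)$), but it is needed in both. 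Second, the Taylor expansion of $\Phi(u)=1/(1+u)$ about $0$ is a geometric series converging only for $|u|<1$, while the hypotheses allow $|g|_{L^\infty}$ to be arbitrarily large (only $1+g\ge k_0$ and $g\in L^\infty_z H^{T_0}_X$ are assumed). So "Taylor expansion plus iterated products" does not prove the composition bound; one must instead use a genuine Moser estimate, e.g.\ extend $u\mapsto u/(1+u)$ from the range $[k_0-1,\,|g|_{L^\infty}]$ of $g$ to a globally smooth function with bounded derivatives and apply the classical composition estimate (or induct on derivatives using $\nabla(1/(1+g))=-\nabla g/(1+g)^2$ together with Propositions \ref{product_estimate2} and \ref{inverse_estimate1}). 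With these two corrections the argument closes and reproduces the indicator structure $\indicatrice_{\{s>T_0\}}|f|_{L^\infty_z H^{T_0}_X}|g|_{H^{s,0}}$ as you describe.
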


\noindent We need some commutator estimates in $S$. The following Propositions are Corollary B.17 in \cite{Lannes_ww}.

\begin{prop}\label{commutator_estimate1}
Let $T_{0} > \frac{d}{2}$, $\delta \geq 0$, $0 < t_{1} \leq 1$ with $t_{1} < T_{0} - \frac{d}{2}$ and $-\frac{d}{2} < s \leq T_{0} + t_{1}$. Then for all $u \in L^{\infty}_{z} H^{T_{0}}_{\! X}$ and $v \in H^{s-t_{1},0}(S)$ we have

\begin{equation*}
\left\lvert \left[ \Lambda^{s}_{\delta},u \right] v \right\lvert_{L^{2}(S)} \leq C \left\lvert \Lambda^{T_{0}}_{\delta} u \right\lvert_{L^{\infty}_{z} L^{2}_{X} (S)} \left\lvert \Lambda^{s-t_{1}}_{\delta} v \right\lvert_{L^{2}(S)}.
\end{equation*}

\end{prop}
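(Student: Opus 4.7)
The key observation is that $\Lambda^{s}_{\delta} = \chi(\delta \Lambda) \Lambda^{s}$ acts only in the horizontal variable $X$, so for each fixed $z \in (-1, 0)$ one has $([\Lambda^{s}_{\delta}, u] v)(z, X) = [\Lambda^{s}_{\delta}, u(z, \cdot)]\, v(z, \cdot)(X)$, and the estimate reduces to a slice-wise commutator inequality on $\RD$ followed by integration in $z$. The plan is therefore to establish, uniformly in $\delta \geq 0$, the horizontal commutator estimate
\begin{equation*}
\lvert [\Lambda^{s}_{\delta}, f]\, g \rvert_{L^{2}(\RD)} \leq C\, \lvert \Lambda^{T_{0}}_{\delta} f \rvert_{L^{2}(\RD)}\, \lvert \Lambda^{s-t_{1}}_{\delta} g \rvert_{L^{2}(\RD)}
\end{equation*}
under the stated constraints on $s, t_{1}, T_{0}$, and then combine it with Hölder's inequality in $z$.

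For the horizontal estimate I would use Bony's paradifferential decomposition $fg = T_{f} g + T_{g} f + R(f, g)$ and split
\begin{equation*}
[\Lambda^{s}_{\delta}, f]\, g = [\Lambda^{s}_{\delta}, T_{f}]\, g + \left( \Lambda^{s}_{\delta} T_{g} f - T_{g} \Lambda^{s}_{\delta} f \right) + \left( \Lambda^{s}_{\delta} R(f, g) - R(f, \Lambda^{s}_{\delta} g) \right).
\end{equation*}
The condition $T_{0} > d/2$ makes the paraproduct $T_{f}$ bounded on every $H^{\sigma}$ with operator norm controlled by $\lvert \Lambda^{T_{0}} f \rvert_{L^{2}}$; the standard symbolic calculus then shows that $[\Lambda^{s}_{\delta}, T_{f}]$ has order $s - 1$ and in particular gains the $t_{1} \leq 1$ derivatives needed on $g$. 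The second bracket is handled by the boundedness of $T_{g}$ combined with the $\Lambda^{s-t_{1}}_{\delta}$ norm of $g$. The remainder satisfies $R \colon H^{T_{0}} \times H^{s-t_{1}} \to H^{s-t_{1} + T_{0} - d/2}$ for the admissible range of exponents, and the margin $T_{0} - d/2 > t_{1}$ produces the desired gain of $t_{1}$ derivatives; the two-sided constraint $-d/2 < s \leq T_{0} + t_{1}$ is precisely what is needed for all three pieces to close simultaneously. The constants can be taken uniform in $\delta$ because $\chi(\delta \Lambda)$ has symbol uniformly bounded by $\lvert \chi \rvert_{L^{\infty}}$ and commutes with every horizontal Fourier multiplier appearing in the decomposition.

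With the horizontal estimate in hand, Hölder in $z$ yields
\begin{equation*}
\lvert [\Lambda^{s}_{\delta}, u] v \rvert_{L^{2}(S)}^{2} \leq C^{2}\, \sup_{z \in (-1, 0)} \lvert \Lambda^{T_{0}}_{\delta} u(z, \cdot) \rvert_{L^{2}(\RD)}^{2}\, \int_{-1}^{0} \lvert \Lambda^{s-t_{1}}_{\delta} v(z, \cdot) \rvert_{L^{2}(\RD)}^{2}\, dz,
\end{equation*}
and taking square roots produces the claimed bound. The main obstacle is to establish the paradifferential commutator estimate uniformly in $\delta$ across the full admissible range $-d/2 < s \leq T_{0} + t_{1}$: one has to track frequency supports carefully to verify that inserting $\chi(\delta \Lambda)$ into the Littlewood–Paley decompositions of the paraproducts does not introduce constants blowing up as $\delta \to 0$. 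This is the content of Corollary B.17 in \cite{Lannes_ww}, to which the estimate is equivalent, so one may alternatively invoke that reference directly and only do the Fubini/Hölder step explicitly here.
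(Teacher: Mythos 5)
The paper gives no proof of this proposition at all: it is stated verbatim as Corollary B.17 of \cite{Lannes_ww}, which is exactly the reference you fall back on in your last sentence. Your additional sketch (the slice-wise reduction in $z$ via H\"older, then the paradifferential proof of the horizontal commutator estimate, with the caveat that the gain on the paraproduct term is $t_{1}$ rather than a full derivative since $u(z,\cdot)$ is only in $H^{T_{0}}\subset C^{t_{1}}_{*}$) is a reasonable account of how the cited result is actually obtained, so your proposal is consistent with, and strictly more detailed than, what the paper does.
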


\begin{prop}\label{commutator_estimate2}
Let $T_{0} > \frac{d}{2}$, $\delta \geq 0$, $0 < t_{1} \leq 1$ with $t_{1} < T_{0} - \frac{d}{2}$ and $-\frac{d}{2} < s \leq T_{0} + t_{1}$. Then for all $u \in H^{T_{0},0}$ and $v \in L^{\infty}_{z} H^{s-t_{1}}_{\! X}$ we have

\begin{equation*}
\left\lvert \left[ \Lambda^{s}_{\delta},u \right] v \right\lvert_{L^{2}(S)} \leq C \left\lvert \Lambda^{T_{0}}_{\delta} u \right\lvert_{L^{2}(S)} \left\lvert \Lambda^{s-t_{1}}_{\delta} v \right\lvert_{L^{\infty}_{z} L^{2}_{\! X}(S)}.
\end{equation*}

\end{prop}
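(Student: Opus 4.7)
The plan is to reduce the statement to a scalar commutator estimate on $\mathbb{R}^d$ via a slicing argument in $z$, and then combine the slices by a Hölder-type inequality that exploits the asymmetric placement of the $L^{\infty}_{z}$ norm. Since $\Lambda^{s}_{\delta}=\chi(\delta\Lambda)\Lambda^{s}$ is defined as a Fourier multiplier in $X$ alone, the commutator $[\Lambda^{s}_{\delta},u]v$ commutes with the evaluation $z\mapsto\cdot(X,z)$: for almost every $z\in(-1,0)$,
\begin{equation*}
\bigl([\Lambda^{s}_{\delta},u]v\bigr)(\cdot,z)=[\Lambda^{s}_{\delta},u(\cdot,z)]\,v(\cdot,z),
\end{equation*}
where the right-hand side is a commutator on $\mathbb{R}^{d}$.

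First I would prove the following scalar commutator estimate on $\mathbb{R}^{d}$: under the hypotheses $T_{0}>d/2$, $0<t_{1}\leq 1$ with $t_{1}<T_{0}-d/2$, and $-d/2<s\leq T_{0}+t_{1}$, for every $f\in H^{T_{0}}(\mathbb{R}^{d})$ and $g\in H^{s-t_{1}}(\mathbb{R}^{d})$,
\begin{equation*}
\bigl|[\Lambda^{s}_{\delta},f]g\bigr|_{L^{2}(\mathbb{R}^{d})}\leq C\,\bigl|\Lambda^{T_{0}}_{\delta}f\bigr|_{L^{2}(\mathbb{R}^{d})}\bigl|\Lambda^{s-t_{1}}_{\delta}g\bigr|_{L^{2}(\mathbb{R}^{d})},
\end{equation*}
with $C$ independent of $\delta\geq 0$. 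This is the same scalar inequality that underlies Proposition \ref{commutator_estimate1} (only the symmetry between $f$ and $g$ in Sobolev indices matters here, not the placement of the $L^{\infty}_{z}$ norm). I would establish it through Bony's paraproduct decomposition $fg=T_{f}g+T_{g}f+R(f,g)$: the commutator $[\Lambda^{s}_{\delta},T_{f}]$ gains one derivative, which is accommodated by the loss $t_{1}\leq 1$; the $T_{g}f$ piece is handled by Sobolev embedding $H^{T_{0}}\hookrightarrow L^{\infty}$ (available because $T_{0}>d/2$); and the remainder $R(f,g)$ is absorbed using the strict inequality $t_{1}<T_{0}-d/2$. Uniformity in $\delta$ follows because $\chi(\delta\xi)$ is a bounded symbol of order zero uniformly in $\delta$ and commutes with the Littlewood--Paley localizations up to harmless error terms.

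Once the scalar estimate is in hand, the proof of the Proposition is immediate. Apply the scalar bound pointwise in $z$ to $f=u(\cdot,z)$ and $g=v(\cdot,z)$, square, and integrate in $z$ using the elementary Hölder bound $|\Lambda^{s-t_{1}}_{\delta}v(\cdot,z)|_{L^{2}_{X}}\leq |\Lambda^{s-t_{1}}_{\delta}v|_{L^{\infty}_{z}L^{2}_{X}}$ for a.e.\ $z$:
\begin{align*}
\bigl|[\Lambda^{s}_{\delta},u]v\bigr|^{2}_{L^{2}(S)}&=\int_{-1}^{0}\bigl|[\Lambda^{s}_{\delta},u(\cdot,z)]v(\cdot,z)\bigr|^{2}_{L^{2}(\mathbb{R}^{d})}\,dz\\
&\leq C^{2}\int_{-1}^{0}\bigl|\Lambda^{T_{0}}_{\delta}u(\cdot,z)\bigr|^{2}_{L^{2}_{X}}\bigl|\Lambda^{s-t_{1}}_{\delta}v(\cdot,z)\bigr|^{2}_{L^{2}_{X}}\,dz\\
&\leq C^{2}\bigl|\Lambda^{s-t_{1}}_{\delta}v\bigr|^{2}_{L^{\infty}_{z}L^{2}_{X}(S)}\bigl|\Lambda^{T_{0}}_{\delta}u\bigr|^{2}_{L^{2}(S)},
\end{align*}
which is the desired inequality.

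The main obstacle is the scalar commutator estimate, in particular the uniformity of the constant with respect to the regularizing parameter $\delta$ and the treatment of the endpoint cases $s=T_{0}+t_{1}$ and $s$ close to $-d/2$. The Bony decomposition must be carried out so that the paraproduct remainders and the cutoff $\chi(\delta\Lambda)$ interact cleanly; this is the step one cannot afford to be cavalier about. Since this exact scalar inequality is essentially Corollary B.17 of \cite{Lannes_ww}, I would in fact simply invoke it, and only sketch the slicing-and-Hölder passage from the scalar statement to the strip-valued statement stated here.
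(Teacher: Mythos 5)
Your proposal is sound, but note that the paper offers no proof of this statement at all: it is introduced with the single sentence that these propositions \emph{are} Corollary B.17 of \cite{Lannes_ww}, which is already stated there on the flat strip $S$ with the mixed norms (both placements of the $L^{\infty}_{z}$ factor), not as a scalar inequality on $\RD$. So your plan of first proving a scalar commutator estimate and then slicing in $z$ is a legitimate reconstruction rather than a reproduction of the paper's (nonexistent) argument. The slicing step itself is unimpeachable: $\Lambda^{s}_{\delta}$ is a multiplier in $X$ only, so the commutator acts slice by slice, and the Fubini--H\"older combination you write correctly places the $L^{\infty}_{z}$ norm on $v$ and the $L^{2}_{z}$ norm on $u$, matching the asymmetry of the statement. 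The one place where the work genuinely lives is the scalar estimate with the \emph{smoothed} norms $\lvert\Lambda^{T_{0}}_{\delta}f\rvert_{L^{2}}$ and $\lvert\Lambda^{s-t_{1}}_{\delta}g\rvert_{L^{2}}$ on the right-hand side (not the full $H^{T_{0}}$, $H^{s-t_{1}}$ norms), uniformly in $\delta$, under the low-regularity hypotheses $t_{1}<T_{0}-\frac{d}{2}$ and $s>-\frac{d}{2}$; you correctly identify this as the delicate point and defer it to the paraproduct machinery of \cite{Lannes_ww}, which is exactly where the paper's citation points. In short: correct, with the caveat that your attribution of the \emph{scalar} inequality to Corollary B.17 is slightly off (that corollary is the strip statement; the scalar input is the preceding theorem in that appendix), a citation detail rather than a mathematical gap.
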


\section*{Acknowledgments}

\noindent The author has been partially funded by the ANR project Dyficolti ANR-13-BS01-0003.

\newpage
\small{
\bibliographystyle{plain}
\bibliography{biblio}
}

\end{document}